\DeclareFontFamily{U}{mathx}{\hyphenchar\font45}
\DeclareFontShape{U}{mathx}{m}{n}{
      <5> <6> <7> <8> <9> <10>
      <10.95> <12> <14.4> <17.28> <20.74> <24.88>
      mathx10
      }{}
\DeclareSymbolFont{mathx}{U}{mathx}{m}{n}
\DeclareMathAccent{\widecheck}{0}{mathx}{"71}
\DeclareMathAccent{\wideparen}{0}{mathx}{"75}
\newcommand{\grad}{\nabla}
\newcommand{\vc}[1]{\mathbf{#1}}
\newcommand{\C}{\mathbb{C}}
\newcommand{\N}{\mathbb{N}}
\newcommand{\R}{\mathbb{R}}
\newcommand{\NB}{\mathcal{H}}
\renewcommand{\S}{\mathbb{S}}
\newcommand{\ps}[2]{\left\langle #1 , #2 \right\rangle}
\newcommand{\norm}[1]{\| #1 \|}
\newcommand{\nnorm}[1]{{\vert\kern-0.25ex\vert\kern-0.25ex\vert #1 
   \vert\kern-0.25ex\vert\kern-0.25ex\vert}}
\renewcommand{\bar}[1]{\overline{#1}}
\newcommand{\sign}{\textup{sign}}
\renewcommand{\epsilon}{\varepsilon}
\newcommand{\bal}{\boldsymbol{\alpha}}
\newcommand{\bdel}{\boldsymbol{\delta}}
\newcommand{\bbe}{\boldsymbol{\beta}}
\newcommand{\bga}{\boldsymbol{\gamma}}
\newcommand{\bt}{\boldsymbol{\theta}}
\newcommand{\cwl}{\widehat{\operatorname{cw}}_{\bb}}
\newcommand{\cwu}{\widecheck{\operatorname{cw}}_{\bb}}
\renewcommand{\O}{\mathcal O}
\newcommand{\Dn}{\Delta_{++}^d}
\newcommand{\bxi}{\boldsymbol{\xi}}
\newcommand{\blam}{\boldsymbol{\lambda}}
\newcommand{\bnu}{\boldsymbol{\nu}}
\newcommand{\ba}{\vc a}
\newcommand{\bb}{\vc b}
\newcommand{\bc}{\vc c}
\newcommand{\be}{\vc e}
\newcommand{\bp}{\vc p}
\newcommand{\bx}{\vc x}
\newcommand{\bxb}{\vc x}
\newcommand{\by}{\vc y}
\newcommand{\byb}{\vc y}
\newcommand{\bz}{\vc z}
\newcommand{\bzb}{\vc z}
\newcommand{\bv}{\vc v}
\newcommand{\bvb}{\vc v}
\newcommand{\bu}{\vc u}
\newcommand{\bub}{\vc u}
\newcommand{\bw}{\vc w}
\newcommand{\bwb}{\vc w}
\newcommand{\bs}{\vc s}
\newcommand{\s}{s}
\newcommand{\algo}{power method}
\newcommand{\A}{\mathcal{A}}
\newcommand{\bmin}[2]{\text{\delimitershortfall=0pt
\delimiterfactor=1001 $\eta_-\!\left(#1\middle/#2\right)$}}
\newcommand{\bmax}[2]{\text{\delimitershortfall=0pt
\delimiterfactor=1001 $\eta_+\!\left(#1\middle/#2\right)$}}
\newcommand{\Bmin}[2]{\text{\delimitershortfall=0pt
\delimiterfactor=1001 $\mathbf{\mathfrak{m}}\!\left(#1\middle/#2\right)$}}
\newcommand{\Bmax}[2]{\text{\delimitershortfall=0pt
\delimiterfactor=1001 $\mathfrak{M}\!\left(#1\middle/#2\right)$}}
\newcommand{\bmini}[3]{\text{\delimitershortfall=0pt
\delimiterfactor=1001 $\mathfrak{m}_{#1}\!\left(#2\middle/#3\right)$}}
\newcommand{\bmaxi}[3]{\text{\delimitershortfall=0pt
\delimiterfactor=1001 $\mathfrak{M}_{#1}\!\left(#2\middle/#3\right)$}}
\newcommand{\ind}[1]{\operatorname{int}(#1)}
\renewcommand{\and}{\quad\text{and}\quad}
\newcommand{\andd}{\qquad\text{and}\qquad}
\newcommand{\E}{\mathcal{E}}
\newcommand{\kone}{\mathcal{K}}
\newcommand{\lek}{\leq_{\kone}}
\newcommand{\lekk}{\lneq_{\kone}}
\newcommand{\lekkk}{<_{\kone}}
\newcommand{\G}{\mathcal{G}}
\newcommand{\Gm}{\mathcal{G}^-}
\newcommand{\krog}{\otimes}
\newcommand{\J}{\mathcal J}
\newcommand{\bj}{\vc j}
\newcommand{\bl}{\vc l}
\newcommand{\I}{\mathcal I}
\newcommand{\saufzero}{\setminus\{0\}}
\newcommand{\diag}{\operatorname{diag}}
\renewcommand{\t}{\tilde}
\newcommand{\multihomo}{multi-homogeneous}
\newcommand{\noarg}{\,\cdot\,}
\newcommand{\NF}{G}
\newcommand{\irrF}{H}
\newcommand{\ones}{\mathbf{1}}
\newcommand{\sauf}{\setminus}
\newcommand{\bphi}{\boldsymbol{\phi}}
\newcommand{\bphib}{\boldsymbol{\phi}}
\newcommand{\Sn}{\S^{\norm{\cdot}_{\bga}}}
\newtheorem{thm2}{Theorem}
\newtheorem{thm}{Theorem}[section]
\newtheorem{lem}[thm]{Lemma}
\newtheorem{prop}[thm]{Proposition}
\newtheorem{cor}[thm]{Corollary}
\theoremstyle{definition}
\newtheorem{defi}[thm]{Definition}
\newtheorem{ex}[thm]{Example}
\theoremstyle{remark}
\newtheorem{rmq}[thm]{Remark}
\numberwithin{equation}{section}
\begin{document}

\title{The Perron-Frobenius Theorem for Multi-homogeneous Maps}

\author{Antoine Gautier}
\address{Department of Mathematics and Computer Science, Saarland University, 66041 Saarbr\"{u}cken, Germany}
\curraddr{}
\email{ag@cs.uni-saarland.de}
\thanks{The authors acknowledge support by the ERC starting grant NOLEPRO 307793 and thank Shmuel Friedland and Lek-Heng Lim for insightful discussions and pointing out relevant references.}

\author{Francesco Tudisco}
\address{Department of Mathematics, University of Padua, via trieste 63 - 35121 - Padova, Italy}
\email{francesco.tudisco@math.unipd.it}

\author{Matthias Hein}
\address{Department of Mathematics and Computer Science, Saarland University, 66041 Saarbr\"{u}cken, Germany}
\email{hein@cs.uni-saarland.de}

\subjclass[2010]{Primary 47H07, 47J10; Secondary 15B48, 47H09, 47H10}
\keywords{Perron-Frobenius theorem, nonlinear eigenvalues, nonlinear singular values, nonnegative tensor, Hilbert projective metric, Thompson metric, Collatz-Wielandt principle, nonlinear power method.}

\date{\today}

\dedicatory{}

\begin{abstract}
We introduce the notion of order-preserving multi-homogeneous mapping which allows to study Perron-Frobenius type theorems and nonnegative tensors in unified fashion. We prove a weak and strong Perron-Frobenius theorem for these maps and provide a Collatz-Wielandt principle for the maximal eigenvalue. Additionally, we propose a generalization of the power method for the computation of the maximal eigenvector and analyse its convergence. We show that the general theory provides new results and strengthens existing results for various spectral problems for nonnegative tensors.
\end{abstract}

\maketitle
\section{Introduction}
The classical Perron-Frobenius theory addresses properties such as existence, uniqueness and maximality of eigenvectors and eigenvalues of matrices with nonnegative entries. Two important generalizations of this theory arise in the study of eigenvectors of order-preserving homogeneous maps defined on cones \cite{Nussb,NB,Gaubert,KreRut48,Bir57,Bir62,Thompson,Hop63,BK66,Pot77,Bus73,Nus89,Kra01,NVL99,AGLN06,HJ10b,HJ10a,GV12,NB_specradcont}, and in multilinear algebra where spectral problems for tensors with nonnegative coefficients are considered \cite{Boyd,Lim,QIZH,Quynhn,Chang,QIspetraltheory,Lim2013,SymFried1,SymFried2,NQZ,Qi_rect,Qi_rect_1,Chang_rect_eig,linlks,Yao2016,Fried,us,Yang1,Yang2}. Examples include $\ell^p$-eigenvectors, (rectangular) $\ell^{p,q}$-singular vectors and $\ell^{p_1,\ldots,p_d}$-singular vectors of nonnegative tensors (in particular, the $\ell^{p,q}$-singular vector problem for nonnegative matrices is a special case of the latter problem). 
\begin{figure}[h]
\begin{center}
\begin{tikzpicture}[scale =1, ->,>=stealth']
\tikzset{
    state/.style={
           rectangle,
           rounded corners,
           draw=black, thick,
           minimum height=2em,
           inner sep=2pt,
           text centered,
           },
}
\node[state,anchor=center] (Lbox) 
   {\begin{tabular}{c}Nonnegative\\ matrices \end{tabular}};
   
\node[state, right of = Lbox, xshift=5cm] (Hbox) 
 {\begin{tabular}{c}
  Order-preserving\\
homogeneous maps
 \end{tabular}};
 
 \node[state, below of= Lbox, yshift=-0.8cm] (Tbox) 
  {\begin{tabular}{c}
   Nonnegative\\
   tensors
  \end{tabular}};
  
  \node[state, right of= Tbox, xshift=5cm] (HHbox) 
   {\begin{tabular}{c}
   Order-preserving\\
    multi-homogeneous maps
   \end{tabular}};
   
\path[draw=black,solid, line width=0.3mm, fill=black, preaction={-triangle 90,thin,draw,shorten >=-1mm}]
		 (Lbox) edge (Hbox)
		 (Lbox) edge (Tbox)
         (Hbox) edge (HHbox)
         (Tbox) edge (HHbox);
 \end{tikzpicture}
\caption{The Perron-Frobenius theorem was originally developed for nonnegative matrices and then generalized on the one side for order-preserving homogeneous mappings and on the other side for nonnegative tensors. The study of order-preserving multi-homogeneous mappings unifies these theories. }\label{generalization_explanation}
\end{center}
\end{figure}
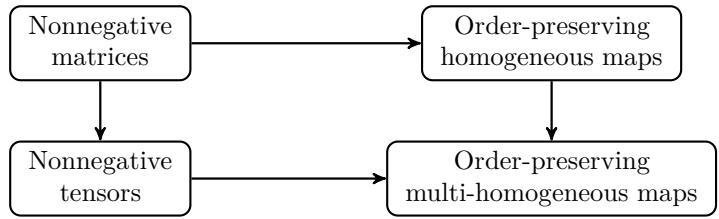
A first connection between these fields has been established in \cite{Fried} where, in order to apply the results of \cite{Nussb, Nus89}, the authors prove the equivalence between a class of spectral problems involving nonnegative tensors and a class of nonlinear eigenvalue problems involving order-preserving homogeneous mappings on the nonnegative orthant $\R^n_+$.
Their approach has been successfully extended to other spectral problems related to nonnegative tensors, see for instance \cite{us,Qi_rect,Yang2}. However, there are ranges of choices for $p,q$ and $p_1,\ldots,p_d$ where this approach can not be used but conclusions about Perron-Frobenius theory, such as existence and uniqueness of a positive eigenvector, still hold. 

Motivated by this observation, we propose a higher-order formulation of the eigenvalue problem for homogeneous mappings on $\kone_+=\R^{n_1}_+\times \ldots \times\R^{n_d}_+$. More precisely, we consider multi-homogeneous order-preserving mappings and prove conditions for existence, uniqueness, maximality and convergence of an algorithm for the computation of positive eigenvectors. The $\ell^p$-eigenvectors of squared nonnegative tensors \cite{Lim,QIZH,Quynhn,Fried,Yang1,Yang2}, the $\ell^{p,q}$-singular vectors of nonnegative matrices \cite{Boyd,us,Bhaskara}, the $\ell^{p,q}$-singular vectors of rectangular nonnegative tensors \cite{Chang_rect_eig,Qi_rect,Qi_rect_1} and the $\ell^{p_1,\ldots,p_d}$-singular vectors of nonnegative tensors \cite{Lim,Fried,us,SymFried1,SymFried2,BanachA} are all particular cases of our formulation. These problems all have a common structure that is discussed in the last section of this paper. Moreover, the eigenvector problem of an order-preserving homogeneous map \cite{Nussb,Nus89,NB} on the cone $\R^n_{+}$ is also a special case of the class considered here. 

We show that many of these spectral problems can be reformulated as fixed point problems involving strictly contractive maps defined on the interior of $\kone_+$, endowed with a weighted Thompson \cite{Thompson} or Hilbert metric \cite{Bir57, samelson1957perron}. While the contractive case has been exploited for order-preserving $p$-homogeneous maps (with $p\in(0,1)$) defined on cones \cite{Bus73,Kra01}, we are not aware of such observation in the nonnegative tensor community. Indeed, a drawback of the method proposed in \cite{Fried} is that contractive information is often lost when the higher-order spectral problem is transformed into a first order problem. The approach proposed here exploits this structure. In particular, our results improve results in \cite{Boyd, us, Fried, Qi_rect, Yang1, Yang2}. Additionally, the reformulation of spectral problems for nonnegative tensors as fixed point problems allows in the contractive case to formulate a generalized power method with a linear convergence rate. This result is either novel or has been proved under more restrictive assumptions in the literature. We study in detail the spectral properties of multi-homogeneous order-preserving mappings which are non-expansive but not contractive. In this case, when applied to nonnegative tensor problems, our results imply various known results of the literature with often similar or weaker assumptions \cite{Boyd, us, Fried, Qi_rect, Yang1, Yang2, Hu2014,Wu2013}. Typically, when our assumptions are more restrictive, it is because the specific structure of a particular problem is used and such results do not hold for other problems involving nonnegative tensors. Finally, we note that both in the contractive and the non-expansive case, some of the presented results are not known even in particular cases (e.g. a Collatz-Wielandt principle for $\ell^{p,q}$-singular vectors of rectangular nonnegative tensors, convergence rate for the generalized power method, etc.). 

Generalizing the Perron-Frobenius theorem to multi-homogeneous mappings is a delicate task. Indeed, due to the degrees of freedom induced by higher-order spectral problems, most of the usual assumptions of the Perron-Frobenius theory have to be reformulated carefully as their straightforward generalizations are either too restrictive or the arguments in the proofs that requires them do not work for multi-homogeneous mappings. Furthermore, known relationships between these assumptions in the homogeneous case, do not hold anymore in the higher-order case. On the other hand, as already discussed above, we note that for a vast class of higher-order problems, identifying $\kone_{+}$ with $\R^{n_1+\ldots+n_d}_+$ prevent the application of existing nonlinear Perron-Frobenius theorems as the mapping is expansive with respect to both the Thompson metric and the Hilbert projective metric on $\R^{n_1+\ldots+n_d}_+$. However, it is non-expansive or even contractive with respect to their weighted counter-parts defined on $\kone_+$ provided that the weights are chosen in a suitable manner. These differences and technical difficulties are discussed and illustrated through particular examples all along this text. 

The paper is organized as follows: At the beginning of each section, we state and discuss the main results presented therein. Their proofs are given within the section as they require to establish some intermediate results first. Each section deals with one aspect of the Perron-Frobenius theory besides the last one which is devoted to the application of our results to nonnegative tensors. More precisely,
in Section \ref{contract_section} we discuss first properties of order-preserving multi-homogeneous mappings. Then, we introduce the weighted Hilbert and Thompson metrics and prove a contraction principle. In Section \ref{existence_section}, we discuss conditions for the existence of nonnegative and positive eigenvectors and prove a weak form of the Perron-Frobenius theorem. Section \ref{CW_M_U} is concerned with a generalization of the Collatz-Wielandt principle. In particular, we discuss the maximality of eigenvalues associated with nonnegative and positive eigenvectors. Moreover, we give a condition for the uniqueness of a positive eigenvector. We discuss a generalization of the power method for the computation of positive eigenvectors and analyse its convergence in Section \ref{PM_section}. Finally, in Section \ref{appli} we show how our theory can be applied to the study of spectral problems involving nonnegative tensors and compare our results with those of the existing literature.

\subsection{Notation and preliminaries}\label{notation}
For the sake of clarity and in order to avoid ambiguities, we fix here the main notation used throughout this paper. 

For $n\in\N$, define $[n]=\{1,\ldots,n\}$,
$\R^n_+=\big\{\bz\in\R^n\ \big| \ z_i\geq 0, \ \forall i\in [n]\big\}$, $\R^n_{+,0}=\R^n_+\saufzero$, $\R^n_{++}=\big\{\bz\in\R^n\ \big| \ z_i>0, \  \forall i\in [n]\big\}$, $ \Delta_{++}^n=\big\{\bz\in\R^n_{++}\ \big|\ \sum_{i=1}^dz_i=1\big\}.$
For $p\in[1,\infty]$, we write $\norm{\bz}_p$ to denote the usual $p$-norm of $\bz\in\C^{n}$. 
Moreover, for $p\in(1,\infty)$ we write $p'$ to denote the H\"{o}lder conjugate of $p$, i.e. $p'=p/(p-1)$. Let
\begin{equation*}
\psi_p\colon\R^n\to\R^n,\qquad \psi_p(\bz)=\big(|z_1|^{p-1}\sign(z_1),\ldots,|z_n|^{p-1}\sign(z_n)\big),
\end{equation*}
where $\sign(t)= t/|t|$ if $t\neq 0$ and $\sign(0)=0$. We write $|\bz|$ to denote the component-wise absolute value of $\bz$, i.e. $|\bz|=(|z_1|,\ldots,|z_n|)$. On $\R^n_+$ we consider the partial ordering induced by $\R^n_+$, i.e. for every $\by,\bz\in\R^n$ we write $\bz\leq\by$, $\bz\lneq \by$ and $\bz <\by$ if, and only if $\by-\bz\in\R^n_+$, $\by-\bz\in\R^n_+\saufzero$ and $\by-\bz\in\R^n_{++}$, respectively. We write $I\in\R^{n\times n}$ and $\ones\in\R^n$ to denote the identity matrix and the vector of all ones respectively. We write $\rho(A)$ for the spectral radius of a matrix $A\in\R^{n\times n}$. We recall that a matrix $A\in\R^{n\times n}_+$ is irreducible if $(I+A)^{n-1}\in\R^{n\times n}_{++}$ and primitive if there exists $\nu\in\N$ such that $A^\nu\in\R^{n\times n}_{++}$ where $\R^{n\times n}_{+}$ and $\R^{n\times n}_{++}$ denote the sets of matrices with nonnegative, respectively positive, entries. 

For $\bal\in\R^n_{++}$ and $B\in\R^{n\times n}$ (or $\bal\in\R^{n}_+$ and $B\in\R^{n\times n}_+$), define $\bal^B\in\R^n_+$ as
\begin{equation*}
\bal^{B}= \left(\prod_{k=1}^n\alpha_k^{B_{1,k}},\ldots,\prod_{k=1}^n\alpha_k^{B_{n,k}}\right).
\end{equation*}
A direct computation shows that for every $\bal,\bbe\in\R^n_{++}$ and every $B,C\in\R^{n\times n}$, following identities hold
\begin{equation}\label{homoident}
\bal^{B}\circ \bal^{C}=\bal^{B+C}, \qquad \big(\bal^{C}\big)^B=\bal^{BC} \qquad \text{and}\qquad \big(\bal\circ\bbe\big)^B=\bal^B\circ\bbe^B,
\end{equation}
where $\circ$ denotes the entrywise product, i.e. $\bal\circ \bbe =(\alpha_1\beta_1,\ldots,\alpha_n\beta_n)$. Moreover, if $\ba\in\R_{++}^n$ and $\lambda>0$, then 
\begin{equation*}
\prod_{i=1}^n\big(\bal^B\big)_i^{a_i}=\prod_{i=1}^n\alpha_i^{(B^T\ba)_i}\quad\text{ and }\quad (\lambda^{a_1},\ldots,\lambda^{a_n})^B =(\lambda^{(B\ba)_1},\ldots,\lambda^{(B\ba)_n}).
\end{equation*}
Now, for $d\in\N$ and $n_1,\ldots,n_d\in\N$, define $V= \R^{n_1}\times \ldots \times \R^{n_d}$,
$\kone_+ = \R^{n_1}_+\times \ldots \times\R^{n_d}_+$,
$\kone_{+,0} = \R^{n_1}_{+,0}\times \ldots\times \R^{n_d}_{+,0}$ and
$\kone_{++} = \R^{n_1}_{++}\times \ldots \times\R^{n_d}_{++}.$
We use bold letters without index to denote  elements of $V$, bold letters with index $i\in [d]$ denote vectors in $\R^{n_i}$, whereas components of $\bx_i$ are written in normal font. Namely \begin{equation*}
\bx = (\bx_1,\ldots,\bx_d)\in V, \qquad \bx_i=(x_{i,1},\ldots,x_{i,n_i})\in \R^{n_i} \andd x_{i,j_i} \in \R.\end{equation*}
For $i\in[d]$, let $\norm{\cdot}_{\gamma_i}$ be a norm on $\R^{n_i}$. We consider
\begin{equation*}
\Sn_+=\big\{\bxb\in\kone_{+}\ \big|\ \norm{\bx_i}_{\gamma_i}=1, \forall i \in[d]\big\}, \qquad \Sn_{++}= \Sn_+\cap\kone_{++},
\end{equation*}
and, for $\bphi\in\kone_{++}$,
\begin{equation*}
\S^{\bphi}_{+}=\big\{\bx\in\kone_+ \ \big|\ \ps{\bx_i}{\bphi_i}=1, \ \forall i \in[d]\big\},\qquad \S^{\bphi}_{++}= \S^{\bphi}_+\cap\kone_{++}.
\end{equation*}
The norm $\norm{\cdot}_{\gamma_i}$ is said to be monotonic if $\norm{\bx_i}_{\gamma_i} \leq \norm{\by_i}_{\gamma_i}$ whenever $0\leq \bx_i\leq \by_i$. Although not always needed, we assume that all the norms considered in the following are monotonic.
Likewise $\R^n_+$, $\kone_+$ induces a partial ordering on $V$. We write $\bx\lek\bu$, $\bx\lekk \bu$, $\bx\lekkk\bu$ if and only if $\bu-\bx\in\kone_+$, $\bu-\bx\in\kone_{+}\saufzero$ and $\bu-\bx\in\kone_{++}$ respectively.

We consider the index sets $\I, \J$ defined as
\begin{equation*}
\I = \bigcup_{k=1}^d \big(\{k\}\times [n_k]\big) \andd \J= [n_1]\times\ldots\times [n_d],
\end{equation*}
and the product $\krog\colon \R^d\times V\to V$ defined as
\begin{equation*}
\bal\krog\bx= (\alpha_1\bx_1,\ldots,\alpha_d\bx_d).
\end{equation*}
For $F\colon V\to V$ we use the same notation as for vectors, i.e.
$F=(F_1,\ldots,F_d)$ and $F_i=(F_{i,1},\ldots,F_{i,n_i})$ with $F_i\colon V\to\R^{n_i}$ and $F_{i,j_i}\colon V\to\R$. For $k\in\N$, we denote the iterates of $F$ as $F^{k}$, where 
$F^{0}(\bxb)=\bxb$ and $F^{k}(\bxb)= F\big(F^{k-1}(\bx)\big)$.
If $F$ is differentiable at $\bv\in V$, we write $D_kF_i(\bv)\in\R^{n_i\times n_k}$ to denote the Jacobian matrix of the map $\bx_k\mapsto F_i(\bx)$ at $\bx= \bv$. Similarly, if $f\colon V \to \R$, we write $\grad_if(\bx)$ to denote the gradient of $\bx_i\mapsto f(\bx)$.

Both of the following notions are fundamental in our study, namely the concepts of order-preserving and multi-homogeneous maps.
\begin{defi}
Let $U,U'\subset V $ and $F\colon U\to U'$. We say that $F$ is order-preserving if, for every $\bx,\by\in U$ such that $\bx\lek \by$, we have $F(\bx)\lek F(\by)$.
\end{defi}
\begin{defi}
Let $U,U'\subset V $ and $F\colon U\to U'$. $F$ is said to be (positively) multi-homogeneous if there exists a matrix $A\in\R^{d\times d}$ such that for every $\bx\in U$ and every $\bal\in\R^d_{++}$ with $\bal\krog\bx\in U$, we have $F(\bal\krog\bx)=\bal^A\krog F(\bx)$. We call $A$ the homogeneity matrix of $F$, and denote it as $\A(F)$.
\end{defi}
Inspired by the assumptions arising in the Perron-Frobenius theory for order-preserving homogeneous mappings \cite{NB}, we introduce the following set. 
\begin{defi}
Let $\NB^d$ be the set of mappings $F\colon\kone_+\to\kone_+$ such that:
\begin{enumerate}
\item $F$ is continuous and order-preserving, 
\item $F$ is \multihomo{},
\item $F(\ones)\in\kone_{++}$ and $\A(F)$ has at least one nonzero entry per row.
\end{enumerate}
\end{defi}
Finally, inspired by the spectral theory for nonnegative tensors, we introduce the following notion of eigenvector.
\begin{defi}\label{defeigevect}
Let $F\colon V\to V$ be continuous and \multihomo{}, $\bx\in V$ is called an eigenvector of $F$, if $\bx_i\neq 0$ for all $i\in[d]$ and there exists $\blam\in\R^d$ such that $F(\bx)=\blam\krog\bx$.
\end{defi}
Note that if $F\in\NB^1$, i.e. $d=1$, then $F$ is $p$-homogeneous with $p=\A(F)$ and $\bx\in\kone_{+}=\R^{n_1}_+$ is an eigenvector of $F$ if $\bx \neq 0$ and there exists $\lambda\in\R_+$ such that $F(\bx)=\lambda\bx$.
Observe also that if $F\in\NB^d$, then $F$ is \multihomo{}, and for every eigenvector $\bxb$ of $F$ and every $\bal\in\R^d_{++}$, $\bal\krog\bxb$ is also an eigenvector of $F$. Indeed, if $F(\bxb)=\blam\krog\bxb$, then $F(\bal\krog\bxb)=(\bal^{A-I}\circ\blam)\krog(\bal\krog\bxb)$ with $A=\A(F)$. Thus, the associated eigenvalue may not be scaling invariant. For this reason we say that two eigenvectors $\by,\bx\in\kone_{+,0}$ of $F$ are equivalent, if there exists $\bal\in\R^d_{++}$ such that $\bx=\bal\krog\by$. 

The properties of a multi-homogeneous map $F\in\NB^d$ are governed by those of its homogeneity matrix. In particular, as $\A(F)$ is nonnegative, the linear Perron-Frobenius theorem, which is recalled here for the convenience of the reader, will be a useful tool in the following. While this result can be found in most of the modern textbooks on matrices such as \cite{Horn}, we refer to \cite{Perron1907},\cite{Frobenius1908},\cite{Collatz},\cite{Wielandt1950} for the original works of Perron, Frobenius, Collatz and Wielandt respectively.
\begin{thm2}\label{linear_PF}
Let $n\geq 2$ and $M\in\R^{n\times n}$ be an irreducible nonnegative matrix. Then:
\begin{enumerate}[(i)]
\item $M$ has an eigenvector $\bu\in\R^n_{++}$, i.e. $M\bu=\lambda\bu$.
\item It holds $\lambda = \rho(M)$ and the Collatz-Wielandt $\min$-$\max$ characterization
\begin{equation*}\label{linear_CW}
 \min_{\bx\in\R^n_{++}}\max_{i=1,\ldots,n}\frac{(M\bx)_i}{x_i}= \lambda = \max_{\bx \in\R^n_{+}\saufzero}\min_{\substack{i=1,\ldots,n\\ x_i>0}} \frac{(M\bx)_i}{x_i}.
\end{equation*}
Moreover, $\bu$ is the unique nonnegative eigenvector of $M$. 
\item If additionally $M$ is primitive, the sequence $(\bx^k)_{k=0}^{\infty}\subset \R^n_{++}$\label{PMlin} defined as 
\begin{equation*}
\bx^0\in\R^n_{++}\andd \bx^{k+1}=\frac{M\bx^k}{\norm{M\bx^k}_2} \qquad \forall k\in \N,
\end{equation*}
satisfies $\lim_{k\to\infty} \bx^k = \bu$.
\end{enumerate}
\end{thm2}
\section{Multi-homogeneous maps and contraction principle}\label{contract_section}
We start by discussing first properties of mappings in $\NB^d$. This discussion provides intuition on the richness of the class of maps $\NB^d$ and is helpful to determine whether a mapping belongs to $\NB^d$ or not. More precisely, we recall a known characterization of order-preserving maps in Theorem \ref{opcharac}. Then, in Lemma \ref{Eulerthm}, we prove a generalization of Euler's theorem for homogeneous maps. We will need this result in order to derive a condition for the uniqueness of the positive eigenvector of a map in $\NB^d$. In Theorem \ref{extend}, we recall a result on continuous extensions of homogeneous order-preserving maps which are defined in the interior of the cone. This result will be used to obtain a sufficient condition for the existence of a positive eigenvector of a mapping in $\NB^d$. Finally, in Lemma \ref{homoNBd}, we describe some operations that leave $\NB^d$ invariant. 

In a second step, we consider a particular instance of a spectral problem involving multi-homogeneous maps, namely the $\ell^{p,q}$-singular values of a nonnegative matrix. Through this example, we emphasize differences between the classical notion of eigenvectors for nonlinear maps and the definition proposed here, as well as techniques commonly used for the computation of the projective norm of nonnegative matrices and tensors. Moreover, it is a first illustration of how \multihomo{} mappings appear naturally in the study of higher-order spectral problems. We then briefly discuss how one can homogenize non homogeneous mappings in order to apply our theory.

In Section \ref{Hilbert_metric}, we recall the definitions of the Hilbert projective metric and the Thompson metric. We introduce two weighted versions of these metrics extended to $\kone_{++}$ that will be crucial for the proofs in the following. In Lemma \ref{contract}, we compute the Lipschitz constant of a mapping $F\in\NB^d$ with respect to the introduced weighted metrics. We show that the Lipschitz constant is minimized by a certain choice of the weights. The optimal Lipschitz constant is the spectral radius of the homogeneity matrix of $F$. This is the key observation for the following main result of this section.

\begin{thm}\label{Banachcor}
Let $F\in\NB^d$ and $A=\A(F)$. If $\rho(A)<1$, then there exists $\bb\in\R^d_{++}$ with $A^T\bb\leq r\bb$ for some $r\in [\rho(A),1)$ and there exists a unique $\bub\in\Sn_{++}$ such that $F(\bub)=\blam\krog\bub$ for some $\blam \in\R^d_{++}$. For $\bxb^0\in\Sn_{++}$, let
\begin{equation*}
\bxb^k\,=\, \bigg(\frac{F_1(\bx^{k-1})}{\norm{F_1(\bx^{k-1})}_{\gamma_1}},\ldots,\frac{F_d(\bx^{k-1})}{\norm{F_d(\bx^{k-1})}_{\gamma_d}}\bigg)\qquad \forall k \in\N,
\end{equation*}
then
\begin{equation*}
\lim_{j\to\infty}\bx^j\,=\,\bub\andd \mu_{\bb}\big(\bub,\bxb^{k}\big) \,\leq \,\frac{r^k}{1-r}\,\mu_{\bb}\big(\bx^{1},\bx^{0}\big) \qquad \forall k\in\N.
\end{equation*}
where $\mu_{\bb}\colon\kone_{++}\times\kone_{++}\to\R_+$ is the weighted Hilbert metric defined as 
\begin{equation*}
\mu_{\bb}(\bx,\by)\,=\,\sum_{k=1}^d b_i\,\ln\!\bigg[\Big(\max_{j_i\in[n_i]}\frac{x_{i,j_i}}{y_{i,j_i}}\,\Big)\Big(\max_{l_i\in[n_i]}\frac{y_{i,l_i}}{x_{i,l_i}}\Big)\bigg] \qquad \forall \bx,\by\in\kone_{++}.
\end{equation*}
In particular, if $A$ has a positive left-eigenvector $\bc\in\R^d_{++}$, then one can choose $\bb=\bc$ so that $r=\rho(A)$.
\end{thm}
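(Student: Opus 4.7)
The plan is to realize the power iteration as a Banach fixed point iteration on the complete metric space $(\Sn_{++},\mu_\bb)$, after a careful choice of the weight vector $\bb$. The driving observation is Lemma~\ref{contract}, which already supplies the Lipschitz constant of $F$ on $(\kone_{++},\mu_\bb)$ once $\bb$ is fixed, and the fact that $\mu_\bb$ is invariant under componentwise positive rescaling of each block $\bx_i$.

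\textbf{Step 1 (choice of $\bb$).} Since $A\in\R^{d\times d}_+$ has $\rho(A)<1$, I produce $\bb\in\R^d_{++}$ with $A^T\bb\lec r\bb$ and $r<1$ as follows. If $A$ already has a positive left-eigenvector $\bc\in\R^d_{++}$, then by Theorem~\ref{linear_PF} the associated eigenvalue equals $\rho(A)$, so $\bb=\bc$ gives $A^T\bb=\rho(A)\bb$ and $r=\rho(A)$; this establishes the last assertion of the theorem. In general, let $J\in\R^{d\times d}$ denote the all-ones matrix and set $A_\epsilon=A+\epsilon J$. This is strictly positive, hence primitive, and by continuity of the spectral radius $\rho(A_\epsilon)\downarrow\rho(A)<1$ as $\epsilon\downarrow 0$. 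For $\epsilon$ small enough $r:=\rho(A_\epsilon)\in(\rho(A),1)$. Taking $\bb$ to be a Perron vector of $A_\epsilon^T$ gives $A^T\bb\lec A_\epsilon^T\bb=r\bb$, as required.

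\textbf{Step 2 (contraction).} With this $\bb$, Lemma~\ref{contract} states that $F\colon(\kone_{++},\mu_\bb)\to(\kone_{++},\mu_\bb)$ is $r$-Lipschitz. Because $\mu_\bb$ is invariant under rescaling each block $\bx_i$ by a positive scalar, the componentwise normalization that takes $\bx\in\Sn_{++}$ to $G(\bx):=\big(F_1(\bx)/\norm{F_1(\bx)}_{\gamma_1},\ldots,F_d(\bx)/\norm{F_d(\bx)}_{\gamma_d}\big)$ does not alter $\mu_\bb$-distances. Thus $G$ is well-defined on $\Sn_{++}$ and inherits the Lipschitz constant $r<1$.

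\textbf{Step 3 (Banach fixed point and convergence).} I would check that $G$ sends $\Sn_{++}$ into itself: for any $\bx\in\kone_{++}$ there exist $\bc,\bC\in\R^d_{++}$ with $\bc\krog\ones\lek\bx\lek\bC\krog\ones$, and applying $F$ with the identity $F(\bal\krog\ones)=\bal^A\krog F(\ones)$ together with $F(\ones)\in\kone_{++}$ yields $F(\bx)\in\kone_{++}$. Once completeness of $(\Sn_{++},\mu_\bb)$ is established, Banach's contraction principle delivers a unique fixed point $\bub\in\Sn_{++}$ of $G$, and unwinding the definition of $G$ gives $F_i(\bub)=\lambda_i\bub_i$ with $\lambda_i=\norm{F_i(\bub)}_{\gamma_i}>0$, so $F(\bub)=\blam\krog\bub$ with $\blam\in\R^d_{++}$. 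The standard a priori estimate of the contraction principle yields $\mu_\bb(\bub,\bx^k)\leq\tfrac{r^k}{1-r}\mu_\bb(\bx^1,\bx^0)$, which is exactly the claimed rate.

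\textbf{Main obstacle.} The most delicate point is the completeness of $(\Sn_{++},\mu_\bb)$: a priori a $\mu_\bb$-Cauchy sequence in $\Sn_{++}$ could drift toward the boundary of $\kone_{++}$, where $\mu_\bb$ blows up. Ruling this out needs a uniform lower bound on each coordinate of a Cauchy sequence, obtained by combining the Cauchy property in $\mu_\bb$ with the $\norm{\cdot}_{\gamma_i}$-normalization and the monotonicity of the norms; after that step, $\mu_\bb$-convergence and norm convergence agree on the resulting compact subset of $\Sn_{++}$ and the limit automatically lies in $\Sn_{++}$.
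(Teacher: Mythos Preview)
Your proposal is correct and follows essentially the same route as the paper: choose $\bb$ via the perturbation $A\mapsto A+\epsilon\ones\ones^T$ (this is exactly Remark~\ref{newrmq}), invoke Lemma~\ref{contract} to get the Lipschitz constant $r<1$, note that normalization does not change $\mu_\bb$, and apply the Banach fixed point theorem on $(\Sn_{++},\mu_\bb)$. The only difference is that the paper does not treat completeness of $(\Sn_{++},\mu_\bb)$ as an obstacle: it observes in Section~\ref{Hilbert_metric} that $\mu_\bb$ is a weighted product metric built from the factors $(\S^{\norm{\cdot}_{\gamma_i}}_{++},\mu)$, each of which is already known to be complete, so completeness of the product is immediate and no ad hoc boundary argument is needed.
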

Finally, we discuss an example of a mapping which is expansive with respect to the Thompson and Hilbert metrics on $\R^4_{++}$ but is a contraction on $\kone_{++}=\R^{2}_{++}\times \R^2_{++}$ with respect to the metric of Theorem \ref{Banachcor}.
\subsection{Multi-homogeneous maps and their eigenvectors}
First of all we recall a known theorem that characterizes the property of being order-preserving. 
\begin{thm}[Theorem 1.3.1, \cite{NB}]\label{opcharac}
Let $U\subset \kone_+$ be an open convex set. If $F\colon U \to \kone_{+}$ is locally Lipschitz, then $DF(\bx)$ exists for Lebesgue almost all $\bx\in U$, and $F$ is order-preserving if and only if $DF(\bx)\kone_{+}\subset\kone_{+}$ for all $\bx\in U$ for which $DF(\bx)$ exists.
\end{thm}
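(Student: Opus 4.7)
The statement has two parts: almost-everywhere differentiability and the characterization of order-preservation via the derivative. The first part is immediate from a classical theorem, while the second splits into a routine forward implication and a slightly more delicate converse that uses Fubini together with the fundamental theorem of calculus along lines.

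For the almost-everywhere existence of $DF(\bx)$, the plan is to invoke Rademacher's theorem: any locally Lipschitz map between finite-dimensional normed spaces is differentiable Lebesgue almost everywhere. Since $U$ is open and $F$ is locally Lipschitz on $U$, Rademacher applies on each compact subset and hence on $U$ itself.

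For the forward direction of the equivalence, fix $\bx\in U$ where $DF(\bx)$ exists and take $\bh\in\kone_+$. Since $U$ is open, there is $t_0>0$ such that $\bx+t\bh\in U$ for $t\in[0,t_0]$. Order-preservation gives $F(\bx+t\bh)-F(\bx)\in\kone_+$ for every such $t$, so dividing by $t>0$ (which keeps the vector in the cone) yields $t^{-1}\bigl(F(\bx+t\bh)-F(\bx)\bigr)\in\kone_+$. Letting $t\to 0^+$ and using that $\kone_+$ is a closed cone, one gets $DF(\bx)\bh\in\kone_+$; thus $DF(\bx)\kone_+\subset\kone_+$.

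For the converse, assume $DF(\bx)\kone_+\subset\kone_+$ whenever $DF(\bx)$ exists, and take $\bx,\by\in U$ with $\bx\lek\by$. Set $\bh=\by-\bx\in\kone_+$ and consider the affine slab $S=\{\bx+\bz+t\bh:t\in[0,1],\ \bz\in W\}$, where $W$ is a small neighborhood of $0$ in the orthogonal complement of $\R\bh$, chosen so that $S\subset U$ (possible since $U$ is open and convex). Let $N\subset U$ be the Lebesgue null set where $DF$ fails to exist. By Fubini's theorem, for almost every $\bz\in W$ the set of $t\in[0,1]$ with $\bx+\bz+t\bh\in N$ has one-dimensional Lebesgue measure zero. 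Fix such a $\bz$, and let $g(t)=F(\bx+\bz+t\bh)$. Being the composition of a Lipschitz map with an affine map, $g$ is Lipschitz on $[0,1]$, hence absolutely continuous, and its derivative equals $DF(\bx+\bz+t\bh)\bh$ for almost every $t$. By hypothesis this derivative lies in $\kone_+$ almost everywhere, so by the fundamental theorem of calculus applied componentwise,
\begin{equation*}
F(\bx+\bz+\bh)-F(\bx+\bz)\,=\,\int_0^1 DF(\bx+\bz+t\bh)\bh\,dt\,\in\,\kone_+,
\end{equation*}
using that $\kone_+$ is a closed convex cone (so integrals of cone-valued measurable functions stay in the cone). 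Finally, choose a sequence of admissible $\bz_n\to 0$; continuity of $F$ gives $F(\by)-F(\bx)\in\kone_+$, completing the proof.

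The main obstacle is the converse. Rademacher alone does not put differentiable points on the particular segment from $\bx$ to $\by$, so the Fubini perturbation argument is essential, together with the fact that a Lipschitz map on an interval is absolutely continuous and thus recoverable from its a.e.\ derivative by integration. Closedness and convexity of $\kone_+$ are used at two distinct places: to pass to the limit $t\to 0^+$ in the forward direction, and to keep the Bochner-type integral in $\kone_+$ in the converse.
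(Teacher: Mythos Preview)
The paper does not give its own proof of this statement; it is quoted verbatim as Theorem 1.3.1 of \cite{NB} and simply recalled for later use. Your argument is correct and is essentially the standard proof one finds in that reference: Rademacher for almost-everywhere differentiability, the closedness of $\kone_+$ for the forward implication, and the Fubini perturbation combined with absolute continuity of Lipschitz curves for the converse. One small cosmetic point: you should explicitly dispose of the trivial case $\bh=0$ before invoking the orthogonal complement of $\R\bh$, but this does not affect the argument.
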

The following lemma can be seen as a generalization of the well-known Euler theorem for homogeneous mappings.
\begin{lem}\label{Eulerthm}
Let $U\subset V$ be open and such that $\bal\krog\bx=(\alpha_1\bx_1,\ldots,\alpha_d\bx_d)\in U$ for all $\bal\in\R^d_{++}$ and $\bx\in U$. Let $\ba\in\R^d$ and $f\colon U\to\R$, a differentiable map. The following are equivalent:
\begin{enumerate}
\item It holds\label{homomulti1} $f(\bal\krog\bx)=f(\bx)\displaystyle\prod^d_{k=1}\alpha_k^{a_k}$ for every $\bal\in\R^d_{++},\ \bx\in U.$
\item It holds
\label{homomulti2}
$\ps{\grad_if(\bx)}{\bx_i}=a_if(\bx)$ for every $i \in[d],\ \bx\in U.$
\end{enumerate}
Moreover, if $f$ satisfies \eqref{homomulti1} or \eqref{homomulti2}, then:
\begin{enumerate}\setcounter{enumi}{2}
\item It holds $\grad_i f(\bal\krog\bx)=\displaystyle\grad_i f(\bx)\alpha_i^{-1}\prod_{k=1}^d\alpha_k^{a_k} $ for every $i\in[d],\bal\in\R^d_{++}, \bx\in U.$
\end{enumerate}
\end{lem}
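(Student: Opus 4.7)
The plan is to treat this as a standard Euler-type argument adapted to the multi-homogeneous setting, establishing the implications (1)$\Rightarrow$(2), (2)$\Rightarrow$(1), and then deriving (3) by differentiating (1) in the spatial variable.

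For (1)$\Rightarrow$(2), I would fix $i\in[d]$ and $\bx\in U$ and specialize $\bal$ to be $\ones$ in all coordinates except the $i$-th, where it equals a scalar $t>0$. By hypothesis, the function $g(t)=f(\bx_1,\dots,t\bx_i,\dots,\bx_d)$ satisfies $g(t)=t^{a_i}f(\bx)$. Differentiating at $t=1$ using the chain rule gives $\ps{\grad_i f(\bx)}{\bx_i}=a_if(\bx)$, which is exactly (2).

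For (2)$\Rightarrow$(1), the key idea is to show that the function
\begin{equation*}
h(\bal)=f(\bal\krog\bx)\prod_{k=1}^d\alpha_k^{-a_k}
\end{equation*}
is constant on the connected open set $\R^d_{++}$. Compute $\partial h/\partial \alpha_i$ via the chain rule: the derivative of $f(\bal\krog\bx)$ with respect to $\alpha_i$ equals $\ps{\grad_i f(\bal\krog\bx)}{\bx_i}$. Applying (2) at the point $\bal\krog\bx$ (which lies in $U$ by the assumption on $U$) yields $\ps{\grad_i f(\bal\krog\bx)}{\alpha_i\bx_i}=a_if(\bal\krog\bx)$, so $\ps{\grad_i f(\bal\krog\bx)}{\bx_i}=\alpha_i^{-1}a_if(\bal\krog\bx)$. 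Plugging this in and combining with the derivative of the scalar factor $\prod_k\alpha_k^{-a_k}$ gives $\partial h/\partial \alpha_i\equiv 0$. Since $\R^d_{++}$ is connected and $h(\ones)=f(\bx)$, we conclude $h\equiv f(\bx)$, which is (1).

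For (3), assuming (1) holds, I would differentiate both sides of the identity $f(\bal\krog\bx)=f(\bx)\prod_k\alpha_k^{a_k}$ with respect to $\bx_i$ (treating $\bal$ as a parameter). The chain rule applied to the left-hand side produces $\alpha_i\,\grad_i f(\bal\krog\bx)$, since only the $i$-th block of $\bal\krog\bx$ depends on $\bx_i$, and this dependence is the linear map $\bx_i\mapsto\alpha_i\bx_i$. The right-hand side yields $\grad_i f(\bx)\prod_k\alpha_k^{a_k}$. Dividing by $\alpha_i>0$ gives the desired identity. The argument has no real obstacle; the only point requiring care is the bookkeeping of the chain rule when evaluating gradients at the scaled point $\bal\krog\bx$, and verifying that the openness of $U$ together with its closure under $\bal\krog$ makes all the differentiations legitimate.
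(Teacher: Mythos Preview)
Your proof is correct and follows essentially the same route as the paper: the paper also derives (2) from (1) by differentiating the scaling identity at $\bal=\ones$, proves (1) from (2) by showing the function $h(\bal)=f(\bal\krog\bx)\prod_k\alpha_k^{-a_k}$ has vanishing gradient on $\R^d_{++}$, and obtains (3) by differentiating (1) in the spatial variable (the paper writes this last step via difference quotients, which is the same chain-rule computation you sketched).
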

\begin{proof}
Let $\bx\in U$ and consider $g_{\bx},h_{\bx}\colon\R^d_{++}\to \R$ defined as
\begin{equation*}
g_{\bx}(\bal)=  f(\bal\krog\bx)-f(\bx)\prod_{k=1}^d\alpha_k^{a_k}\andd
h_{\bx}(\bal)= f(\bal\krog\bx)\prod_{k=1}^d\alpha_k^{-a_k}-f(\bx).
\end{equation*}
If \eqref{homomulti1} holds, then $g_{\bx}$ is constant and \eqref{homomulti2} follows from $\grad g_{\bx}(\ones)=0$. If \eqref{homomulti2} holds, then $\grad h_{\bx}(\bal)=0$ for every $\bal$ and \eqref{homomulti1} follows from $h_{\bx}(\bal)=h_{\bx}(\ones)=0$. To show the last part, let $(i,j_i)\in \J$ and consider $\be^{(i,j_i)}\in \kone_+$, the vector such that $(\be^{(i,j_i)})_{k,l_k}=1$ if $(k,l_k)=(i,j_i)$ and $(\be^{(i,j_i)})_{k,l_k}=0$ else. Then, for every small enough $h$, it holds 
\begin{equation*}
\frac{f(\bal\krog\bx+h\be^{(i,j_i)})-f(\bal\krog\bx)}{h}
= \bigg(\alpha_i^{-1}\prod_{k=1}^d\alpha_k^{a_k}\bigg) \frac{f(\bx+\alpha_i^{-1}h\be^{(i,j_i)})-f(\bx)}{\alpha_i^{-1}h}.
\end{equation*}
Letting $h\to 0$ concludes the proof.
\end{proof}
Before discussing continuous extensions of order-preserving, \multihomo{} maps defined in the interior of the cone, we show in the following lemma that for any $F\in\NB^d$, it holds $F(\kone_{++})\subset\kone_{++}$ and $\A(F)$ is nonnegative. 
\begin{lem}\label{basics_lem}
Let $F\in\NB^d$, then $F(\kone_{++})\subset \kone_{++}$ and $\A(F)\in \R^{d\times d}_+$. Moreover, if $F$ is differentiable at some point $\bx\in\kone_{++}$, then for every $i,j\in [d]$, we have $\A(F)_{i,j}>0$ if and only if $D_jF_i(\bx)$ has at least one nonzero entry per row.
\end{lem}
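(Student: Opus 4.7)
The plan is to treat the three claims in sequence, starting with the two global ones and ending with the local Jacobian characterization. All three parts rely on the same three ingredients: order-preservation of $F$, its multi-homogeneity with homogeneity matrix $A=\A(F)$, and the normalization $F(\ones) \in \kone_{++}$.

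For the first claim, given $\bxb \in \kone_{++}$, I would set $\alpha_i = \min_{l \in [n_i]} x_{i,l} > 0$ so that $\bal \krog \ones \lek \bxb$. Order-preservation and multi-homogeneity then give
\begin{equation*}
\bal^{A} \krog F(\ones) = F(\bal \krog \ones) \lek F(\bxb).
\end{equation*}
Because every $\alpha_k > 0$, each coordinate $\bal^{A}_i = \prod_k \alpha_k^{A_{i,k}}$ is strictly positive regardless of the signs of the $A_{i,k}$, and together with $F(\ones) \in \kone_{++}$ this places the left-hand side, and hence $F(\bxb)$, in $\kone_{++}$. For the nonnegativity of $A$, I would fix $i,j \in [d]$ and, for $t>1$, choose $\bal\in\R^d_{++}$ with $\alpha_j = t$ and $\alpha_k = 1$ otherwise. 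Then $\ones \lek \bal\krog\ones$, so $F(\ones) \lek F(\bal\krog\ones) = \bal^{A}\krog F(\ones)$; the $i$-th block reads $F_i(\ones) \leq t^{A_{i,j}} F_i(\ones)$ in $\R^{n_i}$, which combined with $F_i(\ones) > 0$ forces $t^{A_{i,j}} \geq 1$ for every $t>1$, i.e.\ $A_{i,j} \geq 0$.

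For the Jacobian characterization, assume $F$ is differentiable at $\bxb \in \kone_{++}$. The multi-homogeneity relation $F_i(\bal\krog\bxb) = \big(\prod_k \alpha_k^{A_{i,k}}\big) F_i(\bxb)$ holds for all $\bal\in\R^d_{++}$, so differentiating componentwise in $\alpha_j$ at $\bal = \ones$ yields the pointwise Euler-type identity
\begin{equation*}
D_j F_i(\bxb)\,\bxb_j = A_{i,j}\, F_i(\bxb).
\end{equation*}
Theorem \ref{opcharac} applied to $F$ gives $DF(\bxb)\kone_+ \subset \kone_+$, and restricting to directions supported on the $j$-th block forces $D_j F_i(\bxb) \in \R^{n_i \times n_j}_+$. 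Since $\bxb_j \in \R^{n_j}_{++}$, row $l$ of the nonnegative matrix $D_j F_i(\bxb)$ contains a nonzero entry if and only if $\big(D_j F_i(\bxb)\bxb_j\big)_l > 0$; using the Euler-type identity and $F_{i,l}(\bxb) > 0$ from the first part, this is in turn equivalent to $A_{i,j} > 0$, which is the claimed equivalence.

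The main subtlety is the differentiation step used to obtain the Euler-type identity: Lemma \ref{Eulerthm} is stated under differentiability on an open set, whereas here only pointwise differentiability at $\bxb$ is assumed. The way around is to notice that for $\bal(t) \in \R^d_{++}$ with $\alpha_j(t) = t$ and $\alpha_k(t) = 1$ for $k \neq j$, applying the chain rule at $t = 1$ to the scalar map $t \mapsto F_{i,l}(\bal(t)\krog\bxb)$ gives exactly $\ps{\grad_j F_{i,l}(\bxb)}{\bxb_j} = A_{i,j} F_{i,l}(\bxb)$, because multi-homogeneity rewrites this function as $t^{A_{i,j}} F_{i,l}(\bxb)$, which is smooth in $t$; assembling these identities over $l$ gives the required matrix-vector identity.
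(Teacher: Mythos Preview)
Your proof is correct and follows essentially the same route as the paper: the same comparison $\bal\krog\ones\lek\bxb$ for positivity of $F(\bxb)$, the same one-parameter scaling $\alpha_j=t>1$ to force $A_{i,j}\ge 0$, and the same Euler-type identity $D_jF_i(\bxb)\bxb_j=A_{i,j}F_i(\bxb)$ combined with nonnegativity of the Jacobian to characterize $A_{i,j}>0$. You are in fact slightly more careful than the paper in two spots: you derive the Euler identity directly from pointwise differentiability rather than invoking Lemma~\ref{Eulerthm} (which is stated under differentiability on an open set), and you make explicit that $D_jF_i(\bxb)\ge 0$ is needed for the row-support equivalence---though note that your appeal to Theorem~\ref{opcharac} assumes local Lipschitzness, so the cleaner justification is the direct one-line limit argument $\lim_{t\downarrow 0}\big(F(\bxb+t\be)-F(\bxb)\big)/t\ge 0$ from order-preservation and differentiability at~$\bxb$.
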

\begin{proof}
Let $\by\in\kone_{++}$, there exists $\bal\in\R^d_{++}$ such that $\bal\krog\ones\lek\by$ and thus $0\lekkk\bal^{A}\krog F(\ones)\lek F(\by)$ as $F(\ones)\in\kone_{++}$ for $F\in\NB^d$.
Now, let $A=\A(F)$ and fix $i,j\in[d]$. Let $\bt\colon\R_{++}\to\R^d_{++}$ be defined as $\theta_l(t)=1$ if $l\neq j$ and $\theta_j(t)=1+t$. Then, we have $\ones \lek \bt(t)\krog\ones$ for every $t>0$. As $F$ is order-preserving, it follows
\begin{equation*}
F_i(\ones) \lek F_i(\bt(t)\krog \ones) = (1+t)^{A_{i,j}} F_i(\ones) \qquad \forall t>0. 
\end{equation*}
Hence, $1 \leq (1+t)^{A_{i,j}}$ for every $t>0$ which implies $A_{i,j}\geq 0$. Finally, Lemma \ref{Eulerthm} implies that $D_jF_i(\bx)\bx_j=A_{i,j}F_i(\bx)$. The discussion above shows that $F(\bx)\in\kone_{++}$ if $\bx\in\kone_{++}$. It follows that $A_{i,j}>0$ if and only if $D_jF_i(\bx)\in\R^{n_i\times n_j}$ has at least one nonzero entry per row. 
\end{proof}
There exist order-preserving multi-homogeneous mappings which are naturally defined on $\kone_{++}$ rather than on $\kone_+$. This frequently happens in the case $d=1$ when considering the log-exp transform of a topical map (see e.g. \cite{Gaub_survey}). We also face such a situation when deriving the dual condition for the existence of a positive eigenvector in Corollary \ref{existdual}. It is then useful to know whether the considered mapping can be continuously extended to a map in $\NB^d$. In the case $d=1$, such an extension has been proved to exist in Theorem 3.10 \cite{cont_ext} and Theorem 5.1.2 \cite{NB}. As the proof of this result can be easily generalized for $d>1$ (with the help of Lemma \ref{contract}), we omit it here.
\begin{thm}\label{extend}
Let $F\colon\kone_{++}\to\kone_{++}$ be order-preserving and \multihomo{}. If $\A(F)$ has at least one positive entry per row and there exists $\bb\in\R^d_{++}$ such that $\A(F)^T\bb\leq \bb$, then there exists $\bar F\in\NB^d$ such that $F=\bar F|_{\kone_{++}}$ and $\A(\bar F)=\A(F)$.
\end{thm}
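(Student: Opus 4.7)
The plan is to mimic the classical extension argument \cite[Theorem 5.1.2]{NB}, adapted to the multi-homogeneous setting with the help of Lemma \ref{contract}.

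Since $\A(F)^T\bb\le\bb$, applying Lemma \ref{contract} on $\kone_{++}$ shows that $F$ is non-expansive with respect to the weighted Thompson metric attached to $\bb$. As the topology of this metric agrees with the Euclidean one on $\kone_{++}$, $F$ is in particular continuous there. I would then define the candidate extension by
\begin{equation*}
\bar F(\bx)=\lim_{\epsilon\to 0^+} F(\bx+\epsilon\ones),\qquad \bx\in\kone_+.
\end{equation*}
By order-preservation, the family $\{F(\bx+\epsilon\ones)\}_{\epsilon>0}\subset\kone_{++}$ is monotone decreasing as $\epsilon\downarrow 0$ and bounded below by $0$, so the limit exists in $\kone_+$; continuity of $F$ on $\kone_{++}$ yields $\bar F|_{\kone_{++}}=F$.

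I would then check that $\bar F$ inherits order-preservation, multi-homogeneity with the same matrix $A=\A(F)$, and satisfies $\bar F(\ones)\in\kone_{++}$. Order-preservation follows by passing $F(\bx+\epsilon\ones)\lek F(\by+\epsilon\ones)$ to the limit whenever $\bx\lek\by$. For multi-homogeneity, given $\bbe\in\R^d_{++}$ and $\bx\in\kone_+$, the identity $\bbe\krog\bx+\epsilon\ones=\bbe\krog(\bx+\bdel_\epsilon)$ with $\bdel_\epsilon=(\epsilon/\beta_1,\ldots,\epsilon/\beta_d)\krog\ones$, together with the multi-homogeneity of $F$ on the interior, gives $F(\bbe\krog\bx+\epsilon\ones)=\bbe^A\krog F(\bx+\bdel_\epsilon)$. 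A squeeze $\bx+(\min_i\epsilon/\beta_i)\ones\lek\bx+\bdel_\epsilon\lek\bx+(\max_i\epsilon/\beta_i)\ones$ combined with order-preservation of $F$ shows $F(\bx+\bdel_\epsilon)\to\bar F(\bx)$, hence $\bar F(\bbe\krog\bx)=\bbe^A\krog\bar F(\bx)$. Finally $\bar F(\ones)=F(\ones)\in\kone_{++}$ and, by assumption, $A$ has a positive entry per row, so once continuity of $\bar F$ on $\kone_+$ is proved, $\bar F\in\NB^d$ with $\A(\bar F)=A$.

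The genuinely delicate point is the continuity of $\bar F$ at boundary points of $\kone_+$. Fix $\bx\in\kone_+$ and a sequence $\by_n\to\bx$ in $\kone_+$. The upper bound is easy: given $\epsilon>0$, one has $\by_n\lek\bx+\epsilon\ones$ eventually, so $\bar F(\by_n)\lek F(\bx+\epsilon\ones)$ by order-preservation of $\bar F$; letting $n\to\infty$ and then $\epsilon\to 0^+$ gives $\limsup_n\bar F(\by_n)\lek\bar F(\bx)$. The naive subtractive perturbation $\bx-\epsilon\ones$ does not work for the lower bound because it may leave $\kone_+$ at the zero coordinates of $\bx$. The key idea is to replace it by the multiplicative perturbation $\bal_m\krog\bx$ with $\bal_m=(1-1/m,\ldots,1-1/m)\in\R^d_{++}$: on zero coordinates of $\bx$ the inequality $\bal_m\krog\bx\lek\by_n$ reads $0\le y^n_{i,j_i}$, and on positive coordinates it reads $(1-1/m)x_{i,j_i}\le y^n_{i,j_i}$, which eventually holds since $y^n_{i,j_i}\to x_{i,j_i}$. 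Using order-preservation and multi-homogeneity of $\bar F$ just established, this yields $\bal_m^A\krog\bar F(\bx)=\bar F(\bal_m\krog\bx)\lek\bar F(\by_n)$; sending first $n\to\infty$ and then $m\to\infty$ (so that $\bal_m^A\to\ones$) gives $\bar F(\bx)\lek\liminf_n\bar F(\by_n)$.

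The main obstacle is precisely this lower bound: $F$ is a priori known only on $\kone_{++}$, so one cannot approximate $\by_n$ from below by an interior point when $\bx$ has zero coordinates. The multiplicative shrinkage $\bx\mapsto\bal_m\krog\bx$ fixes this by keeping the zero coordinates at zero while contracting the positive ones, after which multi-homogeneity of $\bar F$ transports the estimate back to $\bar F(\bx)$ in the limit $m\to\infty$.
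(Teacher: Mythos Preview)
Your proposal is correct and follows exactly the route the paper intends: the authors omit the proof, pointing to the $d=1$ argument in \cite[Theorem~5.1.2]{NB} and noting that Lemma~\ref{contract} supplies the non-expansiveness (hence continuity on $\kone_{++}$) needed to make that argument go through for $d>1$. Your extension $\bar F(\bx)=\lim_{\epsilon\to0^+}F(\bx+\epsilon\ones)$, the verification of order-preservation and multi-homogeneity, and the multiplicative-shrinkage trick $\bal_m\krog\bx$ for the lower semicontinuity at the boundary are precisely the adaptations one expects; the only point worth noting is that Lemma~\ref{contract} is stated for $F\in\NB^d$, but its proof uses only order-preservation and multi-homogeneity on $\kone_{++}$, so your invocation of it is legitimate.
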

We describe operations under which $\NB^d$ is closed in the following straightforward lemma.
\begin{lem}\label{homoNBd}
Let $F,G\in\NB^d$, $A=\A(F)$ and $B=\A(G)$. Moreover, let $D\in\R^{d\times d}_+$ with $D\geq A,B$ and $\xi_1,\ldots,\xi_d\colon\kone_{+}\to\R_+$ be continuous, order-preserving, homogeneous maps such that $\xi_i(\kone_{+,0})\subset\R_{++}$ for every $i\in[d]$. Define $N\colon\kone_+\to\R^d_+$ as $N(\bx)=\big(\xi_1(\bx),\ldots,\xi_d(\bx)\big)$. Finally, let $H^{(1)},H^{(2)},H^{(3)}\colon\kone_{+}\to\kone_+$ with
\begin{equation*}
H^{(1)}(\bx)= F\big(G(\bx)\big), \qquad H^{(2)}(\bx)=F(\bx)\circ G(\bx)
\end{equation*} 
and
\begin{equation*}
H^{(3)}(\bx)=N(\bx)^{D-A}\krog F(\bx)+N(\bx)^{D-B}\krog G(\bx).
\end{equation*}
Then $H^{(1)},H^{(2)},H^{(3)}\in\NB^d$ with homogeneity matrices $AB$, $A+B$, $D$ respectively.
\end{lem}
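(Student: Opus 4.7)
The plan is to verify, for each of $H^{(1)}, H^{(2)}, H^{(3)}$, the three defining conditions of $\NB^d$: continuity plus order-preservation; multi-homogeneity with the stated matrix; and $H(\ones) \in \kone_{++}$ together with a nonzero entry in every row of the homogeneity matrix. Continuity of all three maps is immediate from continuity of $F$, $G$, and the $\xi_i$ combined with continuity of the arithmetic operations and of the power map $\bz \mapsto \bz^B$ on $\R^d_+$ when $B \geq 0$.

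For $H^{(1)} = F \circ G$, order-preservation is composition of two order-preserving maps, and multi-homogeneity follows from two successive applications of the multi-homogeneity of $G$ and $F$ combined with the identity $(\bal^B)^A = \bal^{AB}$ from \eqref{homoident}. Positivity at $\ones$ uses Lemma \ref{basics_lem} twice: first $G(\ones) \in \kone_{++}$, then $F(G(\ones)) \in \kone_{++}$. To show that every row of $AB$ has a nonzero entry, for row $i$ pick $k_0$ with $A_{i,k_0} > 0$ and $l_0$ with $B_{k_0,l_0} > 0$; then $(AB)_{i,l_0} \geq A_{i,k_0} B_{k_0,l_0} > 0$ by nonnegativity of $A$ and $B$ (again Lemma \ref{basics_lem}). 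For $H^{(2)}(\bx) = F(\bx) \circ G(\bx)$, order-preservation comes from the telescoping identity $F(\by)\circ G(\by) - F(\bx) \circ G(\bx) = F(\by)\circ(G(\by)-G(\bx)) + (F(\by)-F(\bx))\circ G(\bx)$, each summand lying in $\kone_+$; multi-homogeneity reduces block by block to $(\bal^A)_i(\bal^B)_i = (\bal^{A+B})_i$, again via \eqref{homoident}; and positivity at $\ones$ together with the row condition on $A+B$ is immediate.

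For $H^{(3)}$ the one step with content is multi-homogeneity. Reading each $\xi_i$ as homogeneous in the natural sense compatible with the $\krog$-action, i.e.\ $\xi_i(\bal\krog\bx) = \alpha_i\, \xi_i(\bx)$, the scalar map $N$ satisfies $N(\bal\krog\bx) = \bal \circ N(\bx)$ componentwise in $\R^d_{++}$. Then \eqref{homoident} gives $N(\bal\krog\bx)^{D-A} = \bal^{D-A} \circ N(\bx)^{D-A}$, which combined with $F(\bal\krog\bx) = \bal^A \krog F(\bx)$ and $\bal^{D-A} \circ \bal^A = \bal^D$ produces $\bal^D \krog (N(\bx)^{D-A} \krog F(\bx))$ for the first summand; the $G$-summand transforms analogously, and adding yields $H^{(3)}(\bal\krog\bx) = \bal^D \krog H^{(3)}(\bx)$. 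Order-preservation uses monotonicity of each $\xi_i$, monotonicity of $\bz \mapsto \bz^{D-A}$ on $\R^d_+$ (from $D - A \geq 0$), and order-preservation of $F$ and $G$ into $\kone_+$. Finally $H^{(3)}(\ones) \in \kone_{++}$ since $N(\ones) \in \R^d_{++}$ and $F(\ones), G(\ones) \in \kone_{++}$, and every row of $D$ has a nonzero entry because $D \geq A$ does.

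If there is any obstacle it is purely bookkeeping: one must pin down the intended homogeneity of the scalars $\xi_i$ (the target matrix $D$ in the definition of $H^{(3)}$ forces it to be the $i$-th standard basis vector of $\R^d$) and keep careful track of which identity from \eqref{homoident} is being applied so that $AB$, $A+B$ and $D$ emerge correctly from the computation. No analytic content is required beyond those identities and Lemma \ref{basics_lem}.
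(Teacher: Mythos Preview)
The paper omits the proof entirely, introducing the lemma as ``straightforward.'' Your verification is correct and complete; in particular, your reading of ``homogeneous'' for the $\xi_i$ as $\xi_i(\bal\krog\bx)=\alpha_i\,\xi_i(\bx)$ is the one intended (cf.\ the instances $N(\bx)=(\norm{\bx_1}_{\gamma_1},\ldots,\norm{\bx_d}_{\gamma_d})$ in Remark~\ref{nonomo} and Theorem~\ref{BIGTHM}), and it is indeed the only interpretation under which $H^{(3)}$ acquires homogeneity matrix $D$.
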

In particular, it follows that for every $F\in\NB^d$, we have $\A(F^k)=\A(F)^k$.

\subsection{First motivating examples}
In this section, we make observations on the properties of maps in $\NB^d$ by analysing simple examples. The first one is the well-known matrix $\ell^{p,q}$-singular value problem \cite{Boyd} which we recast in terms of multi-homogeneous mappings.
\begin{ex}[Matrix $\ell^{p,q}$-singular value problem]\label{lpq_pb}
Let $M\in\R_+^{m\times n}$ and $p,q\in(1,\infty)$. The $\ell^{p,q}$-singular vectors of $M$ are the critical points of the Rayleigh quotient associated to the $(p,q)$-norm of $M$. More precisely, let $R\colon \R^m\saufzero\times \R^n\saufzero\to \R$ be defined as
\begin{equation*}
R(\bx,\by) = \frac{\bx^TM\by}{\norm{\bx}_p\norm{\by}_q}\andd
 \norm{M}_{p,q}=\max_{\bx,\by\neq 0} R(\bx,\by),
\end{equation*}
then $\norm{M\by}_{p'} \leq \norm{M}_{p,q}\norm{\by}_q$ for every $\by\in \R^n$ and $(\bx,\by)$ is an $\ell^{p,q}$-singular vector of $M$ if $\grad R(\bx,\by)=0$. Note that, as $R(\bx,\by)\leq R(|\bx|,|\by|)$ for every $\bx,\by$, the maximum above is attained in $\kone_{+}=\R^{m}_{+}\times\R^n_{+}$. So, define $F\colon\kone_+\to \kone_+$ as
\begin{equation}\label{lpq_F}
F(\bx,\by)=\big(\psi_{p'}(M\by),\psi_{q'}(M^T\bx)\big)\qquad \forall (\bx,\by)\in\kone_+.
\end{equation}
Then, the eigenvectors of $F$ correspond to the critical points of $R$ in the product of spheres $\Sn_+=\{(\bx,\by)\in\kone_+\mid \norm{\bx}_{p}=\norm{\by}_q=1\}$. Assume that $(\bx,\by)\in\Sn_+$ satisfies $\grad R(\bx,\by) = 0$, then
\begin{equation*}
F(\bx,\by)= (\lambda_1\bx,\lambda_2\by) \quad \text{with} \quad (\lambda_1,\lambda_2)=\big(R(\bx,\by)^{p'-1},R(\bx,\by)^{q'-1}\big),
\end{equation*}
i.e. the critical points of $R$ in $\Sn_{+}$ are eigenvectors of $F$. Conversely, if $(\bx,\by)\in\Sn_{+}$ satisfies $F(\bx,\by)=(\lambda_1\bx,\lambda_2\by)$, then 
\begin{equation*}
\lambda_1^{p-1}=\lambda_1^{p-1}\norm{\bx}_p=\big\langle\bx,\lambda_1^{p-1}\psi_p(\bx)\big\rangle=\ps{\bx}{M\by} =R(\bx,\by)= \ps{\by}{M^T\bx}=\lambda_2^{q-1},
\end{equation*}
that is the eigenvectors of $F$ in $\Sn_+$ are critical points of $R$. 

Note that $F$ is order-preserving because $(\bx,\by)\lek (\t\bx,\t\by)$ implies $M^T\bx \leq M^T\t\bx$ and $M\by\leq M\t\by$, as $M$ has nonnegative entries. Moreover, $F$ is multi-homogeneous as for every $\alpha,\beta>0$ and $(\bx,\by)\in\kone_+$ we have
\begin{align*}
F(\alpha\bx,\beta\by)&=\big(F_1(\alpha\bx,\beta\by),F_2(\alpha\bx,\beta\by)\big)=\big(\alpha^0\beta^{p'-1} F_1(\bx,\by),\alpha^{q'-1}\beta^0 F_2(\bx,\by)\big)\\&= (\alpha,\beta)^{A}\krog F(\bx,\by) \qquad \text{with}\qquad A = \begin{pmatrix} 0 & p'-1 \\ q'-1 & 0 \end{pmatrix}=\A(F).
\end{align*}
Finally, for $F$ to be in $\NB^2$, we have to ensure $F(\ones)\in\kone_{++}$. This is the case when $M$ has at least one nonzero entry per row and per column. When the latter assumption is not fulfilled, one can introduce a decoupled problem as follows: Let $\t F\colon\R^m_{+}\to\R^m_{+}$ with $\t F(\bx)= F_1\big(\ones,F_2(\bx,\ones)\big) $,
then, for $\blam\in\R^2_+$ and $(\bx,\by)\in\kone_{+,0}$, we have
\begin{equation}\label{decouple}
F(\bx,\by)=(\lambda_1\bx,\lambda_2\by) \qquad \iff \qquad \begin{cases} \t F(\bx)=\lambda_2^{p'-1}\lambda_1\bx\\ \lambda_2\by = F_2(\bx,\ones).\end{cases}
\end{equation}
This is possible because $A_{1,1}=A_{2,2}=0$. This transformation can be useful as there are matrices $M$ for which $F\notin\NB^d$ and $\t F\in\NB^d$. For example, when $m=n=2$, $M_{1,1}=M_{2,1}=1$ and $M_{1,2}=M_{2,2}=0$, then $F(\kone_{++})\not\subset \kone_{++}$ while $\t F(\R^m_{+,0})\subset\R^m_{++}$. 
\end{ex}
We use the following remark to emphasize three relevant observations made in Example \ref{lpq_pb}.
\begin{rmq}\label{lpq_rmq}
(a) When looking at the necessary condition for the critical points of a Rayleigh quotient, we obtain an equation of the form $G(\bx)=\blam \krog \Psi(\bx)$. E.g. for the $\ell^{p,q}$-singular vectors of a matrix $M$, we have $G(\bx_1,\bx_2)=(M\bx_2,M^T\bx_1)$ and $\Psi(\bx_1,\bx_2)=(\psi_p(\bx_1),\psi_q(\bx_2))$. When $G\in\NB^d$ and $\Psi$ is an invertible map such that $\Psi^{-1}\in\NB^d$, we can then recast the necessary condition into a spectral problem for $F\in\NB^d$ with $F(\bx)=\Psi^{-1}\big(G(\bx)\big)$. Such approach is widely used for the computation of projective norms of nonnegative matrices and tensors (see for instance \cite{Boyd,Lim,Fried,us,Quynhn,Qi_rect}).\\
(b) To establish the correspondence between the eigenvectors of $F$ defined in \eqref{lpq_F} and the $\ell^{p,q}$-singular vectors of $M$, we have shown that if $(\bx_1,\bx_2)\in\Sn_{+}$ satisfies $F(\bx_1,\bx_2)=(\lambda_1\bx_1,\lambda_2\bx_2)$, then $\lambda_1^{p-1}=\lambda_2^{q-1}$. In order to prove this implication, we used a technique that is common in the study of spectral problems of nonnegative tensors, especially to prove existence of a positive eigenvector (see \cite{us,Fried,Qi_rect}). This technique can be formulated for maps in $\NB^d$ as follows: Let $G\in\NB^d$ and suppose that there exists $\ba\in\R^d_{++}$ and $R\colon\kone_{+,0}\to\R_+$ such that
\begin{equation*}
\ps{G_i(\bx)}{\bx_i}=a_iR(\bx) \qquad \forall i\in[d],\ \bx\in\kone_{+,0}.
\end{equation*}
Then, for every $\Psi\colon\kone_{+,0}\to\kone_{+,0}$ and $(\blam,\bx)\in\R^d_+\times\kone_{+,0}$ we have
\begin{equation}\label{Rayequal}
\begin{cases}G(\bx)=\blam\krog\Psi(\bx)\\ \ps{\bx_i}{\Psi_i(\bx)}=1 \ \ \forall i\in[d]\end{cases} \qquad \implies \qquad \frac{\lambda_i}{a_i}=R(\bx)\quad \forall i \in[d]. 
\end{equation}
Indeed, if the system of equations in \eqref{Rayequal} is satisfied, then
\begin{equation*}
\lambda_i=\ps{\bx_i}{\lambda_i\Psi_i(\bx)}=\ps{\bx_i}{G_i(\bx)}=a_iR(\bx) \qquad \forall i\in[d].
\end{equation*}
(c) The decoupling technique in \eqref{decouple} is known, especially for computing projective norms of matrices and tensors (see for instance \cite{Boyd,Bhaskara,us,Higham}). This principle can under certain conditions also be applied to maps in $\NB^d$. More precisely, let $F\colon\kone_+\to\kone_+$ be continuous, order-preserving and multi-homogeneous and suppose that there exists $i\in[d]$ such that $A=\A(F)\in\R^{d\times d}_+\saufzero$ satisfies $A_{i,i}=0$. Set
$\t\kone_+=\R^{n_1}_+\times \ldots\times \R^{n_{i-1}}_+\times \R^{n_{i+1}}_+\times \ldots\times \R^{n_d}_+$ and let $\t F\colon \t\kone_+\to \t\kone_+$ with $\t F=\big(\t F_1,\ldots,\t F_{i-1},\t F_{i+1},\ldots,\t F_d\big)$ and, for $\t\bx=(\t \bx_{1},\ldots,\t\bx_{i-1},\t\bx_{i+1},\ldots,\t\bx_d)\in\t\kone_+$,
\begin{equation*}
\t F_k(\t\bx)\!=\! F_k\big(\t\bx_1,\ldots,\t\bx_{i-1},F_i(\t \bx_{1},\ldots,\t\bx_{i-1},\ones,\t\bx_{i+1},\ldots,\t\bx_d),\t\bx_{i+1},\ldots,\t\bx_d\big),\,  k\in[d]\sauf\{i\}.
\end{equation*}
Then, $\t F$ is continuous, order-preserving, \multihomo{} and
\begin{equation}\label{homomatdecouple}
\big(\A(\t F)\big)_{k,l}= A_{k,l}+A_{k,i}A_{i,l}\qquad \forall k,l\in[d]\setminus\{i\}.
\end{equation}
Moreover, given $(\blam,\bx)\in\R^{d}_{++}\times\kone_{+,0}$, we have $F(\bx)=\blam\krog\bx$ if and only if $\t F(\t\bx)=\t\blam\krog\t\bx$ with $\t\bx_k=\bx_k$ and $\t\lambda_k=\lambda_k\lambda_i^{A_{k,i}}$ for every $k\in[d]\sauf\{i\}$. As already observed in Example \ref{lpq_pb}, this transformation can be useful when $F$ does not satisfies some required assumptions, e.g. it does not hold $F(\ones)\in\kone_{++}$.
\end{rmq}
Another important example problem which can be analyzed with the Perron-Frobenius theory for multi-homogeneous mappings is the eigenvalue problem for a class of polynomial maps with nonnegative coefficients. This problem was considered for instance in \cite{Fried} and \cite{Ibrahim}. It is discussed in the following Example \ref{exsum} where it is pointed out that this problem is a special case of the eigenvalue problem of a sum of continuous, order-preserving, multi-homogeneous mappings which is described in Remark \ref{nonomo}. 
\begin{rmq}[Sums of mappings]\label{nonomo} 
Let $F^{(1)},\ldots,F^{(\nu)}\colon\kone_{+}\to\kone_+$ be continuous, multi-homogeneous and order-preserving with $A^{(i)}=\A(F^{(i)})$ for every $i\in[\nu]$. Let $A\in\R^{d\times d}_+$ with $A\geq A^{(i)}$ for all $i\in[\nu]$ and $A\ones\in\R^d_{++}$. Define $N\colon\kone_+\to\R^d_+$ and $G,H\colon\kone_{+}\to\kone_+$ as $N(\bx)=(\norm{\bx_1}_{\gamma_1},\ldots,\norm{\bx_d}_{\gamma_d})$,
\begin{equation*}
H(\bx)=\sum_{k=1}^{\nu}F^{(k)}(\bx) \andd G(\bx)=\sum_{k=1}^{\nu}N(\bx)^{A-A^{(i)}}\krog F^{(k)}(\bx).
\end{equation*}
Then, from Lemma \ref{homoNBd}, we know that if $H(\ones)\in\kone_{++}$, then $G\in\NB^d$ with $\A(G)=A$. Moreover, for every $\bx\in\Sn_{+}$ we have $G(\bx)= H(\bx)$. In particular, we have
\begin{equation*}
G(\bx)=\blam\krog \bx \qquad \iff \qquad H(\bx)=\blam\krog \bx \qquad \forall \bx\in \Sn_+.
\end{equation*}
This shows that the eigenvectors of the non-homogeneous map $H$ on $\Sn_+$ coincide with those of $G\in\NB^d$. We will see that a natural choice for $A$ is to set the entries to be as small as possible. Indeed, Theorem \ref{Banachcor} indicates that one should minimize the spectral radius of $A$.
\end{rmq}
\begin{ex}\label{exsum}
Let $H\colon\R^2\to\R^2$ be given by
\begin{equation*}
H(s,t)=\Bigg(\sum_{k=1}^N c_ks^{\alpha_{k}}t^{\beta_k},\sum_{k=1}^{\t N} \t c_ks^{\t \alpha_{k}}t^{\t \beta_k}\Bigg)
\end{equation*}
then $H$ can be written as the sum
\begin{equation*}
H(s,t)=\sum_{k=1}^NF^{(k)}(s,t)+\sum_{k=1}^{\t N}\t F^{(k)}(s,t) 
\end{equation*}
where 
$F^{(k)}(s,t)=\big(c_ks^{\alpha_{k}}t^{\beta_k},0\big) $ and $\t F^{(k)}(s,t)=\big(0,\t c_ks^{\t \alpha_{k}}t^{\t \beta_k}\big)$. Moreover, if $\alpha_i,\beta_i$, $\t \alpha_j,\t \beta_j>0$ for every $i,j$ and there exists $k_1\in[N],k_2\in[\t N]$ such that $c_{k_1},\t c_{k_2}>0$, then for any $\delta>0$ such that
\begin{equation*}
\textstyle\delta \geq \max_{k\in[N]}\alpha_k\beta_k \quad\and\quad \delta \geq \max_{k\in[\t N]}\t \alpha_k\t \beta_k,
\end{equation*}
it holds $G\in \NB^1$ where
\begin{equation*}
G(s,t)=\sum_{k=1}^N\norm{(s,t)}_{2}^{\delta-\alpha_k\beta_k}F^{(k)}(s,t)+\sum_{k=1}^{\t N}\norm{(s,t)}_{2}^{\delta-\t\alpha_k\t\beta_k}\t F^{(k)}(s,t) ,
\end{equation*}
\end{ex}
\subsection{Weighted Hilbert and Thompson metric and contraction principle}\label{Hilbert_metric}
A natural metric for the study of positive eigenvectors of mappings in $\NB^1$ is the Hilbert semi-metric $\mu\colon\R^{n}_{++}\times \R^{n}_{++}\to \R_{+}$ defined as follows:
\begin{equation*}
\mu(\bz,\bv)= \ln\!\bigg(\frac{\bmax{\bz}{\bv}}{\bmin{\bz}{\bv}}\bigg) \qquad \forall \bz,\bv\in\R^n_{++},
\end{equation*}
where $\bmax{\noarg}{\noarg},\ \bmin{\noarg}{\noarg}\colon\R^{n}_{++}\times \R^{n}_{++}\to \R_{++}$ are defined as
\begin{equation*}
\bmax{\bz}{\bv}= \max_{j\in[n]}\frac{z_j}{v_j} \andd \bmin{\bz}{\bv}= \min_{j\in[n]}\frac{z_j}{v_j}.
\end{equation*}
The Thompson metric $\bar\mu\colon\R^{n}_{++}\times \R^{n}_{++}\to \R_{+}$ is defined as:
\begin{equation*}
\bar\mu(\bz,\bv)= \ln\!\Big(\max\big\{\bmax{\bz}{\bv},\bmax{\bv}{\bz}\big\}\Big) =\norm{\ln(\bv)-\ln(\bz)}_{\infty}\qquad \forall \bz,\bv\in\R^n_{++}.
\end{equation*}
These metrics were introduced in \cite{Bir57, samelson1957perron} and \cite{Thompson} respectively.
It is known that $(\kone_{++},\bar\mu)$ is a complete metric space as well as $(\S_{++}^{\bphi},\mu)$, for any fixed $\bphi\in\R^n_{++}$. Moreover, the topologies of $(\kone_{++},\bar\mu)$ and $(\S_{++}^{\bphib},\mu)$ coincide with the norm topology and, for every $\bx,\by\in\kone_{++}$, if we set $\bz(t)= t\bx+(1-t)\by$, then
\begin{equation}\label{simplegeod}
\mu(\bx,\by)=\mu\big(\bx,\bz(t)\big)+\mu\big(\bz(t),\by\big) \qquad \forall t\in[0,1].
\end{equation}
A key property of these metrics is the following (see Chapter 2 of \cite{NB}): Suppose that $F\colon\R^{n}_{++}\to\R^{n}_{++}$ is continuous, order-preserving and $p$-homogeneous with $p>0$, then
\begin{equation}\label{simplennexp}
\mu(F(\bx),F(\by))\,\leq\, p\,\mu(\bx,\by) \andd \bar\mu(F(\bx),F(\by))\,\leq\, p\,\bar\mu(\bx,\by)
\end{equation}
for every $\bx,\by\in\kone_{++}$.

Based on these observations, we build a metric space appropriate for maps in $\NB^d$ with $d>1$. We define $\Bmax{\noarg}{\noarg},\Bmin{\noarg}{\noarg}\colon\kone_{++}\times\kone_{++}\to\R^d_{++}$ as
\begin{equation*}
\Bmax{\bxb}{\byb}=\big(\bmaxi{1}{\bxb}{\byb},\ldots,\bmaxi{d}{\bxb}{\byb}\big) =\bigg(\max_{j_1\in[n_1]}\frac{x_{1,j_1}}{y_{1,j_1}} ,\ldots,\max_{j_d\in[n_d]}\frac{x_{d,j_d}}{y_{d,j_d}}\bigg)
\end{equation*}
and
\begin{equation*}
\Bmin{\bxb}{\byb}=\big(\bmini{1}{\bxb}{\byb},\ldots,\bmini{d}{\bxb}{\byb}\big) =\bigg(\min_{j_1\in[n_1]}\frac{x_{1,j_1}}{y_{1,j_1}} ,\ldots,\min_{j_d\in[n_d]}\frac{x_{d,j_d}}{y_{d,j_d}}\bigg).
\end{equation*}
for every $\bxb,\byb\in\kone_{++}$. Note that
\begin{equation*}
\Bmin{\bxb}{\byb}\krog\byb\ \lek \ \bxb \ \lek \ \Bmax{\bxb}{\byb}\krog\byb \qquad \forall \bxb,\byb\in\kone_{++}.
\end{equation*}
For $\bb\in\R^d_{++}$, let $\mu_{\bb},\bar\mu_{\bb}\colon\kone_{++}\times\kone_{++}\to\R_{+}$ be respectively the weighted Hilbert and Thompson metrics defined as
\begin{equation*}
\mu_{\bb}(\bxb,\byb)=\sum_{i=1}^db_i\ln\!\bigg(\frac{\bmaxi{i}{\bx}{\by}}{\bmini{i}{\bx}{\by}}\bigg)=\ln\!\bigg(\prod_{i=1}^d\frac{\bmaxi{i}{\bx}{\by}^{b_i}}{\bmini{i}{\bx}{\by}^{b_i}}\bigg)
\end{equation*}
and
\begin{align*}
\bar\mu_{\bb}(\bxb,\byb)&=\sum_{i=1}^db_i\ln\!\Big(\max\big\{\bmaxi{i}{\bx}{\by},\bmaxi{i}{\by}{\bx}\big\}\Big) \end{align*}
Note that for every $\bphi\in\kone_{++}$, $\mu_{\bb}$ is the weighted product metric on $\S_{++}^{\bphi}$ induced by the product of the metric spaces $(\S_{++}^{\bphi_i},b_i\, \mu)$ for $i\in[d]$. Hence, $(\S_{++}^{\bphi},\mu_{\bb})$ is a complete metric space and its topology coincides with the topology of $(\R^N_+,\norm{\noarg}_2)$, where $N=n_1+\ldots+n_d$. Similarly, we know that $(\Sn_{++},\mu_{\bb})$ and $(\kone_{++},\bar\mu_{\bb})$ are complete metric spaces and $(\kone_{++},\bar\mu_{\bb})$ has the same topology as $(\R^N_+,\norm{\noarg}_2)$. The subsequent lemma is a generalization of \eqref{simplennexp}. It motivates the use of $\mu_{\bb}$ and $\bar\mu_{\bb}$ in the study of eigenvectors of $F\in\NB^d$.
\begin{lem}\label{contract}
Let $F\in\NB^d$, $A=\A(F)$ and $\bb\in\R^d_{++}$. Set
\begin{equation*}
C= \max_{i\in[d]}\frac{(A^T\bb)_i}{b_i},
\end{equation*}
then, for every $\bx,\by\in\kone_{++}$ it holds
\begin{equation*}
\mu_{\bb}\big(F(\bx),F(\by)\big)\,\leq\, C\, \mu_{\bb}(\bx,\by) \andd \bar\mu_{\bb}\big(F(\bx),F(\by)\big)\,\leq\, C\, \bar\mu_{\bb}(\bx,\by). 
\end{equation*}
\end{lem}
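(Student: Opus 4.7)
The plan is to leverage the sandwich inequality displayed just before the lemma, namely $\Bmin{\bx}{\by}\krog\by\lek\bx\lek\Bmax{\bx}{\by}\krog\by$, together with the fact (established in Lemma~\ref{basics_lem}) that $A=\A(F)\in\R^{d\times d}_+$, so that applying $F$ preserves the order and transforms the scalar factors via the multi-homogeneity identity $F(\bal\krog\bz)=\bal^{A}\krog F(\bz)$. Setting $\bmu=\Bmax{\bx}{\by}$ and $\bnu=\Bmin{\bx}{\by}$, I obtain
\begin{equation*}
\bnu^{A}\krog F(\by)\lek F(\bx)\lek \bmu^{A}\krog F(\by),
\end{equation*}
which, reading coordinatewise in block $i$, gives $(\bnu^{A})_i\leq F_{i,j_i}(\bx)/F_{i,j_i}(\by)\leq (\bmu^{A})_i$ for every $j_i\in[n_i]$. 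Hence
\begin{equation*}
\bmaxi{i}{F(\bx)}{F(\by)}\leq\prod_{k=1}^d\mu_k^{A_{i,k}}\and \bmini{i}{F(\bx)}{F(\by)}\geq\prod_{k=1}^d\nu_k^{A_{i,k}}.
\end{equation*}

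Next I plug these into the definition of $\mu_{\bb}$, take logarithms, and swap the order of summation:
\begin{equation*}
\mu_{\bb}\big(F(\bx),F(\by)\big)\leq\sum_{i=1}^d b_i\sum_{k=1}^d A_{i,k}\bigl[\ln\mu_k-\ln\nu_k\bigr]=\sum_{k=1}^d (A^T\bb)_k\,\ln(\mu_k/\nu_k).
\end{equation*}
Because $\mu_k\geq\nu_k$, each term $t_k:=b_k\ln(\mu_k/\nu_k)$ is nonnegative; writing $(A^T\bb)_k\ln(\mu_k/\nu_k)=\bigl((A^T\bb)_k/b_k\bigr)t_k$ and pulling the maximum out gives the estimate by $C\sum_k t_k=C\,\mu_{\bb}(\bx,\by)$.

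For the Thompson metric, I apply the same bound simultaneously to the pair $(F(\bx),F(\by))$ and the swapped pair $(F(\by),F(\bx))$, using that $A_{i,k}\geq 0$ to pass from a maximum of products to a product of maxima:
\begin{equation*}
\max\bigl\{\bmaxi{i}{F(\bx)}{F(\by)},\bmaxi{i}{F(\by)}{F(\bx)}\bigr\}\leq\prod_{k=1}^d M_k^{A_{i,k}},
\end{equation*}
where $M_k=\max\{\bmaxi{k}{\bx}{\by},\bmaxi{k}{\by}{\bx}\}\geq 1$. Taking logarithms, weighting by $b_i$, and again swapping sums leads to $\bar\mu_{\bb}(F(\bx),F(\by))\leq\sum_k(A^T\bb)_k\ln M_k$, and since $b_k\ln M_k\geq 0$, the same factorization by $(A^T\bb)_k/b_k\leq C$ yields $C\,\bar\mu_{\bb}(\bx,\by)$.

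No step looks like a serious obstacle; the only point needing a little care is the passage from a maximum-of-products to a product-of-maxima in the Thompson estimate, which requires the nonnegativity of the exponents $A_{i,k}$ — precisely what Lemma~\ref{basics_lem} supplies — and the observation that $M_k\geq 1$ so the logarithms have the right sign to allow the coefficient-by-coefficient comparison with $C$.
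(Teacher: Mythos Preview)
Your argument is correct and follows essentially the same route as the paper: you derive the same sandwich inequality $\bnu^{A}\krog F(\by)\lek F(\bx)\lek\bmu^{A}\krog F(\by)$ (the paper's Equation~\eqref{homoineq}), extract the componentwise bounds, and then perform the same swap of summation to produce $\sum_k (A^T\bb)_k$ coefficients before bounding by $C$. The only cosmetic difference is that the paper writes the intermediate product inequality $\prod_i\bmini{i}{\bx}{\by}^{(A^T\bb)_i}\leq\prod_i(F_{i,j_i}(\bx)/F_{i,j_i}(\by))^{b_i}\leq\prod_i\bmaxi{i}{\bx}{\by}^{(A^T\bb)_i}$ explicitly, whereas you pass directly to the logarithmic sums; the Thompson-metric step, including the max-of-products to product-of-maxes passage, is handled identically.
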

\begin{proof}
For any $\bxb,\byb\in\kone_{++}$, we have
\begin{equation}\label{homoineq}
\Bmin{\bx}{\by}^A\krog F(\by)\ \lek\ F(\bx)\ \lek\ \Bmax{\bx}{\by}^A\krog F(\by).
\end{equation}
It follows that for every $(j_1,\ldots,j_d)\in\J$ it holds
\begin{equation*}
\prod_{i=1}^d\bmini{i}{\bx}{\by}^{(A^T\bb)_i}\leq \prod_{i=1}^d\bigg(\frac{F_{i,j_i}(\bx)}{F_{i,j_i}(\by)}\bigg)^{b_i}\leq \prod_{i=1}^d\bmaxi{i}{\bx}{\by}^{(A^T\bb)_i}.
\end{equation*}
and
\begin{align*}
\mu_{\bb}\big(F(\bxb),F(\byb)\big)&= \sum_{i=1}^db_i\ln\!\bigg(\frac{\bmaxi{i}{F(\bx)}{F(\byb)}}{\bmini{i}{F(\bxb)}{F(\byb)}}\bigg)\leq \sum_{i=1}^d(A^T\bb)_i\ln\!\bigg(\frac{\bmaxi{i}{\bxb}{\byb}}{\bmini{i}{\bxb}{\byb}}\bigg)\\
&=  \sum_{i=1}^d\frac{(A^T\bb)_i}{b_i}b_i\ln\!\bigg(\frac{\bmaxi{i}{\bxb}{\byb}}{\bmini{i}{\bxb}{\byb}}\bigg)\leq C\,\mu_{\bb}(\bxb,\byb).
\end{align*}
Furthermore, Equation \eqref{homoineq} implies that
\begin{align*}
\bar\mu_{\bb}\big(F(\bxb),F(\byb)\big)
&\leq \ln\!\Bigg(\prod_{i=1}^d\max\!\bigg\{\prod_{k=1}^d\bmaxi{k}{\bx}{\by}^{A_{i,k}},\prod_{k=1}^d\bmaxi{k}{\by}{\bx}^{A_{i,k}}\bigg\}^{b_i}\Bigg)\\
&\leq \ln\!\bigg(\prod_{k=1}^d\max\!\Big\{\bmaxi{k}{\bx}{\by},\bmaxi{k}{\by}{\bx}\Big\}^{(A^T\bb)_k}\bigg)\leq C\,\bar\mu_{\bb}\big(\bxb,\byb\big).\qedhere
\end{align*}
\end{proof}
\begin{rmq}\label{newrmq}
Note that the Collatz-Wielandt principle in Theorem \ref{linear_PF} is useful to get bounds on the possible Lipschitz constant $C$ in Lemma \ref{contract}.
Moreover, for any $A\in\R^{d\times d}_+$, if $\rho(A)<1$, there exists $r\in[\rho(A),1)$ and $\bb\in\R^d_{++}$, such that $A^T\bb\leq r\bb$. Indeed, if $A^T$ has a positive eigenvector $\bc\in\R^d_{++}$, then we can choose $\bb=\bc$ so that $r=\rho(A)$. Otherwise, if $A$ has no positive eigenvector, define $A(t)=A+t(\ones\ones^T)$ for $t\in\R_{++}$. As $A< A(t)$ for any $t> 0$, by continuity, there exists $t_0>0$ such that $0\leq \rho(A)\leq \rho\big(A(t_0)\big)<1$. Theorem \ref{linear_PF} implies the existence of $\bb\in\R^d_{++}$ such that $A(t_0)^T\bb=r\bb$ with $r = \rho\big(A(t_0)\big)$. It follows that $A^T\bb<A(t_0)^T\bb=r\bb$. 
\end{rmq}
We are now ready to prove Theorem \ref{Banachcor} which is a combination of Lemma \ref{contract} and Remark \ref{newrmq}.
\begin{proof}[Proof of Theorem \ref{Banachcor}]
As $\rho(A)<1$ by assumption, Remark \ref{newrmq} implies the existence of $r\in[\rho(A),1)$ and $\bb\in\R^d_{++}$ such that $A^T\bb\leq r\bb$. By Lemma \ref{contract}, we have 
\begin{equation*}
\mu_{\bb}\big(F(\bxb),F(\byb)\big)\leq r \mu_{\bb}(\bxb,\byb) \qquad \forall \bxb,\byb\in\kone_{++}.
\end{equation*}
Now, let $G\colon\Sn_{++}\to\Sn_{++}$ be defined as
\begin{equation*}
G(\bxb)=\bigg(\frac{F_1(\bx)}{\norm{F_1(\bx)}_{\gamma_1}},\ldots,\frac{F_d(\bx)}{\norm{F_d(\bx)}_{\gamma_d}}\bigg).
\end{equation*}
Then we have 
\begin{equation*}
\mu_{\bb}\big(G(\bx),G(\by)\big)=\mu_{\bb}\big(F(\bx),F(\by)\big)\leq r \mu_{\bb}(\bxb,\byb)\qquad \forall \bx,\by\in\kone_{++}.
\end{equation*}
This shows that $G$ is a strict contraction on the complete metric space $(\Sn_{++},\mu_{\bb})$. The result is then a direct consequence of the Banach fixed point theorem (see e.g. Theorem 3.1 in \cite{pointfixe}).
\end{proof}
We give an example of a map which is expansive with respect to the Hilbert and Thompson metrics on $\R^{n_1+\ldots+n_d}_{++}$ but satisfies all the assumptions of Theorem \ref{Banachcor}. This example motivates the study of multi-homogeneous maps and illustrates that several arguments involving homogeneous maps do not hold anymore in the multi-homogeneous framework.
\begin{ex}
Let $\kone_{++}=\R^{2}_{++}\times \R^2_{++}$ and $F\colon \kone_{++}\to\kone_{++}$ be defined as
\begin{equation*}
F\big((a,b),(u,v)\big)=\Big(\big(\min\{u,v\}^{\sqrt{2}},\pi v^{\sqrt{2}}\big),\big((b^{3/2}+a\sqrt{b})^{1/3},\max\{a,b\}^{1/2}\big)\Big).
\end{equation*}
Clearly, $F$ is not homogeneous when $\kone_{++}$ is identified with $\R^{4}_{++}$. It is nevertheless subhomogeneous of degree $\sqrt{2}$, i.e. $\lambda^{\sqrt{2}} F(\bz)\leq F(\lambda \bz)$ for $\lambda \in (0,1)$ and $\bz\in\R^4_{++}$. It turns out that $F$ is expansive with respect to both the Hilbert and the Thompson metric on $\R^4_{++}$. However, we have $F\in \NB^2$, where
\begin{equation*}
\A(F)=A = \begin{pmatrix} 0 & \sqrt 2\\ 2^{-1} & 0 \end{pmatrix} \andd A^T\begin{pmatrix} 2^{-3/4}\\ 1\end{pmatrix} = 2^{-1/4} \begin{pmatrix} 2^{-3/4}\\ 1\end{pmatrix}.
\end{equation*}
Hence, $2^{-1/4}$ is a Lipschitz constant of $F$ with respect to the weighted Hilbert metric 
\begin{equation*} 
\mu_{\bb}(\bx,\by)=2^{-3/4}\mu(\bx_1,\by_1)+\mu(\bx_2,\by_2) \qquad \forall \bx,\by\in\kone_{++},
\end{equation*} 
where $\mu$ is the Hilbert metric on $\R^2_{++}$.  
\end{ex}
Finally, we prove that the transformation discussed in Remark \ref{lpq_rmq} (c) does not help to gain ``contractivity''.
\begin{prop}\label{lemdec}
Let $F\colon\kone_{+}\to\kone_+$ be continuous, order-preserving and multi-homogeneous. Let $A=\A(F)$ and suppose that $A^T\ones>0$ and $A_{i,i}=0$ for some $i\in[d]$. Define $\t F\in\NB^{d-1}$ and $\t A=\A(\t F)$ as in Remark \ref{lpq_rmq} (c). Then, there exists $\t\bb\in\R^{d-1}_{++}$ such that $\t A^T\t\bb\leq\t\bb$ or $\t A^T\t\bb<\t\bb$, if and only if there exists $\bb\in\R^d_{++}$ such that $A^T\bb\leq\bb$ or $A^T\bb<\bb$, respectively.
\end{prop}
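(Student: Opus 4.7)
The plan is to show both implications by an explicit construction relating $\bb\in\R^d_{++}$ to $\tilde\bb\in\R^{d-1}_{++}$, using the key formula $\tilde A_{k,l}=A_{k,l}+A_{k,i}A_{i,l}$ from Remark \ref{lpq_rmq}(c). Without loss of generality, after permuting indices, I will assume $i=d$ so that $\tilde\bb$ corresponds to dropping the last coordinate of $\bb$. The fact that $A\in\R^{d\times d}_+$ (Lemma \ref{basics_lem}) will be used throughout, as will $A_{d,d}=0$, which makes the $d$-th row of the inequality $A^T\bb\leq\bb$ read $\sum_{l<d}A_{l,d}b_l\leq b_d$.

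For the direction ``$\bb\Rightarrow\tilde\bb$'', I simply set $\tilde b_l=b_l$ for $l\in[d-1]$. Then for each $k<d$,
\[
(\tilde A^T\tilde\bb)_k=\sum_{l<d}(A_{l,k}+A_{l,d}A_{d,k})b_l=\sum_{l<d}A_{l,k}b_l+A_{d,k}\!\sum_{l<d}A_{l,d}b_l\leq \sum_{l<d}A_{l,k}b_l+A_{d,k}b_d=(A^T\bb)_k\leq b_k,
\]
where the middle step uses $A_{d,k}\geq 0$ and $\sum_{l<d}A_{l,d}b_l\leq b_d$. If the original inequality is strict, the last inequality is strict for every $k<d$, which is exactly what is needed since $\tilde\bb$ only has $d-1$ components.

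For the converse ``$\tilde\bb\Rightarrow\bb$'', set $b_l=\tilde b_l$ for $l<d$ and choose $b_d=\sum_{l<d}A_{l,d}\tilde b_l+\epsilon$ for some $\epsilon\geq 0$ to be determined. Positivity of $b_d$ follows from the hypothesis $A^T\ones>0$: since $A_{d,d}=0$, the $d$-th coordinate of $A^T\ones$ equals $\sum_{l<d}A_{l,d}$, so at least one $A_{l,d}>0$ with $l<d$, hence $\sum_{l<d}A_{l,d}\tilde b_l>0$. The $d$-th inequality $(A^T\bb)_d=\sum_{l<d}A_{l,d}\tilde b_l\leq b_d$ holds trivially (with equality if $\epsilon=0$ and strict if $\epsilon>0$). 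For $k<d$, a direct computation gives
\[
(A^T\bb)_k=\sum_{l<d}A_{l,k}\tilde b_l+A_{d,k}b_d=(\tilde A^T\tilde\bb)_k+A_{d,k}\epsilon.
\]
In the non-strict case take $\epsilon=0$ and $(A^T\bb)_k\leq\tilde b_k=b_k$ follows immediately. In the strict case, $(\tilde A^T\tilde\bb)_k<\tilde b_k$ for all $k<d$, so any $\epsilon\in\bigl(0,\min_{k<d}\{(\tilde b_k-(\tilde A^T\tilde\bb)_k)/(1+A_{d,k})\}\bigr)$ yields $(A^T\bb)_k<b_k$ for all $k<d$, while strict inequality at $k=d$ is automatic.

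The routine part is the bookkeeping; the only genuinely delicate point is the strict-inequality converse, where one must verify that $\epsilon>0$ can be chosen small enough to simultaneously preserve strict inequality in all $d-1$ upper components while producing strict inequality in the $d$-th component. This is handled by the finite minimum above together with the hypothesis $A^T\ones>0$, which guarantees $b_d>0$.
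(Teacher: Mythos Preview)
Your proof is correct and follows essentially the same approach as the paper's own argument: drop the $i$-th coordinate in one direction, and in the other direction set $b_i$ equal to $\sum_{l\neq i}A_{l,i}\tilde b_l$ (plus a small $\epsilon>0$ in the strict case), using $A^T\ones>0$ and $A_{i,i}=0$ to guarantee positivity. The only cosmetic differences are that the paper keeps general $i$ and writes the converse via a one-parameter family $\bb(t)$, whereas you normalize to $i=d$ and give the explicit $\epsilon$-bound; your uniform denominator $1+A_{d,k}$ is slightly cruder than the paper's (which restricts to indices with $A_{i,k}>0$), but both work.
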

\begin{proof}
First of all, note that for every $\bb\in\R^d_{++}$ and $\t \bb = (b_1,\ldots,b_{i-1},b_{i+1},\ldots,b_d)$, $A_{i,i}=0$ and Equation \eqref{homomatdecouple} imply that, for $l\in[d]\sauf\{i\}$, we have
\begin{equation}\label{deco_homoineq}
(A^T\bb)_l =(\t A^T\t \bb)_l+(A^T)_{l,i}\big(b_i-(A^T\bb)_i\big).
\end{equation}
Now, suppose that $A^T\bb\leq r\bb$ for some $r\in(0,1]$. Then, $b_i-(A^T\bb)_i\geq 0$ and
\begin{equation*}
r\t b_l = rb_l \geq (\t A^T\bb)_l+A_{l,i}(b_i-(A^T\bb)_i)\geq (\t A^T\bb)_l, \qquad l\in[d]\sauf\{i\}.
\end{equation*}
Now, let $\t\bb\in\R^{d-1}_{++}$ be such that $\t A^T\t \bb \leq \t \bb$, and, for $t>0$, set 
\begin{equation*}
\bb(t)= (\t b_1,\ldots,\t b_{i-1},t,\t b_{i+1},\ldots,\t b_d)\in\R^d_{++}.
\end{equation*}
We have $\delta_i=\big(A^T\bb(t)\big)_i = \big(A^T\bb(0)\big)_i>0$ and 
\begin{equation*}
\big(A^T \bb(t)\big)_k=\big(\t A^T\t\bb\big)_k+A^T_{k,i}(t-\delta_i)
\end{equation*}
for every $k \in [d]\sauf\{i\}$ and $t>0$. It follows that 
\begin{align}
\max_{k\in[d]}\frac{(A^T\bb(t))_k}{(\bb(t))_k}&=\max\bigg\{\frac{\delta_i}{t}\ ,\ \max_{k\in[d]\sauf\{i\}}\frac{\big(\t A^T\t\bb\big)_k+A^T_{k,i}(t-\delta_i)}{\t b_k}\bigg\}\label{uppercab2}\notag\\ &\leq \max\bigg\{\frac{\delta_i}{t}, \max_{k\in[d]\sauf\{i\}}\frac{(\t A^T\t\bb)_k}{\t b_k}+(t-\delta_i)\max_{k\in [d]\sauf\{i\}}A^T_{k,i}\bigg\}.
\end{align}
Hence, with $t=\delta_i$, we have $\bb(\delta_i)\in\R^{d}_{++}$ and
\begin{equation*}\max_{k\in[d]}\frac{(A^T\bb(\delta_i))_k}{(\bb(\delta_i))_k}\leq \max\bigg\{1,\max_{k\in[d]\sauf\{i\}}\frac{(\t A^T\t\bb)_k}{\t b_k}\bigg\}= 1.
\end{equation*}
Finally, suppose that $\t A^T\t\bb<\t\bb$. There exists $\epsilon$ with
\begin{equation*}
0 < \epsilon < \min_{k\in [d]\sauf\{i\}, A_{i,k}>0}\frac{\t b_k-\big(\t A^T\t\bb\big)_k}{A_{i,k}}.
\end{equation*}
and, from \eqref{uppercab2}, it follows that 
\begin{equation*}
\max_{k\in[d]}\frac{(A^T\bb(\delta_i+\epsilon))_k}{(\bb(\delta_i+\epsilon))_k}= \max\bigg\{\frac{\delta_i}{\delta_i+\epsilon},\max_{k\in[d]\sauf\{i\}}\frac{\big(\t A^T\t\bb\big)_k+\epsilon A^T_{k,i}}{\t b_k}\bigg\}<1.\qedhere
\end{equation*}
\end{proof}
\section{Existence of eigenvectors}\label{existence_section}
It follows from Theorem \ref{Banachcor} that any mapping $F\in\NB^d$ with $\rho(\A(F))<1$ has an eigenvector $\bx\in\kone_{+,0}$ which is strictly positive, i.e. $\bx\in\kone_{++}$. For mappings which are not strict contractions, it is well known that, even when $d=1$ and $F$ is linear, this result is not always true. The aim of this section is to provide sufficient conditions that ensure the existence of a nonnegative, respectively strictly positive eigenvector in the non-expansive case, that is $\rho(\A(F))=1$.

In a first step, we propose and discuss the following notion of (strong) irreducibility for mappings in $\NB^d$:
\begin{defi}\label{newirr}
Let $F\colon\kone_{+}\to\kone_{+}$ be continuous, multi-homogeneous and order-preserving. Then we say $F$ is irreducible, if for any $\bxb\in\kone_{+,0}$, there exists $m\in\N$ such that $\irrF^{m}(\bxb)\subset\kone_{++} $, where $\irrF(\bxb)=\bxb+F(\bxb)$.
\end{defi}
This definition is inspired from the following characterization of irreducible nonnegative matrices
(see for instance \cite{Horn}):
\begin{equation}\label{classicirr}
M\in\R^{n\times n}_+\ \text{is irreducible}\qquad\iff \qquad (I+M)^{n-1}\in\R^{n\times n}_{++}.
\end{equation}
In Proposition \ref{irrcharac}, we prove characterizations of irreducibility for mappings that are order-preserving and \multihomo{}. Some of these characterizations are useful for detecting irreducibility of a map in practice, some others instead are useful for theoretical purposes. Indeed, as in the classical Perron-Frobenius theory, irreducibility is desirable as it implies that any nonnegative eigenvector is positive (see Corollary \ref{irr_pos}). Combining this result with the Brouwer fixed point theorem, we get the following:
\begin{thm}\label{firstprinciple}
If $F\in\NB^d$ is irreducible and $F(\kone_{+,0})\subset\kone_{+,0}$, then there exists $\blam\in\R_{++}^d$ and $\bub\in\Sn_{++}=\big\{(\bx_1,\ldots,\bx_d)\ \big|\ \bx_i\in\R^{n_i}_{++} \ \text{and}\ \norm{\bx_i}_{\gamma_i}=1, \ \forall i\in[d]\big\}$ such that $F(\bub)=\blam\krog\bub$.
\end{thm}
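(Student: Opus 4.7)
The plan is to apply the Brouwer fixed point theorem to produce a nonnegative eigenvector, then use irreducibility (through Corollary \ref{irr_pos}, which is referenced just before the statement and asserts that every nonnegative eigenvector of an irreducible $F\in\NB^d$ is strictly positive) to upgrade it to a strictly positive one, and finally rescale so that it lies in $\Sn_{++}$.

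Since the norm spheres $\{\bx_i\in\R^{n_i}_+:\norm{\bx_i}_{\gamma_i}=1\}$ need not be convex, I would not apply Brouwer on $\Sn_+$ directly. Instead, I would take $\bphi=\ones\in\kone_{++}$ and work on the product of standard simplices
\begin{equation*}
\S^{\ones}_{+}=\big\{\bx\in\kone_{+}\ \big|\ \ps{\bx_i}{\ones}=1,\ \forall i\in[d]\big\},
\end{equation*}
which is nonempty, compact, convex, and contained in $\kone_{+,0}$. On this set I would define
\begin{equation*}
\Phi(\bx)\,=\,\bigg(\frac{F_1(\bx)}{\ps{F_1(\bx)}{\ones}},\ldots,\frac{F_d(\bx)}{\ps{F_d(\bx)}{\ones}}\bigg).
\end{equation*}
The assumption $F(\kone_{+,0})\subset\kone_{+,0}$ forces $F_i(\bx)\in\R^{n_i}_{+,0}$ for every $\bx\in\S^{\ones}_+$, so each denominator $\ps{F_i(\bx)}{\ones}$ is strictly positive. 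Continuity of $F$ then yields a continuous self-map $\Phi\colon\S^{\ones}_+\to\S^{\ones}_+$. Brouwer's theorem produces a fixed point $\bv\in\S^{\ones}_+$ satisfying $F(\bv)=\bmu\krog\bv$ with $\mu_i=\ps{F_i(\bv)}{\ones}>0$, hence $\bmu\in\R^d_{++}$.

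At this stage $\bv$ is only a nonnegative eigenvector and may still have zero coordinates. This is precisely where irreducibility intervenes: invoking Corollary \ref{irr_pos} promotes $\bv\in\kone_{+,0}$ to $\bv\in\kone_{++}$. To land in $\Sn_{++}$, I would rescale by setting $\alpha_i=1/\norm{\bv_i}_{\gamma_i}$ and $\bub=\bal\krog\bv$. Then $\bub\in\Sn_{++}$, and by multi-homogeneity $F(\bub)=(\bal^{A-I}\circ\bmu)\krog\bub$ with $A=\A(F)$; since $\bal,\bmu\in\R^d_{++}$, the new eigenvalue $\blam=\bal^{A-I}\circ\bmu$ is also in $\R^d_{++}$, delivering the statement.

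The principal obstacle is the passage from a nonnegative fixed point to a strictly positive one: without an irreducibility-type hypothesis, Brouwer alone cannot preclude that some block $\bv_i$ lies on the boundary of the nonnegative orthant, and the block-wise normalization by $\ps{\cdot}{\ones}$ carries no further positivity information. Everything else, namely the well-definedness and continuity of $\Phi$, the compactness and convexity of $\S^{\ones}_+$, and the final rescaling via multi-homogeneity, is routine once the correct convex domain has been selected.
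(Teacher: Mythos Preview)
Your proof is correct and follows essentially the same approach as the paper: the paper packages the Brouwer step as a separate lemma (Lemma~\ref{Brouwer_exis}), applying the fixed point theorem to $\bx\mapsto\big(\ps{\bphi_1}{F_1(\bx)}^{-1},\ldots,\ps{\bphi_d}{F_d(\bx)}^{-1}\big)\krog F(\bx)$ on $\S^{\bphi}_+$ for a general $\bphi\in\kone_{++}$, then rescales and invokes Corollary~\ref{irr_pos}, which is exactly what you do with the specific choice $\bphi=\ones$. The only cosmetic difference is the order in which positivity of the eigenvalue and of the eigenvector are established.
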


As discussed in Example \ref{rect_ex} and Lemma \ref{NBdIRR}, the assumption $F(\kone_{+,0})\subset\kone_{+,0}$ which is necessary for applying the Brouwer fixed point theorem, is not implied by the irreducibility of $F$ when $d>1$. While the assumptions on $F$ in Theorem \ref{firstprinciple} are quite restrictive in terms of irreducibility, this result has the advantage to hold regardless of the magnitude of $\rho(\A(F))$.

In the second part of this section, we use the so-called continuity of the spectral radius of nonexpansive maps to prove a weak form of the Perron-Frobenius theorem. More precisely, we adapt the notion of Bonsall spectral radius \cite{Bonsall} and cone spectral radius \cite{cone_spec_rad} to maps $F\in\NB^d$ for which there exists $\bb\in\R^d_{++}$ such that $\A(F)^T\bb=\bb$ and prove that such $F$ always has a nonnegative eigenvector corresponding to this notion of spectral radius. To this end, we consider a sequence of mappings $(F^{(\delta_k)})_{k=1}^{\infty}\subset\NB^d$ that converges uniformly towards $F$ as $k\to \infty$ and such that $F^{(\delta_k)}$ satisfies the assumptions of Theorem \ref{firstprinciple} for every $k\in\N$. This is formalized in the following theorem:
\begin{thm}\label{weakPF}
Let $F\in\NB^d$ and $A=\A(F)$. If there exists $\bb\in\Dn$ such that $A^T\bb=\bb$, then there exists $\bu\in\Sn_+$ and $\blam\in\R^d_+$ such that $F(\bu)=\blam\krog\bu$ and 
\begin{equation}\label{specradequal}
\sup_{\bx\in\kone_{+,0}} \limsup_{m\to\infty}\prod_{i=1}^d\norm{F^m_i(\bx)}_{\gamma_i}^{b_i/m}=\prod_{i=1}^d\lambda_i^{b_i}=\lim_{m\to\infty}\sup_{\bx\in\Sn_+}\prod_{i=1}^d\norm{F^m_i(\bx)}_{\gamma_i}^{b_i/m}.
\end{equation}
Moreover, it holds
\begin{equation}\label{poseigequal}
\prod_{i=1}^d\lambda_i^{b_i}=\lim_{m\to\infty}\Big(\prod_{i=1}^d\norm{F^m_i(\bx)}_{\gamma_i}^{b_i}\Big)^{1/m}\qquad \forall \bx\in\kone_{++},
\end{equation}
and, for every $\by\in\Sn_+$ and $\bt\in\R^d_+$ such that $\bt\krog\by\lek F(\by)$, it holds
\begin{equation}\label{maxifirstprinciple}
\prod_{i=1}^d \theta_i^{b_i}\leq \prod_{i=1}^d \lambda_i^{b_i}.
\end{equation}
Finally, if $F$ is irreducible, then $\bu\in\kone_{++}$.
\end{thm}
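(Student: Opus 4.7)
The plan is to approximate $F$ by maps $F^{(\delta)}\in\NB^d$ that satisfy the hypotheses of Theorem \ref{firstprinciple}, extract a limiting eigenpair as $\delta\to 0$, and exploit the identity $A^T\bb=\bb$ to derive the spectral radius formulas and the Collatz--Wielandt bound.

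\textbf{Existence of $(\bu,\blam)$.} Setting $N(\bx)=(\|\bx_1\|_{\gamma_1},\ldots,\|\bx_d\|_{\gamma_d})$, define for $\delta>0$
\[F^{(\delta)}(\bx)=F(\bx)+\delta\,N(\bx)^A\krog\ones.\]
Monotonicity of the norms together with Lemma \ref{homoNBd} shows $F^{(\delta)}\in\NB^d$ with $\A(F^{(\delta)})=A$. For $\bx\in\kone_{+,0}$ every coordinate of $N(\bx)$ is strictly positive, so $N(\bx)^A\in\R^d_{++}$ and $F^{(\delta)}(\bx)\in\kone_{++}$; hence $F^{(\delta)}(\kone_{+,0})\subset\kone_{++}$ and $F^{(\delta)}$ is irreducible (with $m=1$ in Definition \ref{newirr}). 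Theorem \ref{firstprinciple} delivers $\bu^{(\delta)}\in\Sn_{++}$ and $\blam^{(\delta)}\in\R^d_{++}$ with $F^{(\delta)}(\bu^{(\delta)})=\blam^{(\delta)}\krog\bu^{(\delta)}$. The scalars $\lambda_i^{(\delta)}=\|F^{(\delta)}_i(\bu^{(\delta)})\|_{\gamma_i}$ are uniformly bounded for $\delta\in(0,1]$ since $F$ and $N$ are continuous on the compact set $\Sn_+$. Passing to a subsequence $\delta_k\to 0$ yields $\bu^{(\delta_k)}\to\bu\in\Sn_+$ and $\blam^{(\delta_k)}\to\blam\in\R^d_+$, and uniform convergence $F^{(\delta_k)}\to F$ on $\Sn_+$ gives $F(\bu)=\blam\krog\bu$.

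\textbf{Spectral radius identities.} Iterating multi-homogeneity yields $F^m(\bu)=\blam^{I+A+\ldots+A^{m-1}}\krog\bu$, so taking $\gamma_i$-norms, weighting by $b_i$ and using $A^T\bb=\bb$ produces the crucial identity
\[\prod_{i=1}^d\|F^m_i(\bu)\|_{\gamma_i}^{b_i}=\prod_{k=1}^d\lambda_k^{m b_k},\]
which immediately gives the lower bounds in \eqref{specradequal}. For the matching upper bound, $F\lek F^{(\delta)}$ componentwise, so by monotonicity $F^m(\bx)\lek(F^{(\delta)})^m(\bx)$. Since $\bu^{(\delta)}\in\kone_{++}$, Lemma \ref{contract} (with Lipschitz constant $1$, as $A^T\bb=\bb$) combined with the monotone-norm inequality $\|\bz_i\|_{\gamma_i}\leq\Bmaxi{i}{\bz}{\bw}\|\bw_i\|_{\gamma_i}$ gives $\lim_m\prod_i\|(F^{(\delta)})^m_i(\bx)\|^{b_i/m}=\prod_k(\lambda_k^{(\delta)})^{b_k}$ for $\bx\in\kone_{++}$, and letting $\delta_k\to 0$ delivers $\limsup_m\prod_i\|F^m_i(\bx)\|^{b_i/m}\leq\prod_k\lambda_k^{b_k}$ for $\bx\in\kone_{++}$. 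The complementary lower bound $\liminf_m\prod_i\|F^m_i(\bx)\|^{b_i/m}\geq\prod_k\lambda_k^{b_k}$ follows from $\bu\lek c\bx$ (valid for $\bx\in\kone_{++}$ and some $c>0$), monotonicity, and the cancellation $\bb^T A^m\ones=\bb^T\ones=1$ (from $A^T\bb=\bb$ and $\bb\in\Dn$), which converts multiplicative constants into $c^{1/m}\to 1$. This proves \eqref{poseigequal}. For $\bx\in\kone_{+,0}$ the upper bound extends via $\bx\lek\bx+\epsilon\ones\in\kone_{++}$, and a uniform dominator $\bx\lek M\ones$ on $\Sn_+$ (again exploiting $\bb^T A^m\ones=1$) extracts the second equality in \eqref{specradequal}.

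\textbf{Closing items and main obstacle.} For \eqref{maxifirstprinciple}, induction using monotonicity and multi-homogeneity shows $\bt^{I+A+\ldots+A^{m-1}}\krog\by\lek F^m(\by)$ whenever $\bt\krog\by\lek F(\by)$; taking $\gamma_i$-norms, weighting by $b_i$ and using $A^T\bb=\bb$ together with $\|\by_i\|_{\gamma_i}=1$ gives $\prod_i\|F^m_i(\by)\|^{b_i}\geq\prod_k\theta_k^{m b_k}$, and $m$-th roots combined with \eqref{specradequal} close the estimate. If $F$ is irreducible, Corollary \ref{irr_pos} forces $\bu\in\kone_{++}$. The delicate point I anticipate is the matching upper bound in \eqref{specradequal}--\eqref{poseigequal}: the natural Thompson-metric argument requires a strictly positive reference eigenvector, which the limit $\bu$ need not provide, so the approximation $F^{(\delta)}$ is essential as it supplies such a reference $\bu^{(\delta)}\in\kone_{++}$ for each $\delta>0$, while the cancellation $\bb^T A^m\ones=1$ is what prevents the domination constants from blowing up as $m$ grows.
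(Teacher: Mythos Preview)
Your proof is correct and follows essentially the same route as the paper. The paper packages the perturbation $F^{(\delta)}(\bx)=F(\bx)+\delta N(\bx)^A\krog\ones$, the compactness extraction of $(\blam,\bu)$, and the identity $r_\bb(F)=\prod_i\lambda_i^{b_i}$ into a separate result (Theorem~\ref{BIGTHM}), and isolates the limit formula for interior points together with the Collatz--Wielandt lower bound as Proposition~\ref{Prop536} and the equality of the Bonsall and cone spectral radii as Theorem~\ref{Thm531}; its proof of Theorem~\ref{weakPF} then just cites these. You have reconstructed the same ingredients inline: the same perturbation, the same use of Theorem~\ref{firstprinciple} on $F^{(\delta)}$, the same subsequence extraction, and the same exploitation of $A^T\bb=\bb$ (equivalently $\bb^TA^m\ones=1$) to kill the growth of domination constants in the $m$-th root. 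Your ``delicate point'' is exactly why the paper, too, routes the upper bound through $F^{(\delta)}$ and its strictly positive eigenvector $\bu^{(\delta)}$ rather than through the limit $\bu$.
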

Theorem \ref{weakPF} is insightful in many aspects that want to we discuss here. First, note that \eqref{specradequal} is the generalization of a known result (see Theorem 2.3 in \cite{cone_spec_rad}), namely that the cone spectral radius (LHS) equals the Bonsall spectral radius (RHS). As a consequence of \eqref{poseigequal}, we have that whenever $F$ satisfies the assumptions of Theorem \ref{weakPF} and $F$ has a positive eigenvector $\bv\in\Sn_{++}$ such that $F(\bv)=\bxi\krog\bv$ for some $\bxi\in\R^d_{++}$, then $\prod_{i=1}\xi_i^{b_i}=\prod_{i=1}^d\lambda_i^{b_i}$. Equation \eqref{maxifirstprinciple} can be a seen as a maximality principle in the following sense: if $\bw\in\Sn_+$ is a nonnegative eigenvector, i.e. there exists $\bbe\in\R^d_+$ such that $F(\bw)=\bbe\krog\bw$, then $\prod_{i=1}^d\beta_i^{b_i}\leq \prod_{i=1}^d\lambda_i^{b_i}$. Finally, combining the existence result (of $\bu$) with the fact every nonnegative eigenvector of an irreducible map is positive, we get a second condition for the existence of a positive eigenvector of $F\in\NB^d$ when $F$ is irreducible which, instead of requiring $F(\kone_{+,0})\subset\kone_{+,0}$ as in Theorem \ref{firstprinciple}, requires the existence of $\bb\in\R^d_{++}$ such that $\A(F)^T\bb = \bb$. 

The last part of this section is devoted to the derivation of a sufficient condition for the existence of a positive eigenvector for mappings that are nonexpansive in the sense of Theorem \ref{weakPF}. To this end, we propose another notion of irreducibility adapted from a graph approach proposed in \cite{Gaubert}. In the case $d=1$, this notion can be seen as a generalization of the fact that a nonnegative matrix is irreducible if and only if the associated adjacency graph is strongly connected. For the definition of this graph, we consider, for all $(i,j_i)\in \I$, the map $\bu^{(i,j_i)}\colon\R_{+}\to\kone_{+,0}$ defined as
\begin{equation*} 
\big(\bub^{(i,j_i)}(t)\big)_{k,l_k}=\begin{cases} t & \text{if } (k,l_k)=(i,j_i)\\ 1 & \text{otherwise},\end{cases} \qquad \forall (k,l_k)\in\I \ =\bigcup_{\nu=1}^d(\{\nu\}\times [n_{\nu}]).
\end{equation*}
Then, the (asymptotic) graph associated with $F\in\NB^d$ is given by the following:
\begin{defi}\label{graphdefi}
For $F\in\NB^d$, the directed graph $\G(F)=(\I,\E)$, is defined as follows: There is an edge from $(k,l_k)$ to $(i,j_i)$, i.e. $\big((k,l_k),(i,j_i)\big)\in\E$, if
\begin{equation*}
\lim_{t\to\infty} F_{k,l_k}\big(\bu^{(i,j_i)}(t)\big) =\infty.
\end{equation*}
\end{defi}
Our following existence result is a nontrivial generalization of Theorem 2 in \cite{Gaubert}. In the case $d=1$, our proof reduces to that of (the equivalent) Theorem 6.2.3 in \cite{NB} which assumes that the graph of $F$ is strongly connected. However, as discussed below, when $d>1$, our assumption is less restrictive than requiring $\G(F)$ to be strongly connected. Moreover, as noted in Corollary 6.2.4 \cite{NB} for the case $d=1$, a dual version of this result can be easily obtained by considering a graph that analyses the behaviour of $F$ around $0$:
\begin{defi}
For $F\in\NB^d$, the directed graph $\G^{-}(F)=(\I,\E^{-})$, is defined as follows: There is an edge from $(k,l_k)$ to $(i,j_i)$, i.e. $\big((k,l_k),(i,j_i)\big)\in\E^{-}$, if
\begin{equation*}
\lim_{t\to 0} F_{k,l_k}\big(\bu^{(i,j_i)}(t)\big) =0.
\end{equation*}
\end{defi}
We have the following theorem:
\begin{thm}\label{existence}
Let $F\in\NB^d$ be such that there exists $\bb\in\R^d_{++}$ with $\A(F)^T\bb=\bb$. Let $\G\in\{\G(F),\G^{-}(F)\}$. If for every $(\nu,l_{\nu})\in\I$ and $(j_1,\ldots,j_d)\in[n_1]\times\ldots\times [n_d]$ there exists $i_{\nu}\in[d]$ such that there is a path from $(i_\nu,j_{i_{\nu}})$ to $(\nu,l_{\nu})$ in $\G$, then $F$ has an eigenvector in $\Sn_{++}$.
\end{thm}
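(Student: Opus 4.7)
My plan is to apply Theorem \ref{weakPF} to obtain a nonnegative eigenvector of $F$, and then use the graph hypothesis to rule out vanishing entries. Replacing $\bb$ by $\bb/\sum_k b_k$ (which still satisfies $A^T\bb=\bb$), we may assume $\bb\in\Dn$, so Theorem \ref{weakPF} yields $(\bu,\blam)\in\Sn_+\times\R^d_+$ with $F(\bu)=\blam\krog\bu$ satisfying the Collatz--Wielandt maximality \eqref{maxifirstprinciple}. The first essential step is to show $\blam\in\R^d_{++}$. I would use the test vector $\by=(c_k^{-1}\ones)_k\in\Sn_+$ with $c_k=\norm{\ones}_{\gamma_k}$: multi-homogeneity and $F(\ones)\in\kone_{++}$ give $F_k(\by)\in\R^{n_k}_{++}$, so $\theta_k:=\min_l F_{k,l}(\by)/y_{k,l}>0$ and $\bt\krog\by\leq F(\by)$; inequality \eqref{maxifirstprinciple} then forces $\prod_k\lambda_k^{b_k}\geq\prod_k\theta_k^{b_k}>0$, so every $\lambda_k>0$.

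Next, I would reduce the $\G=\G(F)$ case to the $\G=\G^-(F)$ case by duality. Set $\bar F(\bx)=F(\bx^{-1})^{-1}$ on $\kone_{++}$, with $\bx^{-1}$ the component-wise reciprocal. A direct check shows $\bar F$ is order-preserving, multi-homogeneous with $\A(\bar F)=A$, and $\bar F(\ones)\in\kone_{++}$; since $A^T\bb=\bb$, Theorem \ref{extend} extends $\bar F$ to a map in $\NB^d$. The identity $\bu^{(i,j)}(t)^{-1}=\bu^{(i,j)}(1/t)$ gives $\bar F_{k,l}(\bu^{(i,j)}(t))=1/F_{k,l}(\bu^{(i,j)}(1/t))$, so the edge $(k,l)\to(i,j)$ belongs to $\G^-(\bar F)$ iff it belongs to $\G(F)$; hence $\G^-(\bar F)=\G(F)$. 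Moreover $\bu\mapsto\bu^{-1}$, composed with the obvious multi-homogeneous rescaling onto $\Sn_{++}$, sends positive eigenvectors of $F$ bijectively to positive eigenvectors of $\bar F$. It therefore suffices to prove the theorem for $\G=\G^-(F)$.

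The heart of the proof is the following lemma for $\G^-(F)$: an edge $(k,l)\to(i,j)$ implies $F_{k,l}(\bx)=0$ for every $\bx\in\kone_+$ with $x_{i,j}=0$. Indeed, choosing $M$ larger than every entry of $\bx$, one has $\bx\leq (M\ones)\krog \bu^{(i,j)}(t)$ for every $t>0$ (the only constraint $0\leq Mt$ is automatic), so order-preservation and multi-homogeneity give $F_{k,l}(\bx)\leq M^{\sum_r A_{k,r}}F_{k,l}(\bu^{(i,j)}(t))\to 0$ as $t\to 0^+$. Combined with $\lambda_k>0$ and $F_{k,l}(\bu)=\lambda_k u_{k,l}$, each edge yields $u_{i,j}=0\Rightarrow u_{k,l}=0$, and by transitivity this propagates along every directed path in $\G^-(F)$.

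To conclude, I would argue by contradiction: if $\bu\notin\Sn_{++}$, fix $(\nu,l_\nu)\in\I$ with $u_{\nu,l_\nu}=0$, and for each $i\in[d]$ select $j_i\in[n_i]$ with $u_{i,j_i}>0$ (possible since $\bu_i\neq 0$). The hypothesis produces $i_\nu\in[d]$ and a path $(i_\nu,j_{i_\nu})\to\cdots\to(\nu,l_\nu)$ in $\G^-(F)$, along which the propagation above forces $u_{i_\nu,j_{i_\nu}}=0$, contradicting the choice. The step I expect to be most delicate is the first one: without $\blam\in\R^d_{++}$ the implication ``$F_{k,l}(\bu)=0\Rightarrow u_{k,l}=0$'' collapses at any block with $\lambda_k=0$, breaking the path argument; extracting $\blam\in\R^d_{++}$ cleanly from the maximality principle is what makes the graph argument go through uniformly.
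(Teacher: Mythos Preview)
Your proof is correct and takes a genuinely different route from the paper. The paper first proves the $\G(F)$ case (Theorem~\ref{exist}) by a rather delicate argument: it works with the approximating eigenpairs $(\blam^{(\epsilon_k)},\bx^{(\epsilon_k)})$ from Theorem~\ref{BIGTHM}, introduces an auxiliary increasing function $\Psi$ built from the edges of $\G(F)$, and shows by walking along paths that if the limit $\bx^*$ had a zero entry then one would contradict the positivity of $\prod_s (x^*_{s,l_s})^{b_s}$ at a chosen index tuple $\bl$. The $\G^-(F)$ case (Corollary~\ref{existdual}) is then obtained by duality via $\hat F(\bx)=\tau(F(\tau(\bx)))$ and $\G(\hat F)=\G^-(F)$.

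You reverse the order: you treat $\G^-(F)$ as the primary case and handle $\G(F)$ by the same duality, noting $\G^-(\bar F)=\G(F)$. Your key observation is that an edge $(k,l)\to(i,j)$ in $\G^-(F)$ directly forces $F_{k,l}(\bx)=0$ whenever $x_{i,j}=0$, via the one-line bound $F_{k,l}(\bx)\le M^{(A\ones)_k}F_{k,l}(\bu^{(i,j)}(t))\to 0$. Combined with $\lambda_k>0$ (which you extract from the maximality inequality~\eqref{maxifirstprinciple} applied to the strictly positive test vector $\by$), this turns the graph hypothesis into a clean backward zero-propagation along paths, and the contradiction falls out immediately. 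This is considerably more elementary than the paper's $\Psi$-function argument and avoids the approximating sequence entirely for the primary case. The price is that your reduction for $\G(F)$ relies on Theorem~\ref{extend} to place $\bar F$ in $\NB^d$, whereas the paper's direct treatment of $\G(F)$ does not; conversely, the paper's $\Psi$-construction is more involved but may be more portable to settings where no simple zero-propagation is available. Your identification of the step ``$\blam\in\R^d_{++}$'' as the linchpin is exactly right: without it the eigenvalue equation $F_{k,l}(\bu)=\lambda_k u_{k,l}$ no longer lets you pass from $F_{k,l}(\bu)=0$ to $u_{k,l}=0$, and the whole path argument collapses.
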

While the assumption in the above theorem is equivalent to requiring that $\G$ is strongly connected when $d=1$, this is not the case anymore when $d>1$ as shown by Example \ref{nonirrgraph}. Indeed, if $\G$ is strongly connected, then $\G$ satisfies the assumption of Theorem \ref{existence} but the converse is not true anymore for $d>1$. Moreover, Example \ref{noncomparableirr} shows that the connectivity of $\G(F)$ and $\G^{-}(F)$, and the irreducibility of $F$ can not be compared in the sense that there are mappings that satisfy exactly one of these assumptions and none of the other two.
\subsection{Irreducible maps and Brouwer fixed point theorem}
We start by a small observation: For $\alpha>0$, let $F^{(\alpha)}\in\NB^1$ with $F^{(\alpha)}_j(\bx)=(M\bx)_j^{\alpha}$ for every $j\in[n_1]$, where $M\in\R^{n_1\times n_1}_+$ and $M\ones\in\R^d_{++}$. Then, note that $F^{(1)}$ is irreducible in the sense of Definition \ref{newirr} if, and only if, $M$ is irreducible in the sense of Equation \eqref{classicirr}. Moreover, we observe that, with respect to Definition \ref{newirr}, $F^{(1)}$ is irreducible if and only if $F^{(\alpha)}$ is irreducible for any $\alpha>0$. A similar observation holds for the $\ell^{p,q}$-singular value problem of a nonnegative matrix (see Example \ref{lpq_pb}) where the irreducibility of $F$ defined as in \eqref{lpq_F} does not depend on $p,q\in(1,\infty)$. This can be even extended to $\ell^{p_1,\ldots,p_d}$-singular values problems of nonnegative tensors, where the irreducibility of the induced mapping $F\in\NB^d$ does, as a matter of fact, not depend on the choice of $p_1,\ldots,p_d\in(1,\infty)$. We formulate this observation for order-preserving, \multihomo{} mappings and prove several characterizations and properties of irreducibility. First, we need the following: 
\begin{lem}\label{irrlem}
Let $F,G\colon\kone_{+}\to\kone_{+}$ be continuous, multi-homogeneous and order-preserving. Set $H(\bx)=\bx+F(\bx)$ and $E(\bx)=\bx+G(\bx)$. Then:
\begin{enumerate}[\quad\, (a)]
\item For every $k\in\N$, $\bx\in\kone_{+,0}$ and $\bal\in\R^d_{++}$, there exists $\bbe,\bdel\in\R^d_{++}$ such that\label{iterHbound}
\begin{equation*}
\bdel\krog H^{k}(\bx)\lek H^k(\bal\krog\bx)\lek \bbe\krog H^{k}(\bx).
\end{equation*}
\item If for every $\bx\in\kone_{+,0}$ there exists $\bal\in \R^d_{++}$ with $\bal\krog F(\bx)\lek G(\bx)$, then for every $k\in\N$ and every $\bz\in\kone_{+,0}$, there exists $\bdel\in\R^d_{++}$ such that $\bdel\leq \ones$ and $\bdel\krog H^k(\bz) \lek E^k(\bz)$.\label{compareFG}
\end{enumerate}
\end{lem}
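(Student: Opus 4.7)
Both parts are proved by induction on $k$, using the multi-homogeneity identity $F(\bal\krog\bx)=\bal^{A}\krog F(\bx)$ with $A=\A(F)$ (and analogously for $G$ with $B=\A(G)$), together with the fact that $\bal^{A}\in\R^d_{++}$ whenever $\bal\in\R^d_{++}$, since each entry is a product of positive reals raised to real exponents.

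The plan for \eqref{iterHbound} is as follows. For the base case $k=1$, multi-homogeneity gives
\begin{equation*}
H(\bal\krog\bx)\,=\,\bal\krog\bx+\bal^{A}\krog F(\bx),
\end{equation*}
so that setting $\bbe^{(1)}_i=\max\{\alpha_i,(\bal^{A})_i\}$ and $\bdel^{(1)}_i=\min\{\alpha_i,(\bal^{A})_i\}$ yields vectors in $\R^d_{++}$ satisfying the two-sided bound. For the inductive step, apply the order-preserving map $H$ to the inductive hypothesis $\bdel^{(k)}\krog H^{k}(\bx)\lek H^{k}(\bal\krog\bx)\lek \bbe^{(k)}\krog H^{k}(\bx)$, and then run the base-case argument with $H^{k}(\bx)$ in place of $\bx$ and with $\bbe^{(k)}$, $\bdel^{(k)}$ in place of $\bal$: this produces $\bbe^{(k+1)}_i=\max\{\bbe^{(k)}_i,((\bbe^{(k)})^{A})_i\}$ and $\bdel^{(k+1)}_i=\min\{\bdel^{(k)}_i,((\bdel^{(k)})^{A})_i\}$ such that $\bdel^{(k+1)}\krog H^{k+1}(\bx)\lek H^{k+1}(\bal\krog\bx)\lek \bbe^{(k+1)}\krog H^{k+1}(\bx)$.

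The plan for \eqref{compareFG} is similar. For $k=1$, take $\bal\in\R^d_{++}$ with $\bal\krog F(\bz)\lek G(\bz)$ as provided by the hypothesis and set $\bdel^{(1)}_i=\min\{1,\alpha_i\}$; then $\bdel^{(1)}\leq\ones$, $\bdel^{(1)}\in\R^d_{++}$, and
\begin{equation*}
\bdel^{(1)}\krog H(\bz)\,=\,\bdel^{(1)}\krog\bz+\bdel^{(1)}\krog F(\bz)\,\lek\,\bz+\bal\krog F(\bz)\,\lek\,\bz+G(\bz)\,=\,E(\bz).
\end{equation*}
For the inductive step, first note that since $F,G\colon\kone_+\to\kone_+$ and $H(\by)\gek\by$ componentwise in each block, iterating $H$ starting from $\bz\in\kone_{+,0}$ stays in $\kone_{+,0}$, so the hypothesis is applicable at $H^{k}(\bz)$, yielding some $\bal'\in\R^d_{++}$ with $\bal'\krog F(H^{k}(\bz))\lek G(H^{k}(\bz))$. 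Apply order-preserving of $E$ to the inductive hypothesis $\bdel^{(k)}\krog H^{k}(\bz)\lek E^{k}(\bz)$ to get
\begin{equation*}
E^{k+1}(\bz)\,\gek\,\bdel^{(k)}\krog H^{k}(\bz)+(\bdel^{(k)})^{B}\krog G(H^{k}(\bz))\,\gek\,\bdel^{(k)}\krog H^{k}(\bz)+(\bdel^{(k)})^{B}\krog\bal'\krog F(H^{k}(\bz)),
\end{equation*}
and then choose $\bdel^{(k+1)}_i=\min\{1,\bdel^{(k)}_i,((\bdel^{(k)})^{B})_i\alpha'_i\}$, which lies in $\R^d_{++}$, satisfies $\bdel^{(k+1)}\leq\ones$, and gives $\bdel^{(k+1)}\krog H^{k+1}(\bz)\lek E^{k+1}(\bz)$.

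The only mildly delicate point is bookkeeping: one must verify at each step that the new scaling vector is strictly positive (which is automatic from $\bal^{A}\in\R^d_{++}$ and finite minima/maxima of positive scalars) and that it still satisfies $\bdel^{(k+1)}\leq\ones$ in part \eqref{compareFG} (secured by explicitly including $1$ in the minimum). Beyond this, the proof is mechanical and relies only on multi-homogeneity, monotonicity, and the hypothesis relating $F$ and $G$.
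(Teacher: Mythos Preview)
Your proof is correct and follows essentially the same inductive scheme as the paper: part \eqref{iterHbound} is identical to the paper's argument (same recursive definitions $\bdel^{(k+1)}_i=\min\{\bdel^{(k)}_i,((\bdel^{(k)})^{A})_i\}$, $\bbe^{(k+1)}_i=\max\{\bbe^{(k)}_i,((\bbe^{(k)})^{A})_i\}$), and part \eqref{compareFG} differs only in harmless bookkeeping---the paper applies the hypothesis at the scaled point $\bal^{(k)}\krog H^{k}(\bz)$ and uses the multi-homogeneity of $F$, whereas you apply it at $H^{k}(\bz)$ and use the multi-homogeneity of $G$, leading to $(\bdel^{(k)})^{B}$ in place of the paper's $(\bal^{(k)})^{A}$.
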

\newcommand{\btp}{\theta^+}
\newcommand{\btm}{\theta^-}
\begin{proof}
Let $A=\A(F)$.\\ 
\eqref{iterHbound} Define $\btm,\btp\colon\R^d_{++}\times\R^d_{++}\to\R^d_{++}$ as
$\btm_i(\ba,\bb)=\min\{a_i,b_i\}$ and $\btp_i(\ba,\bb)=\max\{a_i,b_i\}$ for every $i\in[d]$. Then, we have 
\begin{equation*}
\bdel^{(k)}\krog H^k(\bx)\lek H^k(\bal\krog\bx)\lek \bbe^{(k)}\krog H^k(\bx)
\end{equation*}
with $\bdel^{(1)}=\btm(\bal,\bal^{A})$, $\bbe^{(1)}=\btp(\bal,\bal^{A})$, 
\begin{equation*}
\bdel^{(k+1)}=\btm\Big(\bdel^{(k)},(\bdel^{(k)})^A\Big) \quad\text{and}\quad \bbe^{(k+1)}=\btp\Big(\bbe^{(k)},(\bbe^{(k)})^A\Big)
\end{equation*}
for every $k\in \N$. As $\bal\in\R^d_{++}$, we have $\bdel^{(k)},\bbe^{(k)}\in\R^d_{++}$ which proves the claim.\\
\eqref{compareFG} Let $\bz\in\kone_{+,0}$ and $\bal^{(1)}\in (0,1]^d$ be such that $\bal^{(1)}\krog F(\bz)\lek G(\bz)$. Then,
\begin{equation*}
\bal^{(1)}\krog H(\bz)\lek \bx+\bal^{(1)}\krog F(\bz) \lek E(\bz).
\end{equation*}
There exists $0<\bbe^{(1)}\leq \bal^{(1)}$ such that, for $\bu = \bal^{(1)}\krog H(\bz)$, we have
\begin{equation*}
(\bbe^{(1)}\circ(\bal^{(1)})^A)\krog F\big(H(\bz)\big)=\bbe^{(1)}\krog F(\bu) 
\lek G(\bu) \lek G\big(E(\bz)\big).
\end{equation*}
Let $\bal^{(2)}=\bbe^{(1)}\circ(\bal^{(1)})^A$, then $0<\bal^{(2)}\leq \bal^{(1)}$ and
\begin{equation*}
\bal^{(2)}\krog H^2(\bz) \lek \bal^{(1)}\krog H(\bz)+\bal^{(2)}\krog F\big(H(\bz)\big)\lek E^2(\bz).
\end{equation*}
This argument can be repeated for $k\geq 2$ by letting $\bu=\bal^{(k)}\krog H^k(\bz)$ and $\bal^{(k+1)}=\bbe^{(k)}\circ(\bal^{(k)})^A$ showing the existence of a sequence $(\bal^{(k)})_{k=1}^{\infty}\subset\R^d_{++}$ with the desired property.\end{proof}
The following proposition generalizes some well-known characterizations of irreducibility for matrices (see \cite{Plemmons}) to order-preserving, \multihomo{} mappings. Characterization \eqref{characirrE} can be found in the setting of rectangular tensors in Theorem 5.1 \cite{Yang2}. Characterization \eqref{characirrQ} is inspired by a similar result in \cite{Fried}. We recall that 
$\I = \bigcup_{k=1}^d \big(\{k\}\times [n_k]\big)$ and $\J= [n_1]\times\ldots\times [n_d]$.
\newcommand{\EE}{\mathbb{E}}
\begin{prop}\label{irrcharac}
Let $F\colon\kone_{+}\to\kone_+$ be continuous, \multihomo{} and order-preserving. Let $H(\bx)=\bx+F(\bx)$. The following statements are equivalent:
\begin{enumerate}
\item $F$ is irreducible.\label{FIRR}
\item There exists $m\in\N$ with $H^{m}(\{\be^{(\bj)}\mid\bj\in\J\})\subset\kone_{++}$, where, for $\bj\in\J$ and $(k,l_k)\in\I$, $\big(\be^{(\bj)}\big)_{k,l_k}=1$ if $l_k=j_k$ and $\big(\be^{(\bj)}\big)_{k,l_k}=0$ else.\label{characirrE}
\item The map $\bx\mapsto \bga_{\bp}\big(F(\bx)\big)$ is irreducible, where $\bga_{\bp}\colon\kone_{+,0}\to\kone_{+,0}$ is defined for $\bp\in\R^d_{++}$ as $\big(\bga_{\bp}(\bx)\big)_{k,l_k}=x_{k,l_k}^{p_k}$ for every $(k,l_k)\in\I$.\label{characirrG}
\item $Q\big(F(\bz)\big)\not\supset Q(\bz)$ for every $\bz\in\kone_{+,0}\sauf\kone_{++}$, where, for every $\bx\in\kone_{+,0}$,  $Q(\bx)=\{\bj\in\J\ | \ x_{i,j_i}=0, \ \forall i \in[d]\}$.\label{characirrQ}
\end{enumerate}
\end{prop}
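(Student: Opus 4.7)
The plan is to establish $(1)\Leftrightarrow(2)$, $(1)\Leftrightarrow(3)$, and $(1)\Leftrightarrow(4)$ separately, relying on Lemma \ref{irrlem} and the basic observation that $H=I+F$ is order-preserving with $\bx\lek H(\bx)$, so $k\mapsto H^k(\bx)$ is componentwise non-decreasing. For $(1)\Rightarrow(2)$, applying irreducibility to each of the finitely many $\be^{(\bj)}$, $\bj\in\J$, yields $m_{\bj}$ with $H^{m_{\bj}}(\be^{(\bj)})\in\kone_{++}$; monotonicity of $H^k$ lets us take $m=\max_{\bj}m_{\bj}$. For $(2)\Rightarrow(1)$, given $\bz\in\kone_{+,0}$ each $\bz_i$ is nonzero, so one chooses $\bj\in\J$ and $\alpha_i=z_{i,j_i}>0$ with $\bal\krog\be^{(\bj)}\lek\bz$; order-preservation and Lemma \ref{irrlem}(a) then combine to give $\bdel\krog H^m(\be^{(\bj)})\lek H^m(\bal\krog\be^{(\bj)})\lek H^m(\bz)$ for some $\bdel\in\R^d_{++}$, and the left-hand side is in $\kone_{++}$ by (2).

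For $(1)\Leftrightarrow(3)$, set $G=\bga_{\bp}\circ F$, so that $G_{i,j_i}(\bx)=F_{i,j_i}(\bx)^{p_i}$ has the same coordinatewise zero pattern as $F$. For each $\bx\in\kone_{+,0}$, defining $\alpha_i=\min\{F_{i,j_i}(\bx)^{p_i-1}:F_{i,j_i}(\bx)>0\}$ when $F_i(\bx)\ne 0$, and $\alpha_i=1$ otherwise, yields $\bal\in\R^d_{++}$ with $\bal\krog F(\bx)\lek G(\bx)$. Lemma \ref{irrlem}(b) then provides, for every $k\in\N$ and every $\bz\in\kone_{+,0}$, some $\bdel\in\R^d_{++}$ with $\bdel\krog H^k(\bz)\lek(I+G)^k(\bz)$, hence $H^k(\bz)\in\kone_{++}\Rightarrow(I+G)^k(\bz)\in\kone_{++}$ and $F$ irreducible forces $G$ irreducible. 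Using $\alpha'_i=\min\{F_{i,j_i}(\bx)^{1-p_i}:F_{i,j_i}(\bx)>0\}$ and swapping the roles of $F$ and $G$ gives the converse.

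For $(1)\Leftrightarrow(4)$, the key identity is $Q(H(\bz))=Q(\bz+F(\bz))=Q(\bz)\cap Q(F(\bz))$, which makes the condition $Q(F(\bz))\not\supset Q(\bz)$ equivalent to $Q(H(\bz))\subsetneq Q(\bz)$. Then $(4)\Rightarrow(1)$ by iteration: $Q(H^k(\bz))$ is a strictly decreasing sequence in the finite set $\J$ until it becomes empty, i.e.\ $H^m(\bz)\in\kone_{++}$. For $(1)\Rightarrow(4)$, argue by contradiction and pick $\bz\in\kone_{+,0}\sauf\kone_{++}$ with $Q(F(\bz))\supset Q(\bz)$; then $Q(H(\bz))=Q(\bz)$, so $H(\bz)$ shares the coordinatewise zero pattern of $\bz$, and $\beta_i:=\max\{(H(\bz))_{i,j_i}/z_{i,j_i}:z_{i,j_i}>0\}\in\R_{++}$ gives $H(\bz)\lek\bbe\krog\bz$. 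Order-preservation and multi-homogeneity then produce $F(H(\bz))\lek F(\bbe\krog\bz)=\bbe^{\A(F)}\krog F(\bz)$, so $Q(F(H(\bz)))\supset Q(F(\bz))\supset Q(H(\bz))$; induction delivers $Q(H^k(\bz))=Q(\bz)\ne\emptyset$ for every $k$, contradicting $(1)$.

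The delicate step is the induction in $(1)\Rightarrow(4)$: one must convert $Q(F(\bz))\supset Q(\bz)$ into a componentwise support containment so that the multiplicative upper bound $H(\bz)\lek\bbe\krog\bz$ becomes available, and then exploit multi-homogeneity to push this containment through one more application of $F$. The remaining implications are routine consequences of order-preservation and Lemma \ref{irrlem}.
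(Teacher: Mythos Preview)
Your proof is correct and follows the paper's argument essentially step for step: the same use of Lemma~\ref{irrlem}(a) for $(1)\Leftrightarrow(2)$, Lemma~\ref{irrlem}(b) for $(1)\Leftrightarrow(3)$, and the identity $Q(H(\bz))=Q(\bz)\cap Q(F(\bz))$ for $(1)\Leftrightarrow(4)$. The only cosmetic difference is that in the inductive step of $(1)\Rightarrow(4)$ the paper packages things via the observation ``$Q(\bx)=Q(\by)\Rightarrow Q(F(\bx))=Q(F(\by))$'' (from two-sided bounds $\bal\krog\by\lek\bx\lek\bbe\krog\by$), whereas you use the one-sided bound $H(\bz)\lek\bbe\krog\bz$ to get $Q(F(H(\bz)))\supset Q(F(\bz))$; these are equivalent here.

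One small point: your deduction ``$Q(H(\bz))=Q(\bz)$, so $H(\bz)$ shares the coordinatewise zero pattern of $\bz$'' tacitly uses $Q(\bz)\neq\emptyset$ (only then does equality of the products $\prod_i Z_i$ together with $Z_i(H(\bz))\subset Z_i(\bz)$ force $Z_i(H(\bz))=Z_i(\bz)$ for every $i$). The paper makes this explicit by starting the contrapositive from ``$\emptyset\neq Q(\bx)\subset Q(F(\bx))$''. This is worth stating, since for $d\geq 2$ there exist $\bz\in\kone_{+,0}\setminus\kone_{++}$ with $Q(\bz)=\emptyset$.
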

\begin{proof}
\eqref{FIRR}$\iff$\eqref{characirrE} 
Since $H(\kone_{++})\subset\kone_{++}$,  $H^\nu(\bx)\in\kone_{++}$ implies 
$H^k(\bx)\in\kone_{++}$ for every $k\geq \nu$. It follows directly that \eqref{FIRR} implies \eqref{characirrE}. Now, assume that $F$ is not irreducible and let $\bx\in\kone_{+,0}$ be such that $H^k(\bx)\in\kone_{+,0}\sauf\kone_{++}$ for every $k$. There exists $\bal\in\R^d_{++}$ and $\bj\in\J$ such that $\bal\krog\be^{(\bj)}\lek \bx$. By Lemma \ref{irrlem}, \eqref{iterHbound} we know that for every $k\in\N$, there exists $\bdel^{(k)}\in\R^d_{++}$ with 
\begin{equation*}
\bdel^{(k)}\krog H^k(\be^{(\bj)})\lek H^{k}(\bal\krog\be^{(\bj)})\lek H^{k}(\bx).
\end{equation*}
Since $H^k(\bx)\in\kone_{+,0}\sauf\kone_{++}$, we have $H^k(\be^{(\bj)})\in\kone_{+,0}\sauf\kone_{++}$ for every $k$ and thus there exists no $m\in\N$ such that $H^m(\{\be^{(\bj)}\mid\bj\in\J\})\subset\kone_{++}$.\\
\eqref{FIRR}$\iff$\eqref{characirrG}  Let $G(\bx)=\bga_{\bp}\big(F(\bx)\big)$. Then $G$ is order-preserving and \multihomo{} (with $\A(G)=\diag(\bp)A$). Let $\bx\in\kone_{+,0}$ and $(k,l_k)\in\I$, we have $F_{k,l_k}(\bx) >0$ if and only if $F_{k,l_k}(\bx)^{p_k} >0$. It follows that
\begin{equation*}
\bal\krog F(\bx) \lek \bga_{\bp}\big(F(\bx)\big)\lek \bbe\krog F(\bx) 
\end{equation*}
with $\alpha_i=\beta_i=1$ if $\I_{\bx} =\{j_i\in[n_i]\mid F_{i,j_i}(\bx)>0\}=\emptyset$, and 
\begin{equation*}
\alpha_i = \min_{j_i\in \I_{\bx}} F_{i,j_i}(\bx)^{p_i-1},\qquad\beta_i = \max_{j_i\in\I_{\bx}}F_{i,j_i}(\bx)^{p_i-1}\qquad \text{if }\ \I_{\bx}\neq\emptyset.
\end{equation*}
The claim follows now from Lemma \ref{irrlem}, \eqref{compareFG}.\\
\eqref{FIRR}$\iff$\eqref{characirrQ} 
Note that for $\bx,\by\in\kone_{+,0}$, if $Q(\bx)=Q(\by)$, then there exists $\bal,\bbe\in\R^d_{++}$ such that $\bal\krog\by \lek \bx \lek \bbe\krog\by$ which implies that $Q\big(F(\bx)\big)=Q\big(F(\by)\big)$. Suppose that there exists $\bx\in\kone_{+,0}$ with $\emptyset \neq Q(\bx)\subset Q(F(\bx))$. Then we have
\begin{equation*}
Q\big(H(\bx)\big)=Q(\bx)\cap Q\big(F(\bx)\big) =Q(\bx).
\end{equation*}
Using induction, if $Q\big(H^{k}(\bx)\big)=Q(\bx)$ for some $k\in\N$, then 
\begin{equation*}
Q\big(H^{k+1}(\bx)\big)=Q\big(H^k(\bx)\big)\cap Q\big(F(H^k(\bx))\big) =Q(\bx)\cap Q\big(F(\bx)\big)=Q(\bx).
\end{equation*}
Since this is true for any $k\in\N$, we have $Q\big(H^{k}(\bx)\big)\neq \emptyset$ for every $k$ and thus, $F$ can not be irreducible.
Now, suppose that $Q(\bz)\not \subset Q\big(F(\bz)\big)$ for every $\bz\in\kone_{+,0}\sauf\kone_{++}$. We show that there exists $m\in\N$ such that $H^m(\kone_{+,0})\subset \kone_{++}$. Let $\bx \in\kone_{+,0}$, if $Q\big(H(\bx)\big)= Q(\bx)\neq \emptyset$, then $Q\big(F(\bx)\big)\supset Q(\bx)$, a contradiction to our assumption. Hence, $Q\big(H(\bx)\big)\subsetneq Q(\bx)$. It follows that $Q\big(H^{k+1}(\bx)\big)\subsetneq Q\big(H^{k}(\bx)\big)$ for every $k\in\N$ with $Q\big(H^{k}(\bx)\big)\neq \emptyset$. In particular, $Q\big(H^{|\I|}(\bx)\big)= \emptyset$, i.e. $H^{|\I|}(\bx)\in\kone_{++}$.
\end{proof}
Note that, if $F$ is irreducible, then the index $m$ in Proposition \ref{irrcharac}, \eqref{characirrE} can be chosen to be $m=\max_{\bj\in\J}m_{\bj}$ where $m_{\bj}\in\N$ is the smallest integer such that $H^{m_{\bj}}(\be^{(\bj)})\in\kone_{++}$. Moreover, if $H^m(\{\be^{(\bj)}\mid\bj\in\J\})\subset\kone_{++}$, then $H^m(\kone_{+,0})\subset\kone_{++}$. Indeed, if $\bx\in\kone_{+,0}$, then there exist $\bdel\in\R^d_{++}$ and $\bj\in\J$ such that $\bdel\krog\be^{(\bj)}\lek\bx$ and by Lemma \ref{irrlem}, \eqref{iterHbound} we know that there exists $\bal\in\R^d_{++}$ such that 
\begin{equation*}
0\lekkk\bal\krog H^m(\be^{(\bj)})\lek H^m(\bdel\krog\be^{(\bj)})\lek H^m(\bx).
\end{equation*}
\begin{cor}\label{irr_pos}
Let $F\colon\kone_{+}\to\kone_+$ be continuous, \multihomo{} and order-preserving. If $F$ is irreducible, then for every $(\blam,\bxb)\in\R^d_{++}\times\kone_{+,0}$ satisfying $F(\bxb)=\blam\krog\bxb$, we have $(\blam,\bxb)\in\R^d_{++}\times\kone_{++}$.
\end{cor}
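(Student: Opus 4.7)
The plan is to exploit the eigenvalue equation together with the multi-homogeneity of $F$ to reduce the claim to a direct application of the definition of irreducibility. Since $\blam \in \R^d_{++}$ is part of the hypothesis, only the implication $\bx \in \kone_{++}$ needs to be established.

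First I would set $H(\bx) = \bx + F(\bx)$ as in Definition \ref{newirr} and show by induction on $k$ that there exists $\bbe^{(k)} \in \R^d_{++}$ with $H^k(\bx) = \bbe^{(k)} \krog \bx$. For the base case, since $F(\bx) = \blam \krog \bx$, we have $H(\bx) = \bx + \blam \krog \bx = (\ones + \blam) \krog \bx$, so $\bbe^{(1)} = \ones + \blam \in \R^d_{++}$. For the inductive step, using that $F$ is multi-homogeneous with matrix $A = \A(F)$, we compute
\begin{equation*}
H^{k+1}(\bx) = \bbe^{(k)} \krog \bx + F\!\big(\bbe^{(k)} \krog \bx\big) = \bbe^{(k)} \krog \bx + \big(\bbe^{(k)}\big)^A \krog F(\bx) = \bbe^{(k+1)} \krog \bx,
\end{equation*}
where $\bbe^{(k+1)} = \bbe^{(k)} + \big(\bbe^{(k)}\big)^A \circ \blam$. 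Since $\blam, \bbe^{(k)} \in \R^d_{++}$ and $A \in \R^{d\times d}_+$ by Lemma \ref{basics_lem}, we get $\bbe^{(k+1)} \in \R^d_{++}$, closing the induction.

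Next, as $\bx \in \kone_{+,0}$ and $F$ is irreducible, Definition \ref{newirr} gives the existence of $m \in \N$ such that $H^m(\bx) \in \kone_{++}$. Substituting the formula just derived, $\bbe^{(m)} \krog \bx \in \kone_{++}$, and since each component of $\bbe^{(m)}$ is strictly positive, this forces $\bx \in \kone_{++}$, concluding the proof.

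There is no real obstacle here: the argument is a short, direct calculation once the key observation — that the orbit of an eigenvector under $H$ stays on the ray through $\bx$ — is made. The only mild care needed is verifying that the coefficients $\bbe^{(k)}$ remain in $\R^d_{++}$, which follows immediately from nonnegativity of $A$.
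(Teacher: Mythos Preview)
Your proof is correct. The paper proceeds differently: it invokes characterization \eqref{characirrQ} of Proposition \ref{irrcharac}, observing that if $\bx\in\kone_{+,0}\setminus\kone_{++}$ then $Q\big(F(\bx)\big)=Q(\blam\krog\bx)\supset Q(\bx)\neq\emptyset$, contradicting irreducibility in one line. Your argument instead works straight from Definition \ref{newirr}, showing by induction that the $H$-orbit of an eigenvector stays on the ray $\{\bbe\krog\bx:\bbe\in\R^d_{++}\}$, and then uses that some iterate lands in $\kone_{++}$. Both arguments encode the same idea --- the support pattern of $\bx$ is invariant under $H$ --- but yours is self-contained (it does not rely on the preceding proposition), while the paper's is terser given that Proposition \ref{irrcharac} has already been established.

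One small remark: your appeal to Lemma \ref{basics_lem} for $A\in\R^{d\times d}_+$ is both slightly off (that lemma is stated for $F\in\NB^d$, which is not assumed here) and unnecessary. Since $\bbe^{(k)}\in\R^d_{++}$, the quantity $\big(\bbe^{(k)}\big)^A$ is well-defined and strictly positive for \emph{any} $A\in\R^{d\times d}$, so $\bbe^{(k+1)}=\bbe^{(k)}+\big(\bbe^{(k)}\big)^A\circ\blam\in\R^d_{++}$ holds regardless of the sign of the entries of $A$.
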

\begin{proof}
Let $Q$ be as in Proposition \ref{irrcharac}, \eqref{characirrQ}. If $\bxb\in\kone_{+,0}\sauf\kone_{++}$, then we have the contradiction $Q\big(F(\bx)\big) = Q(\blam\krog\bx)\supset Q(\bx)\neq \emptyset$. 
\end{proof}
A first existence result for irreducible maps $F$ is stated in the next theorem and comes as a consequence of the following lemma which can be proved by applying the Brouwer fixed point theorem to the map 
\begin{equation*}
F'(\bz) =\big(\ps{\bphi_1}{F(\bz_1)}^{-1},\ldots,\ps{\bphi_d}{F(\bz_d)}^{-1}\big)\krog F(\bz).
\end{equation*}
\begin{lem}\label{Brouwer_exis}
Let $F\colon\kone_{+}\to\kone_+$ be continuous, \multihomo{} and order-preserving. If $F(\kone_{+,0})\subset\kone_{+,0}$, then, for every $\bphi\in\kone_{++}$, $F$ has an eigenvector $\bxb\in\S^{\bphi}_{+}$.
\end{lem}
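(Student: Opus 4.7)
The plan is to follow the hint in the statement and apply the Brouwer fixed point theorem to the self-map of $\S^{\bphi}_{+}$ obtained by renormalizing each component of $F$. First I would observe that $\S^{\bphi}_{+} = \{\bz\in\kone_+ \mid \ps{\bz_i}{\bphi_i} = 1,\ \forall i\in[d]\}$ is a product of $d$ weighted simplices, one inside each $\R^{n_i}_+$, and is therefore a nonempty, compact and convex subset of $V$.

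Next I would define $F'\colon \S^{\bphi}_+\to V$ by
\begin{equation*}
F'(\bz) \,=\, \bigl(\ps{\bphi_1}{F_1(\bz)}^{-1},\ldots,\ps{\bphi_d}{F_d(\bz)}^{-1}\bigr)\krog F(\bz),
\end{equation*}
and verify that it is well defined on $\S^{\bphi}_+$. This is the one step where the hypothesis $F(\kone_{+,0})\subset\kone_{+,0}$ is used: if $\bz\in\S^{\bphi}_+$, then each $\bz_i\neq 0$ because $\ps{\bphi_i}{\bz_i}=1$, hence $\bz\in\kone_{+,0}$ and the assumption yields $F_i(\bz)\in\R^{n_i}_{+,0}$ for every $i$; combined with $\bphi_i\in\R^{n_i}_{++}$ this gives $\ps{\bphi_i}{F_i(\bz)}>0$, so the denominators do not vanish. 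By construction $\ps{\bphi_i}{F'(\bz)_i}=1$ for all $i$, and since the renormalized $F_i(\bz)$ remains in $\R^{n_i}_+$, $F'$ maps $\S^{\bphi}_+$ into itself. Continuity of $F'$ on $\S^{\bphi}_+$ is immediate from continuity of $F$ and nonvanishing of the denominators.

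I would then invoke Brouwer's fixed point theorem to obtain $\bxb\in\S^{\bphi}_+$ with $F'(\bxb)=\bxb$. Reading off the $i$-th component, this means $F_i(\bxb)=\lambda_i\,\bxb_i$ with $\lambda_i := \ps{\bphi_i}{F_i(\bxb)}>0$; in particular $\bxb_i\neq 0$ for every $i\in[d]$, so $\bxb$ satisfies Definition \ref{defeigevect} and is an eigenvector of $F$ lying in $\S^{\bphi}_+$, with associated eigenvalue $\blam\in\R^d_{++}$.

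There is no serious obstacle; the only point requiring attention is checking that $F'$ indeed stays inside the compact convex set $\S^{\bphi}_+$, for which both the condition $F(\kone_{+,0})\subset\kone_{+,0}$ and the positivity of $\bphi$ are essential. Everything else (compactness, convexity, continuity, extraction of the eigenvalue from the fixed-point equation) is routine.
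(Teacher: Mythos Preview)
Your proposal is correct and follows exactly the approach indicated in the paper: apply Brouwer's fixed point theorem to the renormalized map $F'$ on the compact convex product of simplices $\S^{\bphi}_+$, using $F(\kone_{+,0})\subset\kone_{+,0}$ and $\bphi\in\kone_{++}$ to guarantee the denominators $\ps{\bphi_i}{F_i(\bz)}$ are positive. The only difference is notational---the paper's displayed formula writes $\ps{\bphi_i}{F(\bz_i)}$, which is evidently a typo for $\ps{\bphi_i}{F_i(\bz)}$, exactly as you have it.
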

We are now ready to prove Theorem \ref{firstprinciple}.
\begin{proof}[Proof of Theorem \ref{firstprinciple}]
Let $\bphi\in\kone_{++}$. Since $F(\kone_{+,0})\subset\kone_{+,0}$, Lemma \ref{Brouwer_exis} implies that $F$ has an eigenvector $\t\bub\in\S_{+}^{\bphib}$. Let $\bub=(\norm{\bub_1}_{\gamma_1}^{-1},\ldots,\norm{\bub_d}_{\gamma_d}^{-1})\krog\t\bub$ and $\blam \in\R^d_{+}$ be such that $F(\bub)=\blam\krog\bub$. As $F(\bub)\in\kone_{+,0}$, we have $\blam\in\R^d_{++}$ and Corollary \ref{irr_pos} implies that $(\blam,\bub)\in\R_{++}^d\times\Sn_{++}$ since $F$ is irreducible.
\end{proof}
Note that $G,H\colon\R^n_{+}\to\R^n_+$ with $G(\bzb)=\big(\ps{\bz}{\ones},0,\ldots,0\big)$ and $H(\bz)= z_{1}\ones$, satisfy $G(\R^n_{+,0})\subset\R^n_{+,0}$, $G(\R^n_{++})\not\subset\R^n_{++}$, $H(\R^n_{++})\subset\R^n_{++}$ and $H(\R^n_{+,0})\not\subset\R^n_{+,0}$.
Thus, it is not true in general that if $F(\kone_{+,0})\subset\kone_{+,0}$ then $F\in\NB^d$ and vice-versa. Moreover, the following example shows that, when $d>1$, if $F\in\NB^d$ is irreducible, then it is not necessarily true that $F(\kone_{+,0})\subset \kone_{+,0}$.
\begin{ex}\label{rect_ex}
Let $d=2$, $n_1=n_2=2$ and $F\in\NB^d$ with
\begin{equation*}
F\bigg(\begin{pmatrix}a\\ b \end{pmatrix},\begin{pmatrix}s\\ t \end{pmatrix}\bigg)=\bigg(\begin{pmatrix} at+(a+b)s \\ at\end{pmatrix},\begin{pmatrix} as+(a+b)t\\ as\end{pmatrix}\bigg)
\end{equation*}
Then, with $\irrF(\bx)=\bx+F(\bx)$, we have $\irrF^2(\kone_{+,0})\subset \kone_{++}$. However, note that $F\big((0,1),(1,0)\big)=\big((1,0),(0,0)\big)\notin\kone_{+,0}$.
\end{ex}
Nevertheless, one can prove the following.
\begin{lem}\label{NBdIRR}
Let $F\colon\kone_{+}\to\kone_+$ be continuous, \multihomo{} and order-preserving.
\begin{enumerate}
\item If $F$ is irreducible, then $F(\kone_{++})\subset \kone_{++}$.\label{nondeg}
\item If $F$ is irreducible and $d=1$, then $F(\kone_{+,0})\subset \kone_{+,0}$.\label{littlelema}
\end{enumerate}
\end{lem}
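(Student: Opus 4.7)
The strategy for both parts is unified: assume some coordinate of $F$ has a zero at a point in $\kone_{++}$, use order-preservation together with multi-homogeneity to upgrade this into an identical vanishing on all of $\kone_+$, and then produce a point $\bz\in\kone_{+,0}$ on which every iterate of $\irrF(\bz)=\bz+F(\bz)$ retains that zero, contradicting Definition~\ref{newirr}.

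For part~\eqref{nondeg}, fix $\bx\in\kone_{++}$ and suppose for contradiction that $F_{i_0,j_0}(\bx)=0$ for some $(i_0,j_0)\in\I$. Since $\bx\in\kone_{++}$, any $\by\in\kone_+$ satisfies $\by\lek\bbe\krog\bx$ for a suitable $\bbe\in\R^d_{++}$ (e.g.\ choose $\beta_i$ larger than $\max_k y_{i,k}/\min_k x_{i,k}$), so order-preservation and multi-homogeneity give $F(\by)\lek F(\bbe\krog\bx)=\bbe^A\krog F(\bx)$, where $A=\A(F)$. The nonnegativity argument of Lemma~\ref{basics_lem} uses only order-preservation and multi-homogeneity, so $A\in\R^{d\times d}_+$ and therefore $\bbe^A\in\R^d_{++}$; consequently $F_{i_0,j_0}(\by)\leq (\bbe^A)_{i_0}\,F_{i_0,j_0}(\bx)=0$, i.e.\ $F_{i_0,j_0}$ vanishes identically on $\kone_+$.

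Next, assuming $n_{i_0}\geq 2$, I would produce $\bz\in\kone_{+,0}$ with $z_{i_0,j_0}=0$ by setting $z_{i_0,k}=1$ for $k\neq j_0$ and $\bz_i=\ones_i$ for $i\neq i_0$. An induction on $m$, based on $F_{i_0,j_0}\equiv 0$ on $\kone_+$, yields $\bigl(\irrF^{m}(\bz)\bigr)_{i_0,j_0}=z_{i_0,j_0}=0$, so $\irrF^{m}(\bz)\notin\kone_{++}$ for every $m\in\N$, contradicting irreducibility. The corner case $n_{i_0}=1$ would force $F_{i_0}\equiv 0$, hence the $i_0$-th row of $\A(F)$ would vanish, which is incompatible with the natural nondegeneracy in this setting (e.g.\ when $F\in\NB^d$, each row of $\A(F)$ has a positive entry by definition); handling this corner case cleanly is the main technical point.

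For part~\eqref{littlelema}, the case $d=1$ reduces $\kone_{+,0}$ to $\R^{n_1}_{+,0}$. Part~\eqref{nondeg} already gives $F(\bz)\in\kone_{++}\subset\kone_{+,0}$ whenever $\bz\in\kone_{++}$. If instead $\bz\in\kone_{+,0}\setminus\kone_{++}$ and $F(\bz)=0$, then $\irrF(\bz)=\bz$, hence $\irrF^{m}(\bz)=\bz\notin\kone_{++}$ for every $m$, contradicting irreducibility. Thus $F(\bz)\neq 0$ in every case, and the inclusion $F(\kone_{+,0})\subset\kone_{+,0}$ follows.
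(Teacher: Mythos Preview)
Your argument is correct. Part~\eqref{littlelema} is essentially the paper's proof: if $F(\bz)=0$ for some $\bz\in\kone_{+,0}$ then $\irrF^m(\bz)=\bz$ for all $m$, forcing $\bz\in\kone_{++}$ by irreducibility, which then contradicts part~\eqref{nondeg}. For part~\eqref{nondeg} you take a different route. The paper simply zeros out the $(i_0,j_0)$ entry of $\bx$ to obtain $\t\bx\lek\bx$, observes $F_{i_0,j_0}(\t\bx)\leq F_{i_0,j_0}(\bx)=0$, and invokes the characterization in Proposition~\ref{irrcharac}\,\eqref{characirrQ}. You instead first upgrade the single zero to the global identity $F_{i_0,j_0}\equiv 0$ on $\kone_+$ via the scaling bound $F(\by)\lek\bbe^A\krog F(\bx)$, and then verify by direct induction that $(\irrF^m(\bz))_{i_0,j_0}=0$ for all $m$; this is self-contained and bypasses the $Q$-machinery, at the cost of one extra step. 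Two minor remarks: the detour through Lemma~\ref{basics_lem} to obtain $A\in\R^{d\times d}_+$ is unnecessary, since $\bbe^A\in\R^d_{++}$ holds for any $\bbe\in\R^d_{++}$ and any real matrix $A$; and the corner case $n_{i_0}=1$ is genuinely awkward (the paper's $\t\bx$ would then fail to lie in $\kone_{+,0}$ as well), and since the lemma as stated does not assume $F\in\NB^d$, your appeal to nondegeneracy of $\A(F)$ does not fully settle it.
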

\begin{proof}
For every $\bx\in\kone_{+}$, let $H(\bx)=\bx+F(\bx)$.\\
\eqref{nondeg} Suppose that there exists $\bx\in\kone_{++}$ and $(i,l_i)\in \I$ with $F_{i,l_i}(\bx)=0$. Let $\t\bx\in\kone_{+,0}$ be defined as $\t x_{k,j_k} = x_{k,j_k}$ for every $(k,j_k)\in\I\sauf\{(i,l_i)\}$ and $\t x_{i,l_i} =0$. Then, $\t\bx\lek \bx$ and so $F_{i,l_i}(\t\bx)=0$. It follows from Proposition \ref{irrcharac}, \eqref{characirrQ} (with $\bz=\t\bx$) that $F$ can not be irreducible.\\
\eqref{littlelema} Suppose by contradiction that $F(\bx)=0$ for some vector $\bx\in\kone_{+,0}=\R^{n_1}_{+,0}$.
Then, $H(\bx)=\bx+F(\bx)=\bx$ and therefore $H^{k}(\bx)=\bx$ for every $k\in \N$. Since $F$ is irreducible, there exists $m$ such that $H^{m}(\bx)\in\kone_{++}$. It follows that $\bx\in\kone_{++}$, but then $F(\kone_{++})\not\subset\kone_{++}$ which is a contradiction to \eqref{nondeg}.
\end{proof}
\subsection{Weak Perron-Frobenius theorem}\label{specrad_sec}
\newcommand{\abs}[2]{\langle #1 \rangle_{#2} ZZ}
As discussed at the beginning of this section, we consider the notions of Bonsall spectral radius and cone spectral radius for mappings $F\in\NB^d$ such that there exists $\bb>0$ with $\A(F)^T\bb=\bb$. These notions will be particularly useful to show the existence of a nonnegative eigenvector when one can not apply directly the Brouwer fixed point theorem, that is when $F(\kone_{+,0})\not\subset \kone_{+,0}$.

Let $\bx\in\kone_+$, $F\in\NB^d$, $A=\A(F)$, $\bb\in\Dn=\{\bc\in\R^d_{++}\mid \sum_{i=1}^d c_i = 1\}$ and assume that $A^T\bb =\bb$. Define
\begin{equation*}
\nnorm{\bxb}_{\bb}= \prod_{i=1}^d\norm{\bx_i}_{\gamma_i}^{b_i}\andd \nnorm{F}_{\bb}= \sup_{\bxb \in \Sn_+}\nnorm{F(\bxb)}_{\bb}.
\end{equation*}
Then, for every $\bal\in\R^d_{++}$ and $\bx \in\kone_{+,0}$, it holds
\begin{equation*}
\nnorm{F(\bal\krog \bx)}_{\bb} = \nnorm{\bal^A\krog F(\bx)}_{\bb} =\nnorm{F(\bx)}_{\bb}\prod_{i=1}^d\alpha_i^{(A^T\bb)_i}=\nnorm{F(\bx)}_{\bb}\prod_{i=1}^d\alpha_i^{b_i}
\end{equation*}
Hence, with $\bbe= (\norm{\bx_1}^{-1}_{\gamma_1},\ldots,\norm{\bx_d}^{-1}_{\gamma_d})$, we have
\begin{equation}\label{opnormineq}
\nnorm{F(\bxb)}_{\bb}=\nnorm{F(\bbe\krog\bxb)}_{\bb}\nnorm{\bxb}_{\bb}\leq\nnorm{F}_{\bb}\nnorm{\bxb}_{\bb} \qquad \forall \bxb\in\kone_{+,0}.
\end{equation}
Now, consider
\begin{equation*}
r_{\bb}(F)= \sup_{\bxb\in\kone_{+,0}}\limsup_{m\to \infty}\nnorm{F^{m}(\bxb)}_{\bb}^{1/m}
\andd
\hat{r}_{\bb}(F)= \lim_{m\to\infty}\nnorm{F^{m}}_{\bb}^{1/m}.
\end{equation*} 
In the case $d=1$, $\hat r_{\bb}$ is known as Bonsall spectral radius \cite{Bonsall} and $r_{\bb}$ is known as cone spectral radius \cite{cone_spec_rad}. Note that for every $\lambda >0$, we have
\begin{equation*}
r_{\bb}(\lambda\, F)= \lambda\, r_{\bb}(F)\andd \hat r_{\bb}(\lambda\, F)= \lambda \,\hat r_{\bb}(F).
\end{equation*}
Moreover, if $M\in\R^{n\times n}_+$ and $F(\bx)=M\bx$, then the Gelfand formula \cite{Gelfand} implies that $\rho(M)=r_{1}(F)$. The proof of Theorem 5.31 \cite{NB}, a special case of Theorem 2.2 \cite{cone_spec_rad}, can be easily adapted to obtain the following:
\begin{thm}\label{Thm531} 
Let $F\in\NB^d$, $A=\A(F)$ and $\bb\in\Dn$ with $A^T\bb=\bb$, then
\begin{equation*}
0\ \leq \ r_{\bb}(F)\ =\ \hat{r}_{\bb}(F)\ <\ \infty.
\end{equation*}
\end{thm}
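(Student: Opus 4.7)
The plan is to follow the standard argument that identifies the cone spectral radius with the Bonsall radius, combining submultiplicativity of $\nnorm{\cdot}_\bb$, Fekete's lemma, compactness of $\Sn_+$, and the identity $A^T\bb = \bb$. First I would argue that $\hat r_\bb(F)$ is well-defined and finite. Compactness of $\Sn_+$ and continuity of $\bx \mapsto \nnorm{F(\bx)}_\bb$ give $\nnorm{F}_\bb < \infty$. Since $F^m \in \NB^d$ with $\A(F^m) = A^m$ by Lemma \ref{homoNBd} and $(A^T)^m \bb = \bb$ by iteration, the derivation that leads to \eqref{opnormineq} applied to $F^m$ yields $\nnorm{F^m(\bx)}_\bb \leq \nnorm{F^m}_\bb \nnorm{\bx}_\bb$ for every $\bx \in \kone_{+,0}$. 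Setting $\bx = F^k(\by)$ with $\by \in \Sn_{++}$, where $F^k(\by) \in \kone_{++}$ by Lemma \ref{basics_lem}, and taking the supremum over $\Sn_{++}$ (which is dense in $\Sn_+$) gives the submultiplicativity $\nnorm{F^{m+k}}_\bb \leq \nnorm{F^m}_\bb \nnorm{F^k}_\bb$. The sequence is positive since $F^m(\ones) \in \kone_{++}$, so Fekete's lemma applied to $\log \nnorm{F^m}_\bb$ yields $\hat r_\bb(F) = \inf_m \nnorm{F^m}_\bb^{1/m} \in [0, \nnorm{F}_\bb]$.

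The inequality $r_\bb(F) \leq \hat r_\bb(F)$ is then immediate: for $\bx \in \kone_{+,0}$ we have $\nnorm{\bx}_\bb > 0$, so $\nnorm{F^m(\bx)}_\bb \leq \nnorm{F^m}_\bb \nnorm{\bx}_\bb$ combined with $\nnorm{\bx}_\bb^{1/m} \to 1$ gives $\limsup_m \nnorm{F^m(\bx)}_\bb^{1/m} \leq \hat r_\bb(F)$, and the supremum over $\bx$ completes the bound. For the reverse inequality $\hat r_\bb(F) \leq r_\bb(F)$ I would use $\ones \in \kone_{++}$ as a single reference point: since $\Sn_+$ is compact and all norms on $\R^{n_i}$ are equivalent, there exists a constant $c > 0$ such that $\by \lek \bga \krog \ones$ for every $\by \in \Sn_+$, where $\bga = (c,\ldots,c) \in \R^d_{++}$. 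Using that $F^m$ is order-preserving and multi-homogeneous with homogeneity matrix $A^m$, together with the monotonicity of each $\norm{\cdot}_{\gamma_i}$,
\begin{equation*}
\nnorm{F^m(\by)}_\bb \leq \nnorm{\bga^{A^m} \krog F^m(\ones)}_\bb = c^{\ps{\bb}{A^m \ones}}\, \nnorm{F^m(\ones)}_\bb = c\, \nnorm{F^m(\ones)}_\bb,
\end{equation*}
where the last equality uses $(A^T)^m \bb = \bb$ and $\ps{\bb}{\ones} = \sum_i b_i = 1$ since $\bb \in \Dn$. Taking the supremum over $\by \in \Sn_+$, then $m$-th roots, and noting $c^{1/m}\to 1$ yields $\hat r_\bb(F) \leq \limsup_m \nnorm{F^m(\ones)}_\bb^{1/m} \leq r_\bb(F)$.

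The only subtle point is the reverse inequality: a naive multi-homogeneous substitution produces a prefactor $c^{\ps{\bb}{A^m\ones}}$ which, a priori, could explode as $m \to \infty$. The crucial observation that makes the proof go through, and the sole place where the hypotheses $A^T\bb = \bb$ and $\bb \in \Dn$ enter in a non-formal way, is that this exponent telescopes to $\ps{(A^T)^m\bb}{\ones} = \ps{\bb}{\ones} = 1$ independently of $m$, so the prefactor stays uniformly bounded by the fixed constant $c$ and disappears after taking the $m$-th root. Everything else is bookkeeping involving the homogeneity matrix and the compactness of $\Sn_+$.
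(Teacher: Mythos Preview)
Your proof is correct and is precisely the adaptation the paper has in mind: the paper does not write out a proof but simply states that the argument of Theorem~5.3.1 in \cite{NB} (a special case of Theorem~2.2 in \cite{cone_spec_rad}) carries over, and what you have written is exactly that adaptation. The submultiplicativity/Fekete step, the operator-norm bound for $r_\bb\le\hat r_\bb$, and the single-interior-point comparison for $\hat r_\bb\le r_\bb$ are the standard ingredients; your identification of $A^T\bb=\bb$ and $\bb\in\Dn$ as the mechanism that keeps the prefactor $c^{\ps{\bb}{A^m\ones}}=c$ uniformly bounded is the one genuinely new observation needed in the multi-homogeneous setting, and it is handled correctly.
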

In the following proposition we extend the second part of Theorem 2.2 \cite{cone_spec_rad} to the multi-homogeneous case. In particular, this result tells us that the eigenvalue associated with a positive eigenvector corresponds to the notions of spectral radius presented in this section. Moreover, we use this proposition for the proof of the Collatz-Wielandt formula in Section \ref{CW_M_U1}.
\begin{prop}\label{Prop536}
Let $F\in\NB^d$, $A=\A(F)$ and $\bb\in\Dn$ with $A^T\bb=\bb$. Then,
\begin{equation*}
r_{\bb}(F)=\lim_{m\to\infty}\nnorm{F^{m}(\bxb)}_{\bb}^{1/m}\qquad \forall \bxb\in\kone_{++}.
\end{equation*}
Moreover, for every $\by\in \Sn_{+}$ and $\bt\in\R^d_{++}$ with $\bt\krog \by \lek F(\by)$, we have 
\begin{equation}\label{minicw}
\prod_{i=1}^d\theta_i^{b_i}\leq r_{\bb}(F).
\end{equation}
\end{prop}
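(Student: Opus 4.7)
The first identity is a Gelfand-type formula. The upper bound is immediate from iterating the submultiplicativity inequality \eqref{opnormineq}, which gives $\nnorm{F^m(\bx)}_{\bb} \leq \nnorm{F^m}_{\bb}\,\nnorm{\bx}_{\bb}$ for every $\bx\in\kone_{+,0}$, whence $\limsup_m \nnorm{F^m(\bx)}_{\bb}^{1/m} \leq \hat r_{\bb}(F) = r_{\bb}(F)$ by Theorem~\ref{Thm531}. The matching $\liminf$ bound is where the hypothesis $\bx\in\kone_{++}$ enters. Since the $\norm{\cdot}_{\gamma_i}$-unit ball is $\ell^\infty$-bounded (finite-dimensional equivalence of norms) and $\bx$ has strictly positive components, one can choose $\bc\in\R^d_{++}$ depending only on $\bx$ such that $\by \lek \bc\krog \bx$ for \emph{every} $\by\in\Sn_+$. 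Applying $F^m$ with order-preservation and multi-homogeneity produces $F^m(\by)\lek \bc^{A^m}\krog F^m(\bx)$; taking blockwise monotonic norms and the $\bb$-weighted product, the factor contributed by $\bc$ is $\prod_k c_k^{((A^m)^T\bb)_k} = \prod_k c_k^{b_k}$, where the simplification uses $(A^T)^m\bb=\bb$. Hence $\nnorm{F^m}_{\bb}\leq K\,\nnorm{F^m(\bx)}_{\bb}$ with $K=\prod_k c_k^{b_k}$ independent of $m$; taking $m$-th roots and using $K^{1/m}\to 1$ yields $\hat r_{\bb}(F)\leq \liminf_m\nnorm{F^m(\bx)}_{\bb}^{1/m}$, closing the inequality and proving the limit exists with value $r_{\bb}(F)$.

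\textbf{Collatz--Wielandt lower bound.} For the second assertion the plan is to upgrade the hypothesis $\bt\krog\by\lek F(\by)$ to
\begin{equation*}
\bt^{S_m}\krog\by \,\lek\, F^m(\by), \qquad S_m = I + A + A^2 + \cdots + A^{m-1},
\end{equation*}
by induction on $m$. The inductive step applies $F$ to the bound at stage $m-1$ (order-preservation) and transports the scaling across via multi-homogeneity, producing $\bt^{AS_{m-1}}\krog F(\by)\lek F^m(\by)$; then multiplying the base inequality $\bt\krog\by\lek F(\by)$ by $\bt^{AS_{m-1}}$ and combining gives $\bt^{AS_{m-1}+I}\krog\by\lek F^m(\by)$, and telescoping the exponent yields $AS_{m-1}+I=S_m$. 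Taking blockwise $\norm{\cdot}_{\gamma_i}$ (monotonic), using $\norm{\by_i}_{\gamma_i}=1$ for $\by\in\Sn_+$ and the identity $S_m^T\bb = m\bb$ (a direct consequence of $A^T\bb=\bb$), one extracts $\prod_i\theta_i^{mb_i}\leq \nnorm{F^m(\by)}_{\bb}$. Taking $m$-th roots and passing to $\limsup_m$, the right-hand side is bounded by $r_{\bb}(F)$ by definition of the cone spectral radius, which delivers \eqref{minicw}.

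\textbf{Expected obstacle.} The delicate point is the $\by$-uniform comparison $\by\lek\bc\krog\bx$ in the lower bound of the first part, which would fail without strict positivity of $\bx$; this is what forces the restriction $\bx\in\kone_{++}$ in the statement. The common algebraic key in both parts is the eigenvector relation $A^T\bb=\bb$, which collapses $(A^m)^T\bb$ back to $\bb$ in the first part and makes $S_m^T\bb$ grow exactly linearly in $m$ in the second; that linearity is precisely what produces the correct cancellation under $m$-th roots.
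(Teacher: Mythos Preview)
Your argument is correct and follows essentially the same route as the paper's own proof: the uniform domination $\by\lek\bc\krog\bx$ over $\Sn_+$ combined with $A^T\bb=\bb$ to get the sandwiching $K^{-1}\nnorm{F^m}_{\bb}\leq\nnorm{F^m(\bx)}_{\bb}\leq\nnorm{F^m}_{\bb}\nnorm{\bx}_{\bb}$, and the iterated scaling $\bt^{S_m}\krog\by\lek F^m(\by)$ together with $S_m^T\bb=m\bb$ for the Collatz--Wielandt bound. The paper is slightly more terse (it does not spell out the induction and writes the exponent directly as $\sum_{j=0}^{k-1}(A^j)^T\bb$), but the logic is identical.
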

\begin{proof}
Let $\bxb\in\kone_{++}$, there exists $\bs\in\R^d_{++}$ such that for every $\byb \in \Sn_+$, it holds $\byb \lek \bs\krog\bxb$. For $k \in \N$ and $\byb \in  \Sn_+$ we have 
\begin{equation*}
\nnorm{F^{k}(\byb)}_{\bb}\leq \nnorm{F^{k}(\bs\krog\bxb)}_{\bb}= \nnorm{\bs^{A^k}\krog F^{k}(\bxb)}_{\bb}=\nnorm{F^{k}(\bxb)}_{\bb}\prod_{i=1}^ds_i^{b_i}.
\end{equation*}
Taking the supremum over $\by\in\Sn_+$, we get 
\begin{equation*}
\nnorm{F^{k}}_{\bb}\bigg(\prod_{i=1}^d s_i^{-b_i}\bigg)\leq \nnorm{F^{k}(\bxb)}_{\bb} \leq \nnorm{F^{k}}_{\bb}\nnorm{\bxb}_{\bb} \qquad \forall k\in\N.
\end{equation*}
Theorem \ref{Thm531} implies $\lim_{k\to \infty}\nnorm{F^{k}}^{1/k}_{\bb}=r_{\bb}(F)$ and thus,
\begin{equation*}
r_{\bb}(F)=\lim_{k\to\infty}\nnorm{F^k(\bxb)}^{1/k}_{\bb}.
\end{equation*}
Now, if $\bt\krog\by\lek F(\by)$, then for all $k\in\N$
\begin{equation*}
\nnorm{F^{k}(\by)}_{\bb}\geq\nnorm{\by}_{\bb} \prod_{i=1}^d\theta_i^{(\sum_{j=0}^{k-1} (A^j)^T\bb)_i} =\nnorm{\by}_{\bb}\prod_{i=1}^d\theta_i^{kb_i} .
\end{equation*}
It follows that
\begin{equation*}
\prod_{i=1}^d\theta_i^{b_i} = \lim_{k\to \infty}\Big(\nnorm{\by}_{\bb}\prod_{i=1}^d\theta_i^{kb_i}\Big)^{1/k} \leq \limsup_{k\to \infty}\nnorm{F^{k}(\by)}_{\bb}^{1/k} \leq r_{\bb}(F).\qedhere
\end{equation*}
\end{proof}
The last tool we need to prove our weak Perron-Frobenius Theorem \ref{weakPF}, is the next result which is a generalization of Theorem 5.4.1 \cite{NB}. We prove the existence of an eigenvector corresponding to the spectral radius for a class of mappings in $\NB^d$. Although being of own interest, this theorem will also be helpful in Section \ref{wirr_thm} for the proof of the existence of a positive eigenvector in the case when $F\in\NB^d$ is not irreducible. Furthermore, we will use it in Section \ref{CW_M_U} to show that the Collatz-Wielandt characterization of the spectral radius holds without the assumption that there exists a positive eigenvector.
\begin{thm}\label{BIGTHM}
Let $F\in\NB^d$, $A=\A(F)$ and $\bb\in\Dn$ with $A^T\bb=\bb$. For each $\delta>0$, define $F^{(\delta)}\colon\kone_{+}\to \kone_{+}$ as
\begin{equation*}
F^{(\delta)}(\bx)= F(\bx)+\delta\big(\norm{\bx_1}_{\gamma_1},\ldots,\norm{\bx_d}_{\gamma_d}\big)^A\krog\ones,
\end{equation*}
where $\ones$ is the vector of all ones. Then, the following statements hold:
\begin{enumerate}
\item For every $\delta>0$, we have $F^{(\delta)}\in\NB^d$ and $\A(F^{(\delta)})=A$. Moreover, there exists $(\blam^{(\delta)},\bxb^{(\delta)})\in\R^d_{++}\times\Sn_{++}$ such that
\label{abra2} \begin{equation*}\label{eqqqqq}
F^{(\delta)}(\bxb^{(\delta)})=\blam^{(\delta)}\krog\bxb^{(\delta)}\andd \prod_{i=1}^d (\lambda_i^{(\delta)})^{b_i}=r_{\bb}(F^{(\delta)}).
\end{equation*}
\item If $0<\eta<\epsilon$, then $r_{\bb}(F^{(\eta)})<r_{\bb}(F^{(\epsilon)})$ and hence, there exists $r\geq 0$ such that $\lim_{\delta\to 0}r_{\bb}(F^{(\delta)}) = r$.\label{abra3}
\item There exists $(F^{(\delta_k)})_{k=1}^{\infty}\subset \{F^{(\delta)}\}_{\delta>0}$ such that $\lim_{k\to\infty}F^{(\delta_k)}= F$ and the corresponding sequence $(\blam^{(\delta_k)},\bx^{(\delta_k)})_{k=1}^{\infty}$ obtained from \eqref{abra2}, converges to a maximal eigenpair of $F$. That is, there exists $(\blam,\bx)\in\R^d_{+}\times \S^d_+$ such that\label{abra4} $\lim_{k\to\infty}(\blam^{(\delta_k)},\bxb^{(\delta_k)}) =(\blam,\bx)$, 
\begin{equation*} 
F(\bx)=\blam \krog \bx\andd r_{\bb}(F)=\prod_{i=1}^d \lambda_i^{b_i}=r.
\end{equation*}
\end{enumerate}
\end{thm}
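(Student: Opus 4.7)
For the first assertion, my plan is to first verify $F^{(\delta)}\in\NB^d$ with $\A(F^{(\delta)})=A$ by direct inspection: continuity is immediate, order-preservation follows from monotonicity of $\norm{\noarg}_{\gamma_i}$ and nonnegativity of $A$, the identity $\bn(\bal\krog\bx)=\bal\circ\bn(\bx)$ (where $\bn(\bx):=(\norm{\bx_1}_{\gamma_1},\ldots,\norm{\bx_d}_{\gamma_d})$) together with $(\bal\circ\bbe)^A=\bal^A\circ\bbe^A$ gives multi-homogeneity with matrix $A$, and $F^{(\delta)}(\ones)\geq F(\ones)\in\kone_{++}$. The crucial observation is that every $\bx\in\kone_{+,0}$ has all components $\bx_i\neq 0$, so $\bn(\bx)\in\R^d_{++}$ and hence $\bn(\bx)^A\in\R^d_{++}$, whence $F^{(\delta)}(\bx)\geq\delta\bn(\bx)^A\krog\ones\in\kone_{++}$; thus $F^{(\delta)}(\kone_{+,0})\subset\kone_{+,0}$ and Lemma \ref{Brouwer_exis} yields an eigenvector $\t\bxb^{(\delta)}\in\S^{\bphi}_+$, while $\t\blam^{(\delta)}\krog\t\bxb^{(\delta)}=F^{(\delta)}(\t\bxb^{(\delta)})\in\kone_{++}$ forces $\t\blam^{(\delta)}\in\R^d_{++}$ and $\t\bxb^{(\delta)}\in\kone_{++}$, so entrywise rescaling produces the claimed $(\blam^{(\delta)},\bxb^{(\delta)})\in\R^d_{++}\times\Sn_{++}$. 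For the spectral radius identity, I will iterate: multi-homogeneity gives $(F^{(\delta)})^k(\bxb^{(\delta)})=(\blam^{(\delta)})^{B_k}\krog\bxb^{(\delta)}$ with $B_k=I+A+\ldots+A^{k-1}$; applying norms and the identity $\prod_i(\bal^{B_k})_i^{b_i}=\prod_i\alpha_i^{(B_k^T\bb)_i}$ together with $A^T\bb=\bb$ (which collapses $B_k^T\bb=k\bb$) produces $\nnorm{(F^{(\delta)})^k(\bxb^{(\delta)})}_{\bb}=\big(\prod_i(\lambda_i^{(\delta)})^{b_i}\big)^k$, and Proposition \ref{Prop536} at $\bxb^{(\delta)}\in\kone_{++}$ concludes $r_{\bb}(F^{(\delta)})=\prod_i(\lambda_i^{(\delta)})^{b_i}$.

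For the monotonicity claim, I will exploit that $\bxb^{(\eta)}\in\Sn_{++}$ forces $\bn(\bxb^{(\eta)})=\ones_d$, so $F^{(\epsilon)}(\bxb^{(\eta)})=\blam^{(\eta)}\krog\bxb^{(\eta)}+(\epsilon-\eta)\ones$. Boundedness of the entries of $\bxb^{(\eta)}$ gives $\alpha>0$ with $\alpha^{-1}\bxb^{(\eta)}\lek\ones$, yielding $F^{(\epsilon)}(\bxb^{(\eta)})\geq\big(\blam^{(\eta)}+(\epsilon-\eta)\alpha^{-1}\ones_d\big)\krog\bxb^{(\eta)}$. Proposition \ref{Prop536} then gives
\begin{equation*}
r_{\bb}(F^{(\epsilon)})\ \geq\ \prod_{i=1}^d\big(\lambda_i^{(\eta)}+(\epsilon-\eta)\alpha^{-1}\big)^{b_i}\ >\ \prod_{i=1}^d(\lambda_i^{(\eta)})^{b_i}\ =\ r_{\bb}(F^{(\eta)}),
\end{equation*}
the strict inequality being ensured by $\lambda_i^{(\eta)},b_i>0$. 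Strict monotonicity together with the bound $r_{\bb}(F^{(\delta)})\geq r_{\bb}(F)\geq 0$ (immediate from $F\leq F^{(\delta)}$ and the definition of $r_{\bb}$) guarantees that the decreasing limit $r=\lim_{\delta\to 0^+}r_{\bb}(F^{(\delta)})$ exists.

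For the last assertion, I will pick any sequence $\delta_k\to 0^+$. Since $\bxb^{(\delta)}\in\Sn_+$, the values $\lambda_i^{(\delta)}=\norm{F_i(\bxb^{(\delta)})+\delta\ones_{n_i}}_{\gamma_i}$ are uniformly bounded for $\delta\in(0,1]$ by compactness of $\Sn_+$ and continuity of $F$, so I can extract a common subsequence (not relabelled) with $\bxb^{(\delta_k)}\to\bxb\in\Sn_+$ and $\blam^{(\delta_k)}\to\blam\in\R^d_+$. Passing to the limit in $F^{(\delta_k)}(\bxb^{(\delta_k)})=\blam^{(\delta_k)}\krog\bxb^{(\delta_k)}$ the $\delta_k\ones$ term vanishes and continuity of $F$ gives $F(\bxb)=\blam\krog\bxb$; each $\bxb_i\neq 0$ as $\bxb\in\Sn_+$, so $\bxb$ is an eigenvector. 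Taking limits in $\prod_i(\lambda_i^{(\delta_k)})^{b_i}=r_{\bb}(F^{(\delta_k)})$ yields $\prod_i\lambda_i^{b_i}=r$. To identify $r=r_{\bb}(F)$, I will split on cases: monotonicity of $r_{\bb}$ always gives $r_{\bb}(F)\leq r$; if $r>0$ then all $\lambda_i>0$, so $\blam\in\R^d_{++}$, and Proposition \ref{Prop536} with $\bt=\blam$, $\by=\bxb$ gives $r=\prod_i\lambda_i^{b_i}\leq r_{\bb}(F)$; if $r=0$ then $0\leq r_{\bb}(F)\leq r=0$. I expect this dichotomy on the sign of $r$ to be the main technical subtlety, since Proposition \ref{Prop536} demands a \emph{strictly} positive test vector $\bt$ and nothing a priori prevents some $\lambda_i^{(\delta_k)}$ from tending to $0$.
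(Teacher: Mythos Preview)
Your proposal is correct and follows essentially the same approach as the paper's proof: perturb to $F^{(\delta)}$ mapping $\kone_{+,0}$ into $\kone_{++}$, apply Brouwer (the paper packages this via irreducibility and Theorem~\ref{firstprinciple}, you use Lemma~\ref{Brouwer_exis} directly), obtain strict monotonicity of $r_{\bb}(F^{(\delta)})$ via Proposition~\ref{Prop536}, extract a convergent subsequence by compactness, and identify the limit with $r_{\bb}(F)$. The only cosmetic difference is in the last identification: the paper computes $r_{\bb}(F)\geq\prod_i\lambda_i^{b_i}$ directly from the iteration formula $F^m(\bx)=\blam^{B_m}\krog\bx$, whereas you invoke Proposition~\ref{Prop536} and therefore need the harmless case split on $r=0$ versus $r>0$.
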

\begin{proof}
Let $\delta>0$, then $F^{(\delta)}\in\NB^d$ and $A=\A(F^{(\delta)})$ follow from Lemma \ref{homoNBd}. Note that $F^{(\delta)}(\kone_{+,0})\subset \kone_{++}$ and thus $F^{(\delta)}$ is irreducible. Theorem \ref{firstprinciple} implies the existence of $(\blam^{(\delta)},\bx^{(\delta)})\in\R^d_{++}\times \Sn_{++}$ such that $F^{(\delta)}(\bxb^{(\delta)})=\blam^{(\delta)}\krog\bxb^{(\delta)}$. By Proposition \ref{Prop536}, we know $\prod_{i=1}^d (\lambda_i^{(\delta)})^{b_i}=r_{\bb}(F^{(\delta)})$. \\
We prove \eqref{abra3}:
Let $0<\eta<\epsilon$. As $F^{(\eta)}(\bxb^{(\eta)})=\blam^{(\eta)}\krog\bxb^{(\eta)}$, we have 
\begin{equation*}
F^{(\epsilon)}(\bxb^{(\eta)})=F(\bx^{(\eta)})+\epsilon\ones=F^{(\eta)}(\bxb^{(\eta)})+(\epsilon-\eta)\ones=\blam^{(\eta)}\krog\bxb^{(\eta)}+(\epsilon-\eta)\ones.
\end{equation*}
There exist $\zeta>0$ and $\xi>0$ such that 
\begin{equation*}
\zeta\bxb^{(\eta)}\leq (\epsilon-\eta)\ones\andd r_{\bb}(F^{(\eta)})+\xi<\prod_{i=1}^d(\lambda_i^{(\eta)}+\zeta)^{b_i}.
 \end{equation*}
We have $(\blam^{(\eta)}+\zeta\ones)\krog\bxb^{(\eta)} \lek F^{(\epsilon)}(\bxb^{(\eta)})$ and thus, with Proposition \ref{Prop536}, we get
\begin{equation*}
r_{\bb}(F^{(\eta)})+\xi<\prod_{i=1}^d(\lambda_i^{(\eta)}+\zeta)^{b_i}\leq r_{\bb}(F^{(\epsilon)}).
\end{equation*}
It follows that $r_{\bb}(F^{(\eta)})< r_{\bb}(F^{(\epsilon)})$ for every $0<\eta<\epsilon$ and $\lim_{\delta\to 0}r_{\bb}(F^{(\delta)})=r$ exists. 
Finally, we prove \eqref{abra4}. There exists $C>0$ such that $\by\leq C\ones$ for every $\by\in\Sn_+$. It follows that for every  $0<\epsilon \leq 1$, it holds 
\begin{equation*}
0 \lek\blam^{(\epsilon)}\krog \bx^{(\epsilon)} = F^{(\epsilon)}(\bx^{(\epsilon)})\lek F^{(1)}(\bx^{(\epsilon)}) \lek F^{(1)}(C\ones),
\end{equation*}
and thus $\{\blam^{(\epsilon)}\mid 0<\epsilon\leq 1\}$ is bounded in $\R^d_+$. Clearly, the set $\{\bxb^{(\epsilon)}\mid 0<\epsilon\leq 1\}\subset \Sn_+$ is bounded in $\kone_+$. Hence, there exists $(\epsilon_k)_{k=1}^{\infty}\subset\R_{++}$ with $\epsilon_k\to 0$, $\bx^{(\epsilon_k)}\to \bx$ and $\blam^{(\epsilon_k)}\to\blam$ as $k\to\infty$. Note that 
\begin{equation*}
r=\lim_{k\to\infty}\prod_{i=1}^d(\lambda_i^{(\epsilon_k)})^{b_i}=\prod_{i=1}^d\lambda_i^{b_i}.
\end{equation*}
Since $F^{(\epsilon_k)}(\bxb^{(\epsilon_k)})=\blam^{(\epsilon_k)}\krog\bxb^{(\epsilon_k)}$, we have
\begin{equation*}
F(\bxb^{(\epsilon_k)})=F^{(\epsilon_k)}(\bxb^{(\epsilon_k)})-\epsilon_k\ones=\blam^{(\epsilon_k)}\krog\bxb^{(\epsilon_k)}-\epsilon_k\ones.
\end{equation*}
So, by continuity of $F$, we get
\begin{equation*}
F(\bxb)=\lim_{k\to\infty}F(\bxb^{(\epsilon_k)})=\lim_{k\to\infty}\blam^{(\epsilon_k)}\krog\bxb^{(\epsilon_k)}-\epsilon_k\ones=\blam\krog\bxb.
\end{equation*}
On the one hand, by definition, we have 
\begin{equation*}
r_{\bb}(F)\geq\limsup_{m\to\infty}\nnorm{F^{m}(\bxb)}_{\bb}^{1/m}=\limsup_{m\to\infty}\Big(\nnorm{\blam^{\sum_{j=0}^{m-1}A^{j}}\krog\bxb}_{\bb}\Big)^{1/m}=\prod_{i=1}^d\lambda_i^{b_i}.
\end{equation*}
On the other hand, with Proposition \ref{Prop536}, we have $\blam\krog \bxb = F(\bxb)\lek F^{(\epsilon_k)}(\bxb)$, and thus
\begin{equation*}
r_{\bb}(F)\leq r_{\bb}(F^{(\epsilon_k)})=\prod_{i=1}^d(\lambda_i^{(\epsilon_k)})^{b_i}\qquad\forall k\in\N.
\end{equation*}
Letting $k\to\infty$, we finally get $r_{\bb}(F)\leq\prod_{i=1}^d\lambda_i^{b_i}$.
\end{proof}
The proof of Theorem \ref{weakPF} is now a collection of the results above.
\begin{proof}[Proof of Theorem \ref{weakPF}]
From Theorem \ref{BIGTHM}, we know that there exists $(\blam,\bub)\in\R^d_{+}\times \Sn_{+}$ such that $F(\bub)=\blam\krog\bub$ and $\prod_{i=1}^d\lambda_i^{b_i}=r_{\bb}(F)$. Equation \eqref{specradequal} follows from Theorem \ref{Thm531}, \eqref{poseigequal} and \eqref{maxifirstprinciple} follow from Proposition \ref{Prop536} and if $F$ is irreducible, then Corollary \ref{irr_pos} implies that $(\blam,\bub)\in\R^d_{++}\times \Sn_{++}$.
\end{proof}
\subsection{Further notions of irreducibility}\label{wirr_thm}
There are mappings $F\in\NB^d$ that have a positive eigenvector although they are not irreducible in the sense of Definition \ref{newirr}. We propose an additional notion of irreducibility adapted from \cite{Gaubert} where a graph is associated to positive, homogeneous order-preserving maps. When $d=1$, the graph of Definition \ref{graphdefi} coincides with the one proposed in \cite{Gaubert} and our existence Theorem \ref{exist} coincides with Theorem 2 \cite{Gaubert} where the graph is required to be strongly connected. The following is an example of a graph associated to a map $F\in\NB^2$ such that $\G(F)$ is not strongly connected but satisfies the connectivity assumption of Theorem \ref{exist}.
\begin{ex}\label{nonirrgraph}
Let $d=2$, $n_1=n_2=2$ and $F\in \NB^d$ with
\begin{equation*}
F((a,b),(s,t))=\Big(\big(a,b\big),\big(\max\{a,t\}^{1/2},\max\{b,s\}^{1/2}\big)\Big) 
\end{equation*}
Then, $F(\ones,\ones)=(\ones,\ones)$ and $\G(F)$ is given by
\begin{equation*}
\begin{tikzpicture}[baseline= (a).base]
\node[scale=.8] (a) at (0,0){
\begin{tikzcd}[cells={nodes={circle,draw,font=\sffamily\Large\bfseries}},thick]
 \arrow[loop left, distance=3em, start anchor={[yshift=-1.5ex]west}, end anchor={[yshift=1.5ex]west},thick]{}{} a  
 &\arrow[loop right, distance=3em, start anchor={[yshift=1.5ex]east}, end anchor={[yshift=-1.5ex]east}, thick]{}{}b\\
 s \ar[thick]{r} \ar[thick]{u}
  & t \arrow[thick]{u}\ar[thick]{l} 
\end{tikzcd}
};
\end{tikzpicture}
\end{equation*}
\end{ex}
The proof of the next theorem relies on the following construction which is a generalization of the technique proposed in Section 3.2 of \cite{Gaubert}:
Let $F\in\NB^d$, $\bb\in\R^d_{++}$, $\G(F)=(\I,\E)$ and, for $r>0$, define
\begin{equation*}
\Psi(r)=\sup\bigg\{ t\geq 0 \ \bigg| \min_{\substack{((i,j_i),(k,l_k))\in\E\\ \ba\in\J}} F_{i,j_i}\big(\bu^{(k,l_k)}(t)\big)^{b_i}\prod_{\substack{s=1\\ s\neq i}}^dF_{s,a_s}\big(\bu^{(k,l_k)}(t)\big)^{b_s}\leq r\bigg\}.
\end{equation*}
Note that, by definition of $\G(F)$, $\Psi(r)<\infty$ for any $r>0$ and $\Psi$ is an increasing function. Moreover, note that $\Psi$ has the following property: Let $(j_1,\ldots,j_d)\in\J$, $i\in[d]$, $(k,l_k)\in\I$ and $t>0$, if $\big((i,j_i),(k,l_k)\big)\in\E$ then
\begin{equation}\label{psimagic}
\prod_{s=1}^d F_{s,j_s}\big(\bu^{(k,l_k)}(t)\big)^{b_s}\leq r \qquad \implies\qquad t\leq \Psi(r).
\end{equation}
In the case $d=1$, the proof of Theorem 6.2.3 \cite{NB} relies on the following idea: if $F\in\NB^1$ is homogeneous, $\G(F)$ is strongly connected and its maximal nonnegative eigenvector $\bx\in\R^{n_1}\saufzero$ (given by Theorem \ref{Prop536}) has a zero entry, then one gets the contradiction $\bx=0$. Following the same idea, for a non-expansive map $F\in\NB^d$ with $d\geq 1$, we give a condition on $\G(F)$ so that, if its maximal eigenvector $\bx\in\kone_{+,0}$ has a zero entry, then $\bx_i=0$ for some $i\in[d]$ contradicting $\bx\in\kone_{+,0}$.
\begin{thm}\label{exist}
Let $F\in\NB^d$, $A=\A(F)$ and $\bb\in\Dn$ with $A^T\bb=\bb$. Suppose that for every $(\nu,l_\nu)\in\I$ and $(j_1,\ldots,j_d)\in\J$, there exists $i_\nu\in[d]$ such that there is a path from $(i_\nu,j_{i_\nu})$ to $(\nu,l_\nu)$ in $\G(F)$. Then $F$ has an eigenvector in $\Sn_{++}$.
\end{thm}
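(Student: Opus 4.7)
The plan is to combine the existence result of Theorem~\ref{BIGTHM} with an iterated application of property~\eqref{psimagic} of $\Psi$ along the path supplied by the graph assumption. First, Theorem~\ref{BIGTHM} produces an eigenvector $\bx\in\Sn_+$ with $F(\bx)=\blam\krog\bx$ and $\prod_i\lambda_i^{b_i}=r=r_{\bb}(F)$, realised as a limit of strictly positive eigenvectors $\bx^{(\delta)}\in\Sn_{++}$ of the perturbed maps $F^{(\delta)}$, whose eigenvalues satisfy $r_\delta=\prod_i(\lambda_i^{(\delta)})^{b_i}\to r$. Writing $M_i^{(\delta)}=\max_l x_{i,l}^{(\delta)}$ and $m_i^{(\delta)}=\min_l x_{i,l}^{(\delta)}$, the normalisation $\norm{\bx_i^{(\delta)}}_{\gamma_i}=1$ keeps $M_i^{(\delta)}$ bounded below by a positive constant depending only on $\gamma_i$, so the proof reduces to bounding the ratios $M_i^{(\delta)}/m_i^{(\delta)}$ uniformly in $\delta$; this forces $m_i>0$ in the limit and hence $\bx_i\in\R^{n_i}_{++}$ for each $i$.

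Fix $i\in[d]$ and, by finiteness of $\J$, pass to a subsequence $\delta_k\downarrow 0$ along which the argmax index $l_i^\ast$ in block $i$ and the argmin indices $l_s^-$ in each block are all independent of $\delta_k$. Applying the graph hypothesis to $(\nu,l_\nu)=(i,l_i^\ast)$ and $\bj=(l_1^-,\ldots,l_d^-)$ yields an index $i_\ast\in[d]$ and a path $v_0=(i_\ast,l_{i_\ast}^-)\to v_1\to\cdots\to v_m=(i,l_i^\ast)$ in $\G(F)$, with $v_r=(p_r,q_r)$. Define $\zeta_r^{(\delta)}=x_{v_r}^{(\delta)}/m_{p_r}^{(\delta)}$; then $\zeta_0^{(\delta)}=1$ since $v_0$ is by construction an argmin coordinate of its block, while $\zeta_m^{(\delta)}=M_i^{(\delta)}/m_i^{(\delta)}$ is precisely the ratio we wish to bound.

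The core step will be the inductive estimate $\zeta_{r+1}^{(\delta)}\leq\Psi\bigl(r_\delta(\zeta_r^{(\delta)})^{b_{p_r}}\bigr)$. To derive it, I would scale by $\bal^{(\delta)}=(1/m_1^{(\delta)},\ldots,1/m_d^{(\delta)})$, so that $\bal^{(\delta)}\krog\bx^{(\delta)}\geq\ones$ and hence $\bal^{(\delta)}\krog\bx^{(\delta)}\geq\bu^{v_{r+1}}(t)$ for every $t\leq\zeta_{r+1}^{(\delta)}$. Applying $F$ and using order-preservation, multi-homogeneity, and the componentwise bound $F(\bx^{(\delta)})\leq\blam^{(\delta)}\krog\bx^{(\delta)}$, then forming the weighted product at the test index $\ba$ with $a_{p_r}=q_r$ and $a_s=l_s^-$ for $s\neq p_r$, the identity $A^T\bb=\bb$ makes all scaling factors cancel and leaves
\begin{equation*}
\prod_{s=1}^d F_{s,a_s}\bigl(\bu^{v_{r+1}}(t)\bigr)^{b_s}\leq r_\delta\bigl(\zeta_r^{(\delta)}\bigr)^{b_{p_r}}.
\end{equation*}
Property~\eqref{psimagic}, applied to the edge $(v_r,v_{r+1})$ with this $\ba$ (whose $p_r$-th coordinate equals $q_r$), then yields the inductive bound. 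Iterating $m$ times and using $\Psi(s)<\infty$ for every $s>0$ produces a finite constant $K$, independent of $\delta$ small enough, with $M_i^{(\delta)}/m_i^{(\delta)}=\zeta_m^{(\delta)}\leq K$, and passing to the limit concludes $\bx\in\Sn_{++}$. The main technical difficulty lies in this cancellation: only the combined choice of the scaling $\bal^{(\delta)}=1/\bm^{(\delta)}$ and the ``argmin except at $p_r$'' test index $\ba$, together with $A^T\bb=\bb$, distils the product bound to the clean form $r_\delta(\zeta_r^{(\delta)})^{b_{p_r}}$ needed to chain \eqref{psimagic} along the entire path.
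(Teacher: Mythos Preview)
Your proposal is correct and follows essentially the same approach as the paper: both use Theorem~\ref{BIGTHM} to obtain a positive eigenvector $\bx^{(\delta)}$ of the perturbed map, scale by the componentwise minima $\bal^{(\delta)}=(1/m_1^{(\delta)},\ldots,1/m_d^{(\delta)})$, and iterate property~\eqref{psimagic} along the path supplied by the graph hypothesis, exploiting $A^T\bb=\bb$ to collapse the scaling factors. The only cosmetic difference is the packaging: the paper argues by contradiction, choosing as endpoints indices $(l_1,\ldots,l_d)$ where $\bx^*$ is strictly positive and deriving $\prod_\nu(x^{(\epsilon_k)}_{\nu,l_\nu})^{b_\nu}\leq C\prod_s(x^{(\epsilon_k)}_{s,j_s})^{b_s}$ after multiplying the per-$\nu$ estimates, whereas you take the argmax indices $l_i^\ast$ as endpoints and bound the ratios $M_i^{(\delta)}/m_i^{(\delta)}$ directly; your formulation is arguably a bit cleaner since it avoids the final product-and-divide manoeuvre.
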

\newcommand{\bi}{\mathbf{i}}
\begin{proof}
By Theorem \ref{BIGTHM}, there exists $\big((\blam^{(\epsilon_k)},\bxb^{(\epsilon_k)})\big)_{k=1}^{\infty}\subset \R^d_{++}\times \Sn_{++}$ such that 
\begin{equation*}
\lim_{k\to\infty}(\blam^{(\epsilon_k)},\bxb^{(\epsilon_k)}) =(\blam,\bxb^*) \andd F(\bxb^*)=\blam \krog \bx^*\in\Sn_{+}.
\end{equation*} 
Since $\blam^{(\epsilon_k)}\to\blam$, there exists a constant $M_0>0$ such that 
\begin{equation}\label{defM0}
\prod_{s=1}^d(\lambda_s^{(\epsilon_k)})^{b_s}\leq M_0 \qquad \forall k \in \N.
\end{equation}
Suppose by contradiction that $\bxb^*\in\Sn_{+}\setminus\Sn_{++}$. By taking a subsequence if necessary, we may assume that there exists $\bj=(j_1,\ldots,j_d)\in\J$ and $\omega\in[d]$ such that 
\begin{equation*}
\min_{t_s\in[n_s]}x_{s,t_s}^{(\epsilon_k)}=x^{(\epsilon_k)}_{s,j_s} \qquad \forall s\in[d], \ k \in\N \andd \lim_{k\to\infty} x^{(\epsilon_k)}_{\omega,j_\omega}=x_{\omega,j_\omega}^*=0.
\end{equation*}
In particular, as $\bx^{(\epsilon_k)}\in\Sn_{+}$, there exists $\t C>0$ such that  $x^{(\epsilon_k)}_{s,t_s}\leq\t C$ for every $(s,t_s)\in \I$ and $k\in\N$. Thus,
\begin{equation}\label{contra1}
0\leq \lim_{k\to \infty} \prod_{s=1}^d(x^{(\epsilon_k)}_{s,j_s})^{b_s} \leq \t C^{1-b_{\omega}}\lim_{k\to\infty} (x^{(\epsilon_k)}_{\omega,j_\omega})^{b_\omega}=0.
\end{equation}
Since $\bx^*\in\Sn_{+}$, there exists $\bl=(l_1,\ldots,l_d)\in\J$ with $x_{s,l_s}^*>0$ for all $s\in[d]$. Thus,
\begin{equation}\label{contra2}
\lim_{k\to \infty} \prod_{s=1}^d(x^{(\epsilon_k)}_{s,l_s})^{b_s} =\prod_{s=1}^d(x^*_{s,l_s})^{b_s}>0.
\end{equation}
Let $\nu\in[d]$, by assumption on $\G(F)$, there exists $i_{\nu}\in[d]$ and a path $(i_\nu,j_{i_\nu})=(m_1,\xi_{m_1})\to (m_2,\xi_{m_2})\to\ldots\to (m_{N_{\nu}},\xi_{m_{N_{\nu}}})=(\nu,l_\nu)$ in $\G(F)$ with $N_{\nu}\leq n_1+\ldots+n_d$. Define $\bi{(1)}, \bi{(2)},\ldots,\bi{(N_{\nu})}\in\J$ as
\begin{equation*}
\bi_s{(a)}=\begin{cases} \xi_{m_a}&\text{if } s= m_a, \\ j_s &\text{otherwise.}\end{cases} \qquad \forall s \in [d], \ a\in[N_{\nu}].
\end{equation*}
Fix $k\in\N$ and let $t=x_{m_2,\xi_{m_2}}^{(\epsilon_k)}/x_{m_2,j_2}^{(\epsilon_k)}$ and $\bal= \big((x^{(\epsilon_k)}_{1,j_1})^{-1},\ldots,(x^{(\epsilon_k)}_{d,j_d})^{-1}\big)$. We have $\bu^{(m_2,\xi_{m_2})}(t)\lek \bal\krog\bx^{(\epsilon_k)}$ and
\begin{align*}
\prod_{s=1}^dF_{s,\bi_s(1)}\big(\bu^{(m_2,\xi_{m_2})}(t)\big)^{b_s}&\leq \prod_{s=1}^dF_{s,\bi_s{(1)}}(\bal\krog\bx^{(\epsilon_k)})^{b_s}\\
&=\bigg(\prod_{s=1}^d (x^{(\epsilon_k)}_{s,j_s})^{b_s}\bigg)^{-1}\prod_{s=1}^dF^{(\epsilon_k)}_{s,\bi_s{(1)}}(\bx^{(\epsilon_k)})^{b_s}\leq M_0.
\end{align*}
where $M_0>0$ satisfies \eqref{defM0}. Hence, by \eqref{psimagic}, $t=x_{m_2,\xi_{m_2}}^{(\epsilon_k)}/x_{m_2,j_2}^{(\epsilon_k)} \leq \Psi(M_0)$ and
\begin{equation*}
\prod_{s=1}^d (x^{(\epsilon_k)}_{s,\bi_s(2)})^{b_s} \leq  M_1\prod_{s=1}^d (x_{s,j_s}^{(\epsilon_k)})^{b_s}\qquad \text{with} \qquad M_1 =\Psi(M_0)^{b_{m_2}}.
\end{equation*}
Applying this procedure again to $ (m_3,\xi_{m_3})$, we get the existence of a constant $M_2>0$ independent of $k$, such that 
\begin{equation*}
\prod_{s=1}^d (x^{(\epsilon_k)}_{s,\bi_s(3)})^{b_s} \leq  M_2\prod_{s=1}^d (x^{(\epsilon_k)}_{s,j_s})^{b_s}.
\end{equation*}
Indeed, let $t=x^{(\epsilon_k)}_{m_3,\xi_{m_3}}/x_{m_3,j_3}^{(\epsilon_k)}$, then $\bu^{(m_3,\xi_{m_3})}(t)\lek \bal\krog\bx^{(\epsilon_k)}$ and 
\begin{align*}
\prod_{s=1}^dF_{s,\bi_s(2)}\big(\bu^{{(m_3,\xi_{m_3})}}(t)\big)^{b_s}&\leq \prod_{s=1}^dF_{s,\bi_s{(2)}}(\bal\krog\bx^{(\epsilon_k)})^{b_s}\\
& = \bigg(\prod_{s=1}^d(x^{(\epsilon_k)}_{s,j_s})^{b_s}\bigg)^{-1}\prod_{s=1}^dF^{(\epsilon_k)}_{s,\bi_s{(2)}}(\bx^{(\epsilon_k)})^{b_s}\leq M_0M_1.
\end{align*}
Hence, with $M_2=\Psi(M_0M_1)^{b_{m_3}}$, we get the desired inequality. Repeating this process at most $N_{\nu}$ times, we obtain a constant $C_{\nu}>0$ independent of $k$, such that 
\begin{equation}\label{onthewayeqnnnn}
(x^{(\epsilon_k)}_{\nu,l_{\nu}})^{b_\nu}\prod_{\substack{s=1\\ s\neq \nu}}^d (x^{(\epsilon_k)}_{s,j_s})^{b_s}=\prod_{s=1}^d (x^{(\epsilon_k)}_{s,\bi_s(N_{\nu})})^{b_s} \leq  C_{\nu}\prod_{s=1}^d (x^{(\epsilon_k)}_{s,j_s})^{b_s} \qquad \forall k\in\N.
\end{equation}
Take the product over $\nu\in[d]$ in \eqref{onthewayeqnnnn} and divide by $\prod_{s=1}^d (x^{(\epsilon_k)}_{s,j_s})^{(d-1)b_s}$ to obtain
\begin{equation*}
\prod_{\nu=1}^d(x^{(\epsilon_k)}_{\nu,l_{\nu}})^{b_\nu}\leq C\prod_{s=1}^d (x^{(\epsilon_k)}_{s,j_s})^{b_s} \qquad\forall k\in\N,
\end{equation*}
where $C=\prod_{\nu=1}^dC_{\nu}.$ Finally, using \eqref{contra1} and \eqref{contra2} we get a contradiction.
\end{proof}
As noted in Corollary 6.2.4 \cite{NB}, there exists a dual version of Theorem \ref{exist} that follows by considering the map $\tau \colon \R^{n}_{++}\to \R_{++}^n$ defined as $\tau(\bz)=(z_1^{-1},\ldots,z_n^{-1})$. 
More precisely, let $F\in\NB^d$ and define $\hat F\colon\kone_{++}\to\kone_{++}$ as $\hat F(\bx)=\tau\big(F(\tau(\bx))\big)$ for every $\bx\in\kone_{++}$. 
Then, $\tau$ is a bijection between the positive eigenvectors of $F$ and $\hat F$. Moreover, by Theorem \ref{extend}, $\hat F$ can be continuously  extended on $\kone_{+}$ so that $\hat{F}\in\NB^d$. By noting that $\G(\hat{F})=\G^{-}(F)$, the following corollary follows from Theorem \ref{exist} applied to $\hat F$.
\begin{cor}\label{existdual}
Let $F\in\NB^d$, $A=\A(F)$ and $\bb\in\Dn$ with $A^T\bb=\bb$. Suppose that, for every $(\nu,l_\nu)\in\I$ and $(j_1,\ldots,j_d)\in\J$ there exists $i_\nu\in[d]$ such that there is a path from $(i_\nu,j_{i_\nu})$ to $(\nu,l_\nu)$ in $\G^{-}(F)$. Then $F$ has an eigenvector in $\Sn_{++}$.
\end{cor}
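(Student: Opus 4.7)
The plan is to reduce Corollary \ref{existdual} to Theorem \ref{exist} by passing to the dual map $\hat F\colon\kone_{++}\to\kone_{++}$ defined by $\hat F(\bx)=\tau(F(\tau(\bx)))$, where $\tau(\bz)=(z_1^{-1},\ldots,z_n^{-1})$ acts component-wise. First I would check the basic properties of $\hat F$ on $\kone_{++}$: since $\tau$ is an order-reversing involution of $\kone_{++}$ and $F$ is order-preserving, $\hat F$ is order-preserving; since $\tau(\bal\krog\bx)=\bal^{-1}\krog\tau(\bx)$ and $(\bal^{-1})^A=\bal^{-A}$ by \eqref{homoident}, a direct computation gives $\hat F(\bal\krog\bx)=\bal^A\krog\hat F(\bx)$, so $\hat F$ is multi-homogeneous with $\A(\hat F)=A$. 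Well-definedness $\hat F(\kone_{++})\subset\kone_{++}$ follows from Lemma \ref{basics_lem}.

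Next, I would invoke Theorem \ref{extend} to obtain a continuous extension of $\hat F$ in $\NB^d$. The hypotheses hold: $A\in\R^{d\times d}_+$ by Lemma \ref{basics_lem} has at least one positive entry per row (which, since $F\in\NB^d$, is automatic from the definition of $\NB^d$ combined with $A\geq 0$), and $A^T\bb\leq \bb$ is satisfied with equality by assumption. Abusing notation, I still denote the extension by $\hat F\in\NB^d$, with $\A(\hat F)=A$ and the same Perron vector $\bb\in\Dn$ of $A^T$.

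The key identification is $\G(\hat F)=\G^{-}(F)$. Observing that $\tau(\bu^{(i,j_i)}(t))=\bu^{(i,j_i)}(1/t)$, we have
\[
\hat F_{k,l_k}(\bu^{(i,j_i)}(t))=\frac{1}{F_{k,l_k}(\bu^{(i,j_i)}(1/t))},
\]
so $\lim_{t\to\infty}\hat F_{k,l_k}(\bu^{(i,j_i)}(t))=\infty$ exactly when $\lim_{s\to 0}F_{k,l_k}(\bu^{(i,j_i)}(s))=0$. Hence an edge $((k,l_k),(i,j_i))$ belongs to $\G(\hat F)$ if and only if it belongs to $\G^-(F)$, so the two graphs coincide and therefore have the same path structure.

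Finally, I would apply Theorem \ref{exist} to $\hat F$ with the weight vector $\bb$: the path hypothesis on $\G^-(F)$ is exactly the hypothesis on $\G(\hat F)$ required by Theorem \ref{exist}, yielding some $\bv\in\Sn_{++}$ and $\bmu\in\R^d_{++}$ with $\hat F(\bv)=\bmu\krog\bv$. Unwinding the definition, $\tau(\bv)\in\kone_{++}$ satisfies $F(\tau(\bv))=\bmu^{-1}\krog\tau(\bv)$ (where $\bmu^{-1}=(\mu_1^{-1},\ldots,\mu_d^{-1})$), so $\tau(\bv)$ is a positive eigenvector of $F$. Renormalizing each block $\tau(\bv)_i$ by $\norm{\tau(\bv)_i}_{\gamma_i}$ and using multi-homogeneity produces the desired eigenvector in $\Sn_{++}$. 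The bulk of the work is conceptual rather than computational; the only step that requires a bit of care is the graph identification, since it depends on the specific form of the probe paths $\bu^{(i,j_i)}$, but the symmetry under $\tau$ makes even this essentially transparent.
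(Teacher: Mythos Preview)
Your proposal is correct and follows essentially the same approach as the paper: define the dual map $\hat F=\tau\circ F\circ\tau$, extend it to $\kone_+$ via Theorem \ref{extend}, identify $\G(\hat F)=\G^{-}(F)$, apply Theorem \ref{exist} to $\hat F$, and transfer the resulting positive eigenvector back through $\tau$. You have simply fleshed out the details (the multi-homogeneity computation, the probe-path identity $\tau(\bu^{(i,j_i)}(t))=\bu^{(i,j_i)}(1/t)$, and the final renormalization) that the paper leaves as a sketch in the paragraph preceding the corollary.
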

Note that $\G(F)$ and $\G^{-}(F)$ can be very different. In fact, consider again the map $F\in\NB^1$ of Example \ref{nonirrgraph}. Then, $\G^{-}(F)$ contains only two self-loops. On the other hand, if we substitute the $\max$'s with $\min$'s in the definition of $F$, we obtain a map $F'\in\NB^1$ with $\G^{-}(F')=\G(F)$ and $\G(F')=\G^{-}(F)$. 
Moreover, as shown by the next example, there is no obvious relationship between irreducibility and the connectivity of $\G(F)$ and $\G^{-}(F)$.
\begin{ex}\label{noncomparableirr}
Let $d=1$, $n_1=3$ and let $F,G\in\NB^d$ be defined as
\begin{equation*}
F(a,b,c)= \big( \min\{b,c\}+a, \max\{a,b\}+c,b+c\big) \quad\text{and}\quad G(a,b,c)=(abc)^{1/3}\ones.
\end{equation*}
Then, $F$ is irreducible because $F^2(\kone_{+,0})\subset\kone_{++}$, but $\G(F)$ and $\Gm(F)$ are not strongly connected. On the other hand, $G$ is not irreducible while $\G(G)$ and $\Gm(G)$ are fully connected.
\end{ex}
Theorem \ref{existence} is now a direct consequence of Theorem \ref{exist} and Corollary \ref{existdual}.
\section{Collatz-Wielandt principle, maximality and uniqueness of positive eigenvectors}\label{CW_M_U}
Recall that for a matrix $M\in\R^{n\times n}_+$, the Collatz-Wielandt principle reads:
\begin{equation}\label{cw_discuss}
\inf_{\bz\in\R^n_{++}}\,\max_{j\in[n]}\frac{(M\bz)_j}{z_j} = \rho(M) = \max_{\bz\in\R^n_+\saufzero}\,\min_{\substack{j\in[n]\\ z_j>0}}\frac{(M\bz)_j}{z_j}.
\end{equation}
A generalization of this characterization is already known for maps in $\NB^1$, see Theorem 5.6.1 \cite{NB} or Theorem 1 \cite{GV12} for an even more general result. We start this section by proving a similar principle for mappings in $\NB^d$. To this end, for $\bb\in\R^d_{++}$, we consider the functions $\cwl\colon\NB^d\times \kone_{+,0}\to\R_{+}$ and $\cwu\colon\NB^d\times \kone_{++}\to\R_{++}$ defined as
\begin{equation*}
\cwu(F,\bu)=\prod_{i=1}^d \Big(\max_{j_i\in[n_i]}\frac{F_{i,j_i}(\bu)}{u_{i,j_i}}\Big)^{b_i} \quad \text{and}\quad \cwl(F,\bx)=\prod_{i=1}^d \Big(\min_{\substack{j_i\in[n_i]\\ x_{i,j_i}>0}}\frac{F_{i,j_i}(\bx)}{x_{i,j_i}}\Big)^{b_i}.
\end{equation*}
In particular, if $d=1$ and $F(\bx)=M\bx$ is positive, i.e. $F(\ones)>0$, then the characterization \eqref{cw_discuss} of $\rho(M)$ can be rewritten as follows:
\begin{equation*} 
\inf_{\bz\in\R^n_{++}}\widecheck{\operatorname{cw}}_{1}(F,\bz)= r_1(F) =
 \max_{\bz\in\R^n_+\saufzero}\widehat{\operatorname{cw}}_{1}(F,\bz),
\end{equation*}
where $r_\bb$, defined in Section \ref{specrad_sec}, satisfies
\begin{equation*}
r_{\bb}(F) = \lim_{k\to \infty} \prod_{i=1}^d \norm{F^k_i(\bx)}_{\gamma_i}^{b_i/k}\qquad \forall \bx\in\kone_{++}
\end{equation*}
for any $F\in\NB^d$ such that $\A(F)^T\bb=\bb$. While, in the particular case $F(\bx)=M\bx$ and more generally when $F\in\NB^1$ and $\A(F)=1$, the scaling of eigenvectors does not influence the associated eigenvalue, we know that for mappings in $\NB^d$ with $d>1$ this property may not hold anymore. Thus, we formulate our Collatz-Wielandt principle on the unit sphere instead of the whole cone. However, as discussed below, using the multi-homogeneity, these characterizations can be easily extended on the whole cone.
Our first Collatz-Wielandt principle is formulated in the following:
\begin{thm}\label{CW1}
Let $F\in\NB^d$, $A=\A(F)$ and $\bb\in\Dn$. If $A^T\bb = \bb$, then
\begin{equation}\label{CWeq1}
\inf_{\bx\in\Sn_{++}} \cwu(F,\bx) = r_{\bb}(F) = \max_{\bx\in\Sn_{+}}\cwl(F,\bx).
\end{equation}
\end{thm}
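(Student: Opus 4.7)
The plan is to prove the two identities separately, leveraging the approximating sequence $F^{(\delta)}$ from Theorem~\ref{BIGTHM} together with the maximality inequality of Proposition~\ref{Prop536}.

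\emph{Lower Collatz--Wielandt identity.} For any $\by\in\Sn_+$, set $\theta_i=\min_{j_i:\,y_{i,j_i}>0}F_{i,j_i}(\by)/y_{i,j_i}$. Componentwise $\bt\krog\by\lek F(\by)$: on the support of $\by_i$ this holds by definition of $\theta_i$, and off the support it reduces to the trivial $0\leq F_{i,j_i}(\by)$. If some $\theta_i=0$, then $\cwl(F,\by)=0\leq r_{\bb}(F)$; otherwise $\bt\in\R^d_{++}$ and Proposition~\ref{Prop536} gives $\cwl(F,\by)=\prod_i\theta_i^{b_i}\leq r_{\bb}(F)$. Equality is achieved at the eigenvector $\bu\in\Sn_+$ supplied by Theorem~\ref{BIGTHM}: for every $j_i$ with $u_{i,j_i}>0$ one has $F_{i,j_i}(\bu)/u_{i,j_i}=\lambda_i$, hence $\cwl(F,\bu)=\prod_i\lambda_i^{b_i}=r_{\bb}(F)$ and the supremum is attained.

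\emph{Upper Collatz--Wielandt identity, $r_{\bb}(F)\leq\cwu(F,\bx)$ for all $\bx\in\Sn_{++}$.} Setting $\mu_i=\max_{j_i}F_{i,j_i}(\bx)/x_{i,j_i}$ gives $F(\bx)\lek\bmu\krog\bx$. A short induction using order-preservation and multi-homogeneity yields $F^k(\bx)\lek\bmu^{B_k}\krog\bx$ with $B_k=\sum_{j=0}^{k-1}A^j$, the inductive step being
\[F^{k+1}(\bx)\lek F(\bmu^{B_k}\krog\bx)=\bmu^{AB_k}\krog F(\bx)\lek\bmu^{AB_k+I}\krog\bx,\]
and $AB_k+I=B_{k+1}$. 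Monotonicity of the norms $\|\cdot\|_{\gamma_i}$ then gives $\nnorm{F^k(\bx)}_{\bb}\leq\nnorm{\bx}_{\bb}\prod_l\mu_l^{(B_k^T\bb)_l}$, and the hypothesis $A^T\bb=\bb$ collapses $B_k^T\bb$ to $k\bb$, so $\prod_l\mu_l^{(B_k^T\bb)_l}=\cwu(F,\bx)^k$. Taking $k$-th roots and invoking Proposition~\ref{Prop536} yields $r_{\bb}(F)=\lim_k\nnorm{F^k(\bx)}_{\bb}^{1/k}\leq\cwu(F,\bx)$.

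\emph{Closing the infimum and main obstacle.} For the reverse inequality $\inf_{\bx\in\Sn_{++}}\cwu(F,\bx)\leq r_{\bb}(F)$, I would apply Theorem~\ref{BIGTHM} to obtain $(\blam^{(\delta)},\bu^{(\delta)})\in\R^d_{++}\times\Sn_{++}$ with $F^{(\delta)}(\bu^{(\delta)})=\blam^{(\delta)}\krog\bu^{(\delta)}$ and $r_{\bb}(F^{(\delta)})=\prod_i(\lambda_i^{(\delta)})^{b_i}\to r_{\bb}(F)$ as $\delta\to 0$. Since $F(\by)\lek F^{(\delta)}(\by)$ entrywise, $\cwu(F,\bu^{(\delta)})\leq\cwu(F^{(\delta)},\bu^{(\delta)})=r_{\bb}(F^{(\delta)})$, and letting $\delta\to 0$ finishes the proof. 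The main technical step is the iteration bound $F^k(\bx)\lek\bmu^{B_k}\krog\bx$ together with the identity $B_k^T\bb=k\bb$: this is the single place where the invariance hypothesis $A^T\bb=\bb$ is truly used, and it is what makes the weighted cone spectral radius $r_{\bb}$ appear in the limit; everything else reduces to directly invoking Theorem~\ref{BIGTHM} and Proposition~\ref{Prop536}.
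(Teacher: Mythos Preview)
Your proposal is correct and follows essentially the same approach as the paper's proof: the iteration bound $F^k(\bx)\lek\bmu^{B_k}\krog\bx$ combined with $B_k^T\bb=k\bb$ and Proposition~\ref{Prop536} for the upper identity, the approximating family $F^{(\delta)}$ from Theorem~\ref{BIGTHM} to close the infimum, and Proposition~\ref{Prop536} together with the eigenpair from Theorem~\ref{BIGTHM} for the lower identity. The only cosmetic difference is that the paper first separates the subcase where $F$ already has a positive eigenvector (so the infimum is attained directly) before invoking the $F^{(\delta)}$ approximation, whereas you go straight to the approximation; since the approximation argument covers both cases, this changes nothing.
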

While Theorem \ref{Banachcor} proves the existence and the uniqueness of a positive eigenvector for mappings $F\in\NB^d$ such that $\rho\big(\A(F)\big)<1$, proving a Collatz-Wielandt principle for such mappings requires additional effort. This is why we also need the following theorem which generalizes Theorem 21 in \cite{us}, where the Collatz-Wielandt principle is proved in the context of the $\ell^{p_1,\ldots,p_d}$-singular values problem of nonnegative tensors. 
\begin{thm}\label{CW<1}
Let $F\in\NB^d$ and $A=\A(F)$. If $\rho(A)<1$, then there exist $\blam\in\R^d_{++}$ and $\bu\in\Sn_{++}$ such that $F(\bu)=\blam\krog\bu$. Moreover, there exists $\bb\in\R^d_{++}$ such that $A^T\bb\leq\bb$ and for every such $\bb$, it holds
\begin{equation}\label{CWeq<1}
\min_{\bx\in\Sn_{++}} \cwu(F,\bx) =\prod_{i=1}^d\lambda_i^{b_i} = \max_{\bx\in\Sn_{+}}\cwl(F,\bx).
\end{equation}
\end{thm}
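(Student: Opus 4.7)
The plan is to establish the three claims of the theorem in sequence. First, the existence of a pair $(\blam,\bu)\in\R^d_{++}\times\Sn_{++}$ with $F(\bu)=\blam\krog\bu$ is exactly Theorem \ref{Banachcor} applied under the standing hypothesis $\rho(A)<1$, and the existence of a vector $\bb\in\R^d_{++}$ with $A^T\bb\leq\bb$ is what is constructed in Remark \ref{newrmq}. Evaluating the Collatz--Wielandt functionals at $\bx=\bu$ directly gives $\cwu(F,\bu)=\cwl(F,\bu)=\prod_{i=1}^d\lambda_i^{b_i}$, so what remains is to show the reverse inequalities $\cwu(F,\bx)\geq\prod_{i=1}^d\lambda_i^{b_i}$ for every $\bx\in\Sn_{++}$ and $\cwl(F,\bx)\leq\prod_{i=1}^d\lambda_i^{b_i}$ for every $\bx\in\Sn_+$.

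The key tool for both reverse inequalities is
\begin{equation*}
\Bmin{\bx}{\bu}^A\krog\blam\krog\bu \ \lek\ F(\bx) \ \lek\ \Bmax{\bx}{\bu}^A\krog\blam\krog\bu,
\end{equation*}
which is obtained by applying the order-preserving map $F$ to $\Bmin{\bx}{\bu}\krog\bu\lek\bx\lek\Bmax{\bx}{\bu}\krog\bu$ and using multi-homogeneity together with $F(\bu)=\blam\krog\bu$; the derivation from the proof of Lemma \ref{contract} extends to any $\bx\in\kone_+$ and $\bu\in\kone_{++}$ (with the convention $0^0=1$ in $\bdel^A$). For the $\cwu$-bound with $\bx\in\Sn_{++}$, I would divide the left inequality by $x_{i,j_i}>0$, take $\max_{j_i}$ on both sides, raise to the power $b_i$, multiply over $i$, and simplify using $\sum_i A_{i,k}b_i=(A^T\bb)_k$ and $\bmini{k}{\bx}{\bu}=\bmaxi{k}{\bu}{\bx}^{-1}$ to obtain
\begin{equation*}
\cwu(F,\bx)\,\geq\,\prod_{i=1}^d\lambda_i^{b_i}\,\cdot\,\prod_{i=1}^d\bmaxi{i}{\bu}{\bx}^{\,b_i-(A^T\bb)_i}.
\end{equation*}
The symmetric derivation from the right inequality, with the minimum restricted to the support $S_i=\{j_i:x_{i,j_i}>0\}$ (non-empty since $\|\bx_i\|_{\gamma_i}=1$), gives for every $\bx\in\Sn_+$ the estimate
\begin{equation*}
\cwl(F,\bx)\,\leq\,\prod_{i=1}^d\lambda_i^{b_i}\,\cdot\,\prod_{i=1}^d\bmaxi{i}{\bx}{\bu}^{\,(A^T\bb)_i-b_i}.
\end{equation*}

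To close the proof I would invoke two observations. First, the hypothesis $A^T\bb\leq\bb$ makes the exponents $b_i-(A^T\bb)_i$ nonnegative and $(A^T\bb)_i-b_i$ nonpositive. Second, a simple monotonicity lemma: for $\bv,\bw\in\R^{n_i}_+\sauf\{0\}$ with $\bw\in\R^{n_i}_{++}$ and $\|\bv\|_{\gamma_i}=\|\bw\|_{\gamma_i}=1$, the componentwise inequality $\bv\leq\bmaxi{i}{\bv}{\bw}\bw$ combined with monotonicity of $\|\cdot\|_{\gamma_i}$ gives $1=\|\bv\|_{\gamma_i}\leq\bmaxi{i}{\bv}{\bw}\|\bw\|_{\gamma_i}=\bmaxi{i}{\bv}{\bw}$. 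Applied with $(\bv,\bw)=(\bu,\bx)$ and $(\bv,\bw)=(\bx,\bu)$, this forces both correction factors above to lie on the right side of $1$, yielding the claimed bounds.

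The main technical obstacle is controlling the $\cwl$-inequality on the boundary $\Sn_+\sauf\Sn_{++}$, where some $x_{i,j_i}$ vanish and several ratios are formally undefined. The resolution is to observe that Equation \eqref{homoineq} is valid on $\kone_+\times\kone_{++}$, that the division by $x_{i,j_i}$ is performed only on the support $S_i$, and that $\bmaxi{i}{\bx}{\bu}$ automatically coincides with $\max_{j_i\in S_i}x_{i,j_i}/u_{i,j_i}$ since ratios outside $S_i$ equal zero; thus the argument goes through uniformly on $\Sn_+$ without any approximation step.
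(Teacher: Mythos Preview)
Your proof is correct and follows essentially the same approach as the paper. Both arguments sandwich $F(\bx)$ between $\Bmin{\bx}{\bu}^A\krog\blam\krog\bu$ and $\Bmax{\bx}{\bu}^A\krog\blam\krog\bu$, divide componentwise, take products weighted by $\bb$, and use $A^T\bb\leq\bb$ together with the norm-monotonicity fact $\bmaxi{i}{\bv}{\bw}\geq 1$ for unit vectors to control the correction factors; the paper's quantity $\boldsymbol{\Theta}$ is precisely your $\Bmax{\bx}{\bu}$, so the two computations coincide line by line.
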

The restriction in \eqref{CWeq1} and \eqref{CWeq<1} that $\bx$ has to belong to the product of unit spheres can be overcome by noticing that
\begin{equation*}
\cwl(F,\bz)=\Big(\prod_{i=1}^d\norm{\bz}_{\gamma_i}^{(\A(F)^T\bb)_i-b_i}\Big)\cwl\big(F,(\norm{\bz}^{-1}_{\gamma_1},\ldots,\norm{\bz}_{\gamma_d}^{-1})\krog \bz\big) \qquad \forall \bz\in\kone_{+,0}.
\end{equation*}
Let $F\in\NB^d$ and suppose that there exists $\bb\in\Dn$ such that $\A(F)^T\bb\leq \bb$. For simplicity in the following discussion, we define $R$ to be either $r_{\bb}(F)$ if $\rho\big(\A(F)\big)=1$ or $\prod_{i=1}^d\lambda_i^{b_i}$ if $\rho\big(\A(F)\big)<1$, where $\blam\in\R^d_{++}$ is the eigenvalue associated to the unique positive eigenvector of $F$ (see Theorem \ref{Banachcor}). On top of being helpful to obtain bounds on $R$, the Collatz-Wielandt principles in \eqref{CWeq1} and \eqref{CWeq<1} imply the maximality of $R$. Indeed, if $(\bt,\bx)\in\R^d_{+}\times \kone_{+,0}$ satisfies $F(\bx)=\bt\krog\bx$, then 
\begin{equation*}
\prod_{i=1}^d\theta_i^{b_i}=\cwl(F,\bx)\leq R.
\end{equation*} 
In particular, this shows that the eigenvalue $\t\blam\in\R^d_{++}$ associated to a positive eigenvector $\t\bu\in\Sn_{++}$ of $F$ is always maximal, because 
\begin{equation*}
\prod_{i=1}^d\t \lambda_i^{b_i} =\cwl(F,\t\bu)\leq R \leq \cwu(F,\t\bu)=\prod_{i=1}^d\t \lambda_i^{b_i}.
\end{equation*}
Suppose that the map $F$ is the restriction of a map $\t F$ defined on the whole vector space $V=\R^{n_1}\times \ldots\times \R^{n_d}$. For example, let $M\in\R^{n\times n}_+$ be such that $M\ones \in\R^n_{++}$ and define $\t F\colon\R^n\to\R^n$ as $\t F(\bx)=M\bx$. Then $\t F|_{\R^n_{+}}\in\NB^1$ and $\t F$ has a nonnegative eigenvector $\bx\in\R^n_{+}\saufzero$ such that $\t F(\bx)=\lambda\bx$ and $\lambda =\rho(M)$. In particular, if $(\theta,\by)\in\R\times(\R^n\saufzero)$ is any eigenpair of $M$, then $|\theta | \leq \lambda$. A similar observation for the $\ell^{p_1,\ldots,p_d}$-singular value problem of a nonnegative tensor has been proved in Corollary 4.3 \cite{Fried}. In Corollary \ref{extension_max}, we adapt this technique for a wider class of mappings in $\NB^d$. Another remarkable observation for linear maps that can be generalized to maps in $\NB^d$ is the following: Suppose that $M$ is irreducible and $M\by = \theta\by$ for some $\theta\geq 0 $ and $\by\in\R^n_{+,0}\sauf\R^n_{++}$, then $\theta<\rho(M)$. This fact is known for mappings in $\NB^1$ (see for example Theorem 6.1.7 in \cite{NB}) and we extend it to the case $d\geq 1$:
\begin{thm} \label{rad<}
Let $F\in\NB^d$ and $A=\A(F)$. Suppose that there exists $\bb\in\Dn$, $\blam\in\R^d_{++}$ and $\bu\in\Sn_{++}$ such that $A^T\bb=\rho(A)\bb$ and $F(\bu)=\blam\krog\bu$. If $\rho(A)\leq 1$, $F$ is differentiable at $\bub$ and there exist $i\in[d]$ and $\tau\in\N$ such that 
\begin{equation}\label{dirr}
\forall \bw\in\kone_{+}\saufzero, \quad \text{if} \quad \bx = \sum_{k=1}^\tau DF(\bu)^k\bw, \quad \text{then} \quad \bx_i\in\R^{n_i}_{++}.
\end{equation}
Then, for every $(\bt,\bvb)\in\R^d_{+}\times(\Sn_{+}\sauf\kone_{++})$ with $F(\bvb)=\bt\krog\bvb$, we have
\begin{equation*}
\prod_{j=1}^d\theta_j^{b_j}<\prod_{j=1}\lambda_j^{b_j}.
\end{equation*}
\end{thm}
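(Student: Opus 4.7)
The argument is by contradiction. By the Collatz--Wielandt principles of Theorem~\ref{CW1} (for $\rho(A)=1$) and Theorem~\ref{CW<1} (for $\rho(A)<1$), applied to $\bu\in\Sn_{++}$ and $\bv\in\Sn_{+}$, we have $\prod_j\theta_j^{b_j}\leq\prod_j\lambda_j^{b_j}$ in either case; so I would assume $\prod_j\theta_j^{b_j}=\prod_j\lambda_j^{b_j}$ and derive a contradiction by combining the multi-homogeneous scaling at $\bu$ with the local irreducibility condition~\eqref{dirr}.

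The first step is to extract from $\bv$ a strictly positive eigenvector of $F$ sharing the eigenvalue $\bt$. Set $\alpha_k = \max_{l_k\in[n_k]} v_{k,l_k}/u_{k,l_k}$ (positive since $\bv_k\neq 0$), $\bw = \bal\krog\bu\in\kone_{++}$, and $\bz = \bw-\bv\in\kone_+\saufzero$. Monotonicity of $\|\cdot\|_{\gamma_k}$ together with $\|\bv_k\|_{\gamma_k}=\|\bu_k\|_{\gamma_k}=1$ forces $\alpha_k\geq 1$, and since $\bv\notin\kone_{++}$ the vector $\bz$ is non-zero and carries a zero coordinate in every block~$k$ (at an index attaining $\alpha_k$). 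Order-preservation and multi-homogeneity give $\bt\krog\bv = F(\bv)\lek F(\bw) = (\bal^A\circ\blam)\krog\bu$; evaluating at the attaining indices this yields $\theta_k\alpha_k\leq(\bal^A)_k\lambda_k$ for every~$k$. Raising to the power $b_k$, multiplying and using $A^T\bb = \rho(A)\bb$ together with the assumed equality $\prod_j\theta_j^{b_j}=\prod_j\lambda_j^{b_j}$, one obtains
\[
\prod_k\Bigl(\frac{\theta_k\alpha_k}{(\bal^A)_k\lambda_k}\Bigr)^{b_k} = \Bigl(\prod_k\alpha_k^{b_k}\Bigr)^{1-\rho(A)}\geq 1,
\]
while each factor on the left is $\leq 1$; hence every factor equals~$1$, i.e.\ $\theta_k\alpha_k = (\bal^A)_k\lambda_k$ for all $k$, which rearranges to $F(\bw) = \bt\krog\bw$.

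Next, I would use differentiability of $F$ along the segment $[\bv,\bw]$ (which lies in $\kone_{++}$ except at the endpoint $\bv$) to force first-order vanishing. Iterating the two eigenvalue identities gives $F^m(\bw) - F^m(\bv) = \bxi^{(m)}\krog\bz$ for some $\bxi^{(m)}\in\R^d_{++}$ depending only on $\bt$ and $A$. Writing this difference as $\int_0^1 DF^m(\bv+t\bz)\bz\,dt$ and invoking $DF^m\geq 0$ (Theorem~\ref{opcharac}), one sees that wherever $z_{k,l_k}=0$ the nonnegative integrand $[DF^m(\bv+t\bz)\bz]_{k,l_k}$ must vanish for almost every $t\in(0,1]$, and by continuity at $t=1$ one obtains $[DF^m(\bw)\bz]_{k,l_k}=0$ for every $m\geq 1$. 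Summing over $m=1,\ldots,\tau$ yields $\bigl[\sum_{m=1}^\tau DF^m(\bw)\bz\bigr]_{k,l_k}=0$ at every zero-coordinate of $\bz$ --- in particular in block~$i$, at the index attaining $\alpha_i$.

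The final step --- and the main technical difficulty --- is to contradict this vanishing via~\eqref{dirr}, which is stated for $P=DF(\bu)$ rather than $DF(\bw)$. Part~(iii) of Lemma~\ref{Eulerthm} applied coordinate-wise gives the block-scaling identity $DF(\bga\krog\bu) = C(\bga)\,P\,D(\bga)^{-1}$ for every $\bga\in\R^d_{++}$, where $C(\bga)$ and $D(\bga)$ are positive block-diagonal matrices; combined with $F^k(\bw)=(\bxi^{(k)}\circ\bal)\krog\bu$, the chain rule expresses $DF^m(\bw)$ as $m$ copies of $P$ sandwiched by positive block-diagonal matrices --- the intermediate sandwiches telescoping to $\operatorname{blkdiag}(\lambda_i^{-1} I)$ precisely thanks to the identities $\theta_k\alpha_k = (\bal^A)_k\lambda_k$ obtained above. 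Since positive diagonal scalings preserve supports of nonnegative vectors, $\supp\bigl(\sum_{m=1}^\tau DF^m(\bw)\bz\bigr) = \supp\bigl(\sum_{m=1}^\tau P^m\bz\bigr)$, and~\eqref{dirr} applied to $\bz\in\kone_+\saufzero$ forces block~$i$ of the right-hand side --- hence of the left-hand side --- to be entirely strictly positive, contradicting the previous vanishing. The hard part is precisely this last bookkeeping: collapsing the inhomogeneous sandwich defining $DF^m(\bw)\bz$ into a form whose support structure matches that of $P^m\bz$, which crucially relies on the equalities from step two.
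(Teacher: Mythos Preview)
Your argument is correct and follows a genuinely different route from the paper's.

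One small justification to tighten in your second step: the integral formula and the ``continuity at $t=1$'' of $DF^m$ are not provided by the hypotheses --- you only have differentiability of $F$ at $\bu$ (hence at every $\bga\krog\bu$ by Lemma~\ref{Eulerthm}\,(3)), but neither local Lipschitz continuity to feed into Theorem~\ref{opcharac} along the whole segment nor continuity of the derivative. The conclusion you need follows more cleanly without either: since $\bv\lek\bv+t\bz\lek\bw$ for $t\in[0,1]$ and $F^m_{k,l_k}(\bv)=F^m_{k,l_k}(\bw)$ whenever $z_{k,l_k}=0$, order-preservation squeezes $t\mapsto F^m_{k,l_k}(\bv+t\bz)$ to a constant on $[0,1]$; differentiating at $t=1$ (where $F^m$ \emph{is} differentiable, by the chain rule at the points $\bga^{(k)}\krog\bu$) yields $[DF^m(\bw)\bz]_{k,l_k}=0$ directly.

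For comparison, the paper normalizes to $\blam=\ones$, chooses $\alpha_k=\min_{l_k:\,v_{k,l_k}>0}u_{k,l_k}/v_{k,l_k}$ so that $0\lekk\bu-\bal\krog\bv$, Taylor-expands each $F^k$ at $\bu$ along the segment towards $\bal\krog\bv$, and uses~\eqref{dirr} to get $\sum_{k\le\tau}F^k_i(\bal\krog\bv)<\tau\bu_i$; an AM--GM step on the iterated homogeneity factors $\bal^{A^k}\circ\bt^{\sum_{s<k}A^s}$ then contradicts the defining property of $\bal$. Your approach instead manufactures a second \emph{positive} eigenvector $\bw=\bal\krog\bu$ sharing the eigenvalue $\bt$ (via the equality case of your product estimate) and linearizes the difference $\bw-\bv$: this makes the role of~\eqref{dirr} very transparent --- it is applied verbatim to $\bz=\bw-\bv$ --- at the price of the block-diagonal bookkeeping transferring supports from $DF^m(\bw)$ to $DF(\bu)^m$. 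The paper avoids that bookkeeping by never leaving $\bu$, trading it for the AM--GM computation that controls the inhomogeneous iterates $F^k(\bal\krog\bv)$.
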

We note that, when $d=1$ the assumption \eqref{dirr} on $DF(\bu)$ is equivalent to $DF(\bu)$ being irreducible. However, as noted in Example \ref{irrrrrr_ex}, the equivalence does not hold anymore for $d>1$, namely if $DF(\bu)$ is irreducible, then \eqref{dirr} is satisfied but the converse might not be true. Nevertheless, we prove in Proposition \ref{hardcorelema} that if $DF(\bu)$ satisfies \eqref{dirr} and $\A(F)$ is irreducible, then $DF(\bu)$ is irreducible.

The second part of this section is concerned with deriving a sufficient condition for the uniqueness of eigenvectors in $\Sn_{++}$. We know already from Theorem \ref{Banachcor} that when $F\in\NB^d$ is such that $\rho\big(\A(F)\big)<1$, then $F$ has a unique eigenvector in $\Sn_{++}$. This is, however, not true anymore when $\rho\big(\A(F)\big) = 1$, as illustrated by Example \ref{nonuniquevect} where an irreducible map $F\in\NB^1$ with a continuum of eigenvectors is given. Such sufficient condition is known for mappings $F\in\NB^1$ with $\rho(\A(F))=1$ and it requires that the Jacobian $DF(\bu)$ of $F$ at an eigenvector $\bu\in \Sn_{++}$ satisfies $\dim\!\big(\ker(\xi I-DF(\bu))\big)=1$ where $\xi=\rho\big(DF(\bu)\big)$ (see for instance Theorem 2.5 \cite{Nussb} or Theorem 6.4.6 \cite{NB}). 
The same condition can be derived for maps in $\NB^d$.
\begin{thm}\label{unique}
Let $\bphi\in\kone_{++}$, $F\in\NB^d$ and $A=\A(F)$. Suppose that $A$ is irreducible, $\rho(A)=1$, there exist $\blam \in\R^d_{++}$ and $\bu\in\S_{++}^{\bphi}$ such that $F(\bu)=\blam \krog \bu$ and $F$ is differentiable at $\bu$. Let $L\in\R^{|\I|\times |\I|}_+$ be defined as
\begin{equation}\label{crazyass}
L_{(i,j_i),(k,l_k)} = \frac{1}{\lambda_i} \frac{\partial  F_{i,j_i}(\bu)}{\partial x_{k,l_k}}\qquad \forall (i,j_i),(k,l_k)\in\I.
\end{equation}
Then, it holds $\rho(L)=1$. Moreover, if $\dim(\ker(I-L))=1$, then $\bu$ is the unique eigenvector of $F$ in $\S^{\bphi}_{++}$.
\end{thm}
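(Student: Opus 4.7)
The plan is to prove the two claims separately: first compute $\rho(L)$ via Euler's identity and Perron--Frobenius on $A$, then derive uniqueness via a linearization of a normalized fixed-point map, combined with the Hilbert-metric nonexpansivity to pass from local to global.

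For $\rho(L)=1$: I apply Lemma \ref{Eulerthm} to each component $F_{i,j_i}$, which is multi-homogeneous with degree vector $A_{i,\cdot}$. This yields
\[ \sum_{l_k\in[n_k]} \frac{\partial F_{i,j_i}(\bu)}{\partial x_{k,l_k}}\,u_{k,l_k} \;=\; A_{i,k}\lambda_i u_{i,j_i}. \]
Since $A$ is irreducible with $\rho(A)=1$, Theorem \ref{linear_PF} provides $\bc\in\R^d_{++}$ with $A\bc=\bc$. Setting $v_{i,j_i}=c_i u_{i,j_i}$ and substituting in the identity above gives $L\bv=\bv$ with $\bv\in\R^{|\I|}_{++}$. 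The nonnegative matrix $L$ thus admits a positive eigenvector for the eigenvalue $1$, so the Collatz--Wielandt principle of Theorem \ref{linear_PF} forces $\rho(L)=1$.

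For the uniqueness statement, I introduce the normalized map $\tilde F\colon\S^\phi_{++}\to\S^\phi_{++}$ defined by $\tilde F(\bx)_i = F_i(\bx)/\langle\bphi_i,F_i(\bx)\rangle$, whose fixed points coincide with the positive eigenvectors of $F$ in $\S^\phi_{++}$. A direct calculation at $\bu$ gives $D\tilde F(\bu) = P\circ L$, where $(P\bh)_i = \bh_i - \langle\bphi_i,\bh_i\rangle\bu_i$ is the projection onto the tangent space $V_0 = \{\bh\colon \langle\bphi_i,\bh_i\rangle = 0\ \forall i\}$ with kernel $\{\bs\krog\bu\colon \bs\in\R^d\}$. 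The main technical step is to show that $I-D\tilde F(\bu)$ is injective on $V_0$. Suppose $\bh\in V_0$ satisfies $PL\bh=\bh$; then $(L-I)\bh\in\ker P$, so $(L-I)\bh=\bs\krog\bu$ for some $\bs\in\R^d$. From Lemma \ref{Eulerthm} one derives $(L-I)(\bt\krog\bu)=((A-I)\bt)\krog\bu$; pairing this identity with any nonnegative left Perron eigenvector $\bd$ of $L$ (with $L^T\bd=\bd$, which exists since $L\geq 0$ and $\rho(L)=1$) yields $A^T\bq=\bq$ for $q_i=\langle\bd_i,\bu_i\rangle$. Irreducibility of $A$ forces $\bq=\beta\bb$ with $\beta>0$ (nonzero because $\bd\geq 0,\bd\neq 0$ and $\bu>0$), so the solvability condition on $\bs\krog\bu\in\mathrm{Im}(L-I)$ reduces to $\bs\in\mathrm{Im}(I-A)$. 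Writing $\bs=(I-A)\bs'$, the set of solutions is $\bh=(\bs'+\tau\bc)\krog\bu$ for $\tau\in\R$ by the assumption $\dim\ker(I-L)=1$, and the constraint $\bh\in V_0$ together with $\langle\bphi_i,\bu_i\rangle=1$ gives $s'_i+\tau c_i=0$ for all $i$, whence $\bh=0$.

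By the inverse function theorem applied to $I-\tilde F$, this shows that $\bu$ is an isolated fixed point of $\tilde F$ in $\S^\phi_{++}$. The remaining step, and what I expect to be the main obstacle, is upgrading this local uniqueness to global uniqueness. The plan here is to invoke Lemma \ref{contract} with weights $\bb\in\R^d_{++}$ satisfying $A^T\bb=\bb$: any other positive eigenvector $\bw\in\S^\phi_{++}$ with $F(\bw)=\bmu\krog\bw$ satisfies $\mu_\bb(\bu,\bw)=\mu_\bb(F(\bu),F(\bw))$, forcing equality throughout the chain of inequalities in the proof of Lemma \ref{contract}. A careful trace of this equality case yields that the componentwise log-ratios $\log(u_{i,j_i}/w_{i,j_i})$ have oscillation exactly $tc_i$ within each block $i$ for a common $t\geq 0$; when $t=0$ the ratios are constant in $j_i$, hence $\bu=\bw$ by the shared normalization in $\S^\phi_{++}$, while excluding $t>0$ combines the rigidity of the equality case with the local non-degeneracy of $\bu$ just established.
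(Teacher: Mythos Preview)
Your argument for $\rho(L)=1$ is correct and coincides with the paper's (Lemma~\ref{difflem} and Proposition~\ref{Prop536}).

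Your linear analysis showing $\ker(I-PL)\cap V_0=\{0\}$ is also correct, and in fact neater than the paper's treatment. The paper obtains the same conclusion but via a case split on whether $\langle\bb,\bal\rangle=0$ (where $\bal$ is your $\bs$): in the first case it builds an ad hoc invertible matrix to force $L\bv=\bv$, and in the second it derives a contradiction from the growth of $\sum_{k\leq\nu}A^k\bal$ versus the boundedness of $L^{\nu+1}\bv$. Your route through the left Perron vector $\bd$ of $L$, the induced identity $A^T\bq=\bq$, and the solvability constraint $\bs\in\operatorname{Im}(I-A)$ reaches the same endpoint more transparently.

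The genuine gap is the local-to-global step. The inverse function theorem gives only that $\bu$ is an isolated fixed point, and your proposed analysis of the equality case in Lemma~\ref{contract} is both unsubstantiated (the claim that the block oscillations equal $tc_i$ for a common $t$ is not a consequence of equality in that chain) and, even if granted, does not exclude $t>0$. Nonexpansive self-maps of complete metric spaces can perfectly well have several isolated fixed points, so isolation plus nonexpansivity is insufficient.

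What you are missing is precisely Theorem~\ref{nonuniquethm} of the paper (an adaptation of Theorem~6.4.1 in \cite{NB}): if $G$ is $\mu_{\bb}$-nonexpansive on $\S^{\bphi}_{++}$, has two distinct fixed points $\bu\neq\bw$, and is differentiable at $\bu$, then there exists a nonzero $\bv$ with $\langle\bv,\bphi\rangle=0$ and $DG(\bu)\bv=\bv$. The proof uses the straight-line geodesic property~\eqref{simplegeod} of the Hilbert metric: the segment $t\mapsto t\bw+(1-t)\bu$ is a $\mu_{\bb}$-geodesic, and nonexpansivity forces $G$ to preserve distances along it, which upon differentiation at $t=0$ produces the required fixed tangent vector. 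Once you invoke this lemma, your linear computation immediately yields the contradiction $\bv=0$, and the proof is complete. So the fix is not to patch your equality-case argument but to replace it entirely by Theorem~\ref{nonuniquethm}; combined with your cleaner kernel computation, this gives a proof at least as tidy as the paper's.
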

If $DF(\bu)$ in Theorem \ref{unique} is irreducible, then $L$ is also irreducible because $L$ has the same zero pattern as $DF(\bu)$. Hence, the irreducibility of $DF(\bu)$ implies that $\dim(\ker(I-L))=1$. It is, however, not always true that every nonnegative matrix $M$ such that $\dim\!\big(\!\ker(\rho(M)I-M)\big)=1$ is irreducible as shown by Example \ref{Krein_notirr}. Moreover, we note that when $d=1$, the assumption \eqref{crazyass} is equivalent to that discussed in the previous paragraph because in this case $L$ is just a rescaling of $DF(\bu)$. As a final observation, we note that if $F\in\NB^d$ satisfies the assumptions of Theorem \ref{rad<} and $\A(F)$ is irreducible, then, by Proposition \ref{hardcorelema} we know that $DF(\bu)$ is irreducible and thus $F$ also satisfies the assumptions of Theorem \ref{unique}. 
\subsection{Collatz-Wielandt principle and maximality of positive eigenvectors}\label{CW_M_U1}
Before starting the proofs of Theorem \ref{CW1} and \ref{CW<1}, we note that for $F\in\NB^d$, $\bb\in\R^d_{++}$ and $\bx\in\kone_{++}$, it holds $\cwl(F,\bx)=\cwu(F,\bx)$ if and only if $\bx$ is an eigenvector of $F$. Indeed, the following identity holds
\begin{equation*}
\mu_{\bb}(F(\bx),\bx)= \ln\bigg(\frac{\cwu(F,\bx)}{\cwl(F,\bx)}\bigg) \qquad \forall \bx\in\kone_{++}.
\end{equation*}
The proof of Theorem \ref{CW1} is similar to that of Theorem 5.6.1 \cite{NB} for the case $d=1$.
\begin{proof}[Proof of Theorem \ref{CW1}]
First we show that $r_{\bb}(F)=\inf_{\bx\in\Sn_{++}}\cwu(F,\bx)$. 
So, let $\bx\in\Sn_{++}$, then, for every $k\in\N$, we have 
\begin{equation*}
 F^{k}(\bx)\ \lek\ \Bmax{F(\bx)}{\bx}^{\sum_{j=0}^{k-1} A^j}\krog\bx\end{equation*}
It follows from Proposition \ref{Prop536} that
\begin{equation*}
r_{\bb}(F)=\lim_{k\to\infty}\nnorm{F^{k}(\bx)}_{\bb}^{1/k} \leq \lim_{k\to\infty}\prod_{i=1}^d\bmaxi{i}{F(\bx)}{\bx}^{(\frac{1}{k}\sum_{j=0}^{k-1} A^j\bb)_i}=\cwu(F,\bx).
\end{equation*}
To show equality, assume first that $F$ has an eigenvector $\bu\in\Sn_{++}$. Then $\cwu(F,\bu)=r_{\bb}(F)$ and we are done. Now, suppose that $F$ does not have an eigenvector in $\Sn_{++}$, let $F^{(\delta_k)}$ and $(\blam^{(\delta_k)},\bx^{(\delta_k)})\in\R^d_{++}\times \Sn_{++}$ be as in Theorem \ref{BIGTHM}. Note that $\cwu(F,\bx)\leq \cwu(F^{(\delta_k)},\bx)$ as $F(\bx)\lek F^{(\delta_k)}(\bx)$ for every $k\in\N$ and $\bx\in\kone_{++}$.
It follows that
\begin{equation*}
r_{\bb}(F)=\lim_{k\to\infty}r_{\bb}\big(F^{(\delta_k)}\big) =\lim_{k\to\infty} \inf_{\bx\in\Sn_{++}}\cwu(F^{(\delta_k)},\bx) \geq \inf_{\bx\in\Sn_{++}}\cwu(F,\bx).
\end{equation*}
Now, we prove $r_{\bb}(F)=\max_{\by\in\Sn_{+}}\cwl(F,\by)$. To this end, let $\by\in\Sn_+$, if there exists $(i,j_i)\in\I$ such that $y_{i,j_i}>0$ and $F_{i,j_i}(\by)=0$, then $\cwl(F,\by)=0 \leq r_{\bb}(F)$. If this is not the case, then $\bt\in\R^d_{++}$ defined as 
\begin{equation*}
\theta_i = \min_{j_i\in[n_i], \ y_{i,j_i}>0} \frac{F_{i,j_i}(\by)}{y_{i,j_i}}\qquad \forall i \in[d],
\end{equation*}
satisfies $\bt\krog\by\leq F(\by)$. Hence, by Proposition \ref{Prop536}, we get
\begin{equation*}
\cwl(F,\by) = \prod_{i=1}^d \theta_i^{b_i}\leq r_{\bb}(F).
\end{equation*}
To conclude, note that, by Theorem \ref{BIGTHM}, we know that there exists $(\blam,\bu)\in\R^d_{+}\times \Sn_{+}$ such that $r_{\bb}(F)=\prod_{i=1}^d\lambda_i^{b_i}=\cwl(F,\bu)$.
\end{proof}
The proof of Theorem \ref{CW<1}, is inspired by the one of Theorem 21 in \cite{us}.
\begin{proof}[Proof of Theorem \ref{CW<1}]
As $\rho(A)<1$, Theorem \ref{Banachcor} implies the existence of $(\blam,\bub)\in\R^d_{++}\times\Sn_{++}$ such that $F(\bub)=\blam\krog\bub$. Moreover, the existence of $\bb\in\R^d_{++}$ such that $A^T\bb\leq\bb$ follows from Remark \ref{newrmq}. Set $R=\prod_{i=1}^d \lambda_i^{b_i}$, then we have $\cwl(F,\bub)=\cwu(F,\bub)=R$.
To prove the right-hand side of \eqref{CWeq<1}, it suffices to prove that for every $\byb\in\Sn_{+}$, we have $\cwl(F,\byb)\leq R$. So, let $\byb\in\Sn_{+}$, if there exists $(i,j_i)\in\I$ such that $y_{i,j_i}>0$ and $F_{i,j_i}(\by)=0$, then the inequality is clear. Thus, we may assume without loss of generality that 
$F_{i,j_i}(\by)>0$ for every $(i,j_i)\in\I$ such that $y_{i,j_i}>0$.
Let $\bt\in\R^d_{++}$ be defined as 
\begin{equation*}
\theta_i = \min_{j_i\in[n_i],\ y_{i,j_i}>0}\  \frac{u_{i,j_i}}{y_{i,j_i}}\qquad \forall i \in[d],
\end{equation*}
then $\bt\leq \ones$ because $\theta_i= \norm{\theta_i\by_i}_{\gamma_i}\leq \norm{\bu_i}_{\gamma_i}=1$ for all $i \in[d].$
Let $\boldsymbol{\Theta}= \bt^{-I}$, then $\boldsymbol{\Theta}\geq \ones$ and $\byb\lek\boldsymbol{\Theta}\krog\bub.$
Thus, for $\bs= \bb-A^T\bb\in\R^d_{+}$, we have
\begin{equation*}
\prod_{i=1}^d\Theta_i^{-\s_i}\leq 1.
\end{equation*}
Now, note that
$F\big(\boldsymbol{\Theta}\krog\bub\big)=\big(\blam\circ\boldsymbol{\Theta}^{A}\big)\krog \bub$ and thus
\begin{equation*}
\cwl(F,\byb)\leq\prod_{i=1}^d\Big(\min_{\substack{j_i\in[n_i]\\ y_{i,j_i}>0}}\frac{F_{i,j_i}(\boldsymbol{\Theta}\krog\bu)}{y_{i,j_i}}\Big)^{b_i}
=\prod_{i=1}^d\Theta_i^{-\s_i}\lambda_i^{b_i}\leq R.
\end{equation*}
The right-hand side of \eqref{CWeq<1} can be proved in a similar way. Indeed, if $\byb\in\Sn_{++}$, then $\prod_{i=1}^d\bmini{i}{\byb}{\bub}^{-\s_i}\geq 1$ and
\begin{equation*}
\cwu(F,\bu)\geq\prod_{i=1}^d\bmaxi{i}{F(\Bmin{\by}{\bub}\krog\bub)}{\by}^{b_i} 
=\prod_{i=1}^d\bmini{i}{\by}{\bub}^{-\s_i}\lambda_i^{b_i}\geq R.\qedhere
\end{equation*}
\end{proof}

It is known (see e.g. \cite{Horn}) that if $M\in\R^{n\times n}_+$ and $(\lambda,\bu)\in\R_{++}\times\R^n_{++}$ satisfies $M\bu=\lambda\bu$, then $\lambda = \rho(M)$, i.e. if $\theta\in\R$ is an eigenvalue of $M$, then $|\theta|\leq \lambda$. The next corollary can be seen as a generalization of this fact. To this end, we assume that $F\in\NB^d$ is the restriction of a map $\t F\colon V\to V$ where $ V=\R^{n_1}\times\ldots\times\R^{n_d}$. Similarly, suppose that $\norm{\cdot}_{\gamma_i}$ is well defined and monotonic on $\R^{n_i}$, i.e. \begin{equation}\label{monormn}
\forall \bx_i,\by_i\in\R^{n_i} \quad \text{if } \quad 0\leq \bx_i\leq \by_i\quad\text{then}\quad\norm{\bx_i}_{\gamma_i}\leq \norm{\by_i}_{\gamma_i}.
\end{equation}
Theorem 1 in \cite{JOHNSON199143} implies that \eqref{monormn} holds if and only if $\norm{\bx_i}_{\gamma_i}=\norm{\, |\bx_i|\, }_{\gamma_i}$, where the absolute value is taken component-wise.
\begin{cor}\label{extension_max}
Let $\t F\colon V \to V$ be such that $F = \t F|_{\kone_+}\in\NB^d$ and \begin{equation*}
|\t F(\bv)|\lek \t F(|\bv|)\qquad \forall \bv\in V. 
\end{equation*}
Moreover, suppose that $F$ satisfies the assumptions of either Theorem \ref{CW1} or Theorem \ref{CW<1} and let $\bb\in\R^d_{++}$ be given by these assumptions. Then, for every $(\bt,\bvb)\in\R^d\times V$ such that $\t F(\bvb)=\bt\krog\bvb$ and $\norm{\bv_i}_{\gamma_i}=1$ for all $i\in[d]$, we have
\begin{equation*}
\prod_{i=1}^d|\theta_i|^{b_i} \ \leq \ \inf_{\bx\in\Sn_{++}}\cwu(F,\bx) 
\end{equation*}
\end{cor}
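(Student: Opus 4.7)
The plan is to reduce the claim to the nonnegative Collatz-Wielandt principle by taking coordinatewise absolute values of the eigenvalue equation $\t F(\bvb)=\bt\krog\bvb$. Componentwise, $|\bt\krog\bvb|=|\bt|\krog|\bvb|$ where $|\bt|=(|\theta_1|,\ldots,|\theta_d|)$ and $|\bvb|=(|\bv_1|,\ldots,|\bv_d|)$. Combined with the assumed domination $|\t F(\bv)|\lek \t F(|\bv|)$, and noting that $|\bvb|\in\kone_+$ so that $\t F(|\bvb|)=F(|\bvb|)$, this gives the key inequality
\begin{equation*}
|\bt|\krog|\bvb|\ \lek\ F(|\bvb|).
\end{equation*}
Also, because each $\norm{\cdot}_{\gamma_i}$ is monotonic on $\R^{n_i}$ (by the cited Theorem~1 of \cite{JOHNSON199143}, $\norm{\bv_i}_{\gamma_i}=\norm{|\bv_i|}_{\gamma_i}$), we have $|\bvb|\in\Sn_+$.

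From the coordinatewise inequality above, for every $i\in[d]$ and every $j_i\in[n_i]$ with $|v_{i,j_i}|>0$,
\begin{equation*}
|\theta_i|\ \leq\ \frac{F_{i,j_i}(|\bvb|)}{|v_{i,j_i}|},
\end{equation*}
so, minimizing over such $j_i$ and taking the $\bb$-weighted product,
\begin{equation*}
\prod_{i=1}^d|\theta_i|^{b_i}\ \leq\ \cwl\bigl(F,|\bvb|\bigr).
\end{equation*}
The case in which $F_{i,j_i}(|\bvb|)=0$ for some $j_i$ with $|v_{i,j_i}|>0$ forces $\theta_i=0$ and is handled by the same convention (both sides vanish in that factor).

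Now apply the Collatz-Wielandt principle. Under the hypotheses of Theorem~\ref{CW1} (with $R=r_{\bb}(F)$) or Theorem~\ref{CW<1} (with $R=\prod_{i=1}^d\lambda_i^{b_i}$), we have
\begin{equation*}
\max_{\bx\in\Sn_+}\cwl(F,\bx)\ =\ R\ =\ \inf_{\bx\in\Sn_{++}}\cwu(F,\bx).
\end{equation*}
Since $|\bvb|\in\Sn_+$, this yields $\cwl(F,|\bvb|)\leq R=\inf_{\bx\in\Sn_{++}}\cwu(F,\bx)$, and combining with the previous display completes the proof.

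Essentially no step is a major obstacle: the only delicate point is confirming that $|\bvb|\in\Sn_+$, which uses the built-in assumption that all the norms $\norm{\cdot}_{\gamma_i}$ are monotonic (hence absolute), and checking that the degenerate case $\theta_i=0$ does not break the bound on $\cwl$. Everything else is a direct application of the Collatz-Wielandt characterizations already proved in this section.
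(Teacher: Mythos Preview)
Your proof is correct and follows essentially the same route as the paper: set $\bx=|\bvb|$, use the domination $|\t F(\bvb)|\lek F(|\bvb|)$ to obtain $|\bt|\krog\bx\lek F(\bx)$, observe that $\bx\in\Sn_+$ by absoluteness of the norms, and then invoke the Collatz--Wielandt characterization from Theorem~\ref{CW1} or~\ref{CW<1}. The paper's version is more terse, but the argument is identical.
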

\begin{proof}
Set $\bxb=|\bvb|$, then $\bxb\in\Sn_{+}$ and
\begin{equation*}
|\bt|\krog\bxb=|\bt\krog\bv| = | \t F(\bvb)| \lek \t F(\bxb)=F(\bx).
\end{equation*}
Hence, with Theorem \ref{CW1} or \ref{CW<1}, we get
\begin{equation*}
\prod_{i=1}^d|\theta_i|^{b_i} \leq \cwl(F,\bxb) \leq \inf_{\bz\in\Sn_{++}}\cwu(F,\bz).\qedhere
\end{equation*}
\end{proof}
As noted in Corollary 4.3 \cite{Fried}, this principle also holds for complex eigenvalues.

Now we prove Theorem \ref{rad<} which gives a sufficient condition that the eigenvalue associated with a positive eigenvector is strictly larger than those associated with any nonnegative eigenvector not being positive. This condition, as well as the proof of the theorem, is inspired by Theorem 6.1.7 in \cite{NB}, where the result is proved for homogeneous, order-preserving maps on solid closed cones in a finite dimensional space that are semi-differentiable at their positive eigenvector.
\begin{proof}[Proof of Theorem \ref{rad<}]
Let $\norm{\cdot}$ be any norm on $\R^{n_1}\times \ldots\times \R^{n_d}$ and $\blam\in\R^d_{++}$ be such that $F(\bu)=\blam\krog\bu$. We first prove the statement for $\blam=\ones$, then we show how to transfer the proof to the case $\blam\neq\ones$. By the chain rule, we have $DF(\bub)^k=DF^{k}(\bub)$ for every $k\in\N$. For the sake of contradiction assume that there exists $(\bt,\bvb)\in\R^d_{+}\times(\Sn_{+}\sauf\kone_{++})$ with $F(\bvb)=\bt\krog\bvb$ and $\prod_{l=1}^d\theta_l^{b_l}=1$. Let $\bal \in\R^d_{++}$ be defined as
\begin{equation}\label{defalpha}
\alpha_k=\min_{l_k\in[n_k], \ v_{k,l_k}>0}\,\frac{u_{k,l_k}}{v_{k,l_k}}\qquad \forall k\in[d],
\end{equation}
then  $0\lekk\bub-\bal\krog\bvb\lek\bu$. Hence
\begin{equation*}
-\bigg(\sum_{k=1}^\tau DF(\bub)^k(\bal\krog\bvb-\bub)\bigg)_i \in\R^{n_i}_{++}.
\end{equation*}
For $t\in (0,1]$, define $\byb(t)=(1-t)\bub+t\bal\krog\bvb\lekk\bub$ and note that
\begin{equation*}
F^{k}\big(\byb(t)\big)=F^{k}(\bub)+t\ DF(\bub)^k(\bal\krog\bvb-\bub) + t\ \norm{\bal\krog\bvb-\bub}\ \epsilon_k\big(t(\bal\krog\bvb-\bub)\big)
\end{equation*}
where $\lim_{\norm{\bwb}\to 0}\epsilon_k(\bwb)=0$. If follows that, with $\bzb=\bal\krog\bvb-\bub$, we have
\begin{align*}
\sum_{k=1}^\tau \Big(F^k(\bub)-F^k\big(\byb(t)\big)\Big)= t \bigg(-\sum_{k=1}^\tau DF(\bu)^k\bzb -\norm{\bzb}\sum_{k=1}^\tau\epsilon_k\big(t\bzb\big)\bigg).
\end{align*}
Since $\lim_{t\to 0}\sum_{k=1}^\tau\epsilon_k\big(t\bzb\big)=0$ and $-\sum_{k=1}^\tau \big(DF(\bu)^k\bzb\big)_i\in\R^{n_i}_{++}$, there exists $s\in (0,1]$ such that for every $t\in (0,s]$, it holds
\begin{equation*}
\sum_{k=1}^\tau \Big(F_i^k(\bub)-F_i^k\big(\byb(t)\big)\Big)\in\R^{n_i}_{++}.
\end{equation*}
For all $t\in (0,1]$, we have $\bal\krog\bvb \lek\byb(t)$ and thus 
\begin{equation*}
\sum_{k=1}^\tau \Big(F^k\big(\byb(t)\big)-F^k(\bal\krog\bv)\Big)\in\kone_{+}.
\end{equation*}
It follows with $\blam = \ones$ and $F(\bu)=\bu$ that
\begin{equation*}
\sum_{k=1}^{\tau}F^k(\bal\krog\bv) \lek \tau \bu \andd \sum_{k=1}^{\tau}F_i^k(\bal\krog\bv)< \tau\bu_i.
\end{equation*}
So, for every $(j_1,\ldots,j_d)\in\J$, we have
\begin{equation*}
\tau\prod_{l=1}^du_{l,j_l}^{b_l}> \prod_{l=1}^d\bigg(\sum_{k=1}^{\tau}F^k_{l,j_l}(\bal\krog\bv)\bigg)^{b_l}=\prod_{l=1}^dv_{l,j_l}^{b_l}\bigg(\sum_{k=1}^\tau \big(\bal^{A^k}\big)_l\big(\bt^{\sum_{s=0}^{k-1}A^s}\big)_l\bigg)^{b_l}.
\end{equation*}
Using the inequality relating arithmetic and geometric mean, for $l\in[d]$ we get
\begin{equation*}
\sum_{k=1}^\tau \big(\bal^{A^k}\big)_l\big(\bt^{\sum_{s=0}^{k-1}A^s}\big)_l\geq \tau \prod_{k=1}^\tau \Big(\big(\bal^{A^k}\big)_l\big(\bt^{\sum_{s=0}^{k-1}A^s}\big)_l\Big)^{1/\tau}.
\end{equation*}
It follows that
\begin{align*}
\prod_{l=1}^d\bigg(\sum_{k=1}^\tau \big(\bal^{A^k}\big)_l\big(\bt^{\sum_{s=0}^{k-1}A^s}\big)_l\bigg)^{b_l} & \geq \tau \prod_{k=1}^\tau \prod_{l=1}^d\Big(\big(\bal^{A^k}\big)_l\big(\bt^{\sum_{s=0}^{k-1}A^s}\big)_l\Big)^{b_l/\tau}\\
&= \tau\prod_{k=1}^\tau \Big(\prod_{l=1}^{d}\alpha_l^{b_l}\Big)^{\frac{\rho(A)^k}{\tau}}\Big(\prod_{l=1}^{d}\theta_l^{b_l}\Big)^{\frac{1}{\tau}\sum_{s=0}^{k-1}\rho(A)^s}\\
&\geq \tau\prod_{l=1}^{d}\alpha_l^{b_l},
\end{align*}
where we have used that $\bal\leq \ones$ because $\bu,\bv\in\Sn_+$. Thus, we get
\begin{equation*}
\prod_{l=1}^du_{l,j_l}^{b_l}> \prod_{l=1}^d(v_{l,j_l}\alpha_l)^{b_l} \qquad \forall (j_1,\ldots,j_d)\in\J,
\end{equation*}
a contradiction to \eqref{defalpha}. \\ 
Now, suppose that $F(\bu)=\blam\krog\bu$ with $\blam\neq \ones$, then $F'\in\NB^d$ defined as $F'(\bx)=\blam^{-I}\krog F(\bx)$ satisfies all the assumptions in the statement and is such that $F'(\bu)=\bu$. Moreover, if $(\bt,\bvb)\in\R^d_{+}\times(\Sn_{+}\sauf\kone_{++})$ satisfies $F(\bvb)=\bt\krog\bvb$, then $F'(\bv)=(\blam^{-I}\circ\bt)\krog\bv$ and thus 
\begin{equation*}
\prod_{l=1}^d \bigg(\frac{\theta_l}{\lambda_l}\bigg)^{b_l}<1 \qquad \implies \qquad 
\prod_{j=1}^d\theta_j^{b_j}<\prod_{j=1}^d\lambda_j^{b_j}=r(F).\qedhere
\end{equation*}
\end{proof}
Note that, if in Theorem \ref{rad<} the matrix $DF(\bu)$ is irreducible, then there exists $\tau\in\N$ such that $\big(I+DF(\bu)\big)^\tau$ has positive entries and \eqref{dirr} is satisfied. However, except when $d=1$, the converse is not true in general as shown by the next example.
\begin{ex}\label{irrrrrr_ex}
Let $d=2$, $n_1=n_2=2$ and $F\in\NB^d$ with
\begin{equation*}
F(\bx_1,\bx_2)=\bigg(\begin{pmatrix}((x_{1,1}x_{1,2})^{1/2}x_{2,1})^{1/2}\\ ((x_{1,1}x_{1,2})^{1/2}x_{2,2})^{1/2} \end{pmatrix},\begin{pmatrix}(x_{2,1}x_{2,2})^{1/2}\\ (x_{2,1}x_{2,2})^{1/2}\end{pmatrix}\bigg).
\end{equation*}
Then, $F(\ones)=\ones$ and all the assumptions of Theorem \ref{rad<} are satisfied, however $DF(\ones)$ is not irreducible.
\end{ex}
Nevertheless, as proved in the next proposition, if $F$ satisfies the assumptions of Theorem \ref{rad<} and $\A(F)$ is irreducible, then $DF(\bu)$ is irreducible. We also prove a similar result for the notion of primitivity which will be used to analyse the convergence of the power method.
\newcommand{\B}{\mathcal{B}}
\begin{prop}\label{hardcorelema}
Let $F\in\NB^d$, $\bx\in\kone_{++}$ and suppose that $F$ is differentiable at $\bx$.
\begin{enumerate}
\item If $DF(\bx)$ is irreducible, then $\A(F)$ is irreducible.\label{hardprop1} Conversely, if $\A(F)$ is irreducible and there exist $l\in[d]$ and $\xi\in\N$ such that
\begin{equation}\label{eqproppp}
\forall \bw\in\kone_{+}\saufzero, \quad \text{if} \quad \bx = \sum_{k=1}^\tau DF(\bu)^k\bw \quad \text{then} \quad \bx_l\in\R^{n_l}_{++},
\end{equation}
then $DF(\bx)$ is irreducible.
\item If $DF(\bx)$ is primitive, then $\A(F)$ is primitive. Conversely, if $\A(F)$ is primitive and there exist $l\in[d]$ and $\xi\in\N$ such that\label{hardprop2}
\begin{equation}\label{eqproppp2}
\forall \bw\in\kone_{+}\saufzero, \quad \text{if} \quad \bx = DF(\bu)^\xi\bw \quad \text{then} \quad \bx_l\in\R^{n_l}_{++},
\end{equation}
then $DF(\bx)$ is primitive.
\end{enumerate}
\end{prop}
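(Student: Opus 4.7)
The plan is to prove Proposition~\ref{hardcorelema} by exploiting two elementary relations between the Jacobian $DF(\bx)$ and the homogeneity matrix $\A(F)$, both valid since $\bx\in\kone_{++}$. First, applying Lemma~\ref{Eulerthm} componentwise yields the identity
\begin{equation*}
D_jF_i(\bx)\,\bx_j \,=\, \A(F)_{i,j}\,F_i(\bx), \qquad \forall\, i,j\in[d];
\end{equation*}
combined with $\bx_j\in\R^{n_j}_{++}$ and the entry-wise nonnegativity of $D_jF_i(\bx)$ (Theorem~\ref{opcharac}), this gives the equivalence $\A(F)_{i,j}=0$ if and only if $D_jF_i(\bx)=0$. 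Second, whenever $\A(F)_{i,j}>0$, Lemma~\ref{basics_lem} guarantees that every row of $D_jF_i(\bx)$ has at least one positive entry, so that $D_jF_i(\bx)$ sends any vector of $\R^{n_j}_{++}$ into $\R^{n_i}_{++}$.

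The forward implications in (1) and (2) follow from the first observation alone. For (1), if $\A(F)$ were reducible, there would exist a proper nonempty $S\subsetneq[d]$ with $\A(F)_{i,j}=0$ for $i\in S$ and $j\notin S$; the corresponding blocks $D_jF_i(\bx)$ would then all vanish, so that the subset $T=\bigsqcup_{i\in S}\{i\}\times[n_i]\subset\I$ would receive no nonzero entry of $DF(\bx)$ from $\I\sauf T$, contradicting the irreducibility of $DF(\bx)$. For (2), if $DF(\bx)^\nu$ has all positive entries, then for every $(i,k)\in[d]^2$ one finds a length-$\nu$ entry-level path from some $(k,l_k)$ to some $(i,j_i)$; projecting its intermediate indices to $[d]$ and invoking the first observation produces a length-$\nu$ block path witnessing $(\A(F)^\nu)_{i,k}>0$.

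For the converse of (1), fix $\bw\in\kone_+\saufzero$ and set $\bv=\sum_{k=1}^{\tau}DF(\bx)^k\bw$; assumption~\eqref{eqproppp} yields $\bv_l\in\R^{n_l}_{++}$. Irreducibility of $\A(F)$ provides, for each $i\in[d]$, a block path $l=j_0,j_1,\ldots,j_{s_i}=i$ with $s_i\le d-1$ and $\A(F)_{j_{t+1},j_t}>0$ for all $t$. Applying the second observation inductively along this path, the composition
\begin{equation*}
D_{j_{s_i-1}}F_{j_{s_i}}(\bx)\cdots D_{j_0}F_{j_1}(\bx)\,\bv_l \;\in\; \R^{n_i}_{++}
\end{equation*}
is a nonnegative summand of $(DF(\bx)^{s_i}\bv)_i$, forcing the latter into $\R^{n_i}_{++}$. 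Summing the binomial expansion over $s=0,\ldots,d-1$ gives $(I+DF(\bx))^{d-1}\bv\in\kone_{++}$, and from $\bv\lek(I+DF(\bx))^{\tau}\bw$ entry-wise we deduce $(I+DF(\bx))^{d-1+\tau}\bw\in\kone_{++}$. Letting $\bw$ run through the standard basis vectors then shows that $(I+DF(\bx))^{d-1+\tau}$ has all positive entries, which is enough to conclude that $DF(\bx)$ is irreducible. The converse of (2) runs in exactly the same way: set $\bv=DF(\bx)^\xi\bw$ so that $\bv_l\in\R^{n_l}_{++}$ by~\eqref{eqproppp2}, propagate strict positivity along the length-$\nu$ block path supplied by $\A(F)^\nu>0$ to obtain $(DF(\bx)^\nu\bv)_i\in\R^{n_i}_{++}$ for every $i$, hence $DF(\bx)^{\xi+\nu}\bw\in\kone_{++}$, and vary $\bw$ to get $DF(\bx)^{\xi+\nu}>0$ entry-wise.

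The main obstacle will be the careful chaining in the two converse directions: one has to verify that the ``each row has a nonzero entry'' property of the second observation composes correctly, so that strict positivity is preserved at every step along a block path rather than degrading to mere nonnegativity. Once this compounding is settled, the irreducibility or primitivity of $\A(F)$ upgrades the single-block hypothesis~\eqref{eqproppp} or~\eqref{eqproppp2} into strict positivity of every block of a suitable power of $DF(\bx)$.
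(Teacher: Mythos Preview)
Your proof is correct and rests on the same two ingredients as the paper: the zero-pattern equivalence $\A(F)_{i,j}=0\iff D_jF_i(\bx)=0$ obtained from Euler's identity, and the row-nonzero structure of each nonvanishing block from Lemma~\ref{basics_lem}. For the converses the paper introduces an auxiliary block-pattern map $\B\colon\R_+^{N\times N}\to\{0,1\}^{d\times d}$ recording which blocks have a nonzero entry in every row, and proves two general lemmas showing that a block matrix $M$ inherits irreducibility or primitivity from $\B(M)$ once one full row of blocks is entrywise positive; your direct chaining of strict positivity along a single block path applied to a test vector $\bw$ accomplishes the same thing more elementarily, trading the paper's reusable block-matrix formalism for a shorter self-contained argument.
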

\begin{proof}
Let $A=\A(F)$ and $L=DF(\bx)$.\\
\eqref{hardprop1} Suppose that $L$ is irreducible. Let $(s,j_s),(i,j_i)\in\I$, be such that $L_{(i,j_i),(s,j_s)}>0$. From Lemma \ref{Eulerthm}, we know that $\ps{\grad_sF_{i,j_i}(\bx)}{\bx_s}=A_{i,s}F_{i,j_i}(\bx)$ and $A_{i,s}>0$ by Lemma \ref{basics_lem}. This means that
\begin{equation}\label{fiimp}
\forall (i,j_i),(s,j_s)\in\I, \qquad L_{(i,j_i),(s,j_s)}>0, \quad \implies\quad A_{i,s}>0.
\end{equation}
 Now, let $G(L)=(\I,E(L))$ and $G(A)=([d],E(A))$ be the graphs associated with the matrices $L$ and $A$ respectively. Since $L$ is irreducible, $G(L)$ is strongly connected. In particular, for every $i,j\in[d]$ there exists a path from $(i,1)$ to $(j,1)$ in $G(L)$. By the above discussion, this path induces naturally a path from $i$ to $j$ in $G(A)$. This implies that $G(A)$ also is strongly connected and therefore $A$ is irreducible.\\
For the second part, we introduce some mappings that allow an easier understanding of the behaviour of the blocks $D_{j}F_i(\bx)$ in $DF(\bx)$. Let $N=|\I|=\sum_{l=1}^dn_l$ and, for $i,j\in[d]$, consider the mappings $B_{i,j}\colon\R^{N\times N}\to\R^{n_i\times n_j}$ such that
\begin{equation*}
M = \begin{pmatrix} B_{1,1}(M) & \cdots & B_{1,d}(M) \\ \vdots & & \vdots \\ B_{d,1}(M)&\cdots& B_{d,d}(M)\end{pmatrix} \qquad \forall M\in\R^{N\times N}.
\end{equation*}
That is $M$ can be written as a block matrix with $d^2$ blocks given by $B_{i,j}(M)$ for $i,j\in[d]$.
In particular, $B_{i,j}(DF(\bx))=D_jF_i(\bx)$ for every $i,j\in[d]$. We describe $\A(F)$ with $\B\colon\R^{N\times N}_+\to\{0,1\}^{d\times d}$ defined for $i,j\in[d]$ as 
\begin{equation*}
\big(\B(M)\big)_{i,j} =\begin{cases} 1 & \text{if } B_{i,j}(M) \text{ has at least one zero entry per row,} \\ 0 & \text{else.}\end{cases}
\end{equation*}
Then, there are $\beta,\t\beta>0$ such that $\beta\A(F)\leq\B(DF(\bx))\leq\t \beta\A(F)$ by Lemma \ref{basics_lem}, i.e. $\B(DF(\bx))$ and $\A(F)$ have the same zero pattern. Note that for every $\bar M,\t M\in\R^{N\times N}_+$, there exist $\alpha,\t \alpha>0$ such that \begin{equation*}
\B(\t M \bar M)\geq \alpha\B(\t M)\B(\bar M)\andd\B(\t M+ \bar M)\geq \t\alpha\big(\B(\t M)+\B(\bar M)\big).
\end{equation*}
Let $M\in\R^{N\times N}_+$, we claim that:
\begin{enumerate}[(a)]
\item If there exists $s\in [d]$ such that $B_{s,j}(M)\in\R^{n_s\times n_j}_{++}$ for every $j\in[d]$ and $\B(M)$ is primitive, then $M$ is primitive.\label{primproof}
\item If there exists $s\in [d]$ such that $B_{s,i}(M)\in\R^{n_s\times n_i}_{++}$ for every $i\in[d]$ and $\B(M)$ is irreducible, then $M$ is irreducible.\label{irproof}
\end{enumerate}
 \eqref{primproof} If $\B_{s,j}(M)>0$ for every $j\in[d]$, then $\B_{s,j}(M^k)>0$ for every $k\in\N$ and $j\in[d]$. Now, as $\B(M)$ is primitive, there exists $\tau\in \N$ such that $\B(M)^\tau\in\R^{d\times d}_{++}$. It follows that $\B(M^\tau)\in\R^{d\times d}_{++}$. Thus, we have
 \begin{equation*}
 B_{i,j}(M^{\tau+1}) \geq B_{i,s}(M^\tau)B_{s,j}(M) \in\R^{n_i\times n_j}_{++} \qquad \forall i,j\in[d],
 \end{equation*}
i.e. $M$ is primitive.\\
 \eqref{irproof} If $\B(M)$ is irreducible, there exists $\tau\in\N$ such that $P=\sum_{k=1}^\tau\B(M)^k\in\R^{d\times d}_{++}$. Let $\t M =\sum_{k=1}^\tau M^k$, then $B_{s,j}(\t M)\geq B_{s,j}(M)$ for every $j\in [d]$ and there exists $\alpha >0$ such that $\B(\t M)\geq \alpha P$. Thus, it follows from \eqref{primproof} that there exists $\nu\in\N$ such that $\t M^\nu\in\R^{N\times N}_{++}$. To conclude, note that there exists $\beta>0$ such that $\sum_{k=1}^{\tau\nu}M^k \geq \beta\t M^\nu$.\\ 
Now, suppose that $A$ is irreducible and $L$ satisfies \eqref{eqproppp}, then $\t L = \sum_{k=1}^{\xi}L^k$ is such that $B_{l,i}(\t L)>0$ for every $i\in[d]$. Moreover, if $A$ is irreducible, then $\t A=\sum_{k=1}^\xi A^k$ is irreducible and there exists $\alpha>0$ such that $\B(\t L)\geq \alpha\t A$ and thus $\B(\t L)$ is also irreducible. It follows from \eqref{irproof} that $\t L$ is irreducible. Now, as $\t L$ is irreducible, there exists $\sigma\in\N$ such that $\sum_{k=1}^\sigma\t L^k\in\R^{N\times N}_{++}$. The irreducibility of $L$ finally follows from the fact that there exists $\beta>0$ such that $\sum_{k=1}^{\sigma\xi}L^k \geq \beta\sum_{k=1}^{\sigma}\t L^k$.\\
\eqref{hardprop2} Suppose that $L$ is primitive. Then, there exists $\tau>0$ such that $L^{\tau}\in\R^{N\times N}_{++}$. We prove by induction, that if $(i,j_i),(s,j_s)\in\I$ are such that $(L^k)_{(i,j_i),(s,j_s)}>0$ then $(A^{k})_{i,s}>0$. The case $k=1$ is given by \eqref{fiimp}. So, suppose that the relation true for some $k\geq 1$ and let $(i,j_i),(s,j_s)\in\I$ be such that $(L^{k+1})_{(i,j_i),(s,j_s)}>0$. Then, there exists $(t,j_t)\in \I$ such that $(L^{k})_{(i,j_i),(t,j_t)}>0$ and $L_{(t,j_t),(s,j_s)}>0$. The induction assumption and \eqref{fiimp} imply that $(A^{k})_{i,t}>0$ and $A_{t,s}>0$. Hence $(A^{k+1})_{i,s}>0$ which proves our claim. In particular, it follows that $A^{\tau}\in\R^{d\times d}_{++}$ and therefore $A$ is primitive.\\
Finally, assume that $L$ satisfies \eqref{eqproppp2} and $A$ is primitive. Then, we have that $B_{l,j}(L^{\xi})\in\R^{n_l\times n_j}_{++}$ for every $j\in[d]$. Thus \eqref{primproof} implies that $L^{\xi}$ is primitive. It follows that there exists $\tau>0$ such that $L^{\tau\xi}\in\R^{N\times N}_{++}$ and thus $L$ itself is primitive. 
 \end{proof}

\subsection{Uniqueness of the positive eigenvector}
We derive a sufficient condition in order to ensure uniqueness of an eigenvector in $\S_{++}$. 
The following example shows that the irreducibility notions of Definition \ref{newirr} and the assumption of Theorem \ref{exist} are sufficient for existence of a positive eigenvector but do not guarantee uniqueness.
\begin{ex}\label{nonuniquevect}
Let $\epsilon \in (0,1)$, $d=1$, $n_1=3$ and $F\in \NB^d$ with
\begin{equation*}
F(a,b,c)=\big(\max(a,b,c), \max(\epsilon a,b), \max(\epsilon b,c)\big).
\end{equation*}
then, $(1,b,c)$ is a positive eigenvector of $F$ for every $b,c\in [\epsilon,1]$. Moreover, $F$ is irreducible and $\G(F)$ is strongly connected.
\end{ex}
The proof of the following theorem can be done as in Theorem 6.4.1 \cite{NB} (that deals with the case $d=1$) by noting that for every $\bb\in\R^d_{++}$ and $\bxb,\byb\in\kone_{++}$, Equation \eqref{simplegeod} implies
\begin{equation*}
\mu_{\bb}\big(\bx,t\bx+(1-t)\by\big)+\mu_{\bb}\big(t\bx+(1-t)\by,\by\big)=\mu_{\bb}\big(\by,\bx\big) \qquad \forall t\in [0,1].
\end{equation*}
\begin{thm}\label{nonuniquethm}
Let $\bphi\in\kone_{++}$, $\bb\in\R^d_{++}$ and $G\colon \S^{\bphi}_{++}\to \S^{\bphi}_{++}$ be such that
\begin{equation*}
\mu_{\bb}(G(\bx),G(\by))\leq \mu_{\bb}(\bx,\by)\qquad \forall\bx,\by\in\S_{++}^{\bphi}. 
\end{equation*}
If there exist $\bu,\bw\in\S_{++}^{\bphi},\bu\neq\bw$ such that $G(\bu)=\bu$, $G(\bw)=\bw$ and $G$ is differentiable at $\bu$, then there exists $\bv\in \R^{n_1}\times\ldots\times\R^{n_d}, \bv\neq 0$ such that $\ps{\bv}{\bphi}=0$ and $DG(\bu)\bv=\bv$.
\end{thm}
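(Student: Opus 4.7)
The plan is to transport the segment from $\bu$ to $\bw$ under $G$, exploit the displayed geodesic identity together with non-expansiveness of $G$ to force equality in the triangle inequality, and pass to the infinitesimal level at $\bu$. Concretely, set $\bv = \bw - \bu$ and $\bz(t) = (1-t)\bu + t\bw$ for $t \in [0,1]$. By convexity $\bz(t) \in \S^{\bphi}_{++}$, and $\bv \neq 0$ lies in the tangent space $T = \{\bar\bv \in \R^{n_1}\times\cdots\times\R^{n_d} : \ps{\bar\bv_i}{\bphi_i} = 0,\ \forall i\in[d]\}$. The displayed geodesic identity gives $\mu_{\bb}(\bu, \bz(t)) + \mu_{\bb}(\bz(t), \bw) = \mu_{\bb}(\bu, \bw)$. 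Combined with non-expansiveness of $G$, the fixed-point relations $G(\bu)=\bu$, $G(\bw)=\bw$, and the triangle inequality, this forces, for every $t \in [0,1]$,
\begin{equation*}
\mu_{\bb}(\bu, G(\bz(t))) = \mu_{\bb}(\bu, \bz(t)) \and \mu_{\bb}(G(\bz(t)), \bw) = \mu_{\bb}(\bz(t), \bw).
\end{equation*}
Since $G(\bz(t))$ then satisfies the same geodesic identity relative to $\bu,\bw$, the argument iterates to yield $\mu_{\bb}(\bu, G^k(\bz(t))) = \mu_{\bb}(\bu, \bz(t))$ for every $k \in \N$.

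Next, set $H = DG(\bu)$. By the chain rule and $G(\bu)=\bu$, one has $DG^k(\bu) = H^k$ and $G^k(\bz(t)) = \bu + tH^k\bv + o(t)$ as $t \to 0^+$. The weighted Hilbert metric admits the first-order expansion
\begin{equation*}
\lim_{t \to 0^+}\frac{\mu_{\bb}(\bu, \bu + t\bar\bv)}{t} = \sum_{i=1}^d b_i\bigg(\max_{j_i \in [n_i]} \frac{\bar v_{i,j_i}}{u_{i,j_i}} - \min_{j_i \in [n_i]} \frac{\bar v_{i,j_i}}{u_{i,j_i}}\bigg) =: \|\bar\bv\|_{\bu, \bb},
\end{equation*}
which, as a function on $T$, is a genuine norm: a tangent vector $\bar\bv \in T$ with zero seminorm must satisfy $\bar\bv_i = c_i \bu_i$, but then $\ps{\bar\bv_i}{\bphi_i}=0$ together with $\ps{\bu_i}{\bphi_i}=1$ gives $c_i = 0$. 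Dividing $\mu_{\bb}(\bu, G^k(\bz(t))) = \mu_{\bb}(\bu, \bz(t))$ by $t$ and letting $t \to 0^+$ then gives $\|H^k \bv\|_{\bu, \bb} = \|\bv\|_{\bu, \bb} > 0$ for every $k \in \N$. Differentiating the defining constraint of $\S^{\bphi}$ shows $H(T) \subset T$, so the orbit $\{H^k\bv\}_{k \geq 0}$ is a bounded subset of a sphere in the finite-dimensional normed space $(T, \|\cdot\|_{\bu, \bb})$.

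A nonzero fixed vector of $H$ would then be obtained from this bounded orbit through the Cesàro averages $\bv_N = N^{-1}\sum_{k=0}^{N-1} H^k\bv \in T$. The sequence $\{\bv_N\}$ is uniformly bounded; by the finite-dimensional mean ergodic theorem it converges to $\bv^\ast$, the projection of $\bv$ onto $\ker(I - H)$. Moreover $H\bv_N - \bv_N = N^{-1}(H^N\bv - \bv) \to 0$, so $H\bv^\ast = \bv^\ast$; if $\bv^\ast\neq 0$, it is the sought eigenvector (it lives in $T$ by construction).

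The main obstacle is precisely to show $\bv^\ast \neq 0$, i.e.\ to exclude the possibility that $\bv$ is supported entirely on generalized eigenspaces of $H$ for eigenvalues $\lambda \neq 1$ on the unit circle. For this, I would exploit the \emph{companion} equality $\mu_{\bb}(G^k(\bz(t)), \bw) = \mu_{\bb}(\bz(t), \bw)$: its first-order expansion in $t$ produces a second, directional, constraint on $H^k\bv$ which records that the images $G^k(\bz(t))$ move \emph{toward} $\bw$ and not away from it. This sign condition rules out the simplest obstruction $H\bv = -\bv$ (by a direct comparison of signed derivatives of $t \mapsto \mu_{\bb}(\bz(t), \bw)$), and, more generally, is incompatible with $\bv$ lying entirely in a nontrivial unit-circle eigenspace of $H$. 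The delicate part is to handle the non-smooth max/min structure of the Finsler norm $\|\cdot\|_{\bu,\bb}$ uniformly across the $d$ blocks; in the case $d=1$ this is exactly the content of Theorem~6.4.1 of \cite{NB} to which the statement appeals, and the extension to $d > 1$ amounts to carrying out the same linearization block-by-block and summing with the weights $b_i$.
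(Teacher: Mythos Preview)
Your overall strategy --- linearize along the straight segment $\bz(t)=(1-t)\bu+t\bw$, use the geodesic identity for $\mu_{\bb}$ together with non-expansiveness to force the two equalities, and then differentiate at $t=0$ --- is exactly the route the paper points to (Theorem~6.4.1 in \cite{NB}, carried over block-by-block via the product geodesic identity). The set-up, the tangent space $T$, the Finsler seminorm $\|\cdot\|_{\bu,\bb}$ and its non-degeneracy on $T$, and the invariance $H(T)\subset T$ are all correct.

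The genuine gap is precisely where you locate it: you have \emph{not} shown that the Ces\`aro limit $\bv^\ast$ is nonzero, and your last paragraph is a heuristic, not a proof. The fix is more concrete than you indicate, and it \emph{is} exactly a linearization of the companion equality. Differentiating $\mu_{\bb}(G^k(\bz(t)),\bw)=\mu_{\bb}(\bz(t),\bw)$ at $t=0^+$ (assume first that, for each $i$, the extremal indices $j_+^i\in\arg\max_j u_{i,j}/w_{i,j}$ and $j_-^i\in\arg\min_j u_{i,j}/w_{i,j}$ are unique) yields
\[
\ell(H^k\bv)=\|\bv\|_{\bu,\bb}\qquad\text{for every }k\ge 0,
\qquad\text{where}\quad
\ell(\bar\bv)=\sum_{i=1}^d b_i\Big(\frac{\bar v_{i,j_-^i}}{u_{i,j_-^i}}-\frac{\bar v_{i,j_+^i}}{u_{i,j_+^i}}\Big).
\]
Since $|\ell(\bar\bv)|\le \|\bar\bv\|_{\bu,\bb}$ for all $\bar\bv\in T$, this is a norming functional, and averaging gives $\ell(\bv^\ast)=\|\bv\|_{\bu,\bb}>0$, hence $\bv^\ast\neq 0$. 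This is the missing step; once you write it down, your Ces\`aro argument closes. When the extremal indices are not unique, the one-sided derivative of $t\mapsto\mu_{\bb}(G^k(\bz(t)),\bw)$ picks out $\max_{j\in J^i_+}$ and $\min_{j\in J^i_-}$ instead of single coordinates and $\ell$ is no longer linear; this is the ``delicate part'' you allude to, and it is exactly what the $d=1$ argument in \cite{NB} handles --- you should either carry it out explicitly or, as the paper does, invoke that reference for this technical point rather than leave it as an intention.
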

The next lemma shows that when $\bu$ is a fixed point of $F\in\NB^d$, i.e. an eigenvector with eigenvalue $\blam = \ones$, and $F$ is differentiable at $\bu$, then one can find $\t \bb\in\R^d_{+}$ such that $\t \bb\krog\bu$ is an eigenvector of $DF(\bu)$.
\newcommand{\Sphi}{\S^{\bphib}}
\begin{lem}\label{difflem}
Let $\bphi\in\kone_{++}$, $F\in\NB^d$, $A=\A(F)$. If there exists $\bu\in\Sphi_{++}$ with $F(\bu)=\bu$, $F$ is differentiable at $\bub$ and $\t\bb\in\R^d_{+,0}$ satisfies $A\t\bb=\t\bb$, then
\begin{equation*}
DF(\bu)\t\bu=\t\bu \qquad \text{with}\qquad \t\bu = \t\bb\krog\bu.
\end{equation*}
Moreover, the mapping $G\colon\kone_{++}\to\kone_{++}$ defined as
\begin{equation*}
G(\bx)= \bigg(\frac{F_1(\bx)}{\ps{F_1(\bx)}{\bphi_1}},\ldots,\frac{F_d(\bx)}{\ps{F_d(\bx)}{\bphi_d}}\bigg)
\end{equation*}
is differentiable at $\bu$ and
\begin{equation}\label{gradG}
DG(\bu)\bz = DF(\bu)\bz-\big(\ps{DF_1(\bu)\bz}{\bphi_1},\ldots,\ps{DF_d(\bu)\bz}{\bphi_d}\big)\krog\bu
\end{equation}
for every $\bz\in V.$
\end{lem}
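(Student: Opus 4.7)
The plan is to combine the Euler-type identity from Lemma \ref{Eulerthm} with a direct application of the quotient rule. The claim about $DF(\bu)$ reduces to a linear algebra identity involving the homogeneity matrix $A$, and the formula for $DG(\bu)$ is a straightforward differentiation calculation, made tractable by the normalization $\ps{\bu_i}{\bphi_i}=1$.

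First I would apply Lemma \ref{Eulerthm} coordinate-wise: since $F$ is multi-homogeneous with $\A(F)=A$, each scalar function $F_{i,j_i}$ satisfies $F_{i,j_i}(\bal\krog\bx)=F_{i,j_i}(\bx)\prod_{k=1}^d\alpha_k^{A_{i,k}}$, i.e.\ it fits the hypothesis of Lemma \ref{Eulerthm} with weights $(A_{i,1},\ldots,A_{i,d})$. Item (2) of that lemma then gives $\ps{\grad_k F_{i,j_i}(\bu)}{\bu_k}=A_{i,k}F_{i,j_i}(\bu)$ for every $(i,j_i)\in\I$ and $k\in[d]$. Regrouped in block form this reads $D_kF_i(\bu)\bu_k=A_{i,k}\bu_i$, where I use $F(\bu)=\bu$. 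Multiplying by $\t b_k$ and summing over $k$, I get $(DF(\bu)\t\bu)_i = \sum_{k=1}^d \t b_k D_kF_i(\bu)\bu_k = \bu_i\sum_{k=1}^d A_{i,k}\t b_k = (A\t\bb)_i\bu_i = \t b_i\bu_i = \t\bu_i$, which is exactly $DF(\bu)\t\bu=\t\bu$.

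For the second part, I would first observe that because $\bu\in\Sphi_{++}$ and $F(\bu)=\bu$, we have $\ps{F_i(\bu)}{\bphi_i}=\ps{\bu_i}{\bphi_i}=1$ for every $i\in[d]$. By continuity, $\bx\mapsto\ps{F_i(\bx)}{\bphi_i}$ stays nonzero in a neighbourhood of $\bu$, so each $G_i$ inherits differentiability at $\bu$ from $F$. Applying the quotient rule to $G_i(\bx)=F_i(\bx)/\ps{F_i(\bx)}{\bphi_i}$ at $\bx=\bu$ yields $DG_i(\bu)\bz = DF_i(\bu)\bz - \bu_i\ps{DF_i(\bu)\bz}{\bphi_i}$, since the denominator equals $1$ and $F_i(\bu)=\bu_i$. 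Assembling these blocks for $i=1,\ldots,d$ gives precisely the formula \eqref{gradG}.

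I do not anticipate a genuine obstacle, since both claims reduce to standard calculus once Lemma \ref{Eulerthm} is in hand. The only point requiring care is identifying the correct exponents when invoking Lemma \ref{Eulerthm} — namely the $i$-th row of $A=\A(F)$ — and remembering to use the normalization $\ps{\bu_i}{\bphi_i}=1$ to kill off the otherwise-present factors of $\ps{F_i(\bu)}{\bphi_i}$ in the quotient rule.
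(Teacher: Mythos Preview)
Your proposal is correct and follows essentially the same route as the paper: invoke Lemma \ref{Eulerthm} to obtain $D_kF_i(\bu)\bu_k=A_{i,k}\bu_i$ (using $F(\bu)=\bu$), then contract against $\t\bb$ and use $A\t\bb=\t\bb$; for $G$, apply the quotient rule and simplify with $\ps{F_i(\bu)}{\bphi_i}=1$ and $F_i(\bu)=\bu_i$. The paper's proof is the same computation, only phrasing the first step slightly more generally as $DF_i(\bu)(\bal\krog\bu)=(A\bal)_i\bu_i$ before specializing to $\bal=\t\bb$.
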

\begin{proof}
 By Lemma \ref{Eulerthm}, for all $k,i\in[d]$, we have $D_iF_k(\bu)\bu_i=A_{k,i}\bu_k$. Hence,
\begin{equation}\label{gradeval}
DF_i(\bu)(\bal \krog\bu)= \sum_{k=1}^d \alpha_kD_kF_i(\bu)\bu_k = \bigg( \sum_{k=1}^d A_{i,k}\alpha_k\bigg)\bu_i=(A\bal)_i \bu_i
\end{equation}
holds every $\bal\in\R^d_{++}$ which implies that $DF(\bu)\t\bu=(A\t\bb)\krog\bu=\t \bu$.\\
Now, if $F$ is differentiable at $\bxb\in\kone_{++}$, then
\begin{equation*}
D_kG_{i}(\bxb)=\frac{\ps{F_i(\bxb)}{\bphi_i}D_kF_{i}(\bxb)-F_{i}(\bxb)\bphib_i^TD_kF_i(\bxb)}{\ps{F_i(\bxb)}{\bphi_i}^2}\qquad \forall k, i \in[d].
\end{equation*}
 In particular, if $\bxb=\bub\in\Sphi_{++}$ and $F(\bu)=\bu$, then 
\begin{equation*}
D_kG_{i}(\bu)=D_kF_{i}(\bu)-\bu_{i}\bphi_i^TD_kF_i(\bu).\qedhere
\end{equation*}
\end{proof}
We prove Theorem \ref{unique} which extends Theorem 6.4.6 in \cite{NB} to the case $d\geq 1$.
\newcommand{\tbphi}{\bar{\boldsymbol{\varphi}}}
\begin{proof}[Proof of Theorem \ref{unique}]
Let $\t\bb,\bb\in\Dn$ be such that $A\t\bb=\t\bb$ and $A^T\bb = \bb$. These vectors always exist because $A$ is assumed to be irreducible. Suppose by contradiction that there exists $\bwb\in\Sphi_{++}\sauf\{\bu\}$ and $\t\blam\in\R^d_{++}$ such that $F(\bw)=\t\blam\krog\bw$. Let $\t F\in\NB^d$ be defined as $\t F(\bx)=\blam^{-I}\krog F(\bx)$ for every $\bx\in\kone_+$. Then, we have $\t F(\bu)=\bu$, $L=D\t F(\bu)$ and $\t F(\bw)=(\blam^{-I}\circ\t \blam)\krog \bw$. Lemma \ref{difflem} implies that $\t\bu=\t\bb\krog\bu\in\kone_{++}$ satisfies $L\t\bu=\t\bu$. Theorem \ref{opcharac} implies that $L$ is a nonnegative matrix because $\t F$ is order-preserving. Hence, Proposition \ref{Prop536} and $L\t\bu=\t\bu\in\kone_{++}$ imply that $\rho(L)=1$. Let $G$ be defined as in Lemma \ref{difflem}, then $G$ is non-expansive by Lemma \ref{contract}. Thus, Theorem \ref{nonuniquethm} implies the existence of $\bv\in \R^{n_1}\times\ldots\times\R^{n_d}, \bv\neq 0$ such that $\ps{\bv}{\bphi}=0$ and
\begin{equation}\label{contreq}
L\bv-\bal\krog\bu=\bv \qquad \text{where}\qquad\bal= \big(\ps{L\bv}{\bphi_1},\ldots,\ps{L\bv}{\bphi_d}\big).
\end{equation}
First, suppose that $\ps{\bb}{\bal}=0$. Then for any $\tbphi\in\kone_{+,0}$ with $\ps{\bu_i}{\tbphi_i}=1$ for all $i\in[d]$, we have 
\begin{equation}\label{goodtrick}
\sum_{i=1}^d\ps{\big(L\bv\big)_i}{b_i\tbphi_i}= \sum_{i=1}^d\ps{\bv_i}{b_i\tbphi_i}+\sum_{i=1}^d\alpha_ib_i\ps{\bu_i}{\tbphi_i} = \sum_{i=1}^d\ps{\bv_i}{b_i\tbphi_i}.
\end{equation}
Let $(i,j_i)\in\I$ and define $\t\be^{(i,j_i)}\in\R^{n_i}_{+,0}$ as
\begin{equation*}
\big(\t\be^{(i,j_i)}\big)_{l_i}=\begin{cases} 0 & \text{if } j_i=l_i \\ 1 & \text{else}\end{cases} \qquad \forall l_i\in[n_i].
\end{equation*}
Furthermore, consider $\tbphi^{(i,j_i)}\in\kone_{+,0}$ defined as 
\begin{equation*}
\tbphi^{(i,j_i)}=\bigg(\frac{\ones}{\ps{\ones}{\bu_1}},\ldots,\frac{\ones}{\ps{\ones}{\bu_{i-1}}},\frac{\ones-\t\be^{(i,j_i)}}{\ps{\ones-\t\be^{(i,j_i)}}{\bu_i}},\frac{\ones}{\ps{\ones}{\bu_{i+1}}},\ldots,\frac{\ones}{\ps{\ones}{\bu_d}}\bigg).
\end{equation*}
Plugging $\tbphi^{(i,j_i)}$ into Equation \eqref{goodtrick} for every $(i,j_i)\in\I$ implies the existence of $M\in\R^{\t N\times \t N}$, with $\t N = n_1+\ldots+n_d$, such that $ML\bv=M\bv$,
$M_{(i,j_i),(k,l_k)}>0$ for every $(i,j_i),(k,l_k)\in\I$ with $(i,j_i)\neq (k,l_k)$ and  $M_{(i,j_i),(i,j_i)}=0$ for every $(i,j_i)\in\I$. In particular, $M$ is invertible and thus $L\bv=\bv$. Hence, by assumption, there exists $\beta\in\R\saufzero$ such that $\bv=\beta\t\bu$. We obtain the contradiction 
\begin{equation*}
0=\ps{\bv}{\bphi}=\beta^{-1}\ps{\bv}{\bphi}=\ps{\t\bu}{\bphi}=\sum_{i=1}^d\t b_i\ps{\bu_i}{\bphi_i}=\sum_{i=1}^d\t b_i=1.
\end{equation*} 
Now, suppose that $\ps{\bb}{\bal}\neq 0$ and let $\norm{\cdot}$ be any monotonic norm on $\R^{n_1}\times \ldots\times \R^{n_d}$. Note that $A\bal \neq 0$ because it would imply the contradiction \begin{equation*}0=\ps{A\bal}{\bb}=\ps{\bal}{A^T\bb}=\ps{\bal}{\bb}.\end{equation*} 
Let $\nu\in\N$, with \eqref{contreq} and \eqref{gradeval}  we get
\begin{equation}\label{eq1contrad}
L^{\nu+1}\bv-\bv\ =\ \sum_{k=0}^{\nu} L^k(L\bv-\bv)\ =\ \sum_{k=0}^{\nu}L^{k}(\bal\krog\bu)\ =\ \sum_{k=0}^{\nu}(A^k\bal)\krog\bu.
\end{equation}
On the one hand, as $\t\bu>0$, there is $t>0$ with $-t\t\bu\lek\bv\lek t\t\bu$. It follows that $ 0 \lek L^{\nu+1}\bv+t\t\bu\lek 2t\t\bu$ because $-t\t\bu \lek L^{\nu+1}\bv \lek t\t\bu$. Thus,
\begin{equation}\label{eq2contrad}
\norm{L^{\nu+1}\bv}\leq \norm{L^{\nu+1}\bv+t\t\bu}+\norm{t\t\bu}\leq 3t\norm{\bu} \qquad \forall \nu\in\N.
\end{equation}
On the other hand, as $A$ is irreducible, we know from Theorem 1.1 \cite{Francesco} that 
\begin{equation*}
\lim_{k\to\infty} \frac{1}{k+1}\sum_{s=0}^kA^s = \frac{\t\bb\bb^T}{\ps{\bb}{\t\bb}}.
\end{equation*}
It follows that
\begin{equation}\label{eq3contrad}
\lim_{\nu\to\infty}\norm{\sum_{k=0}^{\nu}(A^k\bal)\krog\bu}= \infty.
\end{equation}
We finally obtain a contradiction by combining \eqref{eq1contrad}, \eqref{eq2contrad} and \eqref{eq3contrad}.
\end{proof}
We note that already in the case $d=1$, the assumption on the Jacobian $DF(\bu)$ of $F$ in Theorem \ref{unique} is less restrictive than requiring $DF(\bu)$ to be irreducible. Indeed, there exists some $M\in\R^{n\times n}_+$ such that $\dim\big(\ker(\rho(M)I-M)\big)=1$ but $M$ is not irreducible as shown by the following example:
\begin{ex}\label{Krein_notirr}
Let $M\in\R^{3\times 3}_+$ be defined as 
\begin{equation*}
M = \begin{pmatrix} 2 & 0 & 0 \\ 1 & 1 & 0 \\ 1 & 0 & 1 \end{pmatrix}.
\end{equation*}
Then the eigenvectors of $M$ are $(1,1,1)^T$, $(0,1,0)^T$, $(0,0,1)^T$ with respective eigenvalues $\rho(M)=2$, $1$ and $1$. In particular, $\dim\big(\ker(2I-M)\big) = 1$ and $M$ can not be irreducible as it has a nonnegative eigenvector.
\end{ex}
\newcommand{\pmi}[1]{\bx^{#1}}
\newcommand{\lmin}[1]{\widehat{\alpha}(#1)}
\newcommand{\lmax}[1]{\widecheck{\alpha}(#1)}
\section{Convergence of the \algo{}}\label{PM_section}
In this section, we propose a general power method type algorithm for the computation of the unique positive eigenvector of order-preserving multi-homogeneous mappings, that is we extend \eqref{PMlin} of Theorem \ref{linear_PF} to maps in $\NB^d$. 

Let $F\in\NB^d$ and $\pmi{0}\in\Sn_{++}$. Consider the sequence $(\pmi{k})_{k=0}^{\infty}\subset \Sn_{++}$ defined as 
\begin{equation}\label{defpm}
\pmi{k}= \bigg(\frac{F_1(\pmi{k-1})}{\norm{F_1(\pmi{k-1})}_{\gamma_1}},\ldots,\frac{F_d(\pmi{k-1})}{\norm{F_d(\pmi{k-1})}_{\gamma_d}}\bigg) \qquad \forall k\in\N.
\end{equation}
This is a natural generalization of the iteration process of the well-known power method. We analyse the convergence of this sequence towards the positive eigenvector of $F$, when it exists and is unique.
The contractive case, i.e. $\rho\big(\A(F)\big)<1$, is easier to study as the (linear) convergence of $(\pmi{k})_{k=0}^{\infty}$ to a positive eigenvector follows directly from Theorem \ref{Banachcor}. 
The nonexpansive case, i.e. $\rho\big(\A(F)\big)=1$, requires more effort. Again we generalize the corresponding results in the case $d=1$ (see Theorem 2.3 \cite{Nussb} and Corollary 5.6.8 \cite{NB}).
\begin{thm}\label{conv1}
Let $F\in\NB^d$, $\pmi{0}\in\Sn_{++}$ and $A=\A(F)$. Suppose that $\rho(A)=1$ and there exist $(\blam,\bu)\in\R^d_{++}\times\Sn_{++}$ such that $F(\bu)=\blam\krog\bu$. If $F$ is differentiable at $\bu$ and $DF(\bu)$ is primitive, then $\bu$ is the unique eigenvector of $F$ in $\Sn_{++}$ and the sequence $(\pmi{k})_{k=0}^{\infty}\subset\Sn_{++}$ defined in \eqref{defpm} satisfies $\lim_{k\to\infty} \pmi{k} = \bu$.
\end{thm}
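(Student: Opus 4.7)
The plan is to organize the proof into four phases: uniqueness, setup of the normalized iteration, linearization at $\bu$, and a globalization argument.

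First, I would establish uniqueness of $\bu$ in $\Sn_{++}$ by invoking Theorem~\ref{unique}. Its hypotheses are met as follows. By Proposition~\ref{hardcorelema}\eqref{hardprop2}, primitivity of $DF(\bu)$ forces $A=\A(F)$ to be primitive, hence irreducible. The matrix $L$ defined in \eqref{crazyass} differs from $DF(\bu)$ only by positive row-rescalings (row $i$ is divided by $\lambda_i$), so $L$ inherits primitivity. Since primitive nonnegative matrices have a simple Perron eigenvalue, $\dim\bigl(\ker(I-L)\bigr)=1$, and Theorem~\ref{unique} delivers uniqueness (applied with $\bphi$ chosen so that $\S^{\bphi}_{++}$ and $\Sn_{++}$ share the tangent space at $\bu$).

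Next I would set up the iteration. Since $A$ is irreducible, Theorem~\ref{linear_PF} provides $\bb\in\R^d_{++}$ with $A^T\bb=\bb$. Define $G\colon\Sn_{++}\to\Sn_{++}$ by $G(\bx)_i=F_i(\bx)/\norm{F_i(\bx)}_{\gamma_i}$, so that $\bx^k=G^k(\bx^0)$ and $G(\bu)=\bu$. Lemma~\ref{contract} with Lipschitz constant $\max_i (A^T\bb)_i/b_i=1$ shows $G$ is non-expansive on $(\Sn_{++},\mu_\bb)$. Therefore $\mu_\bb(\bx^k,\bu)$ is non-increasing and converges to some $\ell\geq 0$, and the orbit lies in the closed $\mu_\bb$-ball of radius $\mu_\bb(\bx^0,\bu)$ around $\bu$, which is compact in $(\Sn_{++},\mu_\bb)$.

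The key technical step is to extract local linear contraction from primitivity. Setting $\tilde F=\blam^{-I}\krog F$ makes $\bu$ a genuine fixed point with $D\tilde F(\bu)=L$. Applying Lemma~\ref{difflem} to $\tilde F$ gives
\begin{equation*}
DG(\bu)\bz \;=\; L\bz \;-\; \bigl(\langle (L\bz)_1,\bphi_1\rangle,\ldots,\langle (L\bz)_d,\bphi_d\rangle\bigr)\krog\bu.
\end{equation*}
Since $L$ is primitive, its Perron eigenvalue $1$ is simple with eigenvector $\tilde\bb\krog\bu$ (for $A\tilde\bb=\tilde\bb$), and every other eigenvalue has modulus strictly less than $1$. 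The rank-$d$ projection subtracted above annihilates the Perron direction, which is complementary to the tangent space of $\S^{\bphi}_{++}$ at $\bu$. Hence the spectral radius of $DG(\bu)$ on this tangent space is some $r<1$, producing a $\mu_\bb$-neighborhood $U$ of $\bu$ and an integer $N$ on which $G^N$ is a strict $\mu_\bb$-contraction with factor $<1$.

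Finally I would globalize to conclude $\ell=0$. If $\ell>0$, compactness yields a subsequential limit $\bv\in\Sn_{++}$ of $(\bx^{k_j})$ with $\mu_\bb(\bv,\bu)=\ell$, and continuity of $G$ propagates this to $\mu_\bb(G^m(\bv),\bu)=\ell$ for every $m$. The $\omega$-limit set of $\bx^0$ is therefore a nonempty compact $G$-invariant subset of the level set $\{\mu_\bb(\cdot,\bu)=\ell\}$, disjoint from $U$. But non-expansion of $G$ on the $\mu_\bb$-ball combined with the strict contraction on $U$ and uniqueness of the fixed point precludes any such compact invariant set at positive $\mu_\bb$-distance from $\bu$: iterating $G$ from any $\mu_\bb$-limit point, one can use the local estimate and non-expansion to drive the distance strictly below $\ell$, contradiction. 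Hence $\ell=0$ and $\bx^k\to\bu$. The hard part is precisely this last step, i.e.\ bridging the local spectral information at $\bu$ to the behavior of $\omega$-limit points that may lie far from $\bu$; this is where primitivity (rather than mere irreducibility) of $DF(\bu)$ is indispensable, as it rules out the recurrent/periodic behavior that non-expansive dynamics generally permits.
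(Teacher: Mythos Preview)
Your uniqueness argument and the setup of $G$ are fine, and the linearization at $\bu$ is essentially correct (modulo replacing the possibly non-differentiable norm-normalization by the smooth $\langle\cdot,\bphi_i\rangle$-normalization of Lemma~\ref{difflem}). The gap is in your globalization. Your spectral analysis yields at best \emph{local} attractivity: a neighborhood $U$ of $\bu$ such that orbits entering $U$ converge to $\bu$. You then assert that non-expansiveness, this local contraction, and uniqueness of the fixed point together rule out a compact $G$-invariant set at distance $\ell>0$. This does not follow: a non-expansive self-map of a compact metric space with a unique, locally attracting fixed point may still carry periodic orbits (or other minimal invariant sets) disjoint from $U$. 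The sentence ``iterating $G$ from any $\mu_\bb$-limit point, one can use the local estimate and non-expansion to drive the distance strictly below $\ell$'' is exactly where the argument breaks---the orbit of an $\omega$-limit point at distance $\ell$ has no reason ever to enter $U$, so the local estimate is never applicable.

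The paper closes this gap through the \emph{order} structure rather than the metric one. It works with $\hat F=\blam^{-I}\krog F$ on $\kone_{++}$ (so $\hat F(\bu)=\bu$) and the weighted Thompson metric $\bar\mu_\bb$. The key global input is Lemma~\ref{lightlemma}: primitivity of $DF(\bu)$ gives $\nu$ with $DF(\bu)^\nu>0$, and then $\bu\lekk\bx$ implies $\hat F^\nu(\bu)\lekkk\hat F^\nu(\bx)$ for \emph{every} such $\bx$, not only those near $\bu$. Combined with the monotone quantities $\alpha_k=\prod_i\bmini{i}{\hat F^k(\bx)}{\bu}^{b_i}$ of Lemma~\ref{monotheory}, which stabilize on the $\omega$-limit set, this forces every $\bz\in\omega(\hat F,\bx)$ to be of the form $\bxi\krog\bu$: otherwise applying $\hat F^\nu$ would strictly increase the already-constant $\alpha$. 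Primitivity of $A$ then gives $\hat F^k(\bxi\krog\bu)=\bxi^{A^k}\krog\bu\to\bxi^{B}\krog\bu$, so $\omega(\hat F,\bx)$ is a singleton, and renormalizing yields $\bx^k\to\bu$. In short, primitivity of $DF(\bu)$ is exploited globally via Lemma~\ref{lightlemma}, not merely through the spectrum of the linearization.
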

Note that if $F$ satisfies the assumptions of Theorem \ref{conv1}, then Proposition \ref{hardcorelema} implies that $F$ satisfies the assumptions of Theorem \ref{unique} which ensure the uniqueness of a positive eigenvector. Unfortunately, we could not derive a convergence rate in the nonexpansive case. However, we believe that similar techniques as in Corollary 5.2 \cite{Fried} and Theorem 36 \cite{us} could be used to prove an asymptotic linear convergence rate. Nevertheless, we note that the Collatz-Wielandt principle of Section \ref{CW_M_U} can be used to obtain a stopping criterion for the power method as it has been used for tensor spectral problems (see e.g. \cite{Boyd,NQZ,Chang_rect_eig,us}). The following proposition generalizes Proposition 28 in \cite{us}.
\begin{prop}\label{monoprop}
Let $F\in\NB^d$ and $\pmi{0}\in\Sn_{++}$. Suppose that $F$ satisfies either the assumptions of Theorem \ref{Banachcor} or those of Theorem \ref{conv1}. Then, there exists $\bb\in\R^d_{++}$ and $(\blam,\bu)\in\R^d_{++}\times\Sn_{++}$ such that $F(\bu)=\blam\krog\bu$ and $A^T\bb\leq\bb$. Moreover, the sequences $(\lmax{k})_{k=1}^{\infty},(\lmin{k})_{k=1}^{\infty}\subset \R_{++}$ defined as
\begin{equation*}
\lmax{k}=\prod_{i=1}^d \Big(\max_{j_i\in[n_i]}\frac{F_{i,j_i}(\pmi{k})}{(\pmi{k})_{i,j_i}}\Big)^{b_i} \quad \text{and}\quad \lmin{k}=\prod_{i=1}^d \Big(\min_{j_i\in[n_i]}\frac{F_{i,j_i}(\pmi{k})}{(\pmi{k})_{i,j_i}}\Big)^{b_i},
\end{equation*}
satisfy
\begin{equation*}
\lmin{k}\ \leq \ \lmin{k+1}\ \leq \ \prod_{i=1}^{d}\lambda_i^{b_i}\ \leq \ \lmax{k+1}\ \leq \ \lmax{k} \qquad \forall k\in\N,
\end{equation*}
and for every $\epsilon>0$, $k\in\N$, if $\big(\lmax{k}-\lmin{k}\big) < \epsilon$ then
\begin{equation*}
\Big|\frac{\lmax{k}-\lmin{k}}{2}-\prod_{i=1}^d\lambda_i^{b_i}\Big|\ <\ \frac{\epsilon}{2}.
\end{equation*}
\end{prop}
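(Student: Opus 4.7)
The plan is as follows. For the existence of $\bb$ and a positive eigenpair $(\blam,\bu)$: under the hypotheses of Theorem \ref{Banachcor} (i.e.\ $\rho(A)<1$), that theorem itself furnishes $\bb\in\R^d_{++}$ with $A^T\bb\leq r\bb\leq\bb$ for some $r\in[\rho(A),1)$ together with the unique positive eigenvector $\bu\in\Sn_{++}$. Under the hypotheses of Theorem \ref{conv1} ($\rho(A)=1$, positive eigenvector $\bu$, $F$ differentiable at $\bu$, $DF(\bu)$ primitive), I invoke Proposition \ref{hardcorelema} to deduce that $A$ is itself primitive, hence irreducible, and then apply the linear Perron--Frobenius Theorem \ref{linear_PF} to $A^T$ to produce $\bb\in\R^d_{++}$ with $A^T\bb=\bb$.

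The heart of the proof is monotonicity. Set $\alpha^k_i=\norm{F_i(\pmi{k})}_{\gamma_i}$ and $\bal^k=(\alpha^k_1,\ldots,\alpha^k_d)$, so that $F(\pmi{k})=\bal^k\krog\pmi{k+1}$; for each $i\in[d]$, let $\nu^k_i$ and $\mu^k_i$ denote respectively the minimum and the maximum of $F_{i,j}(\pmi{k})/\pmi{k}_{i,j}$ over $j\in[n_i]$, so that $\lmin{k}=\prod_i(\nu^k_i)^{b_i}$ and $\lmax{k}=\prod_i(\mu^k_i)^{b_i}$, and set $\bnu^k,\bmu^k\in\R^d_{++}$ accordingly. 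Monotonicity of $\norm{\cdot}_{\gamma_i}$ applied to $\nu^k_i\pmi{k}_i\leq F_i(\pmi{k})\leq\mu^k_i\pmi{k}_i$ gives the componentwise sandwich $\nu^k_i\leq\alpha^k_i\leq\mu^k_i$. Applying the order-preserving map $F$ to $\bnu^k\krog\pmi{k}\lek F(\pmi{k})$ and using multi-homogeneity with $\A(F)=A$, I get $(\bnu^k)^A\krog F(\pmi{k})\lek F^2(\pmi{k})=(\bal^k)^A\krog F(\pmi{k+1})$; substituting $F(\pmi{k})=\bal^k\krog\pmi{k+1}$ and rearranging yields
\begin{equation*}
\frac{F_{i,j}(\pmi{k+1})}{\pmi{k+1}_{i,j}}\ \geq\ \frac{((\bnu^k)^A)_i\,\alpha^k_i}{((\bal^k)^A)_i}\qquad\forall(i,j)\in\I.
\end{equation*}

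Taking the minimum over $j$, raising to the power $b_i$ and multiplying over $i$, the identities $\prod_i((\bnu^k)^A)_i^{b_i}=\prod_l(\nu^k_l)^{(A^T\bb)_l}$ and $\prod_i((\bal^k)^A)_i^{b_i}=\prod_l(\alpha^k_l)^{(A^T\bb)_l}$ lead to
\begin{equation*}
\lmin{k+1}\ \geq\ \prod_l(\nu^k_l)^{(A^T\bb)_l}\,\prod_l(\alpha^k_l)^{b_l-(A^T\bb)_l}.
\end{equation*}
Since $A^T\bb\leq\bb$ componentwise, the exponents $b_l-(A^T\bb)_l$ are nonnegative, and combining this with $\nu^k_l\leq\alpha^k_l$ allows $\alpha^k_l$ to be replaced by $\nu^k_l$ in the second product; the two products then merge into $\prod_l(\nu^k_l)^{b_l}=\lmin{k}$. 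The symmetric argument, starting from $F(\pmi{k})\lek\bmu^k\krog\pmi{k}$ and using $\alpha^k_l\leq\mu^k_l$, produces $\lmax{k+1}\leq\lmax{k}$.

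Finally, the sandwich $\lmin{k}\leq\prod_i\lambda_i^{b_i}\leq\lmax{k}$ is the Collatz--Wielandt principle evaluated at $\pmi{k}\in\Sn_{++}$: under the hypotheses of Theorem \ref{Banachcor} this is Theorem \ref{CW<1}, while under those of Theorem \ref{conv1} it is Theorem \ref{CW1} combined with Proposition \ref{Prop536}, which identifies $r_{\bb}(F)=\prod_i\lambda_i^{b_i}$. The stopping-criterion statement then drops out: once $\lmax{k}-\lmin{k}<\epsilon$, the value $\prod_i\lambda_i^{b_i}$ lies in the interval $[\lmin{k},\lmax{k}]$ and is therefore within $\epsilon/2$ of its midpoint. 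The subtle point which I expect to be the main obstacle is that in the contractive case one cannot pick $\bb$ with $A^T\bb=\bb$ (since $\rho(A)<1$), so a strict slack in $A^T\bb\leq\bb$ is unavoidable; the elementary sandwich $\nu^k_i\leq\alpha^k_i\leq\mu^k_i$ on the normalizing factors is precisely what absorbs this slack and preserves the monotonicity.
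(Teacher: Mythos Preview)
Your proof is correct and follows essentially the same approach as the paper: the monotonicity computation you carry out is precisely the content of Lemma \ref{monothm} (with your $\nu^k_i,\mu^k_i,\alpha^k_i$ playing the role of the paper's $\mathfrak{m}_i(F(\bx)/\bx)$, $\mathfrak{M}_i(F(\bx)/\bx)$, $\norm{F_i(\bx)}_{\gamma_i}$), and the sandwich and stopping criterion are handled identically via Theorems \ref{CW1}, \ref{CW<1}. The only cosmetic difference is that the paper packages the monotonicity step into a separate lemma while you inline it.
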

\subsection{Convergence analysis}
First, we need the subsequent lemma which can be proved in the same way as Lemma 6.5.7 \cite{NB} dealing with the case $d=1$.
\begin{lem}\label{lightlemma}
Let $F\in\NB^d$ and $\bu\in\kone_{++}$ with $F(\bu)=\bu$. If $F$ is differentiable at $\bu$ and there exists $\nu\in\N$ such that $DF(\bu)^{\nu}>0$, i.e. $DF(\bu)$ is primitive, then 
\begin{equation*}
F^{\nu}(\bu)< F^{\nu}(\bx)\qquad\forall \bx\in\kone_{++}\qquad\text{ with }\qquad\bu\lekk\bx.
\end{equation*}
\end{lem}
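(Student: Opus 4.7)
The plan is to combine differentiability of the iterated map $F^{\nu}$ at $\bu$ with the monotonicity of $F^{\nu}$ along the segment from $\bu$ to $\bx$. Set $\bh = \bx - \bu$, which by assumption lies in $\kone_{+}\saufzero$. A standard chain rule argument, which goes through using only pointwise differentiability of $F$ at $\bu$ together with the identity $F(\bu)=\bu$, shows that $F^{\nu}$ is differentiable at $\bu$ with derivative $DF(\bu)^{\nu}$. Consequently, for $t\in(0,1]$,
\begin{equation*}
F^{\nu}(\bu + t\bh) \,=\, \bu \,+\, t\, DF(\bu)^{\nu}\bh \,+\, r(t), \qquad \lim_{t \to 0^+}\frac{\|r(t)\|_{\infty}}{t} \,=\, 0.
\end{equation*}

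Next I would use the primitivity hypothesis to promote this linearization to a strict componentwise inequality. Because $DF(\bu)^{\nu}$ has strictly positive entries and $\bh\in\kone_{+}\saufzero$, the vector $\bv := DF(\bu)^{\nu}\bh$ lies in $\kone_{++}$. Choosing $t>0$ small enough that $r(t)$ is dominated componentwise by $t\bv$, I obtain
\begin{equation*}
\bu \,\lekkk\, F^{\nu}(\bu + t\bh).
\end{equation*}

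Finally I would invoke the monotonicity of $F^{\nu}$, which holds since $F\in\NB^d$ is order-preserving. From $\bu + t\bh \lek \bu + \bh = \bx$ we deduce $F^{\nu}(\bu + t\bh) \lek F^{\nu}(\bx)$, and combining this with the strict inequality above yields $F^{\nu}(\bu)=\bu \lekkk F^{\nu}(\bx)$, as required.

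The main obstacle is the familiar one in perturbative positivity arguments: one must ensure that the Taylor remainder $r(t)$ cannot destroy the strict positivity produced by the first-order term $t\bv$. Once the strict positivity of $\bv$ is in hand, which is a direct consequence of primitivity of $DF(\bu)$ combined with $\bh\in\kone_{+}\saufzero$, this reduces to a routine quantitative choice of $t$; no further structural input on $F$ is needed beyond what is already encoded in $F\in\NB^d$.
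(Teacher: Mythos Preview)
Your proof is correct and follows the standard approach that the paper defers to (Lemma 6.5.7 in \cite{NB}): linearize $F^{\nu}$ at $\bu$ via the chain rule identity $DF^{\nu}(\bu)=DF(\bu)^{\nu}$ (valid here because $F(\bu)=\bu$, as the paper itself uses in the proof of Theorem~\ref{rad<}), exploit primitivity to make the first-order term strictly positive, absorb the remainder for small $t$, and then push through by monotonicity of $F^{\nu}$.
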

We also need the following:
\begin{lem}\label{monotheory}
Let $F\in\NB^d$, $A=\A(F)$, $\bb\in\Dn$ with $A^T\bb=\bb$, $\bx\in\kone_{++}$ and $\bu\in\kone_{++}$ with $F(\bu)=\bu$. For every $k\in\N$, set
\begin{equation*}
\alpha_k= \prod_{i=1}^d\mathfrak{m}_i\big(F^k(\bx)\big/\bu\big)^{b_i} \andd \beta_k=  \prod_{i=1}^d\mathfrak{M}_i\big(F^k(\bx)\big/\bu\big)^{b_i} ,
\end{equation*}
then
\begin{equation*}
\alpha_k\ \leq \ \alpha_{k+1}\ \leq \ \beta_{k+1}\ \leq \ \beta_k \qquad \forall k\in\N.
\end{equation*}
\end{lem}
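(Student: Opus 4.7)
The plan is to exploit the definitions of $\mathbf{m}$ and $\mathbf{M}$ together with the fact that $F$ is order-preserving and multi-homogeneous, and that $\bu$ is fixed by $F$. Set $\bsi = \mathbf{m}(F^k(\bx)/\bu)$ and $\boldsymbol{\Sigma} = \mathbf{M}(F^k(\bx)/\bu)$, so that by definition
\begin{equation*}
\bsi\krog \bu \ \lek\ F^k(\bx)\ \lek\ \boldsymbol{\Sigma}\krog\bu.
\end{equation*}
Since $F$ is order-preserving and multi-homogeneous with homogeneity matrix $A$, and since $F(\bu)=\bu$, applying $F$ to this chain yields
\begin{equation*}
\bsi^A \krog \bu \ =\ F(\bsi\krog\bu)\ \lek\ F^{k+1}(\bx)\ \lek\ F(\boldsymbol{\Sigma}\krog\bu)\ =\ \boldsymbol{\Sigma}^A\krog\bu.
\end{equation*}

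From this, coordinatewise, one reads off
\begin{equation*}
(\bsi^A)_i\ \leq\ \mathfrak{m}_i\big(F^{k+1}(\bx)/\bu\big) \ \leq\ \mathfrak{M}_i\big(F^{k+1}(\bx)/\bu\big)\ \leq\ (\boldsymbol{\Sigma}^A)_i \qquad \forall i\in[d].
\end{equation*}
The middle inequality gives $\alpha_{k+1}\leq\beta_{k+1}$ trivially. For the outer ones, take products weighted by $\bb$: using the identity $\prod_i(\bsi^A)_i^{b_i}=\prod_j\sigma_j^{(A^T\bb)_j}$ (and the analogous one for $\boldsymbol{\Sigma}$) together with the assumption $A^T\bb=\bb$, one obtains
\begin{equation*}
\alpha_{k+1} \ \geq\ \prod_{i=1}^d (\bsi^A)_i^{b_i} \ =\ \prod_{j=1}^d \sigma_j^{(A^T\bb)_j}\ =\ \prod_{j=1}^d \sigma_j^{b_j}\ =\ \alpha_k,
\end{equation*}
and symmetrically $\beta_{k+1}\leq \prod_j \Sigma_j^{b_j}=\beta_k$, which is the desired monotonicity.

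There is no real obstacle: the argument is a direct combination of the multiplicative identities \eqref{homoident} with the order-preserving property. The only thing to be careful about is the invariance of the weighted product under the action $\bsi\mapsto \bsi^A$, which is precisely what the eigen-equation $A^T\bb=\bb$ guarantees.
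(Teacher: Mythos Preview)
Your proof is correct and follows essentially the same approach as the paper's: both use the sandwich $\bsi\krog\bu\lek F^k(\bx)\lek\boldsymbol{\Sigma}\krog\bu$, apply $F$ using order-preservation, multi-homogeneity and $F(\bu)=\bu$, and then reduce via the identity $\prod_i(\bsi^A)_i^{b_i}=\prod_j\sigma_j^{(A^T\bb)_j}$ together with $A^T\bb=\bb$. The paper compresses these steps into a single displayed line, but the content is identical.
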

\begin{proof}
We have
\begin{equation*}
\alpha_{k+1} \geq\prod_{i=1}^d\mathfrak{m}_i\Big({F\big(\mathfrak{m}({F^{k}(\bx)}/{\bu})\krog\bu\big)}\Big/{\bu}\Big)^{b_i} =\alpha_k.
\end{equation*}
Similarly, $\beta_{k+1}\leq\beta_k$ for all $k$. To conclude, note that $\alpha_k\leq \beta_k$ for every $k\in\N$.
\end{proof}
Finally, we recall known results of fixed point theory:
For $\bx\in\kone_{++}$ and $F\in\NB^d$, the orbit and $\omega$-limit set of $\bx$ under $F$ are respectively defined by 
\begin{equation*}
\O(F,\bx)=\big\{ F^{k}(\bx)\mid k\in\N\big\} 
\end{equation*}
and
\begin{equation*}
\omega(F,\bx) = \Big\{\by\in\kone_{++}\ \Big|\ \lim_{l\to \infty} F^{k_l}(\bx)=\by \text{ for some } (k_l)_{l=1}^{\infty}\subset\N \text{ with } \lim_{l\to \infty}k_l=\infty\Big\},
\end{equation*}
i.e. $\omega(F,\bx)$ is the set of accumulation points of $\O(F,\bx)$.
For $F\in\NB^d$, Theorem 3.1.7 and Lemmas 3.1.2, 3.1.3 and 3.1.6 in \cite{NB} imply the following:
\begin{enumerate}[(I)]
\item If $F$ is non-expansive with respect to the weighted Thompson metric $\bar\mu_{\bb}$ on $\kone_{++}$ and there exists $\bu\in\kone_{++}$ such that $\big(F^{k}(\bu)\big)_{k=1}^{\infty}\subset\kone_{++}$ has a bounded subsequence, then $\O(F,\bx)$ is bounded for each $\bx\in \kone_{++}$.\label{fixp1}
\item If $\bx\in\kone_{++}$ is such that $\O(F,\bx)$ has a compact closure, then $\omega(F,\bx)$ is a non-empty compact set and $F\big(\omega(F,\bx)\big)\subset\omega(F,\bx)$.\label{fixp2}
\item If $\bx\in\kone_{++}$ is such that $\O(F,\bx)$ has a compact closure and $|\omega(F,\bx)|=p$, then there exists $\bz\in\kone_{++}$ such that $\lim_{k\to\infty}F^{pk}(\bx)=\bz$ and $\omega(F,\bx)=\O(F,\bz)$.\label{fixp3}
\item If $F$ is non-expansive with respect to $\bar\mu_{\bb}$, then for all $\bx\in\kone_{++}$ and $\by\in\omega(F,\bx)$, we have that $\omega(F,\by)=\omega(F,\bx)$.\label{fixp4}
\end{enumerate}
Property \eqref{fixp1} is also know as Calka's Theorem \cite{AleksanderCalka1984}. We are now ready to prove Theorem \ref{conv1} which turns out to be a special case of Corollary 6.5.8 in \cite{NB} when $d=1$. 
\begin{proof}[Proof of Theorem \ref{conv1}]
By Proposition \ref{hardcorelema} \eqref{hardprop2}, we know that $A$ is primitive. Hence, by Theorem \ref{unique}, $\bu$ is the unique positive eigenvector of $F$. Furthermore, there exist $\bb,\t\bb\in\Dn$ and $\nu\in\N$ such that $A^T\bb=\bb$, $A\t\bb=\t\bb$ and $DF(\bu)^{\nu}>0$. Now, let $\blam\in\R^d_{++}$ with $F(\bu)=\blam\krog\bu$ and $\hat F\in\NB^d$ defined as $\hat F(\bx)=\blam^{-I}\krog F(\bx)$. Then $\A(\hat F)=A$, $\bu$ is the unique eigenvector of $\hat F$, $\hat F$ is differentiable at $\bu$ and $D\hat F(\bu)^{\nu}>0$. 
 We show that for every $\bx\in\kone_{++}$, there exists $\bxi\in\R^d_{++}$ such that $\omega(\hat F,\bx)=\{\bxi\krog\bu\}$. Let $\bx\in\S^{\bphi}_{++}$, then the sequences $(\alpha_k)_{k=1}^\infty,(\beta_k)_{k=1}^\infty\subset\R_{++}$ defined in Lemma \ref{monotheory} converge towards some $\alpha,\beta>0$. In particular, it holds 
\begin{equation}\label{stable_orbit} 
\alpha = \prod_{l=1}^d \bmini{l}{\bz}{\bu}^{b_l} \andd \beta = \prod_{l=1}^d \bmaxi{l}{\bz}{\bu}^{b_l} \qquad \forall \bz\in\omega(\hat F,\bx).
\end{equation}
By Lemma \ref{contract}, we know that $\hat F$ is non-expansive with respect to the weighted Thompson metric $\bar\mu_{\bb}$ on $\kone_{++}$. Since $\hat F(\bub)=\bub$, we have $\hat F^{k}(\bub)=\bub$ for every $k\in\N$ and thus \eqref{fixp1} implies that $\O(\hat F,\bxb)$ is bounded. Now, let $\nu\in\N$ be such that $DF(\bu)^{\nu}>0$. It follows from \eqref{fixp2}, that $\hat F^{\nu}\big(\omega(\hat  F,\bxb)\big)\subset\omega(\hat F,\bx)$ and thus $\hat F^{\nu}(\bz)\in\omega(\hat F,\bx)$ for every $\bz\in\omega(\hat F,\bx)$. Suppose by contradiction that there exists $\bz\in\omega(\hat F,\bx)$ such that $\bz\neq\bal\krog\bu$ for every $\bal\in\R^d_{++}$. Then $\Bmin{\bz}{\bu}\krog\bu\lekk \bz$ and, with Lemma \ref{lightlemma}, we get 
\begin{equation*}
\Bmin{\bz}{\bu}^{A^\nu}\krog \hat F^{\nu}(\bu) = \hat F^{\nu}(\Bmin{\bz}{\bu}\krog\bu)\lekkk \hat F^{\nu}(\bz).
\end{equation*}
Thus, with \eqref{stable_orbit} and $\hat F^{\nu}(\bu)=\bu$, we obtain the contradiction
\begin{equation*}
\alpha =  \prod_{l=1}^d\bmini{l}{\bz}{\bu}^{b_l}\bmini{l}{\hat F^{\nu}(\bu)}{\bu}^{b_l}< \prod_{l=1}^d\bmini{l}{\hat F^{\nu}(\bz)}{\bu}^{b_l}=\alpha.
\end{equation*}
Hence, there exists $\bxi\in\R^d_{++}$ such that $\by=\bxi\krog\bub$ and \eqref{fixp4} implies that $\omega(\hat F,\bx)=\omega(\hat F,\bxi\krog\bu)$. As $A$ is primitive, we know from Theorem 1.1 \cite{Francesco} that
\begin{equation*}
\lim_{k\to\infty}A^k = B \qquad \text{where}\qquad B=\frac{\t\bb\bb^T}{\ps{\t\bb}{\bb}}.
\end{equation*}
In particular, we have
\begin{equation*}
\lim_{k\to\infty} \hat F^k(\bxi\krog\bu)=\lim_{k\to\infty} \bxi^{A^k}\krog \hat F^k(\bu)=\lim_{k\to\infty} \bxi^{A^k}\krog \bu=\bxi^{B}\krog\bu.
\end{equation*}
Hence, we have $\omega(\hat F,\bx)=\omega(\hat F,\bxi\krog\bu)=\{\bxi^B\krog\bu\}$. So, $\lim_{k\to\infty}\hat F^{k}(\bxb)=\bxi^B\krog\bub$ follows from \eqref{fixp3}. To conclude the proof, note that 
\begin{equation*}
\frac{\hat F_i(\by)}{\norm{\hat F_i(\by)}_{\gamma_i}}=\frac{ F_i(\by)}{\norm{F_i(\by)}_{\gamma_i}}\qquad\forall \by\in\kone_{++}, i\in[d],
\end{equation*}
thus $\lim_{k\to\infty} \bx^k=\bu$.
\end{proof}
Finally, to prove Proposition \ref{monoprop}, we need the following lemma which generalizes Proposition 28 in \cite{us}.
\renewcommand{\NF}{\t G}
\begin{lem}\label{monothm}
Let $F\in\NB^d$ and $(\blam,\bub)\in\R^d_{++}\times\Sn_{++}$ be such that $F(\bu)=\blam\krog\bu$. Let $\bb\in\Dn$ with $A^T\bb\leq \bb$, define $\t G\colon\Sn_{++}\to\Sn_{++}$ as 
\begin{equation*}
\t G(\bx)=\bigg(\frac{F_1(\bz)}{\norm{F_1(\bz)}_{\gamma_1}},\ldots,\frac{F_d(\bz)}{\norm{F_d(\bz)}_{\gamma_d}}\bigg) \qquad \forall \bz\in\Sn_{++},
\end{equation*}
and let $\cwl$, $\cwu$ be defined as in Section \ref{CW_M_U}. Then, for every $\bx\in\Sn_{++}$, we have
\begin{equation*}\label{mono}
\cwl(F,\bx)\ \leq\ \cwl(F,\t G(\bx))\ \leq\ \prod_{i=1}^d\lambda_i^{b_i}\ \leq\ \cwu(F,\t G(\bx))\ \leq \ \cwu(F,\bx).
\end{equation*}
\end{lem}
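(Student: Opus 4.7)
The plan is to compare the Collatz-Wielandt bounds at $\bx$ and at its ``power-method image'' $\by := \t G(\bx)$, then sandwich them around $\prod_i \lambda_i^{b_i}$ using the Collatz-Wielandt principle proved earlier in this section. I begin by writing $c_i := \norm{F_i(\bx)}_{\gamma_i}$, so that $F_i(\bx) = c_i \by_i$ for every $i$, and consequently
\begin{equation*}
\cwl(F, \bx) = \prod_{i=1}^d c_i^{b_i}\, \bmini{i}{\by}{\bx}^{b_i}, \qquad \cwu(F, \bx) = \prod_{i=1}^d c_i^{b_i}\, \bmaxi{i}{\by}{\bx}^{b_i}.
\end{equation*}

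For the outer inequality $\cwl(F, \bx) \leq \cwl(F, \by)$, I would set $\bal := \Bmin{\by}{\bx}$, so that $\bal \krog \bx \lek \by$. Order-preservation and multi-homogeneity give $(\bal^A \circ \bc) \krog \by = F(\bal \krog \bx) \lek F(\by)$, whence $\cwl(F, \by) \geq \prod_i c_i^{b_i} (\bal^A)_i^{b_i} = \prod_i c_i^{b_i}\prod_k \alpha_k^{(A^T\bb)_k}$. Since the norms $\norm{\cdot}_{\gamma_i}$ are monotonic and $\bx, \by$ lie in $\Sn_{++}$, one has $\alpha_k \leq 1$; together with the hypothesis $A^T \bb \leq \bb$ this forces $\alpha_k^{(A^T\bb)_k} \geq \alpha_k^{b_k}$, so $\cwl(F, \by) \geq \prod_i c_i^{b_i} \prod_k \alpha_k^{b_k} = \cwl(F, \bx)$. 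The symmetric argument carried out with $\bbe := \Bmax{\by}{\bx}$ in place of $\bal$ (now $\beta_k \geq 1$, which combines with $(A^T\bb)_k \leq b_k$ to yield $\beta_k^{(A^T\bb)_k} \leq \beta_k^{b_k}$) gives $\cwu(F, \by) \leq \cwu(F, \bx)$.

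The inner sandwich $\cwl(F, \by) \leq \prod_i \lambda_i^{b_i} \leq \cwu(F, \by)$ follows from the Collatz-Wielandt principles of this section applied to $\by \in \Sn_{++}$: in the contractive regime $\rho(A) < 1$ it is the content of Theorem \ref{CW<1}, while when $A^T \bb = \bb$ it is Theorem \ref{CW1} combined with the identification $r_{\bb}(F) = \prod_i \lambda_i^{b_i}$ furnished by Proposition \ref{Prop536} applied to the positive eigenpair $(\blam, \bu)$. The main subtlety I anticipate is managing the trade between $(A^T \bb)_k$ and $b_k$ in the correct direction; this is where the monotonicity of $\norm{\cdot}_{\gamma_i}$ together with $\bx, \by \in \Sn_{++}$ is essential, since it is what guarantees $\alpha_k \leq 1 \leq \beta_k$ and thereby fixes the sign of the exponent swap.
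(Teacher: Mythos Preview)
Your proof is correct and follows essentially the same approach as the paper. Both arguments exploit the same two ingredients: for the outer inequalities, the comparison $\bal\krog\bx\lek\by$ (resp.\ $\by\lek\bbe\krog\bx$) is pushed through $F$ via order-preservation and multi-homogeneity, and the exponent swap $(A^T\bb)_k\mapsto b_k$ is justified by $\alpha_k\leq 1\leq\beta_k$ (a consequence of monotonic norms and $\bx,\by\in\Sn_{++}$) together with $A^T\bb\leq\bb$; for the inner sandwich, both invoke Theorems~\ref{CW1} and~\ref{CW<1}. The paper organizes the computation around $\mathfrak{m}_i\big(F^2(\bx)/F(\bx)\big)$ and the defect vector $\bs=\bb-A^T\bb$, whereas you factor through $\by=\t G(\bx)$ and $c_i=\norm{F_i(\bx)}_{\gamma_i}$, but these are cosmetic rearrangements of the same chain of inequalities.
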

\begin{proof}
Let $\bx\in\Sn_{++}$, then $\mathfrak{m}\big(\NF(\bx)\big/\bx\big)\leq \ones\leq \mathfrak{M}\big(\NF(\bx)\big/\bx\big)$ because $\t G(\bx)\in\Sn_{++}$. Thus, with $\bs = \bb-A^T\bb \in\R^d_{+}$, we have
\begin{equation*}
\prod_{i=1}^d\mathfrak{M}_i\big(\NF(\bx)\big/\bx\big)^{-\s_i}\leq 1 \leq  \prod_{i=1}^d\mathfrak{m}_i\big(\NF(\bx)\big/\bx\big)^{-\s_i}
\end{equation*}
It follows that
\begin{align*}
\cwl(F,\t G(\bx)) & = \prod_{i=1}^d\norm{F(\bx)}_{\gamma_i}^{s_i}\mathfrak{m}_i\big(F^2(\bx)\big/F(\bx)\big)^{b_i} \\
&\geq \prod_{i=1}^d\norm{F(\bx)}_{\gamma_i}^{s_i}\bmini{i}{F(\Bmin{F(\bx)}{\bx}\krog\bx)}{F(\bx)}^{b_i} \\
&=\prod_{i=1}^d\norm{F(\bx)}_{\gamma_i}^{s_i}\bmini{i}{F(\bx)}{\bx}^{-s_i}\bmini{i}{F(\bx)}{\bx}^{b_i} \\
&=\prod_{i=1}^d\mathfrak{m}_i\big({\t G(\bx)}\big/{\bx}\big)^{-s_i}\bmini{i}{F(\bx)}{\bx}^{b_i} \geq \cwl(F,\bx).
\end{align*}
The inequality $\cwu(F,\t G(\bx))\leq \cwu(F,\bx)$ can be proved in the same by exchanging the roles of $\Bmin{\cdot}{\cdot}$ and $\Bmax{\cdot}{\cdot}$ and swapping the inequalities. Finally, Theorems \ref{CW1} and \ref{CW<1} conclude the proof.
\end{proof}
We can now conclude this section by proving Proposition \ref{monoprop}.
\begin{proof}[Proof of Proposition \ref{monoprop}]
The monotonicity of $(\lmax{k})_{k=1}^{\infty}$ and $(\lmin{k})_{k=1}^{\infty}$ follow directly from Lemma \ref{monothm} as the sequence $(\pmi{k})_{k=1}^{\infty}$ defined in \eqref{defpm} is such that $\pmi{k+1}=\t G(\pmi{k})$ for every $k\in\N$, where $\t G $ is defined as in Lemma \ref{monothm}, and
\begin{equation*}
\lmax{k}=\cwu(F,\pmi{k}) \andd \lmin{k}=\cwl(F,\pmi{k}) \qquad\forall k\in\N.
\end{equation*}
The fact that $\lim_{k\to\infty} \lmin{k}=\lim_{k\to\infty} \lmax{k}=\prod_{i=1}^d\lambda_i^{b_i}$ follows from the continuity of the functions $\cwl(F,\cdot)$, $\cwu(F,\cdot)$ in $\Sn_{++}$ and the fact that $\lim_{k\to\infty} \pmi{k}=\bu$ by Theorems \ref{conv1} and \ref{Banachcor}. Finally, suppose that $\epsilon>0$ and $k\in\N$ satisfy $\lmax{k}-\lmin{k}<\epsilon$, then subtracting $(\lmax{k}+\lmin{k})/2$ from $\lmin{k}\leq \prod_{i=1}^d\lambda_i^{b_i}\leq \lmax{k}$ we get
\begin{equation*}
-\frac{\epsilon}{2} < -\frac{\lmax{k}+\lmin{k}}{2} \leq \Big(\prod_{i=1}^d\lambda_i^{b_i}-\frac{\lmax{k}+\lmin{k}}{2}\Big) \leq \frac{\lmax{k}+\lmin{k}}{2}< \frac{\epsilon}{2}.\qedhere
\end{equation*} 
\end{proof}
\newcommand{\SSS}{\mathcal{S}}
\section{Applications to nonnegative tensors}\label{appli}
As an application of our results, we consider various spectral problems involving nonnegative tensors, namely the $\ell^{p,q}$-singular values of a nonnegative matrix \cite{Boyd,Bhaskara,HenOls2010,Steinberg}, the $\ell^p$-eigenvectors of nonnegative square tensors \cite{Lim,QIZH,Quynhn,Chang,QIspetraltheory,Lim2013,SymFried1,SymFried2,NQZ}, the $\ell^{p,q}$-singular vectors of nonnegative rectangular tensors \cite{Qi_rect,Qi_rect_1,Chang_rect_eig,linlks,Yao2016} and the $\ell^{p_1,\ldots,p_m}$-singular vectors of nonnegative square tensors \cite{Lim,Fried,us,SymFried1,SymFried2,BanachA}. We recall that, without suitable assumptions on $p,q$, $p_1,\ldots,p_d$ and the entries of the corresponding tensor (such as nonnegativity), computing the maximal eigenvalue or the maximal singular value is in general NP-hard \cite{Lim2013,Steinberg}. 

In a first step, we review these problems and show how one can rewrite them as eigenvector problems of multi-homogeneous maps. We apply the results derived in this paper to prove Theorem \ref{tensor_PF}, a Perron-Frobenius theorem for nonnegative tensors. It turns out that our general result implies and extends existing results for the above problems. Moreover, the unified point of view allows to better identify the common structure of those problems.

There are mainly two types of assumptions in Theorem \ref{tensor_PF}: conditions on the pattern of nonzero entries in the tensor (irreducibility assumptions) and conditions on $p,q,p_1,\ldots,p_d$ (homogeneity assumptions). In the second part of this section, we discuss these assumptions in detail. We observe that for each of the problems, the homogeneity assumptions of Theorem \ref{tensor_PF} are less (or equally) restrictive than the existing ones. Then, we relate the various irreducibility assumptions of Theorem \ref{tensor_PF} with the established notions of irreducibility for nonnegative tensors (e.g. strict nonnegativity, weak irreducibility, strong irreducibility). In every case, these assumptions coincide or are weaker than the existing definitions of irreducibility. 
Finally, we compare the assumptions of Theorem \ref{tensor_PF} with the corresponding existing ones. In the few cases where our conditions are more restrictive, we point out how our results can be improved.

\subsection{Eigenvalues and eigenvectors of nonnegative tensors}\label{tensor_sec}
Let $(t_{j_1,\ldots,j_m})\in\R^{n_1\times \ldots \times n_m}_+$ be a nonnegative tensor of order $m$ and $T\colon\R^{n_1}\times\ldots\times \R^{n_m}\to \R^{n_1}\times\ldots\times \R^{n_m}$ be defined as
\begin{equation*}
T_{i,j_i}(\bx_1,\ldots,\bx_m) = \sum_{\substack{j_1\in[n_1],\ldots,j_{i-1}\in[n_{i-1}],\\ j_{i+1}\in[n_{i+1}],\ldots,j_m\in[n_m]}} t_{j_1,\ldots,j_m}x_{1,j_1}\cdots x_{i-1,j_{i-1}}x_{i+1,j_{i+1}}\cdots x_{m,j_{m}}\, ,
\end{equation*}
for every $i\in[m],j_i\in[n_i]$, and let $\tau\colon\R^{n_1}\times\ldots\times \R^{n_m}\to \R$ be the associated multi-linear form
\begin{equation*}
\tau(\bx_1,\ldots,\bx_m) = \sum_{j_1,\ldots,j_{m}} t_{j_1,\ldots,j_m}x_{1,j_1}\cdots x_{m,j_{m}}\, .
\end{equation*}
Note that $T(\bx)=\grad \tau(\bx)$. Recall that, for $p\in(1,\infty)$, $\psi_p\colon\R^n\to\R^n$ is defined as \begin{equation*}\psi_{p}(\bz)=\big(\sign(z_1)|z_1|^{p-1},\ldots,\sign(z_n)|z_n|^{p-1}\big),\end{equation*} so that $\grad \norm{\bz}_p=\norm{\bz}_p^{1-p}\psi_p(\bz)$ and $\psi_{p'}(\psi_p(\bz))=\bz$, where $p'=p/(p-1)$ denotes the H\"{o}lder conjugate of $p$. 

We recall three popular spectral problems involving $(t_{j_1,\ldots,j_m})$, namely the $\ell^{p}$-eigenvalues, the (rectangular) $\ell^{p,q}$-singular values and the $\ell^{p_1,\ldots,p_d}$-singular values problems. We refer to Example \ref{lpq_pb} for a detailed presentation of the $\ell^{p,q}$-singular values of nonnegative matrices. Then, we show that these problems are all special cases of a more general class of spectral problem for tensors. In the following,
we consider nonnegative tensors only, i.e. $t_{j_1,\ldots,j_m}\geq 0$ for all $j_1,\ldots,j_m$. However, while our results are mainly concerned with nonnegative singular vectors and nonnegative eigenvectors, we recall their general definitions in order to define the spectral radius of a nonnegative tensor as the supremum in absolute value among all the eigenvalues or singular values. 

\textbf{$\ell^{p}$-eigenvectors of nonnegative tensors \cite{Lim,QIZH}.} Suppose that $n=n_1=\ldots=n_m$ and consider the following problem: Find $(\lambda,\bx)\in\R\times\R^n$, such that
\begin{equation}\label{eigpb}
T_1(\bx,\ldots,\bx) = \lambda \psi_{p}(\bx) \andd \norm{\bx}_{p}=1,
\end{equation}
where $p\in(1,\infty)$. In particular, when $m=p=2$, we recover the classical eigenvalue problem for matrices. When $m=p>2$, the solutions of this equation are called H-eigenpairs and when $m>2=p$ they are called Z-eigenpairs (see e.g. \cite{QIZH}). If we compose both sides of \eqref{eigpb} by $\psi_{p'}$ we obtain the following spectral problem for a homogeneous mapping: Find $(\lambda,\bx)$ such that $\norm{\bx}_{p}=1$ and
\begin{equation*}
F(\bx)=\psi_{p'}\big(T_1(\bx,\ldots,\bx) \big)= \sign(\lambda)|\lambda|^{p'-1} \bx.
\end{equation*}
where $F|_{\R^n_+}\in\NB^1$ if $F(\ones)>0$ and $\A(F)=(m-1)(p'-1)$. Note that when $(t_{j_1,\ldots,j_m})$ is super-symmetric, i.e. its entries are invariant under any permutation of the indices, then its $\ell^p$-eigenvectors coincide with the critical points of the real valued function
\begin{equation*}
 \bx \ \mapsto\ \frac{\tau(\bx,\ldots,\bx)}{\,\,\,\,\,\norm{\bx}_p^m}.
\end{equation*}
In particular, as the tensor is nonnegative, the maximum of this function is attained in $\R^n_+$ and the corresponding (global) maximizer is an $\ell^{p}$-eigenvector associated with the maximal eigenvalue.

\textbf{$\ell^{p,q}$-singular vectors of rectangular nonnegative tensors \cite{Qi_rect,Chang_rect_eig}.} Suppose that $n=n_1 = \ldots = n_s$ and $\bar n = n_{s+1}=\ldots = n_m$ for some $s\in [m-1]$. Let $p,q\in(1,\infty)$, then the nonnegative $\ell^{p,q}$-singular pairs of the rectangular tensor $(t_{j_1,\ldots,j_m})$ are the solutions $\big(\lambda,(\bx,\by)\big)\in\R\times \R^{n}\times \R^{\bar n}$ of 
\begin{equation}\label{rect_eigpb}
\begin{cases}T_1(\bx,\ldots,\bx,\by,\ldots,\by) = \lambda \psi_{p}(\bx)\\
T_{s+1}(\bx,\ldots,\bx,\by,\ldots,\by) = \lambda \psi_{q}(\by)\end{cases} \quad\text{and}\quad \norm{\bx}_{p}=\norm{\by}_{q}=1.
\end{equation}
In the particular case $m=2$, we recover the $\ell^{p,q}$-singular value problem for nonnegative matrices (see Example \ref{lpq_pb}). In the same way as above, \eqref{rect_eigpb} can be rewritten as follows: Find $\big(\lambda,(\bx,\by)\big)$ such that $\norm{\bx}_{p}=\norm{\by}_{q}=1$ and
\begin{equation*}
\begin{cases}
G_1(\bx,\by)=\psi_{p'}\big(T_1(\bx,\ldots,\bx,\by,\ldots,\by)\big) =  \sign(\lambda)|\lambda|^{p'-1} \bx\\
G_2(\bx,\by)=\psi_{q'}\big(T_{s+1}(\bx,\ldots,\bx,\by,\ldots,\by)\big) =  \sign(\lambda)|\lambda|^{q'-1} \by\end{cases}
\end{equation*}
where $G|_{\R^{n}_+\times \R^{\bar n}_{+}}\in\NB^2$ if $G(\ones,\ones)>0$ and
\begin{equation*}
\A(G) = \begin{pmatrix} p'-1 & 0 \\ 0 & q'-1 \end{pmatrix}\begin{pmatrix} s-1 & m-s \\ s & m-s-1\end{pmatrix}.
\end{equation*}
Again, we note that the critical points of the function
\begin{equation}\label{lastfuncc}
(\bx,\by) \mapsto \frac{\tau(\bx,\ldots,\bx,\by,\ldots,\by)}{\,\,\,\,\,\,\,\norm{\bx}^s_{p}\,\norm{\by}^{m-s}_{q}}
\end{equation}
satisfy a $\ell^{p,q}$-singular vector problem which is the same as \eqref{rect_eigpb} when $(t_{j_1,\ldots,j_d})$ is partially super-symmetric, i.e. the entries of $(t_{j_1,\ldots,j_m})$ are invariant under permutations of the first $s$ indices and permutations of the $m-s$ last ones (see \cite{SymFried1}). The function \eqref{lastfuncc} attains its maximum in $\R^n_+\times \R^{\bar n}_+$ at an $\ell^{p,q}$-singular vector associated to the maximal singular value.

\textbf{$\ell^{p_1,\ldots,p_m}$-singular vectors of nonnegative tensors \cite{Lim}.} Let $p_1,\ldots,p_m\in (1,\infty)$. The $\ell^{p_1,\ldots,p_m}$-singular vectors of $T$ are the solutions $\big(\lambda,(\bx_1,\ldots,\bx_m)\big)\in\R\times\R^{n_1}\times\ldots\times\R^{n_m}$ of \begin{equation}\label{pqr_pb}
T_i(\bx_1,\ldots,\bx_m) = \lambda \psi_{p_i}(\bx_i) \andd \norm{\bx_i}_{p_i}=1 \qquad \forall i \in[m].
\end{equation}
The particular case $m=2$ reduces to the $\ell^{p_1,p_2}$-singular value problem for nonnegative matrices. This problem is equivalent to find $\big(\lambda,(\bx_1,\ldots,\bx_m)\big)$ such that $\norm{\bx_i}_{p_i}=1 $ for all $i\in[m]$ and
\begin{equation*}
H_i(\bx_1,\ldots,\bx_m)=\psi_{p_i'}\big(T_i(\bx_1,\ldots,\bx_m)\big) = \lambda^{p_i'-1}\bx_i\qquad \forall i \in[m],
\end{equation*}
where $H|_{\R^{n_1}_+\times\ldots\times\R^{n_m}_+}\in\NB^m$ if $H(\ones,\ldots,\ones)>0$, and 
\begin{equation*}
\A(H)=\diag(p_1'-1,\ldots,p_m'-1)(\ones\ones^T-I).
\end{equation*}
Finally, we note that the solutions of \eqref{pqr_pb} coincide with the critical points of 
\begin{equation}\label{opti_lpqr}
(\bx_1, \dots, \bx_m)\mapsto \frac{\tau(\bx_1,\ldots,\bx_m)}{\norm{\bx_1}_{p_1}\cdot\ldots\cdot \norm{\bx_m}_{p_m}}.
\end{equation}
This is true regardless of the symmetry of the tensor, as noted in \cite{us}. Furthermore, the (global) maximum of this function induces the so-called (tensor) projective norm on $\R^{n_1\times \ldots \times n_d}$ \cite{defnorm}. This maximum is attained in $\R^{n_1}_+\times\ldots\times\R^{n_m}_+$ and the maximizer is the $\ell^{p_1,\ldots,p_d}$-singular vector of $T$ associated with its maximal singular value.

\textbf{A unifying formulation.} Let us observe that our results apply to a wider class of spectral problems associated with nonnegative tenors. 
Consider the function
\begin{equation}\label{eq:general_rayleigh}
(\bx_1,\dots, \bx_d)\mapsto \dfrac{\tau(\overbrace{\bx_1,\ldots,\bx_1}^{\nu_1 \text{ times}},\overbrace{\bx_2,\ldots,\bx_2}^{\nu_2 \text{ times}},\ldots,\overbrace{\bx_d,\ldots,\bx_d}^{\nu_d \text{ times}})}{\norm{\bx_1}_{p_1}^{\nu_1}\,\norm{\bx_2}_{p_2}^{\nu_2}\cdot\ldots\cdot\norm{\bx_d}_{p_d}^{\nu_d}}.
\end{equation}
where $\bnu=(\nu_1,\ldots,\nu_d)\in\N^d$ satisfies $\sum_{i=1}^d \nu_i = m$ and $p_1,\ldots,p_d\in(1,\infty)$. The critical point condition for \eqref{eq:general_rayleigh} is then of the form
\begin{equation}\label{gen_spec_eq}
\psi_{p_i'}\big(T_{s_i}(\bx_1,\ldots,\bx_1,\bx_2,\ldots,\bx_2,\ldots,\bx_d,\ldots,\bx_d)\big) =\lambda^{p_i'-1}\bx_i \qquad \forall i=1,\ldots,d,
\end{equation}
where $s_1 = 1$ and $s_{k+1}=s_{k}+\nu_k$ for $k\in[d-1]$. Note that, as the tensor is assumed to be nonnegative, the maximum of the function in \eqref{eq:general_rayleigh} is attained at some vectors with nonnegative components and the corresponding maximizer is the solution of a problem of the same form as \eqref{gen_spec_eq} associated with the eigenvalue $\lambda$ of largest magnitude. Now, define the mapping $R=(R_1,\ldots,R_d)$ as follows:
\begin{equation}\label{def_R}
R_i(\bx_1,\ldots,\bx_d)=\psi_{p_i'}\big(T_{s_i}(\bx_1,\ldots,\bx_1,\bx_2,\ldots,\bx_2,\ldots,\bx_d,\ldots,\bx_d)\big) \quad \forall i \in[d].
\end{equation}
Then, $R\in\NB^d$ if $R(\ones)=R(\ones,\ldots,\ones)>0$ and \begin{equation}\label{homotens}\A(R)=\diag(p_1'-1,\ldots,p_d'-1)(\bnu\ones^T-I). 
\end{equation}
This formulation unifies the three problems presented above, indeed they correspond to $d=1$, $d=2$ and $d=m$ respectively. To establish the correspondence between the eigenvectors of the multi-homogeneous map $R$ and the solutions of \eqref{gen_spec_eq} we use a similar argument as in Equation \eqref{Rayequal}. Indeed, if $\bx=(\bx_1,\ldots,\bx_d)$ and $\blam\in\R^d_{+}$ satisfy $\norm{\bx_i}_{p_i}=1$ and $R_i(\bx)=\lambda_i\bx_i$ for every $i\in[d]$, then
\begin{equation}\label{comment1}
\lambda_i= \ps{\psi_{p_i}(\lambda_i\bx_i)}{\bx_i}^{p_i'-1}  = \ps{\psi_{p_i}(R_i(\bx))}{\bx_i}^{p_i'-1}  =\tau(\bx)^{p_i'-1}.
\end{equation}
Thus, with $\lambda =\tau(\bx)$, we have $\blam=(\lambda^{p_1'-1},\ldots,\lambda^{p_d'-1})$. In particular, this shows that $\bx$ solves \eqref{gen_spec_eq}. Clearly, if $\bx$ satisfies \eqref{gen_spec_eq}, then it is an eigenvector of $R$ in the sense of Definition \ref{defeigevect}. We refer to $(\lambda,\bx)$ as an eigenpair of $R$.

We formulate our Perron-Frobenius theorem for nonnegative tensors for the map $R$ defined above. To this end,
rename $n_1,\ldots,n_m$ so that $R\colon V\to V$ with $V=\R^{n_1}\times \ldots\times\R^{n_d}$. Let $\S^R=\{\bv\in V\mid \norm{\bv_i}_{p_i}=1, i \in[d]\}$,  $\S^R_{+}=\{\bx\in \S^R\mid \bx\geq 0\}$ and
 $\S^R_{++}=\ind{\S^R_+}$. Consider $\kone^R_{+,0}=\R^{n_1}_{+}\saufzero\times \ldots \times \R^{n_d}_{+}\saufzero$ and $\kone^R_{++}=\ind{\kone^R_{+,0}}$. Let $r(R)$ be the spectral radius of $R$ defined as
\begin{equation*}
r(R) = \sup\big\{|\lambda| \ \big|\ \exists \bx\in \S^R\text{ such that } (\lambda,\bx) \text{ is an eigenpair of } R \big\}. 
\end{equation*}
Note that $r(R)$ equals the classical definition of spectral radius of a matrix when $m=2,d=1$ and $p_1=2$ and coincides with the classical definition of maximal singular value of $(t_{j_1,j_2})$ if $m=d=p_1=p_2=2$. We recall that, for $\bb\in\R^d_{++}$, the weighted Hilbert product metric $\mu_{\bb}\colon\kone^R_{++}\times \kone_{++}^R\to \R_+$ is defined as
\begin{equation*}
\mu_{\bb}(\bx,\by) = \sum_{i=1}^db_i\ln\!\bigg[\Big(\max_{j_i\in[n_i]}\frac{x_{i,j_i}}{y_{i,j_i}}\Big)\Big(\max_{l_i\in[n_i]}\frac{y_{i,l_i}}{x_{i,l_i}}\Big)\bigg].
\end{equation*} 
We note that, as $A=\A(R)$ is irreducible, there exists $\bb\in\R^d_{++}$ such that $A^T\bb =\rho(A)\bb$ and it holds (see Lemma \ref{contract})
\begin{equation*}
\mu_{\bb}\big(R(\bx),R(\by)\big) \leq \rho(A) \,\mu_{\bb}(\bx,\by) \qquad \forall \bx,\by\in \kone^R_{++}.
\end{equation*}
Namely, $\rho(A)$ is a Lipschitz constant for $R$ with respect to the Hilbert product metric $\mu_{\bb}$. In particular, it is worthwhile noting that if $p_1,\ldots,p_d$ are large enough, then $\rho(A)<1$ and thus $R$ is a strict contraction with respect to $\mu_{\bb}$.
\begin{rmq}\label{dectens}
In some cases, there exists $i\in[d]$ such that $\nu_i=1$, implying that $R_i(\bx)$ does not depend on $\bx_i\in\R^{n_i}$. In particular, if $(\blam,\bx)\in\R_{++}\times\S_+^R$ is an eigenpair of $R$, then $R_i(\bx)=\lambda^{p_i'-1}\bx_i$ and, as discussed in Remark \ref{lpq_rmq} (b), there is a bijection between the eigenpairs of $R$ associated with positive eigenvalues and the eigenpairs of the self-mapping $\t R$ defined on $\R^{n_1}_+\times \ldots\times \R^{n_{i-1}}_+\times \R^{n_{i+1}}_+\times \ldots\times \R^{n_d}_+$ as
$\t R_k(\bz)\!=\! R_k\big(\ldots,\bz_{i-1},R_i(\ldots,\bz_{i-1},\ones,\bz_{i+1},\ldots),\bz_{i+1},\ldots\big)$ for every $ k\in[d]\sauf\{i\}.$
This fact is known for the $\ell^{p_1,\ldots,p_d}$-singular value problem \cite{Boyd,us}.
Theorem \ref{tensor_PF} also holds for $\t R$ under less restrictive assumptions on $T$ than those stated for $R$. However, for the sake of brevity, we do not discuss these cases in the theorem. 
\end{rmq}

In the following section we collect the main properties that follow by applying the results developed so far in this work to the multi-homogeneous map $R$, defined above. The overall set of results gives rise to a comprehensive and general formulation of the Perron-Frobenius theorem for nonnegative tensors. It contains four main parts: first we discuss the case where $R$ is a contraction with respect to $\mu_{\bb}$, i.e. $\rho(A)<1$. Then, we discuss the case where $R$ is non-expansive, i.e. $\rho(A)=1$. Moreover, we give characterizations of the spectral radius of $R$ similar to the Gelfand formula, the notion of Bonsall spectral radius and the notion of cone spectral radius. In a third step, we provide a Collatz-Wielandt principle for $r(R)$, which holds for both the contractive and the non-expansive cases (i.e. $\rho(A)\leq 1$). Finally, as it is standard in the Perron-Frobenius theory for nonnegative tensors, we show that if $R$ is (strongly) irreducible, then all its eigenvectors must be positive.
\newcommand{\cwlt}{\widehat{\operatorname{cw}}}
\newcommand{\cwut}{\widecheck{\operatorname{cw}}}
\subsection{The Perron-Frobenius theorem for nonnegative tensors}\label{tensor_PF}
$\,$\\[3pt]
\textbf{Theorem \ref{tensor_PF}. } Let $R$ be as in \eqref{def_R} and assume that $R(\ones)>0$. 
Let $A=\A(R)$, $\bx^0\in\S_{++}^R$ and define $(\bx^k)_{k=0}^{\infty}\subset\S_{++}^R$ as
\begin{equation}\label{tens_PM}
\bx^{k} =\Big(\frac{R_1(\bx^{k-1})}{\norm{R_1(\bx^{k-1})}_{p_1}},\ldots,\frac{R_d(\bx^{k-1})}{\norm{R_d(\bx^{k-1})}_{p_d}}\Big) \qquad \forall k \in \N.
\end{equation}
Then, there exists a unique $\bb\in\R^d_{++}$ such that $A^T\bb = \rho(A)\bb$ and $\sum_{i=1}^db_i=1$.\\ 
Furthermore, the following properties hold:
%
%
\subsubsection{Contractive case:} If $\rho(A)<1$, then there exist unique $\bu\in\S_{++}^R$ and $\lambda>0$ such that $(\lambda,\bu)$ is an eigenpair of $R$. Moreover, $\lambda=r(R)$, $\lim_{k\to\infty}\bx^k=\bu$ and \label{rho<1}\label{2}
\begin{equation}\label{convrate}
\mu_{\bb}(\bx^k,\bu) \leq \Big(\frac{\mu_{\bb}(\bx^1,\bx^0)}{1-\rho(A)}\Big) \rho(A)^k\qquad \forall k\in\N.
\end{equation}
%
%
\subsubsection{Non-expansive case:} If $\rho(A)=1$, there exists a $\bu\in \S_{+}^R$ such that $(r(R),\bu)$ is an eigenpair of $R$ and for every $\by\in\S^R_{++}$ it holds\label{rho=1}\label{3a}
\begin{align}\label{Gelf}
r(R) &=\lim_{k\to\infty}\Big(\prod_{i=1}^d\norm{R_i^k(\by)}^{b_i}_{p_i} \Big)^{\frac{\gamma-1}{k}}\notag\\ &=\sup_{\bx\in\kone_{+,0}}\limsup_{k\to\infty} \Big(\prod_{i=1}^d\norm{R^k_i(\bx)}_{p_i}^{b_i}\Big)^{\frac{\gamma-1}{k}}\\ 
&=\lim_{k\to\infty} \Big( \sup_{\bx\in\S^R_{+}}\prod_{i=1}^d\norm{R^k_i(\bx)}_{p_i}^{b_i}\Big)^{\frac{\gamma-1}{k}}\notag
\end{align}
where $\gamma  = \big(\sum_{i=1}^db_ip_i'-1\big)^{-1}\sum_{i=1}^db_ip_i' >1$.\\
 Moreover, if $DR(\ones)$ is irreducible, then $\bu$ is positive, i.e. $\bu\in\S^{R}_{++}$, and it is the unique positive eigenvector of $R$. If additionally, $DR(\ones)$ is primitive, then the power method converges towards $\bu$, that is $\lim_{k\to\infty} \bx^k = \bu$.
 %
 %
 %
\subsubsection{Collatz-Wielandt principle:} If $\rho(A)\leq 1$, then
\begin{equation*} \label{finalCW}
 \inf_{\bx\in\S_{++}^R} \cwut(\bx) \, =\, r(R) \, =\, \max_{\by\in\S_{+}^R}\cwlt(\by) 
\end{equation*}
where
\begin{equation*}
\cwut(\bx)=\prod_{i=1}^d\Big(\max_{j_i\in[n_i]}\frac{R_{i,j_i}(\bx)}{x_{i,j_i}}\Big)^{(\gamma-1)b_i}\qquad \forall \bx\in\S_{++}^R\end{equation*}
and
\begin{equation*}
\cwlt(\by) =\prod_{i=1}^d\Big(\min_{\substack{j_i\in[n_i], \ y_{i,j_i}>0}}\frac{R_{i,j_i}(\by)}{y_{i,j_i}}\Big)^{(\gamma-1)b_i} \qquad\forall \by\in\S_{+}^R.
\end{equation*}
Moreover, for every $k\in\N$, it holds
\begin{equation*}
\cwlt(\bx^k)\, \leq \, \cwlt(\bx^{k+1}) \, \leq \, r(R) \, \leq \, \cwut(\bx^{k+1}) \, \leq \, \cwut(\bx^k)
\end{equation*}
and for every $\epsilon >0$ and $k\in\N$, if $\cwut(\bx^k)-\cwlt(\bx^k)<\epsilon$, then
\begin{equation*}
\Big|\frac{\cwut(\bx^k)+\cwlt(\bx^k)}{2}-r(R)\Big| \leq \frac{\epsilon}{2}.
\end{equation*}
Also, if $\lim_{k\to\infty} \bx^k = \bu$, then 
\begin{equation*}
\lim_{k\to\infty}\cwut(\bx^k)=\lim_{k\to\infty}\cwlt(\bx^k)=r(R)
\end{equation*}
Finally, if $DR(\ones)$ is irreducible, then for every eigenpair $(\theta,\bw)\in\R_+\times \S_{+}^R$ such that $\bw\notin\S_{++}^R$, it holds $\theta <r(R)$.\newline
%
%
%
\subsubsection{Irreducible tensors:} Define $\bar R(\bx)=\bx + R(\bx)$ for every $\bx\in\kone^R_{+,0}$. If
\begin{equation}\label{irr_tens}
\forall \by\in\S_{+}^R, \quad \exists \kappa\in\N \qquad \text{such that} \qquad \bar R^{\kappa}(\by)>0,
\end{equation}
where $\bar R^{k+1}(\by)=\bar R(\bar R^{k}(\by))$ for every $k\geq 1$, then every nonnegative eigenvector of $R$ is positive. In particular, if $\bu\in\S^R_{++}$ is the unique positive eigenvector of $R$, then it is the unique eigenvector of $R$ in $\S_{+}^R$.
{\color{white}{\begin{thm}\end{thm}}}
\begin{proof}
 First of all, note that $A$ is primitive except for the case $m=d=2$, where $A$ is only irreducible. In particular, $A$ has a left-eigenvector $\bb\in\R^d_{++}$ such that $A\bb=\rho(A)\bb$ and $b_1+\ldots+b_d=1$. Furthermore,
 we have $p_i'>1$ for every $i\in[d]$. It follows that 
 \begin{equation*}
 \gamma'=\sum_{i=1}^db_ip_i'\geq \big(\min_{s\in[d]}p_s'\big)\sum_{i=1}^db_i =\min_{s\in[d]}p_s'>1,
 \end{equation*}
 and thus $\gamma=\gamma'/(\gamma'-1)>1$. We proceed in the proof by following the same structure as in the statement:

 \textit{Contractive case:} Theorem \ref{Banachcor} implies the existence and uniqueness of $(\blam,\bu)\in\R_{++}^d\times\S_{++}^R$ such that $R(\bu)=\blam\krog\bu$. We show that $(r(R),\bu)$ is an eigenpair of $R$, i.e. $\blam=(r(R)^{p_1'-1},\ldots,r(R)^{p_d'-1})$. Note that, as the tensor $(t_{j_1,\ldots,j_m})$ has nonnegative entries, by the triangle inequality it holds
$ |R(\bx)|\leq R(|\bx|)$ for all $\bx\in V$,
 where the absolute value is taken component-wise. Thus, we can apply Corollary \ref{extension_max} which ensures that if $(\bt,\bz)\in\R^d\times \S^R$ is such that $R(\bz)=\bt\krog\bz$, then $\prod_{i=1}^d|\theta_i|^{b_i}\leq \prod_{i=1}^d\lambda^{b_i}$. From \eqref{comment1}, we know that there exists $\theta\geq 0$ and $\lambda>0$ such that $|\theta_i|=\theta^{p_i'-1}$ and $\lambda_i=\lambda^{p_i'-1}$ for every $i\in[d]$. It follows that we have $|\theta|^{\gamma'-1}\leq \lambda^{\gamma'-1}$ which implies that $|\theta|\leq \lambda$ and thus $\lambda = r(R)$. The linear convergence of $(\bx^k)_{k=1}^\infty$ follows from Theorem \ref{Banachcor} as well.

\textit{Non-contractive case:} By Theorem \ref{weakPF}, we know that there exists $(\blam,\bu)\in\R^d_+\times \S^R_{+}$ such that $R(\bu)=\blam\krog\bu$ and $\prod_{i=1}^d\lambda_i^{b_i}=r_{\bb}(R)$. Thus, the same argument as above implies that $(r(R),\bu)$ is an eigenpair of $R$. In particular, we have $r_{\bb}(R)^{\gamma-1}= r(R)$ and therefore the characterizations of $r(R)$ follow from Theorem \ref{weakPF}. Now, note that $R$ is differentiable everywhere on the positive orthant and, for every $\bx>0$, there exist $\alpha,\alpha'>0$ such that \begin{equation}\label{zeropat}
\alpha DR(\bx)\leq DR(\ones)\leq \alpha' DR(\bx),
\end{equation}
 i.e. $DR$ has the same zero pattern everywhere in the interior of the cone. This pattern does not depend on the choice of $p_1,\ldots,p_d\in(1,\infty)$. Furthermore, $DR(\ones)$ is an adjacency matrix of the unweighted graph $\G(R)$ of Definition \ref{graphdefi}. In particular, if $DF(\ones)$ is irreducible, then positivity and uniqueness of $\bu$ follow from Theorems \ref{exist} and \ref{unique}. Now, suppose that $DF(\ones)$ is primitive, then $\lim_{k\to\infty}\bx^k=\bu$ follows from Theorem \ref{conv1}.

\textit{Collatz-Wielandt principle:} Since $r(R)=r_{\bb}(R)^{\gamma-1}$, the min-max characterizations follow from Theorems \ref{CW1} and \ref{CW<1}. The properties of $(\cwut(\bx^k))_{k=1}^{\infty}$ and $(\cwlt(\bx^k))_{k=1}^{\infty}$ follow from Proposition \ref{monoprop}. If $DR(\ones)$ is irreducible and $(\theta,\bw)\in\R_+\times (\S_{+}^R\sauf\S_{++}^R)$ is an eigenpair of $R$, then $\theta < r(R)$ follows by \eqref{zeropat} and Theorem \ref{rad<}.

\textit{Irreducible tensors:} It follows directly from Corollary \ref{irr_pos}.
\end{proof}
Before relating the results of Theorem \ref{tensor_PF} with the literature, we note that when $R$ is defined as in  \eqref{def_R}, $r(R)$ can be efficiently approximated even when $R(\ones)>0$, $\rho(\A(R))=1$ but $DR(\ones)$ is not primitive, i.e. the assumptions of Theorem \ref{tensor_PF} for the convergence of the power method are not satisfied. The idea is to approximate $r(R)$ by a strictly monotonically decreasing sequence $(r_{l})_{l=1}^{\infty}$ where $r_{l}$ is the spectral radius of a multi-homogeneous map $R^{(l)}\in\NB^d$ with primitive Jacobian $DR^{(l)}(\ones)$. In particular, $r_l$ can be computed efficiently using the power method. A similar idea has been studied in Theorem 4.1 \cite{Wu2013} in the context of $\ell^p$-eigenvectors, i.e. $d=1$ in the definition of $R$, and in Theorem 5.4.1 \cite{NB} for homogeneous order-preserving mappings on cones. We exclude the case $m=d=2$ ($\ell^{p,q}$-singular vectors of matrices) in the following proposition because our argument does not apply for this setting. This is indirectly due to the fact that $\A(R)$ is primitive if and only if $m$ and $d$ are not both equal to $2$. However, as discussed in Remark \ref{dectens}, the nonnegative $\ell^{p,q}$-singular vectors of a matrix $M\in\R^{m\times n}$ associated with positive singular values are in bijection with the eigenvectors (associated with positive eigenvalues) of $\t H\colon \R^{m}_+\to \R^{n}_+$ given by $\t H(\bx)=\psi_{p'}(M\psi_{q'}(M^T\bx))$ and the proof of the following proposition applies to $\t H$ as well.
\begin{prop}\label{approx_prop}
Let $R$ be defined as in \eqref{def_R} and $A=\A(R)$. Suppose that $R(\ones)>0$, $\rho(A)=1$ and that $m$ and $d$ are not both equal to $2$. Let $\bb\in\R^d_{++}$ and $\gamma>1$ be as in Theorem \ref{tensor_PF}. Let $(\delta_l)_{l=1}^{\infty}\subset \R_{++}$ be a sequence such that $0<\delta_{l+1}<\delta_l$ for every $l\in\N$ and $\lim_{l\to\infty}\delta_l=0$. For every $l\in\N$, define $G^{(l)}\colon\S_{++}^R\to\S_{++}^R$ as 
\begin{equation*}
G^{(l)}(\bx) =\Big(\frac{R_1(\bx)+\delta_l\ones}{\norm{R_1(\bx)+\delta_l\ones}_{p_1}},\ldots,\frac{R_d(\bx)+\delta_l\ones}{\norm{R_d(\bx)+\delta_l\ones}_{p_d}}\Big),
\end{equation*}
and, for every $\bx\in\S^R_{++}$, let $G^{(l),0}(\bx)=\bx$ and $G^{(l),k+1}(\bx)=G^{(l)}\big(G^{(l),k}(\bx)\big)$, $k\in\N$.
Then, for every $l\in\N$, there exists $\bx^{(l)}\in\S_{++}^R$ such that $\lim_{k\to\infty}G^{(l),k}(\bx)=\bx^{(l)}$ for any $\bx\in\S_{++}^R$. 
Moreover, if
\begin{equation*}
r_{l}=\prod_{i=1}^d\Big(\frac{R_{i,1}(\bx^{(l)})+\delta_l}{x^{(l)}_{i,1}}\Big)^{b_i(\gamma-1)} \qquad\forall l\in\N,
\end{equation*}
then
$r_{l}>r_{l+1}> r(R) $ for all $l\in\N$ and $\lim_{l\to \infty}r_{l}=r(R)$.
\end{prop}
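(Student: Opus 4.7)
The plan is to view $G^{(l)}$ as the sphere-normalised power iteration of the perturbed map
\begin{equation*}
R^{(\delta)}(\bx)=R(\bx)+\delta\big(\norm{\bx_1}_{p_1},\ldots,\norm{\bx_d}_{p_d}\big)^{A}\krog\ones
\end{equation*}
introduced in Theorem \ref{BIGTHM}. Indeed, on $\S^R_{++}$ all the norms equal $1$, so $R^{(\delta_l)}(\bx)=R(\bx)+\delta_l\ones$ and $G^{(l)}$ is literally the coordinate-wise normalisation of $R^{(\delta_l)}$. Theorem \ref{BIGTHM} applied to $R^{(\delta_l)}$ supplies an eigenpair $(\blam^{(l)},\bx^{(l)})\in\R^d_{++}\times\S^R_{++}$ with $\prod_{i=1}^d(\lambda_i^{(l)})^{b_i}=r_{\bb}(R^{(\delta_l)})$, and reading off the eigenvalue equation at $j_i=1$ gives $\lambda_i^{(l)}=(R_{i,1}(\bx^{(l)})+\delta_l)/x^{(l)}_{i,1}$, whence $r_l=r_{\bb}(R^{(\delta_l)})^{\gamma-1}$. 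Combined with the identity $r(R)=r_{\bb}(R)^{\gamma-1}$ already extracted in the proof of Theorem \ref{tensor_PF}, this reduces all three assertions to properties of the scalar function $\delta\mapsto r_{\bb}(R^{(\delta)})$.

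To establish the convergence $G^{(l),k}(\bx)\to\bx^{(l)}$ via Theorem \ref{conv1}, I need $DR^{(\delta_l)}(\bx^{(l)})$ to be primitive. For $\bx\in\S^R_{++}$, a direct differentiation shows that the $(i,k)$-block of $DR^{(\delta_l)}(\bx)$ equals
\begin{equation*}
D_k R_i(\bx)+\delta_l A_{i,k}\,\ones_{n_i}\,\psi_{p_k}(\bx_k)^T,
\end{equation*}
while Lemma \ref{Eulerthm} combined with the nonnegativity of $DR$ granted by Theorem \ref{opcharac} forces $D_k R_i(\bx)=0$ whenever $A_{i,k}=0$. Hence the block structure of $DR^{(\delta_l)}(\bx^{(l)})$ mirrors exactly the zero pattern of $A$, with every nonzero block being entrywise strictly positive. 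Since the exclusion of the case $m=d=2$ is precisely the condition under which $A$ is primitive (as noted at the start of the proof of Theorem \ref{tensor_PF}), choosing $\tau$ with $A^\tau>0$ and expanding the block product gives $DR^{(\delta_l)}(\bx^{(l)})^\tau>0$ entrywise; uniqueness of $\bx^{(l)}$ and convergence of the iteration then follow from Theorem \ref{conv1}.

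The inequality $r_l>r_{l+1}$ is immediate from the strict monotonicity of $\delta\mapsto r_{\bb}(R^{(\delta)})$ granted by Theorem \ref{BIGTHM}(ii), together with $\delta_{l+1}<\delta_l$ and the monotonicity of $t\mapsto t^{\gamma-1}$ on $\R_{++}$. For the strict separation $r_{l+1}>r(R)$ (a direct limit in Theorem \ref{BIGTHM}(ii) only yields $\geq$), I would invoke the Collatz-Wielandt principle of Theorem \ref{CW1}: the infimum in $r_{\bb}(R^{(\delta_l)})=\inf_{\bx\in\S^R_{++}}\cwu(R^{(\delta_l)},\bx)$ is attained at $\bx^{(l)}$, and since each ratio $(R_{i,j_i}(\bx^{(l)})+\delta_l)/x^{(l)}_{i,j_i}$ strictly exceeds $R_{i,j_i}(\bx^{(l)})/x^{(l)}_{i,j_i}$, one obtains
\begin{equation*}
r_{\bb}(R^{(\delta_l)})=\cwu(R^{(\delta_l)},\bx^{(l)})>\cwu(R,\bx^{(l)})\geq r_{\bb}(R).
\end{equation*}
Finally, $r_l\to r(R)$ follows from Theorem \ref{BIGTHM}(iii), $\lim_{\delta\to 0^+}r_{\bb}(R^{(\delta)})=r_{\bb}(R)$, together with continuity of $t\mapsto t^{\gamma-1}$.

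The main obstacle I anticipate is establishing the primitivity of $DR^{(\delta_l)}(\bx^{(l)})$: the perturbation contributes only a rank-one correction within each block, so an entry-by-entry bound is insufficient and one must argue at the level of blocks that the primitivity of $A$ propagates to entrywise strict positivity of a suitable matrix power (equivalently, one can verify the hypotheses of Proposition \ref{hardcorelema}(ii)). The remaining steps are relatively routine consequences of results already developed in the paper.
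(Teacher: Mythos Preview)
Your proposal is correct and follows essentially the same route as the paper's proof: identify $G^{(l)}$ with the normalised power iteration of $R^{(\delta_l)}$ from Theorem \ref{BIGTHM}, establish primitivity of $DR^{(\delta_l)}(\bx^{(l)})$ via the block structure inherited from $A$ (the paper simply asserts $(DR^{(l)}(\bx^{(l)}))^2>0$ since $\{m,d\}\neq\{2\}$, which is the case $\tau=2$ of your argument), invoke Theorem \ref{conv1} for convergence, and translate everything through $r_l=r_{\bb}(R^{(\delta_l)})^{\gamma-1}$ and $r(R)=r_{\bb}(R)^{\gamma-1}$.

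One small remark: your separate Collatz--Wielandt argument for the strict inequality $r_{l+1}>r(R)$ is valid but unnecessary. Strict monotonicity from Theorem \ref{BIGTHM}(ii) together with the limit in (iii) already gives it directly: for any $\delta>0$ pick $0<\eta<\delta$, then $r_{\bb}(R)=\lim_{\eta'\to 0}r_{\bb}(R^{(\eta')})\leq r_{\bb}(R^{(\eta)})<r_{\bb}(R^{(\delta)})$. The paper relies on this implicitly.
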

\begin{proof}
For every $l\in\N$, define $R^{(l)}\in\NB^d$ as
\begin{equation*}
R^{(l)}(\bx) =R(\bx)+\delta_l\big(\norm{\bx_1}_{p_1},\ldots,\norm{\bx_d}_{p_d}\big)^A\krog\ones.
\end{equation*}
Then, from Theorem \ref{BIGTHM}, we know that $R^{(l)}$ has a positive eigenvector $\bx^{(l)}\in\S_{++}^{R}$, $r_{\bb}(R)=\lim_{l\to \infty}r_{\bb}(R^{(l)})$ and $r_{\bb}(R^{(l)})>r_{\bb}(R^{(l+1)})>r_{\bb}(R)$ for every $l\in\N$. Moreover, $R^{(l)}$ is differentiable and $\big(DR^{(l)}(\bx^{(l)})\big)^2>0$ because $\{m,d\}\neq \{2\}$. It follows that $DR^{(l)}(\bx^{(l)})$ is primitive and thus Theorem \ref{conv1} implies that $\lim_{k\to\infty}G^{(l),k}(\bx)=\bx^{(l)}$ for any $\bx\in\S_{++}^R$. Now, let $\blam^{(l)}\in\R^d_{++}$ be such that $R^{(l)}(\bx^{(l)})=\blam^{(l)}\krog\bx^{(l)}$. Then, from Theorem \ref{CW1}, it follows that 
\begin{equation*}
r_{\bb}(R^{(l)}) = \prod_{i=1}^d (\lambda_i^{(l)})^{b_i}=\prod_{i=1}^d \Big(\frac{R^{(l)}_{i,1}(\bx^{(l)})}{x_{i,1}^{(l)}}\Big)^{b_i}= \prod_{i=1}^d\Big(\frac{R_{i,1}(\bx^{(l)})+\delta_l}{x_{i,1}^{(l)}}\Big)^{b_i} = r_l^{1/(\gamma-1)}.
\end{equation*}
Finally, a similar argument as in the proof of Theorem \ref{tensor_PF} shows that $r_{\bb}(R)=r(R)^{1/(\gamma-1)}$ which concludes the proof.
\end{proof}
\newcommand{\bq}{\mathbf{q}}
\subsection{Embedding and comparison with the literature}\label{sec:discussion}
We survey the Perron-Frobenius theorems of the literature which are analogues to Theorem \ref{tensor_PF} and compare the assumptions. In particular, we note that a major contribution of Theorem \ref{tensor_PF} is the contractive case as it seems not to have been distinguished from the non-contractive case yet. We reuse the notation of Section \ref{tensor_sec}. For better readability, we sometimes postpone the references to the end of the paragraph.
\subsubsection*{Homogeneity assumptions:} Let us start by discussing the homogeneity assumptions, that is $\rho(A)=1,\rho(A)\leq 1$ and $\rho(A)<1$ where
\begin{equation*}A=A(p_1,\ldots,p_d,\bnu)=\diag(p_1'-1,\ldots,p_d'-1)(\bnu\ones^T-I)\in\R^{d\times d}_+. 
\end{equation*}
We observe that these conditions induce restrictions on $p_1,\ldots,p_d$ which are often weaker than those of the literature. Indeed, besides the case $m=d\geq 2$, the usual condition assumed in the literature is $p_1,\ldots,p_d\geq m$. In terms of the homogeneity matrix $A$, this means $A^T\ones \leq \ones$. In particular, when $d=1$, we have $p_1\geq m$ if and only if $\rho(A)=A\leq 1$. There is however a noticeable difference when $d>1$. Indeed, $A^T\ones\leq \ones$ implies $\rho(A)=\rho(A^T)\leq1$ as
\begin{equation}\label{rhocw}
\min_{i\in[d]}\frac{(A^T\bb)_i}{b_i}\leq\rho(A)\leq \max_{i\in[d]}\frac{(A^T\bb)_i}{b_i}\qquad \forall \bb\in\R^d_{++}.
\end{equation} 
The converse is not true in general. Unfortunately, we are not aware of a simple closed form expression for $\rho(A)$ in terms of $p_1,\ldots,p_d$ besides the particular cases $d=1$ and $m=d=2$. However, as observed above, \eqref{rhocw} can be used to estimate $\rho(A)$. Moreover, note that $g_{\bnu}\colon (1,\infty)^d\to \R_{++}$ defined as $g_{\bnu}(\bp)=\rho(A(p_1,\ldots,p_d,\bnu))$ has the following properties (see Corollary 3.29 in \cite{Plemmons}):
\begin{equation*}
g_{\bnu}(\bp)>g_{\bnu}(\bq) \qquad \forall \bp,\bq\in (1,\infty)^d \quad \text{such that} \quad \bp\lneq\bq,
\end{equation*}
and for any $c_1,\ldots,c_d \colon\R \to (1,\infty)^d$ with $\lim_{t\to\infty} c_i(t)=\infty$ for all $i\in[d]$, it holds  
\begin{equation}\label{rad0}
\lim_{t\to\infty}g_{\bnu}(c_1(t),\ldots,c_d(t))=0.
\end{equation}
In other words, $\rho(A)$ is strictly decreasing with respect to $p_1,\ldots,p_d$ and $\rho(A)\to 0$ as $p_1,\ldots,p_d\to \infty$. As a consequence, we note that whenever $p_1,\ldots,p_d\geq m$ and $p_i>m$ for some $i\in[d]$, then $\rho(A)<1$. Now, in the case $m=d=2$ (i.e. for the $\ell^{p,q}$-singular values of matrices), we have $\rho(A)=\sqrt{(p_1'-1)(p_2'-1)}$ and the existing conditions on $p_1,p_2$ are equivalent to $\rho(A)\leq 1$. If $m=d>2$, then \cite{us} uses a transformation as in Remark \ref{dectens}, to obtain a condition of the form $\A(\t R)^T\ones \leq \ones$ where $\A(\t R)\in\R_+^{(d-1)\times (d-1)}$ is defined as in Equation \eqref{homomatdecouple} (p. \pageref{homomatdecouple}). Nevertheless, as proved in Lemma \ref{lemdec}, it holds $\rho(A)\leq 1$ or $\rho(A)<1$ if and only if $\rho\big(\A(\t R)\big)\leq 1$ or $\rho\big(\A(\t R)\big)< 1$ respectively. Thus, the assumptions $\A(\t R)^T\ones \leq \ones$ and $\A(\t R)^T\ones\lneq\ones$ imply $\rho(A)\leq 1$ and $\rho(A)<1$ respectively. To summarize, the assumption $A^T\ones\leq \ones$ is equivalent to $p_1,\ldots,p_d\geq m$ and implies $\rho(A)\leq 1$ with equality if and only if $p_1=\ldots=p_d=m$. This means in particular that for almost every choice of $p_1,\ldots,p_d$ which satisfies the usual assumptions in the literature, $R$ is contractive. 

\subsubsection*{Irreducibility assumptions:} We discuss the assumptions: $R(\ones)>0$, $DR(\ones)$ irreducible, $DR(\ones)$ primitive and the (strong) irreducibility assumption in \eqref{irr_tens}. First, we note that these conditions are all independent of the choice of $p_1,\ldots,p_d\in(1,\infty)$. We refer to Proposition \ref{irrcharac} for further (computable) characterizations of strong irreducibility. Let us start with the matrix case. 

When $m=2$ and $d=1$, the assumption $R(\ones)>0$ means that $M=(t_{j_1,j_2})\in\R^{n_1\times n_1}_+$ must have at least one nonzero entry per row. It is clear that $DR(\ones)$ is irreducible or primitive if and only if $M$ is irreducible or primitive respectively. Furthermore, it is well known that the irreducibility of $DR(\ones)\in\R_+^{n_1\times n_1}$ is equivalent to \eqref{irr_tens}. However, when $m=2$ and $d=2$, the situation is different. Indeed, $R(\ones)>0$ implies that $M$ must have at least one nonzero entry per row and per column. Moreover, in this case, $DR(\ones)\in\R_+^{(n_1+n_2)\times (n_1+n_2)}$ has the same zero pattern as the matrix \begin{equation*}P=\begin{pmatrix} 0 & M \\ M^T & 0 \end{pmatrix}.\end{equation*} 
While it has been shown in Lemma 3.1 \cite{Fried} that $DR(\ones)$ is irreducible if and only if \eqref{irr_tens} is satisfied, it is important to note that these assumptions are not equivalent to the classical notion of irreducibility for $M\in\R^{n_1\times n_2}_+$ when $n_1=n_2$. In particular, $DR(\ones)$ is irreducible if and only if $MM^T$ and $M^TM$ are irreducible and $DR(\ones)$ is never primitive. However, we note that the map $\t R(\bx)=\psi_{p'}(M^T\psi_{q'}(M\bx))$ (see Remarks \ref{dectens} and \ref{lpq_pb}) is such that $D\t R(\ones)$ is primitive if and only if $M^TM$ is irreducible by Lemma 3 \cite{Boyd}. Indeed $D\t R(\ones)$ is, in that case, irreducible, self-adjoint and positive semidefinite. Moreover, $\t R(\ones)>0$ if $M^TM$ has at least one positive entry per column which is less restrictive than $R(\ones)>0$ (see Example \ref{lpq_pb}).

Now, we assume that $m\geq 3$, i.e. $(t_{j_1,\ldots,j_m})$ is no longer a matrix. In the tensor community, the assumptions $DR(\ones)$ irreducible and primitive, are known as weak irreducibility and weak primitivity, respectively. These notions have been introduced in \cite{Fried} for general polynomial maps. The assumption in the last part of Theorem \ref{tensor_PF} is equivalent to the definition of strongly irreducible tensors for the cases $m=d$ and $d=1$. However, a slightly different definition of strongly irreducible tensors has been introduced in \cite{Chang_rect_eig} for the case $d=2$. The latter definition is more restrictive than \eqref{irr_tens}. Indeed, Theorem 5.1 \cite{Qi_rect_1} and Lemma 2 \cite{Chang_rect_eig} imply that whenever $(t_{j_1,\ldots,j_m})$ is irreducible in the sense of \cite{Chang_rect_eig}, then $R$ satisfies \eqref{irr_tens} and $R(\kone_{+,0})\subset\kone_{+,0}$. On the other hand, the tensor of Example \ref{rect_ex} satisfies \eqref{irr_tens} but $R(\kone_{+,0})\not\subset\kone_{+,0}$. 

It is known that strong irreducibility implies weak irreducibility and that weak irreducibility implies $R(\ones)>0$ (see Section 3 of \cite{Hu2014} for the case $d=1$ and Section 3 and 4 of \cite{Fried} for the case $d\geq 2$). However, as noted in \cite{Fried} Section 5, weak primitivity does not imply in general strong irreducibility. Finally, we note that the condition $R(\ones)>0$ is very weak. In particular, any tensor
 $(t_{j_1,\ldots,j_m})\in\R_+^{n_1\times\ldots\times n_m}$ such that $t_{i,\ldots,i}>0$ for $i=1,\ldots,N=\min_{j}n_j$ satisfies this requirement. We note further that this assumption can be relaxed if $A^{T}\ones=\ones$, or equivalently $p_1=\ldots=p_d=m$. Indeed, in this case the eigenvectors of $R$ are in bijection with those of $Q(\bx)=\bx +R(\bx)$ and all arguments in the proof of Theorem \ref{tensor_PF} apply to $Q$ as well with $d=1$ because $Q(\alpha\bx)=\alpha Q(\bx)$ for any $\alpha >0$. 
 
\subsubsection*{Existence, maximality, uniqueness of positive eigenvectors:} In the Perron-Frobenius theory for nonnegative tensors, usual assumptions for the existence, maximality and uniqueness of a positive eigenvectors are $A^T\ones\leq \ones$ (or $\A(\t R)^T\ones\leq \ones$) and weak irreducibility of the tensor (i.e. irreducibility of $DR(\ones)$). These conditions are similar to the non-expansive case of Theorem \ref{tensor_PF}, however as noted above, the homogeneity assumption of Theorem \ref{tensor_PF} ($\rho(A)\leq1$) is less restrictive. Moreover, we have observed that whenever $A^T\ones\lneq \ones$ (or $\A(\t R)^T\ones\lneq \ones$), then $\rho(A)<1$ and therefore the irreducibility assumptions in the contractive case of Theorem \ref{tensor_PF} are less restrictive than the usual ones in this setting. Finally, we note that strong irreducibility is the common assumption to ensure that $R$ has no nonnegative eigenvector. Hence, our results are equivalent except for the case $d=2, m\geq 3$ where, as discussed above, our requirements are less restrictive. We refer to Theorem 1 in \cite{Boyd} and Theorem 1 \cite{us} for the $\ell^{p,q}$-singular vector problem of matrices, to Theorems 4.1 \cite{Fried} for the $\ell^{p}$-eigenvector problem of tensors, to Theorems 2.1 and 4.1 in \cite{Qi_rect} for the $\ell^{p,q}$-singular vectors of rectangular tensors and to Theorem 1 in \cite{us} for the $\ell^{p_1,\ldots,p_d}$-singular vector problem of tensors. 

\subsubsection*{Existence of a maximal nonnegative eigenvector:}
If $m=d$, i.e. $\nu_i=1$ for all $i\in[d]$, then the existence of a nonnegative eigenvector corresponding to the spectral radius $r(R)$ follows from the fact that $r(R)=\norm{(t_{j_1,\ldots,j_m})}_{p_1,\ldots,p_m}$ where $\norm{(t_{j_1,\ldots,j_m})}_{p_1,\ldots,p_m}$ is the projective norm of $(t_{j_1,\ldots,j_m})$ defined as the maximum of the function in \eqref{opti_lpqr} over $(\R^{n_1}\saufzero)\times \ldots\times(\R^{n_d}\saufzero)$ (see \cite{defnorm}). The nonnegativity of $(t_{j_1,\ldots,j_m})$ implies that the maximum is attained in the compact set $\S^R_+$ and the corresponding maximizer is a singular vector. We refer to Section 2 in \cite{Lim} and Lemma 13 in \cite{us} for further discussion. 

If $m\neq d$, the same argument still holds if $(t_{j_1,\ldots,j_m})$ is symmetric with respect to the $\nu_1$ first indices, $\nu_2$ next indices, and so on.
In these cases, the maximizer of \eqref{eq:general_rayleigh} in $\S^R_+$ is an eigenvector of $R$ corresponding to $r(R)$. In particular, we note that this argument holds for every nonnegative tensor $(t_{j_1,\ldots,j_m})$ while our assumptions require $R(\ones)>0$ and $\rho(A)\leq 1$. We refer to \cite{SymFried1} for a rigorous definition of partial symmetry and to Lemma 3.1 \cite{LI2013813}, Theorem 3.11 \cite{Quynhn} and Theorem 5 \cite{Yao2016} for existence results using this property. For the cases where this continuity argument does not work, the assumptions in the literature are $A^T\ones\leq \ones$ and $t_{j_1,\ldots,j_m}\geq 0$. Thus requiring $R(\ones)>0$ is more restrictive, however as discussed above this is still a weak assumption. Weak Perron-Frobenius theorems which are not based on this continuity argument can be found in Theorem 2.3 \cite{Yang1} for the $\ell^{p}$-eigenvector problem and Theorem 4.2 in \cite{Qi_rect} for the $\ell^{p,q}$-singular vector problem with $m>2$.

\subsubsection*{Characterizations of the spectral radius:}
The Collatz-Wielandt principle is known for the cases $d=1$ and $d=m$ under the assumption that $R$ has a positive eigenvector and $A^T\ones\leq \ones$. Thus, the assumption in Theorem \ref{tensor_PF} for the Collatz-Wielandt principle is less restrictive because it only requires that $R(\ones)>0$ and $\rho(A)\leq 1$. To our knowledge,  $p=q=m$ and $(t_{j_1,\ldots,j_m})$ strongly irreducible is the only case for which a Collatz-Wielandt principle for (rectangular) $\ell^{p,q}$-singular vectors has been proved. 
Regarding the characterizations of the spectral radius in \eqref{Gelf}, we are only aware of a brief discussion involving the Gelfand formula in Section 2 of \cite{NLA:NLA1902} for the $\ell^p$-eigenvector problem with $p=m$. Therefore, the characterizations of $r(R)$ in \eqref{Gelf} seems to be mostly new. Collatz-Wielandt principles can be found in Theorem 2.3 \cite{Yang1} for the $\ell^{p}$-eigenvector problem, in Theorem 4.2 in \cite{Qi_rect} and Theorem 4.6 \cite{Yang2} for the $\ell^{p,q}$-singular vectors with $m>2$, and in Theorem 1 \cite{us} for the $\ell^{p_1,\ldots,p_d}$-singular vector problem. 

\subsubsection*{Convergence of the power method:}
If $d=1$, then the usual assumptions for the convergence of the sequence defined in \eqref{tens_PM} towards the unique positive $\ell^{p}$-eigenvector are either $\rho(A)< 1$ and $D R(\ones)$ primitive (see Corollary 5.1 in \cite{Fried}) or $\rho(A)=1$ and $D R(\ones)$ irreducible (see Theorem 5.4 in \cite{Hu2014}). The contractive case of Theorem \ref{tensor_PF} is therefore improving \cite{Fried} in terms of irreducibility as it only requires $R(\ones)>0$. If $\rho(A)=1$, then, as discussed above, the $\ell^p$-eigenvectors of $T$ are the eigenvectors of $Q(\bx)=\bx+R(\bx)$ and $DQ(\ones)$ is primitive whenever $DR(\ones)$ is irreducible. Hence, up to this transformation, our assumptions are equivalent to the existing literature also in this case. An asymptotic convergence rate for the power method was proved in Corollary 5.2 \cite{Fried} under the assumptions $\rho(A)\leq 1$ and $DR(\ones)$ primitive. Thus, for the case $\rho(A)<1$, the convergence rate \eqref{convrate} improves this result in terms of irreducibility assumptions and because it is nonasymptotic. A linear convergence rate for the case $\rho(A)=1$ was proved under the assumption that $DR(\ones)$ is primitive in Theorem 4.1 \cite{Hu2014}.\newline
In the setting of $\ell^{p,q}$-singular vectors of nonnegative rectangular tensors, i.e. $d=2$ and $m>2$, the power method has been proved to converge if $p=q=m$ and $DR(\ones)$ is irreducible (see Theorem 3.1 \cite{Wu2013}). Hence, our results for $\rho(A)\leq 1$ and $\rho(A)=1$, provide novel convergence guarantees for all the cases where $p$ and $q$ are not both equal to $m$. Furthermore, as noted above, if $p=q=m$ then $A^T\ones=\ones$ and the assumption on $DR(\ones)$ for the convergence of the power method in Theorem \ref{tensor_PF} can be relaxed from primitivity to irreducibility. We refer to Theorem 4 \cite{linlks}, for a linear convergence rate in the case $p_1=p_2=m$ and under a more restrictive assumption on $T$ than (strong) irreducibility.\newline
If $m=d$, then the usual assumptions for the convergence of the sequence in \eqref{tens_PM} towards the unique positive $\ell^{p_1,\ldots,p_d}$-singular vector are $\A(\t R)^T\ones\leq \ones$ and $D\t R(\ones)$ irreducible (see Theorem 2 in \cite{us}). Clearly, our assumption in the contractive case of Theorem \ref{tensor_PF} are less restrictive. For the non-expansive case, i.e. $\rho(\A(\t R))=1$, our irreducibility assumptions are more restrictive as we require $DR(\ones)$ to be primitive. However, we note that by using Lemma 19 \cite{us} instead of Lemma \ref{lightlemma} in the proof of Theorem \ref{conv1}, one can show that the power method converges whenever $DR(\ones)$ is irreducible and $\rho(\A(\t R))=1$. In Theorem 2 \cite{us}, an asymptotic convergence rate is proved under the assumption that $D\t R(\ones)$ is irreducible and $\A(\t R)^T\ones\leq \ones$. The nonasymptotic convergence rate \eqref{convrate} does not hold when $p_1=\ldots=p_d=m$ however, as discussed, it holds for all the cases where $\A(\t R)^T\ones\lneq \ones$.

\bibliography{PFMH_bib.bib}
\bibliographystyle{amsplain}

\end{document}